\newtheorem{thm}{Theorem}[chapter]
\newtheorem*{rmk*}{Remark}
\newtheorem*{rmks*}{Remarks}
\newtheorem{lem}{Lemma}[chapter]
\newtheorem{defn}{Definition}[chapter]
\newtheorem*{defn*}{Definition}
\newtheorem{defns}{Definitions}
\newtheorem*{defns*}{Definitions}
\newtheorem{prop}{Proposition}[chapter]
\newtheorem*{prop*}{Proposition}
\newtheorem*{cor*}{Corollary}
\newtheorem{ans}{Ansatz}
\newtheorem*{ques}{Question}
\newtheorem*{ex}{Example}
\newtheorem*{exs}{Examples}
\newtheorem*{ill*}{Illustration}
\begin{document}
	\begin{titlepage} 
		
		\centering 
		
		\scshape 
		
		\vspace*{\baselineskip} 
		
		
		\rule{\textwidth}{1.6pt}\vspace*{-\baselineskip}\vspace*{2pt} 
		\rule{\textwidth}{0.4pt} 
		
		\vspace{0.75\baselineskip} 
		
		{\LARGE Topos and Stacks \\ of\\ Deep Neural Networks\\} 
		
		\vspace{0.75\baselineskip} 
		
		\rule{\textwidth}{0.4pt}\vspace*{-\baselineskip}\vspace{3.2pt} 
		\rule{\textwidth}{1.6pt} 
		
		\vspace{1\baselineskip} 
		
		
		
		\vspace*{3\baselineskip} 
		
		
		
		\vspace{0.5\baselineskip} 
		
		{\scshape\Large Jean-Claude Belfiore \\} 
		
		\vspace{0.5\baselineskip} 
		
		\textit{Huawei Advanced Wireless Technology Lab. \\Paris Research Center} 
		
		\vfill 
		
		{\scshape\Large Daniel Bennequin \\} 
		
		\vspace{0.5\baselineskip} 
		
		\textit{Huawei Advanced Wireless Technology Lab. \\Paris Research Center\\} 
		\textit{University of Paris Diderot, Faculty of Mathematics} %
		
		\vfill 
		
		
		
		\vspace{0.3\baselineskip} 
		
		
		
	\end{titlepage}
%
%
%
	\date{}
	

\chapter*{Abstract}
	Every known artificial Deep Neural Network (DNN) corresponds to an object in a canonical Grothendieck’s topos;
	its learning dynamic corresponds to a flow of morphisms in this topos. Invariance structures in the layers (like CNNs or LSTMs)
	correspond to Giraud’s stacks. This invariance is supposed to be responsible of the generalization property,
	that is extrapolation from learning data under constraints. The fibers represent pre-semantic categories (Culioli \cite{Culioli}, Thom \cite{Thom1972}),
	over which artificial languages are defined, with internal logics, intuitionist, classical or linear (Girard \cite{GIRARD19871}).
	Semantic functioning of a network is its ability to express theories in such a language for answering questions in output
	about input data. Quantities and spaces of semantic information are defined by analogy with the homological interpretation
	of Shannon’s entropy (Baudot \& Bennequin \cite{Baudot-Bennequin}).  They generalize the measures found by Carnap and Bar-Hillel \cite{CBH52}.
	Amazingly, the above semantical structures are classified by geometric fibrant objects in a closed model category of Quillen \cite{quillen1967homotopical},
	then they give rise to homotopical invariants of DNNs and of their semantic functioning. Intentional type theories (Martin-Löf \cite{ML})
	organize these objects and fibrations between them. Information contents and exchanges are analyzed by Grothendieck’s derivators \cite{grothendieck1990derivateurs}.

\tableofcontents

\chapter*{Preface}
\section*{Introduction}
This text presents
a general theory of semantic functioning of deep neural networks, DNNs, based on topology, more precisely, Grothendieck's topos,
Quillen's homotopy theory, Thom's singularity theory and the pre-semantic of Culioli in enunciative linguistic.\\

The theory is based on the existing networks, transforming data, as images, movies or written texts, to answer questions, achieve actions or take decisions. Experiments, recent and past, show that the deep neural networks, which have learned
under constrained methods, can achieve surprising semantic performances \cite{xie}, \cite{bao-basu}, \cite{basu-bao},
\cite{logic-DNN}, \cite{DBLP:journals/corr/abs-2012-08508}, \cite{DBLP:journals/corr/KarpathyF14}, \cite{MaoYuille2015}, \cite{DBLP:journals/corr/abs-1806-01830},
\cite{zhuo-2019}, \cite{goyal2020recurrent}. However, the exploitation of more explicit invariance
structures and adapted languages, are in great part a task for the future. Thus the present text is a mixture of an analysis of the functioning
networks, and of a conjectural frame to make them able to approach more ideal semantic functioning.\\

Note that categories, homology and homotopy were recently applied in several manners to semantic information. An example is
the application of category theory to the design of networks,
by Fong and Spivak \cite{fong2018seven}. For a recent review on many applications of category theory to Machine Learning, see \cite{shiebler2021category}.
Other examples are given by the general notion of Information Networks based on Segal spaces by Yuri Manin and Matilde Marcolli, \cite{Manin-Marcolli} and the \v{C}ech homology reconstruction of the environment by place fields
of Curto and collaborators, \cite{curto}. Let us also mention the characterization of entropy, by Baez, Fritz, Leinster, \cite{baez2011characterization},
and the use of sheaves and cosheaves for studying information networks, Ghrist, Hiraoka 2011 \cite{Ghrist2011ApplicationsOS}, Curry 2013
\cite{curry2013sheaves}, Robinson and Joslyn \cite{robinson2017sheaves},
and Abramsky et al. specially for Quantum Information \cite{abramsky2011sheaf}.
Persistent homology for detecting structures in data must also be cited in this context, for instance Port, Karidi, Marcolli 2019, \cite{port2019topological}
on syntactic structures, and Carlsson et al. on shape recognition \cite{carlsson2005persistence}.
More in relation with Bayes networks, there are the three recent PhD theses of Juan-Pablo Vigneaux \cite{Vigneaux}, Olivier Peltre \cite{peltre} and Gr\'egoire Sergeant-Perthuis
\cite{gregoire-these}.\\

With respect to these works, we look at a notion of information which is a (toposic) topological invariant of the situation which involves three dimensions of dynamics:
\begin{enumerate}[label=\arabic*)]
	\item a logical flow along the network;
	\item in the layers, the action of categories;
	\item the evocations of meaning in languages.
\end{enumerate}

The resulting notion of information generalizes the suggestion of Carnap and Bar-Hillel $1952$ in these three dynamical directions. Our inspiration came from the toposic interpretation of Shannon's
entropy in \cite{Baudot-Bennequin} and \cite{vigneaux-TAC}. A new fundamental ingredient is the interpretation of internal implication (exponential) as
a \emph{conditioning} on theories, analogous to the conditioning in probabilities.
We distinguish between the theoretically accessible information, concerning all the theories in a fibred languages, and the practically accessible information, that corresponds to the
semantic functioning of concrete neural networks, associated to a feed-forward dynamics which depends on a learning process.\\
\indent The main results in this text are,
\begin{itemize}[label=\ding{52}]
	\item theorems \ref{thm:backprop} and \ref{thm:dnn} characterizing the topos associated to a DNN
	\item theorem \ref{thm:semanticflow} giving a geometric sufficient condition for a fluid circulation of semantics in this topos
	\item theorems \ref{thm:fibrations-dnn} and \ref{thm:M-L}, characterizing the  fibrations (in particular the fibrant objects) in a closed model category made by
	the stacks of the DNNs having a given network architecture
	\item the tentative definition of Semantic Information quantities and spaces in sections \ref{sec:sem-info} and \ref{sec:homotopy}
	\item theorem \ref{thm:activities} on the generic structures and dynamics of LSTMs.
\end{itemize}

\indent Specific examples, showing the nature of the semantic information that we present here, are at the end of section \ref{sec:homotopy} extracted from the exemplar toy language of Carnap and Bar-Hillel and the mathematical interpretation of the pre-semantic of Culioli in relation with the artificial memory cells of sections \ref{sec:memories} and \ref{sec:presemantics}.\\

\indent {\bf Chapter \ref{chap:architecture}} describes the nature of the sites and the topos associated to deep neural networks, said $DNNs$, with their dynamics, feedforward and backward
(backpropagation) learning.\\
\indent {\bf Chapter \ref{chap:stacks}} presents the different stacks of a $DNN$, which are fibred categories over the site of the $DNN$, incorporating symmetries and logics for approaching the wanted
semantics in functioning. Usual examples are $CNNs$ for translation symmetries, but also other ones regarding logic and semantics
(see experiments in \emph{Logical Information Cells I} \cite{logic-DNN}).
Thus the logical structure of the classifying topos of such a stack is described. We introduce hypotheses on the stack and the language objects that allow a transmission of theories downstream and of propositions upstream in the network.
The $2$-category of the stacks over a given architecture is shown to constitute  a closed model theory of injective type, in the sense of Quillen (also Cisinski and Lurie).
The fibrant objects, which are difficult to
characterize in general, are determined in the case of the Grothendieck sites of $DNNs$. Interestingly, they correspond to the hypothesis
guarantying the transmission of theories. Using the work of Arndt and Kapulkin \cite{AK} we show that the above model theory gives
rise to a Martin-Löf type theory associated to every $DNN$. Semantics in the sense of topos (Lambek) is added by considering objects in the classifying topos
of the stack.\\
\indent In {\bf chapter \ref{chap:dynamics}}, we start exploring the notion of semantic information and semantic functioning in $DNNs$, by using homology and homotopy theory.
 Then we define semantic conditioning of the theories
by the propositions,
and compute the corresponding ringed cohomology of the functions of these theories; this gives a numerical notion of semantic ambiguity, of semantic mutual
information and of semantic Kullback-Leibler divergence. Then we generalize the homogeneous bar-complex to define a bi-simplicial set
$I^{\bullet}_\star $ of classes of theories and propositions
histories over the network,  by taking homotopy colimits. We introduce a class of increasing and concave functions from $I^{\bullet}_\star $ to an external model category $\mathcal{M}$;
and with them, we obtain natural homotopy types of semantic information, associated to coherent semantic functioning of a network with respect to
a semantic problem; they satisfy properties conjectured by Carnap and Bar-Hillel in 1952 \cite{CBH52} for the sets of semantic information. On the simple example
they studied we show the interest of considering spaces of information, in particular groupoids, in addition to the more usual combinatorial dimension of
logical content of propositions.\\
\indent {\bf Chapter \ref{chap:unfolding}} describes examples of memory cells, as the long and short terms memory cells (LSTM), and shows that the natural groupoids for their stack have as fundamental group the group of Artin's
braids with three strands $\mathfrak{B}_3$. Generalizations are proposed, for semantics closer to the semantic of natural languages, in appendix \ref{app:nat-lang}.\\
\indent Finally {\bf chapter \ref{chap:3-category}} introduces possible applications of topos, stacks and models to the relations between several $DNNs$: understanding
the modular structures of networks, defining and studying the
obstructions to integrate some semantics or to solve problems in some contexts. Examples could be taken from the above mentioned experiments
on logical information cells, and from recent attempts of several teams in artificial intelligence: Hudson \& Manning \cite{HM18}, Santoro, Raposo et al. \cite{San-Rap}, Bengio and Hinton,
using memory modules, linguistic analysis modules, attention modules and relation modules, in addition to convolution $CNNs$, for answering
questions about images and movies (also see \cite{DBLP:journals/corr/RaposoSBPLB17}, \cite{zhuo-2019}, \cite{hjelm2020representation}).\\

\indent Most of the figures mentioned in the text can be found in the chapter by Bennequin and Belfiore \emph{On new mathematical concepts for Artificial Intelligence},
in the Huawei volume on \emph{Mathematics for Future Computing and Communication}, edited by Liao Heng and Bill McColl \cite{heng_mccoll_2021}. We also refer to this chapter for the elements of category theory that are necessary to understand this text,
the  definitions and
first properties of topos and Grothendieck topos, and the presentation of elementary type theories.\\
\indent Chapter $9$ in \cite{heng_mccoll_2021}, by Ge Yiqun and Tong Wen, \emph{Mathematics, Information and Learning}, explains the large place of topology in the notions of
semantic information.\\

In a forthcoming preprint, entitled \emph{A search of semantic spaces}, we will compute spaces of semantic information for several elementary languages,
along the lines indicated in section \ref{sec:homotopy}, and develop further the Galois point of view on the information flow in a network.
The notions of intentional signification, meaning
and knowledge are discussed from a philosophical point of view, and adapted to artificial semantic and its intelligibility.\\

In another following preprint, \emph{A mathematical theory of semantic communication}, we plan to present the application
of the above stacks of functioning DNNs and their information spaces, to the problem of semantic communication. In particular we show how the invariance
structures in the fibers, made by categories acting on artificial languages, give a way to understand generalization properties of DNNs, for extrapolation, not only interpolation.\\
\indent Analytical aspects, as equivariant standard DNNs approximation of functions, or gradient descent respecting the invariance,
are developed in this context.

\section*{Acknowledgements}

The two authors deeply thank Olivia Caramello and Laurent Lafforgue for the impulsion they gave to this research, for their constant
encouragements and many helpful suggestions. They also warmly thank Merouane Debbah for his deep interest, the help and the support he gave,
Xavier Giraud, for the concrete experiments he realized with us, allowing to connect the theory with the lively spontaneous behavior
of artificial neural networks, and Zhenrong Liu (Louise) for her constant and very kind help at work.\\
\indent D.B. gives special thanks to Alain Berthoz, with whom he has had the chance to work and dream since many years on a conjectural topos
geometry (properly speaking stacks) for the generation and control of the variety of humans voluntary movements.
He also does not forget that the presence of natural invariants of topos in Information theory was discovered during a common work with Pierre Baudot,
that he heartily thanks. D.B. had many inspiring discussions on closely related subjects with his former students, in particular
Alireza Bahraini, Alexandre Afgoustidis, Juan-Pablo Vigneaux, Olivier Peltre and Grégoire Sergeant-Perthuis, that he friendly thanks, with gratitude.

\chapter{Architectures}\label{chap:architecture}

\noindent Let us show how every (known) artificial deep neural network ($DNN$) can be described by a family of objects in a well defined topos.\\

\section{Underlying graph}
\begin{defn*}
	An oriented graph $\Gamma$ is \emph{directed} when the relation $a\leq b$ between vertices, defined by the existence of an oriented path, made by concatenation of oriented edges, is a partial ordering on the set $V(\Gamma)=\Gamma_{(0)}$ of vertices.  A graph is said \emph{classical} if there exists at most one edge between two vertices, and no loop at one vertex (also named tadpole). A classical directed graph can have non-oriented cycles, but no oriented cycles.
\end{defn*}

\indent The layers and the direct connections between layers in an artificial neural network constitute a finite oriented graph $\Gamma$, which is directed, and classical.\\
\indent The minimal elements correspond to the initial layers, or input layers, and the maximal elements to the
final layers, or output layers, all the other correspond to hidden layers, or inner layers. In the case of $RNNs$ (as when we look at feedback connections in the brain)
we apparently see loops, however they are not loops in space-time, the graph which represents the functioning of the network must be seen in the space-time (not necessary Galilean but causal), then the loops disappear and the graph appears directed and classical (see figure \ref{fig:rnn}). Apparently there is no exception to these rules in the world of $DNNs$.\\
\vspace{8mm}
\noindent
\begin{figure}[ht]\hspace{-17mm}
	\begin{subfigure}{.5\textwidth}
		\centering
		\includegraphics[height=4.5cm]{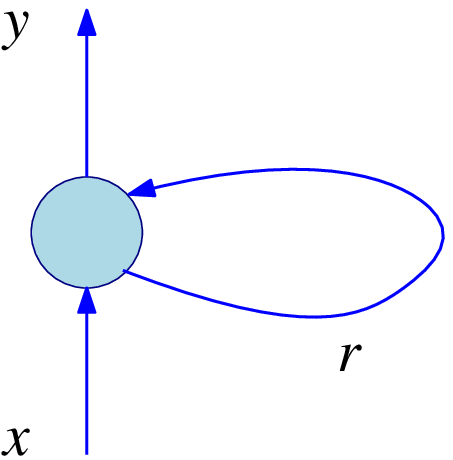}\vspace{2mm}
		\caption{Original RNN}
		\label{rnn}
	\end{subfigure}
	\begin{subfigure}{.5\textwidth}
		\centering
		\includegraphics[height=4.5cm]{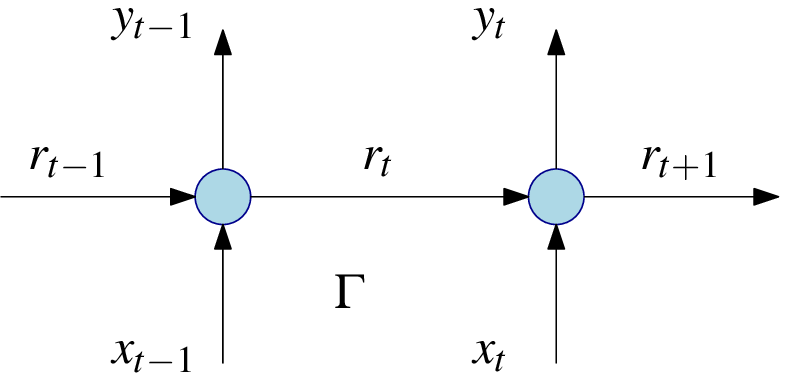}\vspace{2mm}
		\caption{Unfolded RNN in Space-Time}
		\label{rnn-unfolded}
	\end{subfigure}
	\caption{RNN with space-time unfolding }
	\label{fig:rnn}
\end{figure}

\begin{rmk*}
	\normalfont Bayesian networks are frequently associated to oriented or non-oriented graphs, which can be non-directed,
	and have oriented loops. However, the underlying random variables are associated to vertices and to edges, the variable of an edge
	$ab$ being the joint variable of the variables of $a$ and $b$. More generally, an hypergaph is considered, made by a subset $\mathcal{A}$
	of the set $\mathcal{P}(I)$ of subsets of a given set $I$. In this situation, we have a poset, where the natural partial ordering relation is the
	opposite of the inclusion, i.e. it goes from the finer variable to the coarser one.
\end{rmk*}

\section{Dynamical objects of the chains}\label{sec:dynamic-chain}

\indent The simplest architecture of a network is a chain, and the feed-forward functioning of the network, when it has learned, corresponds to a
covariant functor $X$ from  the category $\mathcal{C}^{o}(\Gamma)$ freely generated by the graph to the category of sets, ${\sf Set}$: to a layer $L_k;k\in  \Gamma$ is associated the set $X_k$ of possible activities of the population of neurons in $L_k$, to the edge $L_k\mapsto L_{k+1}$ is associated the map
$X^{w}_{k+1,k}:X_k\rightarrow X_{k+1}$ which corresponds to the learned weights $w_{k+1,k}$; then to each arrow in $\mathcal{C}^{o}(\Gamma)$, we associate the composed map.\\
\indent But also the weights can be encoded in a covariant functor $\Pi$ from $\mathcal{C}^{o}(\Gamma)$ to ${\sf Set}$: for $L_k$ we define $\Pi_k$ as the product of all the sets $W_{l+1,l}$ of weights for $l\geq k$, and to the edge $k\mapsto k+1$ we associate the natural forgetting projection $\Pi_{k+1,k}:\Pi_k\rightarrow \Pi_{k+1}$. (The product over an empty set is the singleton $\star $ in ${\sf Set}$, then for the output layer $L_n$ the last projection is the unique possible map from $\Pi_{n-1}$ to $\star $.) In what follows, we will note $\mathbb{W}=\Pi$, for remembering that it describes the functor of weights, but the notation $\Pi$ is less confusing for denoting the morphisms in this functor.\\
\indent The cartesian products $X_k\times \Pi_k$ together with the maps
\begin{equation}
X_{k+1,k}\times \Pi_{k+1,k}\left(x_k,(w_{k+1,k},w'_{k+1})\right)=\left(X^{w}_{k+1,k}(x_k),w'_{k+1}\right)
\end{equation}
also defines a covariant functor $\mathbb{X}$; it represents all the possible feed-forward functioning of the network, for every potential weights. The natural projection from $\mathbb{X}$ to $\mathbb{W}=\Pi$ is a natural transformation of functors. It is remarkable that, in supervised learning, the Backpropagation algorithm is represented by a flow of natural transformations of the functor $\mathbb{W}$ to itself. We give a proof below in the general case, not only for a chain, where it is easier.\\
\noindent Remark a difference with Spivak et al. \cite{FST}, where backpropagation 
is a functor, not a natural transformation.\\
\indent In fact, the weights represent mappings between two layers, individually they correspond to morphisms in a functor $X^{w}$, then it should
have been more intuitive if they had been coded by morphisms, however globally they are better encoded by the objects in the
functor $\mathbb{W}$, and the morphisms in this functor are the erasure of the weights along the arrows that correspond to them.
This appears as a kind of dual representation of the mappings $X^{w}$.\\

\indent As we want to respect the convention of Topos theory,  \cite{SGA4}, we introduce the category $\mathcal{C} = \mathcal{C}(\Gamma)$ which is opposed to $\mathcal{C}^{0}(\Gamma)$; then $X^{w}$, $\mathbb{W}=\Pi$ and
$\mathbb{X}$ become contravariant functors from this category $\mathcal{C}$ to ${\sf Sets}$, i.e. presheaves over $\mathcal{C}$, i.e. objects in the topos $\mathcal{C}^{\wedge}$ \cite{heng_mccoll_2021}.
This is this topos which is associated to the neural network which has the shape of a chain (multi-layer perceptron). Observe that the arrows between sets continue to follow the natural dynamical ordering, from the initial layer to the final layer, but the arrows in the category (the site) $\mathcal{C}$ are going now in the opposite direction.\\
\indent The object $X^{w}$ can be naturally identified with a subobject of $\mathbb{X}$,
we call this singleton the fiber of $pr_2: \mathbb{X}\rightarrow \mathbb{W}$ over the singleton $w$ in $\mathbb{W}$,
(that is a morphism in $\mathcal{C}^{\wedge}$ from the final object $\textbf{1}$ (the constant functor equal to the point $\star $ at each layer)
to the object $\mathbb{W}$),
which is a system of weights for each edge of the graph $\Gamma$.\\

\indent In this simple case of a chain, the classifying object of subobjects $\Omega$, which is responsible of the logic in the topos \cite{proute-logique},
is given by the subobjects of $\textbf{1}$; more precisely, for every $k\in \mathcal{C}$, $\Omega(k)$
is the set of subobjects of the localization $\textbf{1}|k$, made by the arrows in $\mathcal{C}$ going to $k$.
All these subobjects are increasing sequences $(\emptyset,...,\emptyset,\star ,...,\star )$.
This can be interpreted as the fact that a proposition in the language (and internal 
semantic theory) of the topos is more and more determined when we approach the last layer.
Which corresponds well to what happens in the internal world of the network, and also, in most cases,
to the information about the output that an external observer can deduce from the activity in the inner layers \cite{logic-DNN}. 

\section{Dynamical objects of the general DNNs}\label{sec:dynamic-general}

\indent However, many networks, and most today's networks, are far from being simple chains. The topology of $\Gamma$ is very complex,
with many paths going from a layer to
a deeper one, and many inputs and outputs at a same vertex. In these cases, the functioning and the weights are not defined by functors
on $\mathcal{C}(\Gamma)$ (the category opposite to the category freely generated by $\Gamma$).
But a canonical modification of this category allows to solve the problem: at each layer $a$
where more than one layer sends information, say $a',a",...$, i.e. where there exist irreducible arrows $aa',aa",...$ in $\mathcal{C}(\Gamma)$ (edges in $\Gamma^{\rm op}$),
we perform a surgery: between $a$ and $a'$ ({\em resp.} $a$ and $a"$, a.s.o.)  introduce two new objects $A^{\star }$ and $A$, with arrows
$a'\rightarrow A^{\star }$, $a"\rightarrow A^{\star }$, ..., and $A^{\star }\rightarrow A$, $a\rightarrow A$, forming a fork,
with tips in $a',a",...$ and handle $A^{\star }Aa$ (more precisely if not too pedantically, the arrows $a'A^{\star }, a"A^{\star },...$ are the tines,
the arrow $A^{\star }A$ is the tang, or socket, and the arrow $aA$ is the handle) (see figure \ref{fig:fork}).
By reversing arrows, this gives a new oriented graph $\boldsymbol{\Gamma}$, also without oriented cycles,
and the category $\mathcal{C}$ which replaces $\mathcal{C}(\Gamma)$ is the category $\mathcal{C}(\boldsymbol{\Gamma})$,
opposite of the category which is freely generated by $\boldsymbol{\Gamma}$.

\begin{figure}[h]
	\begin{center}
		\includegraphics[width=9cm]{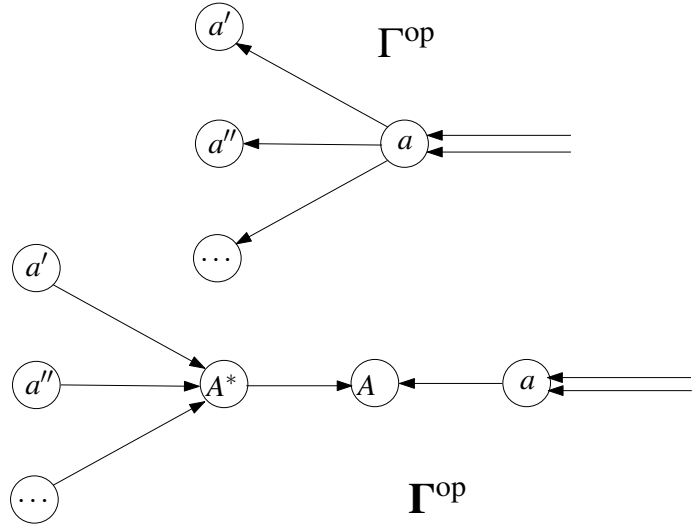}
		\caption{\label{fig:fork}From the initial graph to the Fork}
	\end{center}
\end{figure}

\begin{rmk*}
	\normalfont In $\boldsymbol{\Gamma}$, the complement of the unions of the tangs is a forest.
	Only the convergent multiplicity in $\Gamma$ gives rise to forks, not the divergent one.
	In the category $\mathcal{C}$, this convergence ({\em resp.} divergence) corresponds to a divergence ({\em resp.} convergence) of the arrows.
\end{rmk*}

\noindent When describing concrete networks (see for instance $RNN$, and $LSTM$ or $GRU$ memory cells that we will study in chapter \ref{chap:unfolding}),
ambiguity can appear with the input layers: they can
be considered as input or as tips when several inputs join for connecting a deeper layer $a$. The better attitude is to duplicate them; for instance two input layers
$x_t, h_{t-1}$ going to $h_t, y_t$, we introduce $X_t, x'_t$, $H_{t-1}, h'_{t-1}$, then a fork $A^{\star }, A$, and in $\mathcal{C}$, arrows $x'_t\rightarrow X_t$, $h'_{t-1}\rightarrow H_{t-1}$ for representing the input data, arrows of fork $x'_t\rightarrow A^{\star }$, $h'_{t-1}\rightarrow A^{\star }$, $A^{\star }\rightarrow A$, and arrows of information transmissions $h_t\rightarrow A$ and $y_t\rightarrow A$, representing the output of the memory cell.\\

\noindent With this category $\mathcal{C}$, it is possible to define the analog of the presheaves $X^{w}$, $\mathbb{W}=\Pi$ and
$\mathbb{X}$ in general.\\

\noindent First $X^{w}$: at each old vertex, the set $X^{w}_a$ is as before the set of activities of the neurons of the corresponding layer; over a point
like  $A^{\star }$ and $A$ we put the product of all the incoming sets $X^{w}_{a'}\times X^{w}_{a"},...$. The map from $X_A$ to $X_a$ is the dynamical transmission in the network, joining the information coming from all the inputs layers $a',a",...$ at $a$, all the other maps are given by the structure: the projection on its factors from $X^{w}_{A^{\star }}$, and the identity over the arrow $A^{\star }A$. It is easy to show, that given a collection of activities $\varepsilon^{0}_{in}$ in all the initial layers of the network, it results a unique section of the presheaf $X^{w}$, a singleton, or an element of $\lim_\mathcal{C}X^{w}$, which induces $\varepsilon^{0}_{\rm in}$. Thus, dynamically, each arrow of type $a\rightarrow A$ has replaced the set of arrows from $a$ to $a',a",...$. \\

\indent It is remarkable that the main structural part (which is the projection from a product to its components) can be interpreted by the fact that the
presheaf is a sheaf for a natural Grothendieck topology $J$ on the category $\mathcal{C}$: in every object $x$ of $\mathcal{C}$ the only covering is the full category $C|x$, except when $x$ is of the type of $A^{\star }$,
where we add the covering made by the arrows of the type $a'\rightarrow A^{\star }$ \cite{SGA4}.\\

\noindent The sheafification process, associating a sheaf $X^{\star }$ over $(\mathcal{C},J)$  to any presheaf $X$ over $\mathcal{C}$ is easy to describe:
no value is changed except at a place $A^{\star }$, where $X_{A^{\star }}$ is replaced by the product $X^{\star }_{A^{\star }}$  of the $X_{a'}$, and the map
from $X^{\star }_A=X_A$ to $X^{\star }_{A^{\star }}$ is replaced by the product of the maps from $X_A$ to the $X_{a'}$ given by the functor $X$. In particular,
important for us, the sheaf $C^{\star }$ associated to a constant presheaf $C$ replaces $C$ in $A^{\star }$ by a product $C^{n}$ and
the identity $C\rightarrow C$ by the diagonal map $C\rightarrow C^{n}$ over the arrow $A^{\star }A$.\\

\noindent Let us now describe the sheaf $\mathbb{W}$ over $(\mathcal{C},J)$ which represents the set of possible weights of the $DNN$ (or $RNN$ a.s.o.). First consider at each vertex $a$ of the initial graph $\Gamma$, the set $W_a$ of weights describing the allowed maps from the product $X_{A}=\prod_{a'\leftarrow a}X_{a'}$ to $X_a$, over the projecting layers $a',a",...$ to $a$. Then consider at each layer $x$ the (necessarily connected) subgraph $\Gamma_x$  (or $x|\Gamma$) which is the union of the connected oriented paths in $\Gamma$ from $x$ to some output layer (i.e. the maximal branches issued from $x$ in $\Gamma$); take for $\mathbb{W}(x)$ the product of the $W_y$ over all the vertices in $\Gamma_x$. (For the functioning, it is useful to consider the part $\boldsymbol{\Gamma}_x$ (or $x|\boldsymbol{\Gamma}$) which is formed from $\Gamma_x$, by adding the collections of points $A^{\star },A$ when necessary, and the arrows containing them in $\boldsymbol{\Gamma}$.) At every vertex of type $A^{\star }$ or $A$ of $\boldsymbol{\Gamma}$, we put the product $\mathbb{W}_A$ of the sets $\mathbb{W}_{a'}$ for the afferent $a',a",...$ to $a$. If $x'x$ is an oriented edge of $\boldsymbol{\Gamma}$, there exists a natural projection $\Pi_{xx'}:\mathbb{W}(x')\rightarrow \mathbb{W}(x)$. This defines a sheaf over  $\mathcal{C}=\mathcal{C}(\boldsymbol{\Gamma})$.\\
\indent The crossed product $\mathbb{X}$ of the $X^{w}$ over $\mathbb{W}$ is defined as for the simple chains. It is an object of the topos
of sheaves over $\mathcal{C}$ that represents all the possible functioning of the neural network.\\

\section{Backpropagation as a natural (stochastic) flow in the topos}

\noindent Nothing is loosed in generality if we put together the inputs ({\em resp.} the output) in a product space $X_0$ ({\em resp.} $X_n$); this corresponds to the
introduction of an initial vertex $x_0$ and a  final vertex $x_n$ in $\Gamma$, respectively connected to all the existing initial or final vertices.\\
\indent We also assume that the spaces of states of activity $X_a$ and the spaces of weights $W_{aA}$ are smooth manifolds, and that the maps $(x,w)\mapsto X^{w}(x)$
defines smooth maps on the corresponding product manifolds.\\
In particular it is possible to define tangent objects in the topos of the network $T\left(\mathbb{X}\right)$ and $T\left(\mathbb{W}\right)$, and smooth natural
transformations between them.\\

Supervised learning consists in the choice of an energy function
\begin{equation}
(\xi_0,w)\mapsto F(\xi_0;\xi_{n}(w,\xi_0));
\end{equation}
then in the search of the absolute minimum of the mean $\Phi=\mathbb{E}(F)$ of this energy over a measure on the inputs $\xi_0$; it is a real function on the whole set of weights $W=\mathbb{W}_0$. For simplicity, we assume that $F$ is smooth, and we do not enter the difficult point of effective numerical gradient descent algorithms, we just want to develop
the formula of the linear form $dF$ on $T_{w_0}W$, for a fixed input $\xi_0$ and a fixed system of weights $w_0$. The gradient will depend on the choices of a Riemannian metric on $W$. And the gradient of $\Phi$ is the mean of the individual gradients. \\
We have
\begin{equation}
dF(\delta w)=F^{\star }d\xi_n(\delta w),
\end{equation}
then it is sufficient to compute $d\xi_n$.\\
The product formula is
\begin{equation}
\mathbb{W}_0=\prod_{a\in \Gamma} W_{aA},
\end{equation}
where $a$ describes all the vertices of $\Gamma$, $Aa$ is the corresponding edge in $\boldsymbol{\Gamma}$.
Then it is sufficient to compute $d\xi_n(\delta w_a)$ for $\delta w_a\in T_{w_0}W_{aA}$, assuming that all the other vectors $\delta w_{bB}$
are zero, except $\delta w_a$ which denotes the weight over the edge $Aa$.\\

For that, we consider the set $\Omega_a$ of directed paths $\gamma_a$ in $\Gamma$ going from $a$ to the output layer $x_n$. Each such path
gives rise to a zigzag in $\boldsymbol{\Gamma}$ :
\begin{equation}
...\leftarrow B'\rightarrow b'\leftarrow B\rightarrow b \leftarrow ...
\end{equation}
which gives a feed-forward composed map, by taking over each $B\rightarrow b$ the map $X^{w_{bB}}$ from the product $X_B$ to the
manifold $X_b$, where
everything is fixed by $\xi_0$ and $w_0$ except on the branch coming from $b'$, where $w_a$ varies, and by taking over each $b'\leftarrow B$ the injection $\rho_{Bb'}$
defined by the other factors $X_{b"},X_{b{'''}},...$ of $X_B$. This composition is written
\begin{equation}
\phi_{\gamma_a}=\prod_{b_k\in \gamma_a} X^{w_0}_{b_kB_k}\circ \rho_{B_kb_{k-1}}\circ X^{w}_{aA};
\end{equation}
going from the manifold $W_a\times X_A$ to the manifold $X_n$. In  the above formula, $k$ starts with $1$, and $b_0=a$.\\

\indent Two different elements $\gamma'_a$, $\gamma"_a$ of $\Omega_a$ must coincide after a given vertex $c$, where they join from
different branches $c'c$, $c"c$ in $\Gamma$; they pass through $B$ in $\boldsymbol{\Gamma}$; then we can define the sum $\phi_{\gamma'_a}\oplus\phi_{\gamma"_a}$,
as a map from $W_{aA}^{\oplus2}\times X_A$ to $X_n$, by composing the maps between the $X's$ after $b$, from $b$ to $x_n$, with the two maps $\phi_{\gamma'_a}$ and $\phi_{\gamma"_a}$ truncated at $B$. We name this operation the cooperation, or cooperative sum, of $\phi_{\gamma'_a}$ and $\phi_{\gamma"_a}$.\\
Cooperation can be iterated in associative and commutating manner to any subset of $\Omega_a$, representing a tree issued from $x_n$, embedded in $\Gamma$,
made by all the common branches between the pairs of paths from $a$ to $x_n$. The full cooperative sum is the map
\begin{equation}
\bigoplus \phi_{\gamma_a}:X_A\times\bigoplus_{\gamma_a\in \Omega_a}W_{aA}\rightarrow X_n.
\end{equation}
For a fixed $\xi_0$, and all $w_{bB}$ fixed except $w_{aA}$, the point $\xi_n(w)$ can be described as the composition
of the diagonal map with the total cooperative sum
\begin{equation}
w_a\mapsto (w_a,...w_a)\in \bigoplus_{\gamma_a\in \Omega_a}W_{aA}\rightarrow X_n.
\end{equation}
This gives
\begin{equation}
d\xi_n(\delta w_a)=\sum_{\gamma_a\in \Omega_a} d\phi_{\gamma_a}\delta w_a;
\end{equation}
which implies the backpropagation formula:\\
\begin{lem}\label{lem:backprop}
	\begin{equation}
		d\xi_n(\delta w_a)=\sum_{\gamma_a\in \Omega_a}\prod_{b_k\in \gamma_a} DX^{w_0}_{b_kB_k}\circ D\rho_{B_kb_{k-1}}\circ\partial_wX^{w}_{aA}.\delta w_a
	\end{equation}
	going from the tangent space $T_{w_a^{0}}(W_a)$ to the tangent space $T_{\xi_n^{0}}(X_n)$.
	In  this expression, $k$ starts with $1$, and $b_0=a$.
\end{lem}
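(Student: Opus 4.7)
The plan is to follow the outline already set up in the paragraphs preceding the statement, turning the combinatorial picture of cooperating paths into a clean chain-rule computation.

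First I fix the input $\xi_0$ and the base weight system $w_0$, and reduce to the case where only $\delta w_a \in T_{w_a^0}(W_{aA})$ is nonzero; since $\mathbb{W}_0 = \prod_a W_{aA}$ is a product, the total differential $d\xi_n$ splits as a sum of such partial contributions, so proving the formula in this one-edge case suffices. Next I enumerate $\Omega_a$, the set of directed paths $\gamma_a$ from $a$ to the output $x_n$ in $\Gamma$, and for each such path I write out the corresponding zigzag in $\boldsymbol{\Gamma}$. Along this zigzag, at each downward step $B_k \to b_k$ I insert the transmission map $X^{w^0_{b_k B_k}}_{b_k B_k}$ evaluated at the reference weights, and at each upward step $b_{k-1} \leftarrow B_k$ I insert the injection $\rho_{B_k b_{k-1}}$ that freezes the other factors of the product $X_{B_k}$ at their values determined by $(\xi_0, w_0)$. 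Composing from $a$ to $x_n$ yields the smooth map $\phi_{\gamma_a} \colon W_{aA} \times X_A \to X_n$.

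The key algebraic step is the cooperative sum. When two paths $\gamma'_a, \gamma''_a$ merge at some vertex $c$ via the fork over $B$, I combine the truncations of $\phi_{\gamma'_a}$ and $\phi_{\gamma''_a}$ up to $B$ using independent copies of $w_a$, and then post-compose with the common downstream evolution from $c$ to $x_n$ evaluated at $(\xi_0, w_0)$. Iterating associatively over all pairs embeds the tree of common branches inside $\Gamma$, and produces the total cooperative sum
\begin{equation}
\bigoplus_{\gamma_a \in \Omega_a}\phi_{\gamma_a} \colon X_A \times \bigoplus_{\gamma_a \in \Omega_a} W_{aA} \longrightarrow X_n.
\end{equation}
I then verify, by induction along $\boldsymbol{\Gamma}$ from $a$ towards $x_n$, that the genuine feed-forward value $\xi_n$ as a function of $w_a$ alone is the composition of the diagonal embedding $\Delta \colon w_a \mapsto (w_a,\dots,w_a) \in \bigoplus_{\gamma_a}W_{aA}$ with the total cooperative sum. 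The induction step is routine: at each node where paths merge, the product structure of $X_B$ together with the freezing of all other branches by $\rho$ shows that substituting a single $w_a$ everywhere reproduces the original functor-evaluation of $\mathbb{X}$ on $w_0$ perturbed only at the edge $Aa$.

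Differentiating this factorization at $w_a^0$ is now immediate. The differential of $\Delta$ is again the diagonal, and the differential of the cooperative sum, being multi-linear in its $W_{aA}$-inputs and smooth, sends $(\delta w_a,\dots,\delta w_a)$ to $\sum_{\gamma_a}d\phi_{\gamma_a}(\delta w_a)$. Finally, for each fixed $\gamma_a$, the chain rule applied to the composition defining $\phi_{\gamma_a}$ produces the ordered product
\begin{equation}
d\phi_{\gamma_a}(\delta w_a) = \Bigl(\prod_{b_k \in \gamma_a} DX^{w_0}_{b_k B_k}\circ D\rho_{B_k b_{k-1}}\Bigr)\circ \partial_w X^{w}_{aA}.\delta w_a,
\end{equation}
with $b_0 = a$, since the only factor in which $w$ varies is the initial $X^w_{aA}$ and all differentials of the $\rho$'s and the downstream $X^{w_0}$'s are evaluated at frozen data. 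Summing over $\Omega_a$ yields the stated formula. The main obstacle is the inductive verification that $\xi_n$ really equals $\bigl(\bigoplus \phi_{\gamma_a}\bigr)\circ \Delta$; once this identification is in hand, the rest is a bookkeeping application of the chain rule and the linearity of the diagonal.
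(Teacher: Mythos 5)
Your proposal is correct and follows essentially the same route as the paper: reduce to a single varying weight via the product structure of $\mathbb{W}_0$, express $\xi_n$ as the diagonal composed with the total cooperative sum $\bigoplus_{\gamma_a}\phi_{\gamma_a}$, and apply the chain rule to each path. The only addition is your explicit inductive verification of the factorization $\xi_n=(\bigoplus\phi_{\gamma_a})\circ\Delta$, which the paper simply asserts.
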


To get the backpropagation flow, we compose to the left with $F^{\star }=dF$, which gives a linear form, then apply the chosen metric on the manifold $W$, which
gives a vector field $\beta(w_0|\xi_0)$. Let us assume that the function $F$ is bounded from below on $X_0\times W$ and coercive (at least proper). Then the flow of $\beta$
is globally defined on $W$. From it we define a one parameter group of natural transformations of the object $\mathbb{W}$.\\

In practice, a sequence $\Xi_m;m\in [M]$ of finite set of inputs $\xi_0$ (benchmarks) is chosen randomly, according to the chosen measure on the initial data, and
the gradient is taken for the sum
\begin{equation}
F_m=\sum_{\Xi_m}F_{\xi_0},
\end{equation}
then the flow is integrated (with some important cooking) for a given time, before the next integration with $F_{m+1}$.\\
\indent This changes nothing to the result:\\

\begin{thm}\label{thm:backprop}
	Backpropagation is a flow of natural transformations of $\mathbb{W}$, computed from collections of singletons in $\mathbb{X}$.
\end{thm}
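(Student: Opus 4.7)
The statement unpacks into: for each $t$, a family of self-maps $\phi^t_x \colon \mathbb{W}(x) \to \mathbb{W}(x)$ indexed by the objects $x$ of $\mathcal{C}$ and commuting with every structural restriction $\mathbb{W}(x) \to \mathbb{W}(y)$ arising from an edge $x \to y$ in $\boldsymbol{\Gamma}$; these must assemble into a smooth one-parameter group whose infinitesimal generator is a natural vector field on $\mathbb{W}$. The plan is to read this vector field off directly from Lemma \ref{lem:backprop} and verify naturality by a path-level argument.

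Fix a singleton $(\xi_0,w_0)$ of $\mathbb{X}$: its global section supplies, at every $x$, both an activity $\xi_x$ and a weight system $w^x \in \mathbb{W}(x)$. For each $x$, I would define a vector field $\beta_x$ on $\mathbb{W}(x)$ at the point $w^x$ whose component along $\delta w_a$ --- for any vertex $a$ in the subgraph $\boldsymbol{\Gamma}_x$ --- is the vector obtained from the expression of Lemma \ref{lem:backprop}, composed on the left with $-F^{*}$ and dualized by the fixed Riemannian metric. The decisive local remark is that every summand in that lemma uses only (i) edges of $\boldsymbol{\Gamma}_a \subseteq \boldsymbol{\Gamma}_x$, and (ii) Jacobians evaluated at activities that are reconstructible from $\xi_x$ and $w^x$ via the feed-forward structure of the sheaf $\mathbb{X}$. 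So $\beta_x$ is an honest vector field on $\mathbb{W}(x)$, the boundary datum $\xi_x$ being supplied by the singleton.

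Naturality is the crux. For an edge $x \to y$ in $\boldsymbol{\Gamma}$, the restriction $\Pi_{yx} \colon \mathbb{W}(x) \to \mathbb{W}(y)$ is the product projection forgetting the weights on the edges of $\boldsymbol{\Gamma}_x \setminus \boldsymbol{\Gamma}_y$. For a vertex $a$ downstream of $y$, the set $\Omega_a$ of paths from $a$ to the output layer is identical whether one starts from $x$ or from $y$; and the activity $\xi_a$ at which the Jacobians are evaluated is determined equivalently by $(\xi_y,w^y)$ or by $(\xi_x,w^x)$, the former being the image of the latter under the feed-forward map across the subnetwork from $x$ to $y$. Consequently the $w_a$-component of $\beta_x$ matches that of $\beta_y$, while the components of $\beta_x$ along the forgotten weights are killed by $D\Pi_{yx}$. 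This yields the commutation $D\Pi_{yx} \circ \beta_x = \beta_y \circ \Pi_{yx}$, so $\beta$ is a natural vector field on $\mathbb{W}$. Averaging $\beta$ over a collection of singletons (a training batch) is pointwise and preserves naturality.

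Finally, the hypotheses that $F$ is smooth, bounded below, and coercive on $X_0 \times W$ make $\beta_0$ complete on $\mathbb{W}_0$; by the naturality just established, its integral flow descends consistently to each $\mathbb{W}(x)$, producing the required one-parameter group of natural transformations of $\mathbb{W}$. The chief obstacle I anticipate is the verification in the naturality step: the Jacobians $DX^{w_0}_{b_kB_k}$ in Lemma \ref{lem:backprop} are a priori evaluated at activities produced by the full forward pass, and one must justify that they can be re-expressed purely from data at or below $x$. This is exactly where the phrase ``computed from collections of singletons in $\mathbb{X}$'' does work, since the singleton's section is what furnishes $\xi_x$ independently of the upstream weights discarded by the projections.
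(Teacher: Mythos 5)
Your proposal is correct and follows essentially the same route as the paper: Lemma \ref{lem:backprop} gives $d\xi_n(\delta w_a)$ as a cooperative sum over the paths $\Omega_a$, one composes with $F^{\star}$ and the metric to get the vector field $\beta(w_0|\xi_0)$, coercivity gives a global flow, and the naturality with respect to the forgetting projections $\Pi_{xx'}$ is exactly the point the paper leaves implicit and you spell out. One small imprecision: for a general (non-chain) architecture the activities entering the injections $\rho_{B_kb_{k-1}}$ come from side branches not contained in $\boldsymbol{\Gamma}_x$, so they are not reconstructible from $\xi_x$ and $w^x$ alone; but since both $\beta_x$ and $\beta_y$ are evaluated on the \emph{same} global singleton of $\mathbb{X}$ — which is precisely what the phrase ``computed from collections of singletons'' supplies — the componentwise matching and hence the naturality still hold.
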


\vspace{8mm}
\noindent
\begin{figure}[ht]
	\begin{subfigure}[b]{.6\textwidth}
		\centering
		\includegraphics[width=0.9\textwidth]{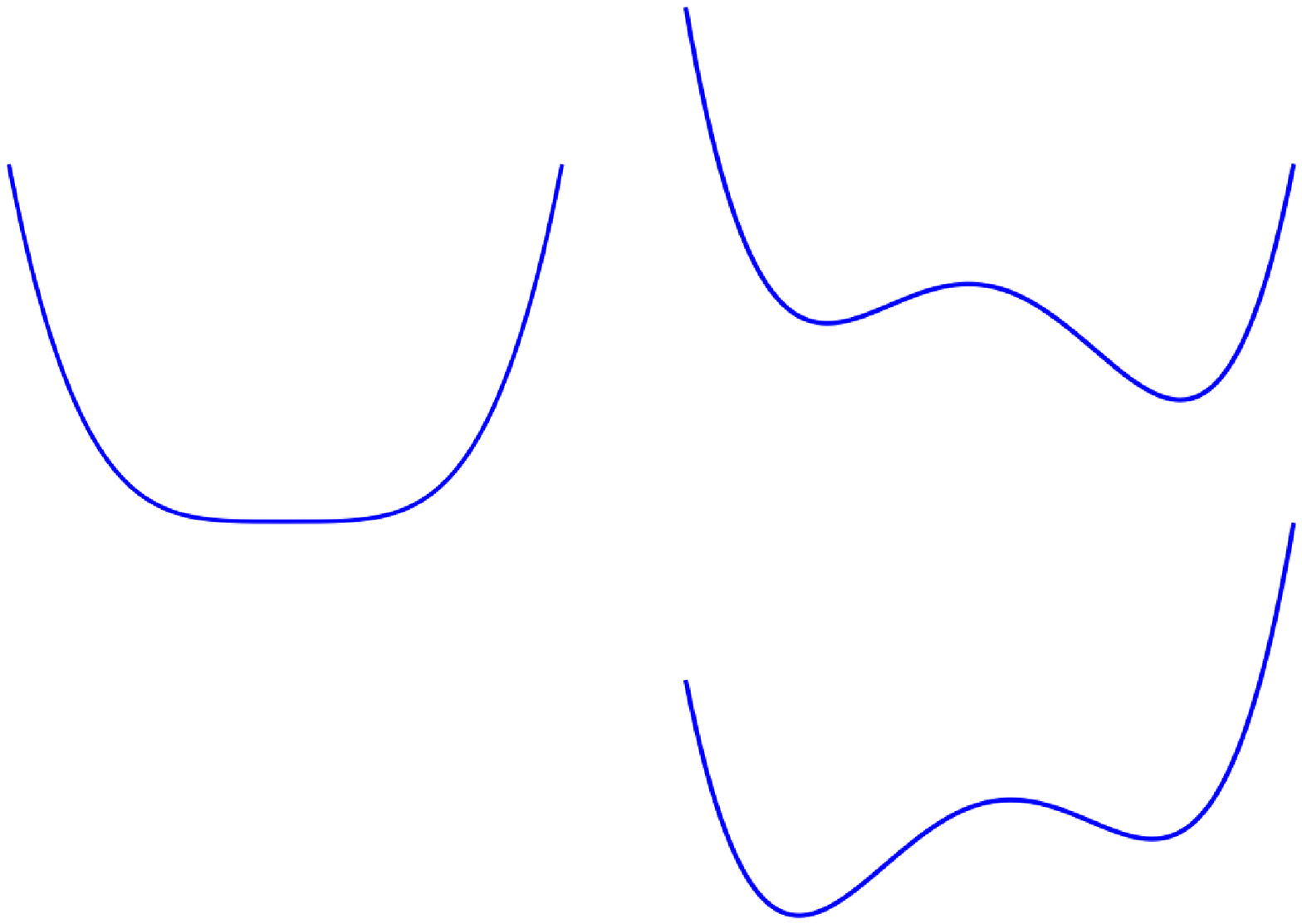}\vspace{3mm}
		\caption{Dynamics of $X^w$}
		\label{subf:quartic}
	\end{subfigure}
	\begin{subfigure}[b]{.4\textwidth}
		\centering
		\includegraphics[width=0.9\textwidth]{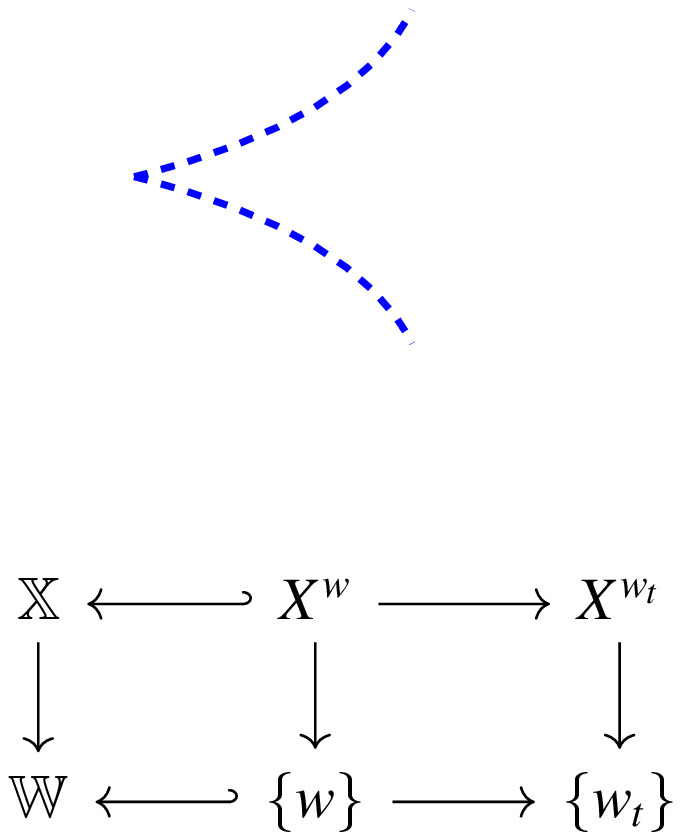}\vspace{3mm}
		\caption{Illustration of theorem \ref{thm:backprop}}
		\label{subf:com_diag}
	\end{subfigure}
	\caption{Examples of bifurcations}
	\label{fig:bifurcation}
\end{figure}

\noindent Figure \ref{fig:bifurcation} shows a bifurcation  $\Sigma$ in $\mathbb{W}$, $\mathbb{X}\rightarrow \mathbb{W}$. Subfigure \ref{subf:quartic} shows three forms of potentials for dynamics of $X^{w}$ on the left part when, in the upper-right part, we can see the regions of a planar projection of $\mathbb{W}$, where the learned dynamics has the corresponding shape.

\begin{rmk*}
	\normalfont Frequently, the function $F$ takes the form of a Kullback-Leibler divergence \[D_{KL}(P(\xi_n)|P_n)\]
	and can be rewritten as a free energy, which can itself be replaced by a Bethe free energy over inner variables,
	which are probabilistic laws on the weights. This is where information quantities could enter \cite{peltre}.
\end{rmk*}

\section{The specific nature of the topos of DNNs}

\noindent We wonder now to what species the topos $\mathcal{C}^{\sim}$ of a $DNN$ belongs. 

\begin{defns*}
	Let $\mathbf{X}$ denotes the set of vertices of $\boldsymbol{\Gamma}$ of type $a$ or of type $A$ (see figure \ref{fig:fork}). We introduce the full subcategory $\mathcal{C}_{\mathbf{X}}$ of $\mathcal{C}$ generated by $\mathbf{X}$. 
\end{defns*}	
	
	There only exists one arrow from a vertex of type $a'$ to a vertex of type $A$ through $A^{\star }$ (but a given $a'$ can join different $A^{\star }$ then different $A$), only one arrow from a vertex of type $a$ to its preceding $A$ (but $A$ can belong to several vertices $a$). Moreover there exists only one arrow from
	a vertex $c$ to a vertex $b$ when $b$ and $c$ are on a chain in $\mathcal{C}$ which does not contain a fork. And no other arrows exist in $\mathcal{C}_{\mathbf{X}}$.
	By definition of the forks, a point $a$ (i.e. a handle) cannot join another point than its tang $A$, and an input or a tang $A$ is the center of a convergent star.\\
	\indent Any maximal chain in $\mathcal{C}^{\rm op}_{\mathbf{X}}$ joins an input entry or a $A$-point (i.e. a tang), to a vertex of type $a'$ (i.e. a tip) or to an output layer.
	Issued from a tang $A$ it can pass through a handle $a$ or a tip $a'$, because nothing forbids a tip to join a vertex $b$.\\
	\indent If $x,y$ belong to $\mathbf{X}$, we note $x\leq y$ when there exists a morphism from $x$ to $y$; then it is equivalent to write $x\rightarrow y$ in the category $\mathcal{C}_{\mathbf{X}}$.

\begin{prop}
	\begin{enumerate}[label=(\roman*)]
		\item $\mathcal{C}_{\mathbf{X}}$ is a poset.
		\item Every presheaf on $\mathcal{C}$ induces a presheaf on $\mathcal{C}_{\mathbf{X}}$.
		\item For every presheaf on $\mathcal{C}_{\mathbf{X}}$, there exists a unique sheaf on $\mathcal{C}$ which induces it.
	\end{enumerate}
\end{prop}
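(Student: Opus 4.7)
For part (i), most of the work has been done in the paragraph preceding the proposition. The enumeration given there---at most one arrow between a tip $a'$ and a tang $A$ (through $A^{\star}$), at most one arrow between a handle $a$ and its tang $A$, and at most one arrow along a fork-free chain---already guarantees that there is at most one morphism between any two objects of $\mathcal{C}_{\mathbf{X}}$. What remains is that the only endomorphisms are identities, which follows because $\boldsymbol{\Gamma}$ inherits directedness from $\Gamma$ and therefore contains no oriented cycle, whence no non-identity endomorphism can appear. For part (ii), the inclusion $\iota \colon \mathcal{C}_{\mathbf{X}} \hookrightarrow \mathcal{C}$ is a full subcategory inclusion, so precomposing any presheaf $F \colon \mathcal{C}^{\mathrm{op}} \to {\sf Set}$ with $\iota^{\mathrm{op}}$ immediately produces a presheaf on $\mathcal{C}_{\mathbf{X}}$ without further check.

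For part (iii), the strategy is to reconstruct the sheaf on $(\mathcal{C},J)$ from the presheaf on $\mathcal{C}_{\mathbf{X}}$ by forcing the sheaf condition at every $A^{\star}$. Given $G$ on $\mathcal{C}_{\mathbf{X}}$, set $F(x) = G(x)$ for $x \in \mathbf{X}$, and at each star vertex $A^{\star}$ of a fork with tips $a'_1, \ldots, a'_k$ define $F(A^{\star}) = \prod_i G(a'_i)$. The restriction map $F(A^{\star}) \to F(a'_i)$ associated to the arrow $a'_i \to A^{\star}$ of $\mathcal{C}$ is taken to be the $i$-th projection, and the map $F(A) \to F(A^{\star})$ associated to $A^{\star} \to A$ is defined as the unique map into the product whose $i$-th coordinate is $G$ applied to the composed arrow $a'_i \to A$ of $\mathcal{C}_{\mathbf{X}}$. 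Functoriality is then automatic: the composition $a'_i \to A^{\star} \to A$ in $\mathcal{C}$ gets sent to the $i$-th component of our product map, which by construction is $G(a'_i \to A)$ and hence matches the image already prescribed by $G$. Since the only non-trivial $J$-cover is the cover of each $A^{\star}$ by its tips, and the sheaf condition there is exactly $F(A^{\star}) = \prod_i F(a'_i)$, the functor $F$ is a sheaf by design.

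Uniqueness is then forced: any sheaf $F'$ on $(\mathcal{C},J)$ whose restriction to $\mathcal{C}_{\mathbf{X}}$ coincides with $G$ must satisfy $F'(A^{\star}) = \prod_i G(a'_i) = F(A^{\star})$, with projections as its restriction maps, and the map $F'(A) \to F'(A^{\star})$ is then determined component-by-component by $G$ via the universal property of products. Hence $F' = F$. The only real obstacle is verifying that the prescription at each $A^{\star}$ is compatible with the global composition structure of $\mathcal{C}$---that any route passing through an $A^{\star}$ yields the same image as the composed arrow already present in $\mathcal{C}_{\mathbf{X}}$. This reduces, one fork at a time, to the universal property of products, which is precisely why the three parts of the proposition fit together so cleanly.
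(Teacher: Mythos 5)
Parts (ii) and (iii) of your argument are correct and follow the same route as the paper: restriction along the full inclusion for (ii), and for (iii) the forced value $F(A^{\star })=\prod_i G(a'_i)$ with projections as restriction maps and the induced product map $F(A)\rightarrow F(A^{\star })$ whose components are the $G(a'_i\rightarrow A)$; your explicit checks of functoriality, of the fact that $F$ restricts back to $G$, and of uniqueness are slightly more detailed than the paper's, which is welcome.

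The gap is in part (i). Saying that $\mathcal{C}_{\mathbf{X}}$ is a poset means in particular that between any two objects of $\mathbf{X}$ there is \emph{at most one morphism}, i.e.\ at most one directed path of $\boldsymbol{\Gamma}$ joining them, since morphisms in the freely generated category are paths. The enumeration in the paragraph preceding the proposition only controls single arrows in specific configurations (tip to tang through $A^{\star }$, handle to tang, fork-free chains); it says nothing about two \emph{distinct composite paths} between the same pair of vertices, and such a pair is a priori possible because the category does contain genuine divergences: a tip $a'$ may join several tangs $A_1$, $A_2$ and additional ordinary points, so two paths issued from the same vertex can separate, and one must rule out their reconvergence. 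This is exactly what the paper's proof of (i) does: given two distinct simple directed paths from $z$ to $x$, it locates the first vertex $y$ where they diverge, eliminates by the fork structure every possibility for $y$ except a tip, argues that at least one of the two diverging branches must terminate at a tang while the other runs along a divergence-free chain to a tang, concludes that these two tangs coincide, and derives from this an oriented cycle in the original graph $\Gamma$, contradicting directedness. Your proof asserts the conclusion of this argument as if it followed from the local enumeration of arrows, which it does not; the acyclicity of $\Gamma$ is used essentially here, not only for the (much easier) absence of non-identity endomorphisms that you do address. You need to supply the path-uniqueness argument.
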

\begin{proof}
	\begin{enumerate}[label=(\roman*)]
		\item let $\gamma_1,\gamma_2$ be two different simple directed paths in $\mathcal{C}_{\mathbf{X}}$ going from a point $z$ in $\mathbf{X}$ to a point $x$ in $\mathbf{X}$, there must exists
		a first point $y$ where the two paths disjoin, going to two different points $y_1$, $y_2$. This point $y$ cannot be a handle (type $a$), nor an input, nor a tang (type $A$), then it is
		an output or a tip. It cannot be an output, because a fork would have been introduced here to manage the divergence. If the two points $y_1,y_2$ were tangs, they were the ending points of the paths, which is impossible. But at least one of them is a tang, say $A_2$, because a tip cannot diverge to two ordinary vertices, if not, there should be a fork here. Then one of them, say $y_1$, is an ordinary vertex and begins a chain, without divergence until it attains an input or a tang $A_1$. Therefore $A_1=A_2$, but this gives an oriented loop in the initial graph $\Gamma$, which was excluded from the beginning for a $DNN$. This final argument directly forbids the existence of $x,\neq y$ with $x\leq y$ and $y\leq x$. Then $\mathcal{C}_{\mathbf{X}}$ is a poset.
		\item is obvious. 
		\item remark that the vertices of $\boldsymbol{\Gamma}$ which are eliminated in $\mathbf{X}$ are the $A^{\star }$. Then consider a presheaf $F$ on $\mathbf{X}$, the sheaf condition over $\mathcal{C}$ tells that $F(A^{\star })$ must be the product of the entrant $F(a'),...$,
		then the product map $F(A)\rightarrow F(A^{\star })$ of the maps $F(A)\rightarrow F(a')$ gives a sheaf.
	\end{enumerate}
\end{proof}

\begin{cor*}
	$\mathcal{C}^{\sim}$ is naturally equivalent to the category of presheaves $\mathcal{C}^{\wedge}_{\mathbf{X}}$.
\end{cor*}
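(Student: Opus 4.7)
The plan is to exhibit the restriction functor along the inclusion $\iota:\mathcal{C}_{\mathbf{X}}\hookrightarrow \mathcal{C}$ as the desired equivalence, with quasi-inverse given by the extension procedure constructed in part (iii) of the proposition. Concretely, I would define the restriction functor $r=\iota^{*}:\mathcal{C}^{\sim}\to \mathcal{C}^{\wedge}_{\mathbf{X}}$ by $r(F)=F\circ \iota^{\mathrm{op}}$ (which lands in presheaves by part (ii)), and the extension functor $e:\mathcal{C}^{\wedge}_{\mathbf{X}}\to \mathcal{C}^{\sim}$ that sends a presheaf $G$ on $\mathcal{C}_{\mathbf{X}}$ to the presheaf on $\mathcal{C}$ obtained by keeping $G(x)$ unchanged for $x\in \mathbf{X}$, by setting $\widetilde{G}(A^{\star})=\prod_{a'\to A^{\star}} G(a')$, and by defining the transition map $\widetilde{G}(A)\to \widetilde{G}(A^{\star})$ as the product of the maps $G(A)\to G(a')$ coming from the composite arrows $a'\to A$ in $\mathcal{C}_{\mathbf{X}}$. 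As part (iii) establishes, $\widetilde{G}$ is a sheaf for the topology $J$: the only nontrivial covering sits at the vertices $A^{\star}$, and there $\widetilde{G}(A^{\star})$ is by construction the required product, so the sheaf axiom is immediate. On morphisms, $e$ sends $\phi:G\to G'$ to the natural transformation that is $\phi$ on $\mathbf{X}$ and the product of the $\phi_{a'}$ at each $A^{\star}$.

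Next I would check that $r\circ e$ and $e\circ r$ are naturally isomorphic to the identity. The composition $r\circ e$ is literally the identity on objects and morphisms, because $e$ does not alter either values or transition maps at vertices lying in $\mathbf{X}$; naturality is automatic. For $e\circ r$, start from any sheaf $F$ on $(\mathcal{C},J)$. Applied to the covering of $A^{\star}$ by the arrows $\{a'\to A^{\star}\}_{a'\leftarrow a}$, the sheaf condition delivers a canonical isomorphism $F(A^{\star})\cong \prod_{a'}F(a')$ under which the transition map $F(A)\to F(A^{\star})$ is identified with the product of the maps $F(A)\to F(a')$. This description coincides exactly with the definition of $e(rF)$, so the canonical comparison $e\circ r\Rightarrow \mathrm{id}_{\mathcal{C}^{\sim}}$ is an isomorphism, natural in $F$.

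Finally, I would note that the equivalence is faithful and full on morphisms for the same reason: any natural transformation between two sheaves on $\mathcal{C}$ is forced, by compatibility with the product decomposition at each $A^{\star}$, to be the product of its restrictions to the tips $a'$; hence it is completely and uniquely determined by its values on $\mathbf{X}$. Combined with the essential surjectivity supplied by part (iii), this yields the claimed equivalence $\mathcal{C}^{\sim}\simeq \mathcal{C}^{\wedge}_{\mathbf{X}}$.

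I do not foresee any real obstacle, since all the substantive geometric input — that $\mathcal{C}_{\mathbf{X}}$ is a poset and that the sheafification of the constant coverings is forced at exactly the $A^{\star}$-vertices — has already been handled in the proposition. The most delicate bookkeeping is checking that the extension of a morphism of presheaves on $\mathcal{C}_{\mathbf{X}}$ to a morphism of sheaves on $\mathcal{C}$ is well-defined at the forks, but this reduces to the functoriality of the product, which is formal.
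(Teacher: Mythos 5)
Your proposal is correct and is essentially the argument the paper leaves implicit: the corollary is deduced from parts (ii) and (iii) of the preceding proposition by observing that restriction along $\mathcal{C}_{\mathbf{X}}\hookrightarrow\mathcal{C}$ and the unique sheaf extension (forced to be the product $\prod_{a'}G(a')$ at each $A^{\star}$) are mutually quasi-inverse, with full faithfulness coming from the same product decomposition. Your write-up just makes the functoriality and the two natural isomorphisms explicit, which is exactly the intended reading.
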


\begin{rmk*}
	\normalfont In Friedman \cite{Friedmann}, it was shown that every topos defined by a finite site, where objects do not possess non unit endomorphisms,
	has this property to be equivalent
	to a topos of presheaves over a finite full subcategory of the site: this is the category generated by the objects that have only the trivial full covering.
	Then we are in a particular case of this theorem. The special fact, that we get a site which is a poset, implies many good properties for the topos \cite{Bell}, \cite{caramello2009duality}.
\end{rmk*}

\noindent In what follows, $\mathbf{X}$ will often denote the poset $\mathcal{C}_{\mathbf{X}}$.\\
\begin{defns}\label{defn:alexandrov}
	The (lower) Alexandrov topology on $\mathbf{X}$, is made by the subsets $U$ of $\mathbf{X}$ such that ($y\in U$ and $x\leq y$) imply $x\in U$.\\
	A basis for this topology is made by the collections $U_\alpha$ of the $\beta$ such that $\beta\leq \alpha$. In fact, consider the intersection
	$U_x\cap U_{x'}$; if $y\leq x$ and $y\leq x'$, we have $U_y\subseteq U_x\cap U_{x'}$,
	then $U_x\cap U_{x'}=\bigcup_{y\in U_x\cap U_{x'}}U_y$. \\
	In our examples the poset $\mathbf{X}$ is in general not stable by intersections or unions of subsets of $\mathbf{X}$, but the intersection and union
	of the sets $U_x$, $U_y$ for $x,y\in \mathbf{X}$ plays this role.\\
	We note $\Omega$ or $\Omega(\mathbf{X})$ when there exists a possibility of confusion, the set of (lower) open sets on $\mathbf{X}$.\\
	A sheaf in the topological sense over the Alexandrov space $\mathbf{X}$ is a sheaf in the sense of topos over the category $\Omega(\mathbf{X})$, where
	arrows are the inclusions, equipped with the Grothendieck topology, generated by the open coverings of open sets.
\end{defns}

\begin{prop}\label{prop:alexandrov}
	(see \cite[Theorem 1.1.8, the comparison lemma]{caramello-18} and \cite[p. 210]{Bell}): every presheaf of sets over the category $\mathcal{C}_\mathbf{X}$ can
	be extended to a sheaf on $\mathbf{X}$ for the Alexandrov topology, and this extension is unique up to a unique isomorphism.
\end{prop}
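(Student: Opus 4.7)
The plan is to realise $\mathcal{C}_{\mathbf{X}}$ as a fully faithful dense subcategory of $\Omega(\mathbf{X})$ whose induced Grothendieck topology is trivial, and then apply the comparison lemma cited in the statement. The concrete extension will be given by a limit formula, and uniqueness will follow from the sheaf condition applied to the basic coverings.

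First I would embed $\mathcal{C}_{\mathbf{X}}$ into $\Omega(\mathbf{X})$ by sending $x\mapsto U_x=\{y\in \mathbf{X}\colon y\leq x\}$ and a morphism $x\leq y$ to the inclusion $U_x\subseteq U_y$. This functor is fully faithful: one direction is obvious, and the other uses $x\in U_x$, so $U_x\subseteq U_y$ forces $x\leq y$. It is dense because, by Definition \ref{defn:alexandrov}, any open $U\in\Omega(\mathbf{X})$ equals $\bigcup_{x\in U}U_x$.

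Next I would determine the Grothendieck topology induced on $\mathcal{C}_{\mathbf{X}}$ by the canonical topology of $\Omega(\mathbf{X})$. A family $\{U_{y_i}\hookrightarrow U_x\}_i$ covers $U_x$ exactly when $\bigcup_i U_{y_i}=U_x$; since $x$ itself must belong to this union, there is some $i$ with $x\leq y_i$, and combined with $y_i\leq x$ this forces $y_i=x$. Hence every covering sieve on $x$ contains the identity, so the induced topology on $\mathcal{C}_{\mathbf{X}}$ is the trivial one and sheaves for it coincide with presheaves. By the comparison lemma (\cite[Theorem 1.1.8]{caramello-18}), restriction along $x\mapsto U_x$ yields an equivalence
\[
\mathrm{Sh}(\mathbf{X})\;\simeq\;\widehat{\mathcal{C}_{\mathbf{X}}},
\]
which is exactly the claimed extension-and-uniqueness statement.

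To make the extension concrete, the sheaf $\tilde F$ corresponding to a presheaf $F$ on $\mathcal{C}_{\mathbf{X}}$ is given by $\tilde F(U)=\lim_{x\in U} F(x)$, the limit being taken over $U$ viewed as a sub-poset of $\mathbf{X}$; functoriality in $U\subseteq V$ comes from restricting the indexing poset. Uniqueness up to unique isomorphism is forced by the sheaf condition on the basic covering $U=\bigcup_{x\in U}U_x$, since any sheaf agreeing with $F$ on the $U_x$ must coincide with this limit. The only delicate point is verifying that the induced topology is trivial, and this rests squarely on the feature $x\in U_x$ specific to the Alexandrov construction; the rest is a routine application of the comparison lemma.
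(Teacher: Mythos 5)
Your proof is correct, but it takes a different route from the one the paper actually writes out. The paper cites the comparison lemma in the statement of the proposition but then proceeds entirely by hand: it defines the extension by the limit formula $F(U)=\lim_{x\in U}F(x)$ and explicitly verifies the two sheaf axioms (separation and gluing) together with uniqueness, never invoking the comparison lemma as a black box. You instead make the comparison lemma do the work: you check that $x\mapsto U_x$ is fully faithful and dense in $\Omega(\mathbf{X})$, and the genuinely useful observation you add is that the induced topology on $\mathcal{C}_{\mathbf{X}}$ is trivial --- because $x\in U_x$ forces any covering family $\{U_{y_i}\subseteq U_x\}$ with union $U_x$ to contain $U_x$ itself (using antisymmetry of the poset). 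That single verification, which the paper leaves implicit, is exactly what makes ``sheaves on $\mathbf{X}$'' equivalent to ``presheaves on $\mathcal{C}_{\mathbf{X}}$'' rather than to sheaves for some nontrivial induced topology. What your approach buys is conceptual economy and a clean statement of \emph{why} the result holds (dense subcategory, trivial induced topology); what the paper's approach buys is self-containedness --- the reader sees the gluing argument concretely and does not need to trust the comparison lemma. Your concluding limit formula coincides with the paper's construction, so the two proofs produce the same extension.
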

\begin{proof}
	Let $F$ be a presheaf on $\mathcal{C}_{\mathbf{X}}$; for every $x\in \mathbf{X}$, $F(U_x)$ is equal to $F(x)$. For any open set $U=\bigcup_{x\in U}U_x$ we define $F(U)$ as the limit over $x\in U$ of the sets $F(x)$ (that is the
	set of families $s_x;x\in U$
	in the sets $F(x);x\in U$, such that for any pair $x,x'$ in $U$ and any element $y$ in $U_x\cap U_{x'}$,
	the images of $s_x$ and $s_{x'}$ in $F(y)$ coincide. This defines a presheaf for the lower topological topology.\\
	This presheaf is a sheaf:
	\begin{enumerate}[label=\arabic*)]
		\item if $\mathcal{U}$ is a covering of $U$, and if $s,s'$ are two elements of $F(U)$ which give the same elements over $V$ for all $V\in \mathcal{U}$,
		the elements $s_x, s'_x$ that are defined by $s$ and $s'$ respectively in every $F(x)$ for $x\in U$ are the same, then
		by definition, $s=s'$.
		\item To verify the second axiom of a sheaf, suppose that a collection $s_V$ is defined for $V$ in the covering $\mathcal{U}$ of $U$, and
		that for any intersection $V\cap W$,$V,W \in \mathcal{U}$ the restrictions of $s_V$ and $s_W$ coincide, then by restriction to any $U_x$
		for $x\in U$ we get a coherent section over $U$.
		\item 	For the uniqueness, take a sheaf $F'$ which extends $F$, and consider the open set $U=\bigcup_{x\in U}U_x$, any element $s'$ of
		$F'(U)$ induces a collection $s'_x\in F(U_x)=F(x)$ which is coherent, then defines a unique element $s=f_U(s')\in F(U)$. These maps $f_U;U\in \Omega$
		define the required isomorphism.
	\end{enumerate}
\end{proof}

\begin{cor*}
	The category $\mathcal{C}^{\sim}$ is equivalent to the category $\emph{Sh}(\mathbf{X})$ of sheaves of $\mathbf{X}$, in the ordinary topological sense, for the (lower) Alexandrov topology.
\end{cor*}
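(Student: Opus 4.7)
The plan is to obtain the equivalence as the composition of two equivalences already proved in the text: first $\mathcal{C}^{\sim}\simeq \mathcal{C}^{\wedge}_{\mathbf{X}}$ from the preceding corollary (itself coming from the proposition identifying sheaves on $\mathcal{C}$ with presheaves on $\mathcal{C}_{\mathbf{X}}$), and then $\mathcal{C}^{\wedge}_{\mathbf{X}}\simeq \mathrm{Sh}(\mathbf{X})$ supplied by Proposition \ref{prop:alexandrov}. So conceptually there is nothing new to prove; the task is only to package the two results into an equivalence of categories.

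Concretely, I would first recall from Proposition \ref{prop:alexandrov} the \emph{extension} functor $E:\mathcal{C}^{\wedge}_{\mathbf{X}}\to \mathrm{Sh}(\mathbf{X})$ which sends a presheaf $F$ to the sheaf $\widetilde{F}$ with $\widetilde{F}(U)=\varprojlim_{x\in U} F(x)$ for any lower open $U$, and which is natural in $F$. Then I would construct a \emph{restriction} functor $R:\mathrm{Sh}(\mathbf{X})\to \mathcal{C}^{\wedge}_{\mathbf{X}}$: given a sheaf $G$ on the Alexandrov space $\mathbf{X}$, set $R(G)(x)=G(U_x)$ for $x\in \mathbf{X}$, and for an arrow $x\leq y$ in $\mathcal{C}_{\mathbf{X}}$ (equivalently $U_x\subseteq U_y$) use the restriction map $G(U_y)\to G(U_x)$. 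Functoriality is immediate since inclusions compose.

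Next I would check the two natural isomorphisms. For $F\in \mathcal{C}^{\wedge}_{\mathbf{X}}$, by construction $(R\circ E)(F)(x)=\widetilde{F}(U_x)=\varprojlim_{y\leq x} F(y)=F(x)$, because $x$ is terminal in $U_x$; this gives a natural isomorphism $R\circ E\cong \mathrm{id}$. For $G\in \mathrm{Sh}(\mathbf{X})$, the sheaf $E(R(G))$ is a sheaf on $\mathbf{X}$ that on the basic opens $U_x$ agrees with $G(U_x)$; the uniqueness clause of Proposition \ref{prop:alexandrov} then produces a canonical isomorphism $E(R(G))\cong G$, natural in $G$, yielding $E\circ R\cong \mathrm{id}$. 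Composing with the equivalence $\mathcal{C}^{\sim}\simeq \mathcal{C}^{\wedge}_{\mathbf{X}}$ supplied by the preceding corollary finishes the proof.

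The main obstacle, if any, lies not in writing the two functors $E$ and $R$ but in being careful that the Grothendieck topology on $\Omega(\mathbf{X})$ used to define $\mathrm{Sh}(\mathbf{X})$ agrees with the one whose basis is $\{U_x : x\in \mathbf{X}\}$; this is exactly Definition \ref{defn:alexandrov}, which notes that every lower open set is a union of basic opens $U_x$, so sheafiness on all of $\Omega(\mathbf{X})$ reduces to the compatibility conditions already verified in Proposition \ref{prop:alexandrov}. Once this point is recorded, the composite equivalence $\mathcal{C}^{\sim}\simeq \mathcal{C}^{\wedge}_{\mathbf{X}}\simeq \mathrm{Sh}(\mathbf{X})$ is formal.
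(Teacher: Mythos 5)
Your proposal is correct and follows essentially the same route as the paper: the corollary is obtained there exactly by combining the equivalence $\mathcal{C}^{\sim}\simeq\mathcal{C}^{\wedge}_{\mathbf{X}}$ from the preceding corollary with the existence-and-uniqueness statement of Proposition \ref{prop:alexandrov}, whose proof already contains your extension functor $F\mapsto\widetilde{F}$ with $\widetilde{F}(U)=\varprojlim_{x\in U}F(x)$ and the identification $\widetilde{F}(U_x)=F(x)$. Your only addition is to make the quasi-inverse $R(G)(x)=G(U_x)$ and the two natural isomorphisms explicit, which is a harmless elaboration of what the paper leaves implicit.
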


\noindent Consequences from \cite[pp.408-410]{Bell}: the topos $\mathcal{E}=\mathcal{C}^{\sim}$ of a neural network is coherent. It possesses sufficiently many points, i.e. geometric functors ${\sf Set}\rightarrow \mathcal{C}^{\sim}$, such that equality of morphisms in $\mathcal{C}^{\sim}$ can be tested on these points.\\
\noindent In fact, such an equality can be tested on sub-singletons, i.e. the topos is generated by the subobjects of the final object $\mathbf{1}$. This property is called \emph{sub-extensionality} of the topos $\mathcal{E}$.\\
Moreover $\mathcal{E}$ (as any Grothendieck topos) is defined over the category of sets, i.e. there exists a unique geometric functor $\mu:\mathcal{E}\rightarrow{\sf Set}$. This functor is given by the global sections of the sheaves over $\mathbf{X}$. In this case, as shown in \cite{Bell}, the equality of subobjects (i.e. propositions) in every object of the form $\mu^{\star }(S)$ (named sub-constant objects) is decidable.\\
The two above properties characterize the so-called \emph{localic topos} \cite{Bell}, \cite{MacLaneMoerdijk1992}.\\

\noindent The points of $\mathcal{E}$ correspond to the ordinary points of the topological space $\mathbf{X}$; they are also the points of the poset
$\mathcal{C}_{\mathbf{X}}$. For each such point $x\in \mathbf{X}$, the functor $\epsilon_x:{\sf Set}\rightarrow \mathcal{E}$ is the right adjoint of the functor sending any sheaf $F$ to its fiber $F(x)$.\\

\noindent In the neural network, the minimal elements for the ordering in $\mathbf{X}$ are the output layers plus some points $a'$ (tips), the maximal ones are the input layers, and the
points of type $A$ (tangs).
However, for the standard functioning and for supervised learning, in the objects $\mathbb{X}$, $\mathbb{W}$, the fibers in $A$ are identified with
the products of the fibers in the tips $a',a",...$, and play the role of transmission to the branches of type $a$. Therefore the feed-forward functioning does not reflect the
complexity of the set $\Omega$. The backpropagation learning algorithm also escapes this complexity. \\
\begin{rmks*}
	\normalfont If $A$ were not present in the fork, we should have added the empty covering of $a$ in order to satisfy the axioms of a Grothendieck topology, and this would have been disastrous, implying that every sheaf must have in $a$ the value $\star $ (singleton). A consequence is the existence of more general sheaves than the ones that correspond to usual feed-forward dynamics, because they can have a value $X_A$ different from the product of the $X_{a'}$ appearing in $A^{\star }$, equipped with a map $X_{A^{\star }A}:X_A \rightarrow \prod X_{a'}$ and $X_{aA}:X_A\rightarrow X_a$. Then, depending on the value of $\varepsilon^{0}_{\rm in}$ and of the other objects and morphisms, a propagation can happen or not. This  opens the door to new types of networks, having a part of spontaneous activities (see chapter \ref{chap:dynamics}).
\end{rmks*}

\begin{rmk*}
	\normalfont Several evidences show that the natural neuronal networks in the brain
	of the animals are working in this manner, with spontaneous activities,  internal modulations and complex variants of supervised and unsupervised learning, involving memories, spontaneous activities, genetically and epi-genetically programmed activations and desactivations, which optimize the survival at the level of the evolution of species.
\end{rmk*}

\begin{rmk*}
	\normalfont Appendix \ref{app:localic} gives an interpretation due to Bell of the class of topos we encounter here, named \emph{localic topos}, in terms
	of a categorical version of fuzzy sets, called  \emph{sets with fuzzy identities} taking values in a given Heyting algebra.
\end{rmk*}

For the topos of a $DNN$, the Heyting algebra $\Omega$ is the algebra of open subsets of the poset $\mathbf{X}$. However, we can go further in the characterization of this topos
by using the particular properties of the poset $\mathbf{X}$, and of the algebra $\Omega$.

\begin{thm}\label{thm:dnn}
	The poset $\mathbf{X}$ of a DNN is made by a finite number of trees, rooted in the maximal points and which are joined in the minimal points.
\end{thm}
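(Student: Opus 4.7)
The plan is to classify the direct upward neighbors in $\mathbf{X}$ using the explicit description of the morphisms of $\mathcal{C}$, and then to show that for every maximal element $M$ the down-set $\downarrow M$ is a tree rooted at $M$, with leaves among the minimal elements of $\mathbf{X}$.

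First I would read off from the fork construction the possible direct morphisms out of any $x \in \mathbf{X}$ in $\mathcal{C}_{\mathbf{X}}$: (i) to the chain predecessor $u$ of $x$ in $\Gamma$, when $x$ has a single incoming edge; (ii) to the tang $A_x$, when $x$ is a convergence point of $\Gamma$; and (iii) to the tangs $A_v$ indexed by the convergence successors $v$ of $x$ in $\Gamma$. Tangs admit no outgoing morphism in $\mathcal{C}$ at all, so every tang is maximal; the remaining maxima are exactly the input layers of $\Gamma$ with no convergent successor. Dually, the minimal elements of $\mathbf{X}$ are the outputs of $\Gamma$ together with the internal vertices whose every successor in $\Gamma$ is a convergence point.

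Next, fixing a maximum $M$ and $x \in \downarrow M$ with $x \ne M$, I would examine which of the direct upward neighbors of $x$ lie in $\downarrow M$. Since the tang neighbors $A_x, A_v$ are all maxima, at most one of them (namely $M$ itself, when $M$ is a tang) can belong to $\downarrow M$; the only other candidate is the chain predecessor $u$. In the delicate case when both $u$ and $M = A_v$ lie in $\downarrow M$, I would argue that $u \le M$: any path from $u$ to $A_v$ in $\mathcal{C}$ must factor through the socket $A_v^\star$ or through the handle $v$, and the uniqueness of morphisms in $\mathcal{C}_{\mathbf{X}}$ from the preceding proposition rules out any ambiguity, forcing $u < M$. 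Then $M$ ceases to be a cover of $x$ (with $u$ strictly between them), and the unique cover is $u$; in the opposite sub-case the unique cover is $M$ itself. Iterating the cover map from any starting $x$ produces a unique ascending chain terminating at $M$, which is precisely the tree structure rooted at $M$.

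The joining at the minimal points then follows formally: a leaf $y$ of $\downarrow M$ must be minimal in $\mathbf{X}$, since $z < y$ in $\mathbf{X}$ would give $z \le M$ hence $z \in \downarrow M$, contradicting the leafness of $y$ inside $\downarrow M$; the leaves of distinct trees are thus shared minimal elements. The main obstacle is the comparability argument of the previous paragraph---that $u \le A_v = M$ whenever both live in $\downarrow M$---which reuses the acyclicity of $\Gamma$ and the uniqueness of morphisms in $\mathcal{C}_{\mathbf{X}}$ in essentially the same way as in the proof of the poset proposition, and is the single non-routine step of the entire argument.
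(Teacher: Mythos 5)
The paper never actually proves this theorem: all it supplies is the paragraph after the statement classifying the minimal and maximal elements, plus figure \ref{fig:poset}. Your reconstruction is therefore the natural one, and its backbone is sound: the classification of the irreducible arrows out of a vertex $x$ (at most one to the chain predecessor or to $x$'s own tang, plus one tang $A_v$ per convergence successor $v$), the identification of the maxima (tangs and inputs without convergent successors) and minima (outputs and vertices all of whose successors are convergence points), and the deduction that $\downarrow M$ is a tree because at most one upward neighbour of any $x$ can lie in $\downarrow M$. One step is resolved backwards, though. In your ``delicate case'', if $x$ carries the irreducible arrow $x\to A_v=M$ and one also had $u\le M$ for the chain predecessor $u$, there would be two distinct morphisms from $x$ to $M$ in $\mathcal{C}_{\mathbf{X}}$ (the generator and the composite through $u$), contradicting the posetness established in the preceding proposition. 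So the uniqueness of morphisms forces $u\not\le M$, not $u<M$: the configuration you describe is vacuous, $u\notin\downarrow M$, and the unique upward neighbour of $x$ inside $\downarrow M$ is $M$ itself. Your conclusion (a unique cover in $\downarrow M$, hence a tree) survives, but the justification should be corrected.

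The genuine gap is in the last clause. You prove only that the \emph{leaves} of each tree are minimal in $\mathbf{X}$; you do not prove, and cannot, that two distinct trees $\downarrow M$ and $\downarrow M'$ meet \emph{only} in minimal points. Take $\Gamma$ with edges $u\to y$, $y\to w$, $y\to v$, $z\to v$, where $w$ has $y$ as its only predecessor and $v$ is a convergence point. Then $\mathcal{C}_{\mathbf{X}}$ contains $w\to y$, $y\to u$ and $y\to A_v$, so $y$ is a non-minimal branch point lying in both $\downarrow u$ and $\downarrow A_v$. The paper's remark that ``the only possible divergences happen at tips'' together with its listing of tips among the minimal elements silently assumes that a tip feeds only convergence points, which the fork construction does not guarantee (forks are inserted only at convergent multiplicities, so the arrow $w\to y$ survives). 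You should either state explicitly that you establish the theorem in the weaker form (each $\downarrow M$ is a tree whose leaves are minimal, and the trees cover $\mathbf{X}$), or add the hypothesis that every vertex with several successors feeds only convergence points, under which every branch point of the poset really is minimal and the strong form holds.
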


More precisely, the \emph{minimal} elements are of two types: the outputs layers $x_{n,j}$ and the tips of the forks, i.e. the points of type $a'$; the \emph{maximal} elements are also of two types: the input layers $x_{0,i}$ and the tangs of the forks (i.e. the points $A$). Moreover, the tips and the tanks are joined by an irreducible arrow,
but a tip can join several tanks and some ordinary point (of type $a$ but not being an input $x_{0,i}$), and a tank can be joined by several tips and
other ordinary points (but not being an output $x_{n,j}$)  as it is illustrated in figure \ref{fig:poset}.

\begin{figure}[ht]
	\begin{center}
		\includegraphics[width=13cm]{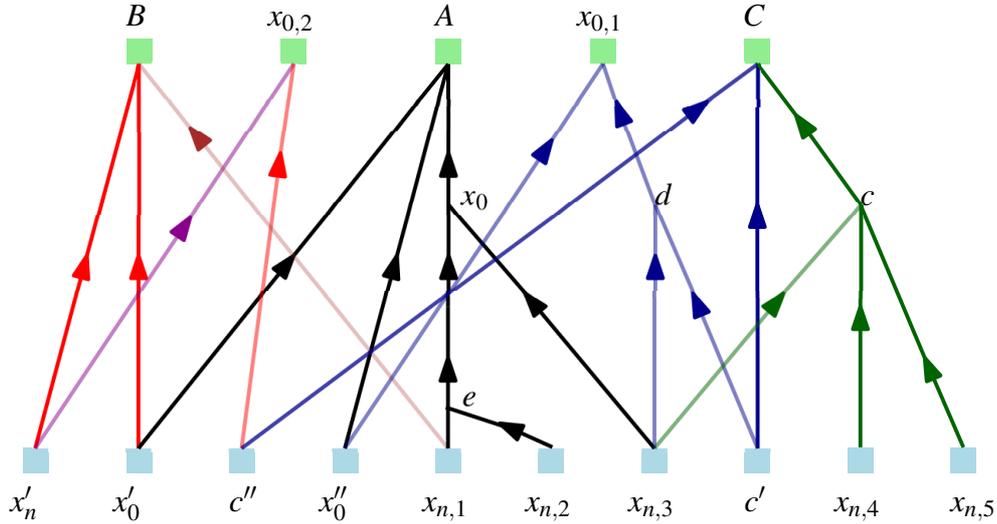}
		\caption{\label{fig:poset}Poset of a DNN}
	\end{center}
\end{figure}

\begin{rmk*}
	\normalfont The only possible divergences happen at tips, because they can joint several tanks and additional ordinary points in $\mathbf{X}$.
\end{rmk*}

\begin{rmk*}
	\normalfont Appendix \ref{app:top-of-dnn} gives an interpretation of the type of toposes we may obtain for $DNNs$ in terms of spectrum of commutative rings.
\end{rmk*}

Any object in the category $\mathcal{C}_\mathbf{X}^{\wedge}$ can be interpreted as a dynamical network, because it describes a
flow of maps between sets $\left\{F_x;x\in \mathbf{X}\right\}$, along the arrows between the layers, and each of these sets can be interpreted as a space of states,
not necessarily made by vectors. However what matters for the functioning of the network is the correspondence between input data,
that are elements in the product $F_{{\rm in}}$ of spaces over input layers, and output states, that are elements in the product $F_{\rm out}$
of the spaces over output layers. This correspondence is described by the limit of $F$ over $\mathcal{C}_\mathbf{X}$, i.e. $H^{0}(F)=H^{0}(\mathcal{C}_\mathbf{X};F)$, \cite{maclane:71}.
This contains the graphs of ordinary applications from $F_{\rm in}$ to $F_{\rm out}$, when taking the products at the forks,
but in general, except for chains, that are models of simple reflexes, this limit is much wider, and a source of innovation (see the above remarks on spontaneius activity and  section \ref{section:spontaneous} below).

\chapter{Stacks of DNNs}\label{chap:stacks}

\section{Groupoids, general categorical invariance and logic}

\indent In many interesting cases, a restriction on the structure of the functioning $X^{w}$, or the learning in $\mathbb{W}$,
comes from a geometrical or semantic invariance, which is extracted (or expected) from the input data and/or the problems that the network has to solve as output.

\indent The most celebrate example is given by the convolutional networks $CNNs$. These networks are made for analyzing images; it can be for finding
something precise in an image in a given class of images, or it can be for classifying special forms.
The images are assumed to be by nature invariant by planar translation, then it is imposed to a large number
of layers to accept a non trivial action of the group $G$ of $2D$-translations and to a large number of connections
between two layers to be compatible with the actions, which implies that the underlying linear part when it exists is made
by convolutions with a numerical function on the plane. This does not forbid that in several layers, the action of $G$ is trivial,
to get invariant characteristics under translations, and here, the layers can be fully connected. The \emph{Resnets} today have such a structure,
with non-trivial architectures, as described in the preceding chapter.\\
\indent Other Lie groups and their associated convolutions were recently used for DNNs, (see Cohen et al. \cite{cohen2019gauge}, \cite{cohen2020general},
\cite{bronstein2021geometric}). Diverse case of equivariant deep learning are presented in \cite{shiebler2021category}. For example, studies of graph networks
involve invariance and equivariance under groupoids of isomorphisms between graphs \cite{maron2019invariant}.\\
Cohen et al. \cite{cohen2019gauge} underline the analogy with Gauge theory in Physics. In the same spirit, Bondesan and Welling \cite{bondesan2021hintons} give an interpretation
of the excited states in DNNs in terms of particles in Quantum Field Theory.\\

DNNs that analyze images today, for instance in object detection, have several channels of convolutional maps, max pooling
and fully connected maps, that are joint together to take a decision. It looks as a structure for localizing the translation
invariance, as it happens in the successive visual areas in the brains of animals. Experiments show that in the first layers,
kinds of wavelet kernels are formed spontaneously to translate contrasts, and color opposition kernels are formed to construct
color invariance.\\

\indent A toposic manner to encode such a situation consists to consider contravariant functors from the category $\mathcal{C}$ of the network
with values in the topos $G^{\wedge}$ of $G$-sets, in place of taking values in the category ${\sf Set}$ of sets. Here the group $G$ is identified with the category
with one object and whose arrows are given by the elements of $G$, then a $G$-set, that is a set with a left action of $G$, is viewed as a set valued sheaf over $G$.
The collection of these functors, with morphisms given by the equivariant natural transformations,
form a category $\mathcal{C}^{\sim}_G$, which was shown to be itself a topos by Giraud \cite{giraud-classifying}. We will prove this fact
in the following section \ref{sec:objects-classifiers}: there exists a category $\mathcal{F}$, which is fibred in groups isomorphic to $G$ over $\mathcal{C}$,
$\pi:\mathcal{F}\rightarrow \mathcal{C}$, and satisfies the axioms of a stack, equipped with a canonical topology $J$ (the least fine such that $\pi$ is cocontinuous \cite[7.20]{stack-project}, i.e.
a comorphism
of site \cite{caramello2021relative}),
in such a manner that the topos $\mathcal{E}=\mathcal{F}^{\sim}$ of sheaves of sets over the site $(\mathcal{F}, J)$, is naturally equivalent to the category $\mathcal{C}^{\sim}_G$.
This topos is named the \emph{classifying topos} of the stack. \\
\indent The construction of Giraud is more general; it extends to any stack over $\mathcal{C}$, not necessarily in groups or in groupoids. In this chapter,
we will consider this more general situation, given by a functor $F$ from $\mathcal{C}^{\sf op}$ to the  category ${\sf Cat}$ of small categories, then corresponding to a fibred category
$\mathcal{F}\rightarrow\mathcal{C}$. But we will not consider the issue of non-trivial topologies, because, as we have shown in chapter \ref{chap:architecture}, the topos of $DNNs$ are topos of presheaves.
Then we determine the inner logic of the classifying topos, from fibers to fibers, to describe later the possible (optimal) flows of information in functioning
networks.\\
\indent The case of groupoids has the interest that the presheaves on a groupoid
form a Boolean topos, then ordinary logic is automatically incorporated.\\
\begin{rmks*}\normalfont 
	\begin{enumerate}[label=\arabic*)]
		\item The logic in the topos of a groupoid consists of simple Boolean algebras; however, things appear more interesting when we
		remember the meaning of the atoms $Z_i;i\in K$, because they are made of irreducible $G_a$-sets. We interpret that as a part
		of the semantic point of view, in the languages of topos and stacks.
		\item In the experiments reported in \cite{logic-DNN} as in $CNNs$, the irreducible linear representations of groups appear spontaneously among
		the dynamical objects.
		\item In every language we can talk of the future, the uncertain past, and introduce hypotheses, this does not mean that
		we are leaving the world of usual Boolean logic, we are just considering externally some intuitionist Heyting algebra, this can be done within ordinary set theory, as is done topos theory in Mathematics, in the fibers, defined by groupoids.
 	\end{enumerate}
\end{rmks*}

\noindent Appendix \ref{app:classifying} gives a description of the classifying object of a groupoid, that is well known by specialists of category theory.\\

However, other logics, intuitionist, can also have an interest.
In more recent experiments done with Xavier Giraud on data representing time evolution, we used simple posets in the fibers.

The notion of invariance goes further than groupoids.\\

\indent Invariance is synonymous of action (like group action), and is understood here in the categorical sense: a category $\mathcal{G}$ acts
on another category $\mathcal{V}$ when a (contravariant) functor from $\mathcal{G}$ to $\mathcal{V}$ is given.
The example that justifies this terminology is when $\mathcal{G}$ is  a group $G$, and $\mathcal{V}$ the Abelian category
of vector spaces and linear maps over a commutative field $\mathbb{K}$. In the latter case, we obtain a linear representation of the group $G$.\\

In any category $\mathcal{V}$, there exists a notion which generalizes the notion of {\em element} of a set.
Any morphism $\varphi:u\rightarrow v$ in $\mathcal{V}$ can be viewed as an {\em element} of the object $v$ of $\mathcal{V}$.
\begin{defn*}
	Suppose that $\mathcal{G}$ acts through the functor $f:\mathcal{G}\rightarrow \mathcal{V}$
	and that $v=f(a)$, then the \emph{orbit} of $\varphi$  under $\mathcal{G}|a$ is the functor from the left slice category $\mathcal{G}|a$
	to the right slice category $u|\mathcal{V}$, that associates to any morphism $a'\rightarrow a$ the
	element $u\rightarrow f(a)\rightarrow f(a')$ of $f(a')$ in $\mathcal{V}$ and to an arrow $a"\rightarrow a'$ over $a$ the
	corresponding morphism $f(a')\rightarrow f(a")$, from $u\rightarrow f(a')$ to $u\rightarrow f(a")$.
\end{defn*}

\noindent In the classical example of a group representation, $u=\mathbb{K}$ and the morphism $\varphi$ defines a vector $x$ in the space $V_e$.
The group $G$ is identified with $G|e$ and the vector space $V_e$, identified with ${\sf Hom}(K,V_e)$, contains the whole orbit of $x$.\\

\noindent In a stack, the notion of action of categories is extended to the notion of fibred action of a fibred category
$\mathcal{F}$ to a fibred category $\mathcal{N}$:\\
\begin{defn*}
	Suppose we are given a sheaf of categories $F:\mathcal{C}\rightarrow {\sf Cat}$, that we consider as a general structure of invariance,
	and another sheaf $M:\mathcal{C}\rightarrow {\sf Cat}$. An action of $F$ on $M$ is a family of contravariant functors
	$f_U:\mathcal{F}_U\rightarrow \mathcal{M}_U$ such that, for any morphism $\alpha:U\rightarrow U'$ of $\mathcal{C}$, we have
	\begin{equation}
		f_U\circ F_\alpha=M_\alpha\circ f_{U'}.
	\end{equation}
	This is the {\em equivariance formula} generalizing group equivariance as it can be found in \cite{Kondor2018NbodyNA}
	for instance. It is equivalent to morphisms of stacks, and allows to define the orbits of sections $u_U\rightarrow f_U(\xi_U)$
	in the sheaf $u|\mathcal{M}$ under the action of the relative stack $\mathcal{F}|\xi$.
\end{defn*}

Remark that Eilenberg and MacLane, when they invented categories and functors in \cite{eilenberg1945general},
were conscious to generalize the Klein's program in Geometry (Erlangen program).\\

\noindent In the next sections, we will introduce languages with types taken from presheaves over the fibers of the stack, where we define
the terms of theories
and  propositions of interest for the functioning of the DNN. Then the above notion of invariance will concern the action of a kind of pre-semantic
categories on the languages and the possible sets of theories, that the network could use and express in functioning.\\

\indent This view is a crucial point for our applications of topos theory to DNNs, because it is in this framework that logical reasoning,
and more generally semantics, in the neural network, can be set: in a stack, the different layers interpret the logical propositions
and the sentences of the output layers. As we will see, the interpretations are expected to become more and more faithful when approaching the output,
however the information flow in the whole networks is interesting by itself.\\

This shift from groups to groupoids, then to  categories, then to more general semantic, by taking presheaves in groupoids or categories, is a fundamental addition
to the site $\mathcal{C}$. The true topos associated to a network is the classifying topos $\mathcal{E}$ over $\mathcal{F}$;
it incorporates much more structure than the visible architecture of layers, it takes into account invariance
(which appears here to be part of the semantic, or better pre-semantic). More generally, it can concern
the domain of natural human semantics that the network has to understand in his own artificial world.

Moreover, as we will show below, working in this setting gives access to more flexible type theories, like the Martin-Löf intensional types,
and goes into the direction of homotopy type theory according to Hofmann and Streicher \cite{HS}, Hollander \cite{Hollander2001AHT}, Arndt and Kapulkin \cite{AK},
enlarged by objects and morphisms in classifying topos in the sense of Giraud.\\

\section{Objects classifiers of the fibers of a classifying topos}\label{sec:objects-classifiers}

In this section we study the propagation of logical theories through a stack (equipped with a scindage in the sense of Giraud). In particular we find a
sufficient condition for free propagation downstream  and upstream, that was apparently not described before; it asks that gluing functors are fibrations,
plus a supplementary geometrical condition, always satisfied in the case of groupoids, (see theorem \ref{thm:semanticflow}).\\
\indent The application to the dynamics of functioning $DNNs$ is presented in the next section \ref{sec:theories}, with the notion of
semantic functioning. It is developed in the next chapter
\ref{chap:dynamics}. Examples are presented in the following chapters \ref{chap:unfolding}, with long and short term memory cells and variants of them, and their tentative
relation with cognitive linguistic, then in \ref{chap:3-category}, with more general networks moduli.\\

In the general case and in more canonical toposic terms, the logic in the stack $\mathcal{F}$ over $\mathcal{C}$ is studied by Olivia Caramello and Riccardo Zanfa \cite{caramello2021relative}; see also the available notes written for "Topos Online",
24-30 june 2021.\\
\indent Also the contributions of Shulman, \cite{shulman2010stack}, \cite{Shulman_2019}, and the slides of his talk "Large categories and quantifiers in topos theory", January 26 2021, Cambridge Category Seminar are of interest.\\

Among the equivalent points of view on stacks and classifying topos \cite{giraud-descente}, \cite{giraud-cohomologie}, and \cite{giraud-classifying}), the most concrete one starts with a contravariant functor $F$ from the category $\mathcal{C}$ to the $2$-category of small categories ${\sf Cat}$. 
(This corresponds to an element of the category ${\sf Scind}(\mathcal{C})$ in the book of Giraud \cite{giraud-cohomologie}.) To each object
$U\in \mathcal{C}$ is associated a small category $\mathcal{F}(U)$, and to each morphism $\alpha:U\rightarrow U'$ is associated a covariant functor $F_\alpha:F(U')\rightarrow F(U)$, also denoted $F(\alpha)$, satisfying the axioms of a presheaf over $\mathcal{C}$. If $f_U:\xi\rightarrow \eta$ is a morphism in $F(U)$, the functor $F_\alpha$ sends it to a morphism $F_\alpha(f_U):F_\alpha(\xi)\rightarrow F_\alpha(\eta)$ in $F(U')$.\\
\indent The corresponding fibration $\pi:\mathcal{F}\rightarrow\mathcal{C}$, written $\nabla F$ by Grothendieck, has for objects the pairs $(U,\xi)$ where $U\in \mathcal{C}$ and $\xi\in F(U)$, sometimes shortly written $\xi_U$, and for morphisms the elements of
\begin{equation}
{\sf Hom}_\mathcal{F}((U,\xi),(U',\xi'))=\bigcup_{\alpha\in {\sf Hom}_\mathcal{C}(U,U')}{\sf Hom}_{F(U)}(\xi,F(\alpha)\xi').
\end{equation}
For every morphism $\alpha:U\rightarrow U'$ of $\mathcal{C}$, the set ${\sf Hom}_{F(U)}(\xi,F(\alpha)\xi')$ is also denoted\\ ${\sf Hom}_\alpha((U,\xi),(U',\xi'))$; it is the subset of morphisms in $\mathcal{F}$ that lift $\alpha$.\\

The functor $\pi$ sends $(U,\xi)$ on $U$. We will write indifferently $F(U)$ or $\mathcal{F}_U$ the fiber $\pi^{-1}(U)$.\\

A section $s$ of $\pi$ corresponds to a family $s_U\in \mathcal{F}_U$ indexed by $U\in \mathcal{C}$, and a family of morphisms
$s_\alpha\in {\sf Hom}_{F(U)}(s_U,F(\alpha)s_{U'})$ indexed by $\alpha\in {\sf Hom}_\mathcal{C}(U,U')$ such that, for any pair of compatible morphisms $\alpha,\beta$,
we have
\begin{equation}\label{compatibilitysection}
s_{\alpha\circ\beta}=F_\beta(s_\alpha) \circ s_\beta.
\end{equation}

As shown by Grothendieck and Giraud \cite{giraud-descente}, a presheaf $A$ over $\mathcal{F}$ corresponds to a family of presheaves $A_U$ on the categories $\mathcal{F}_U$ indexed by $U\in \mathcal{C}$, and a family
$A_\alpha$ indexed by $\alpha\in {\sf Hom}_\mathcal{C}(U,U')$, of natural transformations from $A_{U'}$ to $F_\alpha^{\star }A_U$. (Here $F_\alpha^{\star }$ denotes the pullback of presheaf associated to the functor $F_\alpha:F(U')\rightarrow F(U)$, that is, for $A_U:F(U)\rightarrow {\sf Set}$, the
composed functor $A_U\circ F_\alpha$.)\\
Moreover, for any compatible  morphisms $\beta:V\rightarrow U$, $\alpha:U\rightarrow U'$, we must have
\begin{equation}\label{compatibiltysheaf}
A_{\alpha\circ \beta}=F_\alpha^{\star }(A_\beta)\circ A_\alpha.
\end{equation}
\noindent If $\xi$ is an object of $\mathcal{F}_U$, we define $A(U,\xi)=A_U(\xi)$, and if $f:\xi_U\rightarrow F_\alpha\xi'_{U'}$ is a morphism of $\mathcal{F}$
between $\xi_U\in \mathcal{F}_U$ and $\xi'_{U'}\in \mathcal{F}_{U'}$ lifting $\alpha$, we take
\begin{equation}
A(f)=A_U(f)\circ A_\alpha:A_{U'}(\xi')\rightarrow A_U(F_\alpha(\xi'))\rightarrow A_U(\xi).
\end{equation}
The relation $A(f\circ g)=A(g)\circ A(f)$ follows from \eqref{compatibiltysheaf}.\\
\indent A natural transformation $\varphi:A\rightarrow A'$ corresponds to a family of natural transformations \[\varphi_U:A_U\rightarrow A'_U,\] such that,
for any arrow $\alpha:U\rightarrow U'$ in $\mathcal{C}$,
\begin{equation}
F_\alpha^{\star }\varphi_U\circ A_\alpha=A'_\alpha\circ \varphi_{U'}:A_{U'}\rightarrow F_\alpha^{\star }A'_U.
\end{equation}
This describes the category $\mathcal{E}$ of presheaves over $\mathcal{F}$ from the family of categories $\mathcal{E}_U$
of presheaves over the fibers $\mathcal{F}_U$ and the family of functors $F_\alpha^{\star }:\mathcal{E}_U\rightarrow \mathcal{E}_{U'}$.\\
Note that for two consecutive morphisms $\beta:V\rightarrow U$, $\alpha:U\rightarrow U'$, we have $F_{\alpha\beta}^{\star }=F_{\alpha}^{\star }\circ F_{\beta}^{\star }$.\\

The category $\mathcal{E}$ is fibred over the category $\mathcal{C}$, it corresponds to the functor $E$ from $\mathcal{C}$ to $Cat$, which
associates to $U\in \mathcal{C}$  the category $\mathcal{E}_U$ and to an arrow $\alpha:U\rightarrow U'$, the functor
$F^{\alpha}_!:\mathcal{E}_{U'}\rightarrow \mathcal{E}_U$, which is the \emph{left adjoint} of $F_\alpha^{\star }$. This functor extends $F_\alpha$
through the Yoneda embedding, \cite[Chap. I, Presheaves]{SGA4}.\\
For two consecutive morphisms $\beta:V\rightarrow U$, $\alpha:U\rightarrow U'$, we have $F^{\alpha\beta}_!=F^{\beta}_!\circ F^{\alpha}_!$.\\
\indent Let $\eta_\alpha:F^{\alpha}_!\circ F_\alpha^{\star }\rightarrow Id_{\mathcal{E}_U}$ the counit of the adjunction; a natural transformation
$A_\alpha:A_{U'}\rightarrow F_\alpha^{\star } A_U$ gives a natural transformation $A^{\star }_\alpha:F^{\alpha}_!A_{U'}\rightarrow A_U$, by taking
$A^{\star }_\alpha=(\eta_\alpha\otimes Id)F^{\alpha}_!(A_\alpha)$. This gives another way to describe the elements of $\mathcal{E}$, through
the presheaves over $\mathcal{F}$.
\begin{rmk*}
	\normalfont A section $(s_U,s_\alpha)$ defines a presheaf $A$, by taking
	\begin{equation}
		A_U(\xi)={\sf Hom}_{\mathcal{F}_U}(\xi,s_U);
	\end{equation}
	and $A_\alpha=s_\alpha^{\star }\circ F_\alpha$, according to the following sequence:
	\begin{equation}
		{\sf Hom}(\xi',s_{U'})\rightarrow {\sf Hom}(F_\alpha \xi',F_\alpha(s_{U'}))\rightarrow {\sf Hom}(F_\alpha \xi',s_U).
	\end{equation}
	The identity \eqref{compatibiltysheaf} follows from the identity \eqref{compatibilitysection}.\\
	This construction generalizes in the fibered situation the Yoneda objects in the absolute situation.\\
	A morphism of sections gives a morphism of presheaves.
\end{rmk*}

In each topos $\mathcal{E}_U$ there exists a classifying object $\boldsymbol{\Omega}_U$, such that the natural transformations ${\sf Hom}_U(X_U,\boldsymbol{\Omega}_U)$
naturally correspond to the subobjects of $X_U$; the presheaf $\boldsymbol{\Omega}_U$ has for value in $\xi_U\in \mathcal{F}_U$ the set of subobjects in $\mathcal{E}_U$
of the Yoneda presheaf $\xi_U^{\wedge}$ defined by $\eta \mapsto {\sf Hom}(\eta, \xi_U)$, with morphisms given by composition to the right.\\
The set $\boldsymbol{\Omega}_U(\xi_U)$ can also be identified with the set
of subobjects of the final sheaf $\mathbf{1}_{\xi_U}$ over the slice category $\mathcal{F}_U|\xi_U$.\\

\begin{rmk*}
\normalfont In general, the object of parts $\boldsymbol{\Omega}^{X}$ of an object $X$ in a presheaf topos $\mathcal{D}^{\wedge}$ over a category $\mathcal{D}$, is the presheaf
given in $x\in \mathcal{D}_0$ by the set of subsets of the product set $(\mathcal{D}|x)\times X(x)$ and by the maps induced by $X(f)$ for $f\in \mathcal{D}_1$.
Observe that $\boldsymbol{\Omega}^{X}$ realizes an equilibrium
between the category of basis through $\mathcal{D}|x$ and the set theoretic nature of the value $X(x)$.\\
A special case is when $X=\mathbf{1}$, the final object, made by
a singleton $\star $ at each $x\in \mathcal{D}_0$, and the unique possible maps for $f\in \mathcal{C}_1$. The presheaf
$\boldsymbol{\Omega}^{\mathbf{1}}$ is denoted by $\boldsymbol{\Omega}$. Its value in $x\in \mathcal{D}_0$, is the set $\boldsymbol{\Omega}_x$ of subsets
of the Yoneda object $x^{\wedge}$.\\
It can be proved that a subobject $Y$ of an object $X$ of $\mathcal{D}^{\wedge}$ corresponds to a unique morphism $\chi_Y:X\rightarrow\boldsymbol{\Omega}$
such that at any $x\in \mathcal{D}_0$, we have $Y(x)=\chi_Y^{-1}(\top)$.\\
The exponential presheaf $\boldsymbol{\Omega}^{X}$ is characterized by the natural family
of bijections
\begin{equation}
{\sf Hom}_{\mathcal{D}^{\wedge}}(Y\times X, \boldsymbol{\Omega})\approx {\sf Hom}_{\mathcal{D}^{\wedge}}(Y, \boldsymbol{\Omega}^{X}),
\end{equation}
which expresses the universal property of the classifier $\boldsymbol{\Omega}$.\\
We will also frequently consider the set of subobjects of $\mathbf{1}$ over the whole category $\mathcal{D}$, and we simply
denote it by the letter $\Omega$. It is named the Heyting algebra of the topos $\mathcal{C}^{\wedge}$. See appendix A for more details.
\end{rmk*}

As just said before, the functor $F_\alpha^{\star }:\mathcal{E}_U\rightarrow \mathcal{E}_{U'}$ which associates $A\circ F_\alpha$ to $A$,  possesses
a left adjoint $F^{\alpha}_!:\mathcal{E}_{U'}\rightarrow \mathcal{E}_U$
which extends the functor $F_\alpha$ on the Yoneda objects. For any object $\xi'$
in $\mathcal{F}_{U'}$, note $\xi=F_\alpha(\xi')$; the functor $F^{\alpha}_!$ sends $(\xi')^{\wedge}$ to $\xi^{\wedge}$, and sends
a subset of $(\xi')^{\wedge}$ to a subset of $\xi^{\wedge}$. This is not because $F^{\alpha}_!$ is necessarily left exact, but because we are working
with Grothendieck topos, where subobjects are given by families of coherent subsets.\\
Moreover $F^{\alpha}_!$ respects the
ordering between these subsets, then it induces a poset morphism between the posets of subobjects
\begin{equation}
\Omega_\alpha(\xi'):\Omega_{U'}(\xi')\rightarrow \Omega_U(F_\alpha(\xi'))=F_\alpha^{\star }\Omega_U(\xi');
\end{equation}
the functoriality of $\Omega_U$, $\Omega_{U'}$ and $F_\alpha$ implies that these maps constitute a natural transformation between presheaves
\begin{equation}
\Omega_\alpha:\Omega_{U'}\rightarrow F_\alpha^{\star }\Omega_U.
\end{equation}
The naturalness of the construction insures the formula \eqref{compatibiltysheaf} for the composition of morphisms.
Consequently, we obtain a presheaf $\boldsymbol{\Omega}_\mathcal{F}$. \\
\indent Moreover the final object $\mathbf{1}_\mathcal{F}$ of the classifying topos $\mathcal{E}=\mathcal{F}^{\wedge}$ corresponds to the collection of
final objects $\mathbf{1}_U;U\in \mathcal{C}$ and to the collection of morphisms $\mathbf{1}_{U'}\rightarrow F_\alpha^{\star }\mathbf{1}_U;\alpha\in {\sf Hom}_{\mathcal{C}}(U,U')$,
then we have:
\begin{prop}
	The classifier of the classifying topos is the sheaf $\boldsymbol{\Omega}_\mathcal{F}$ given by the
	classifiers $\Omega_U$ and the pullback morphisms $\Omega_\alpha$, which can be summarized by the formula
	\begin{equation}
		\boldsymbol{\Omega}_\mathcal{F}=\nabla_{U\in \mathcal{C}}\Omega_Ud\Omega_\alpha.
	\end{equation}
\end{prop}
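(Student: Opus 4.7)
The plan is to verify directly that the proposed presheaf $\boldsymbol{\Omega}_\mathcal{F}$, assembled from the fiberwise classifiers, represents the subobject functor on $\mathcal{E}=\mathcal{F}^\wedge$. First I would check that the pair $(\Omega_U,\Omega_\alpha)$ genuinely defines an object of $\mathcal{E}$, i.e.\ satisfies the cocycle identity \eqref{compatibiltysheaf} for consecutive arrows $\beta:V\to U$, $\alpha:U\to U'$. Since each $\Omega_U$ is canonical and $\Omega_\alpha$ is the natural transformation induced on subobject posets by the left adjoint $F^{\alpha}_!$, the compatibility reduces to the already noted identity $F^{\alpha\beta}_!=F^{\beta}_!\circ F^{\alpha}_!$ together with functoriality of the operation of passing to subobjects.

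Second, given a subobject $B\hookrightarrow A$ in $\mathcal{E}$, I unpack it as a compatible family of subobjects $B_U\hookrightarrow A_U$ in each fiber topos $\mathcal{E}_U$, compatibility meaning that the gluing natural transformations $A_\alpha:A_{U'}\to F_\alpha^{\star}A_U$ restrict to $B_\alpha:B_{U'}\to F_\alpha^{\star}B_U$. In each fiber, the universal property of $\Omega_U$ yields a unique characteristic arrow $\chi_{B_U}:A_U\to \Omega_U$. The essential step is to show that these assemble into a single morphism $\chi_B:A\to \boldsymbol{\Omega}_\mathcal{F}$ in $\mathcal{E}$, which amounts to the square
\begin{equation}
F_\alpha^{\star}\chi_{B_U}\circ A_\alpha=\Omega_\alpha\circ \chi_{B_{U'}}
\end{equation}
for every $\alpha:U\to U'$ in $\mathcal{C}$. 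Both sides classify the same subobject of $F_\alpha^{\star}A_U$ in $\mathcal{E}_{U'}$ (on the left, the preimage of $B_U$ under $A_\alpha$; on the right, the image under $F_\alpha^{\star}$ of what $\chi_{B_{U'}}$ classifies), so the equality follows from the uniqueness clause of the universal property of $\Omega_{U'}$, once one checks that these two subobjects do coincide --- this last check uses precisely the compatibility of $A_\alpha$ with $B_\alpha$.

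For the inverse direction, I would use the truth arrow $\top_\mathcal{F}:\mathbf{1}_\mathcal{F}\to \boldsymbol{\Omega}_\mathcal{F}$ assembled from the fiberwise truth arrows $\top_U$; given any $\chi:A\to \boldsymbol{\Omega}_\mathcal{F}$ in $\mathcal{E}$, the pullback of $\top_\mathcal{F}$ along $\chi$ gives a subobject of $A$. Pullbacks in the classifying topos are computed fiberwise (they are ordinary limits in $\mathcal{E}_U$ glued by the $F_\alpha^{\star}$, which being right adjoints preserve them), so the resulting subobject has for $U$-component precisely the subobject of $A_U$ classified by $\chi_U$. Mutual inverseness then follows by fiberwise application of the universal property of each $\Omega_U$, and naturality in $A$ is immediate from the corresponding naturality in every fiber.

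The main obstacle I anticipate is the square displayed above: one must be careful that although $F^{\alpha}_!$ is not generally left exact, it does send subobjects to subobjects and commutes with the formation of characteristic morphisms in the required sense. This is where the remark in the excerpt, that we work with Grothendieck topoi so subobjects are described by coherent families of subsets, is essential, and where the construction of $\Omega_\alpha$ as the poset morphism induced by $F^{\alpha}_!$ on subobjects of Yoneda presheaves (extended to arbitrary subobjects via colimits) has to be carefully unwound. Once that verification is in place, the rest of the argument is a formal fiberwise application of the classical subobject classifier universal property.
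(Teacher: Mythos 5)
Your strategy of verifying the subobject-classifier universal property fiberwise is more than the paper itself attempts (the text only constructs the presheaf $\boldsymbol{\Omega}_\mathcal{F}$ from the $\Omega_U$ and $\Omega_\alpha$ and then asserts the statement), but it founders exactly at the square you single out as the essential step. The identity $F_\alpha^{\star}\chi_{B_U}\circ A_\alpha=\Omega_\alpha\circ \chi_{B_{U'}}$ is false in general. Unwinding it at an element $a\in A_{U'}(\xi')$: the left-hand side produces the sieve of arrows $g$ in $\mathcal{F}_U$ with $A_U(g)\bigl(A_\alpha(a)\bigr)\in B_U$, whereas the right-hand side produces the image under $F^{\alpha}_!$ of the sieve of arrows $h$ in $\mathcal{F}_{U'}$ with $A_{U'}(h)(a)\in B_{U'}$. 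A subobject of $A$ in $\mathcal{E}$ only requires the inclusion $A_\alpha\bigl(B_{U'}(\xi')\bigr)\subseteq B_U(F_\alpha\xi')$, not that $B_{U'}$ be the full preimage of $B_U$; so an element $a\notin B_{U'}(\xi')$ may satisfy $A_\alpha(a)\in B_U(F_\alpha\xi')$, making the left side $\top$ and the right side $\bot$. The minimal counterexample already occurs with trivial fibers: take $\mathcal{C}$ to be the arrow $\alpha:U\rightarrow U'$ with each fiber a point, so that $\mathcal{E}$ is the Sierpinski topos (the topos of the Shadoks appearing elsewhere in the text), and let $B\subset A=(U')^{\wedge}$ be the sieve $\{\alpha\}$, i.e.\ $B_{U'}=\emptyset$ and $B_U=A_U$. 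Then $\chi_{B_{U'}}$ is constantly $\bot$, $\chi_{B_U}$ is constantly $\top$, and since $\Omega_\alpha$ preserves $\bot$ (being induced by the left adjoint $F^{\alpha}_!$, which sends the empty subobject to the empty subobject), the square cannot commute; in fact this $B$ admits no classifying map to the fiberwise object at all.

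The obstruction is not a repairable bookkeeping issue: the genuine subobject classifier of the presheaf topos $\mathcal{F}^{\wedge}$ has value at $(U,\xi)$ the set of sieves on $(U,\xi)$ in the whole category $\mathcal{F}$, which records data over every arrow $\beta:V\rightarrow U$ of $\mathcal{C}$ and is strictly larger than $\Omega_U(\xi)$ (the sieves in the fiber alone) whenever $\mathcal{C}$ has non-invertible arrows; in the example above it has three elements at $U'$ against two. What your argument does establish is weaker than the stated universal property: the object $\nabla_{U}\Omega_U d\Omega_\alpha$ classifies exactly those subobjects $B\hookrightarrow A$ that are cartesian, i.e.\ for which each $B_{U'}(\xi')$ is the full preimage of $B_U(F_\alpha\xi')$ under $A_\alpha$. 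Your converse direction (pulling back $\top_\mathcal{F}$, computed fiberwise since limits in $\mathcal{E}$ are objectwise) is correct and shows that every classified subobject is of this cartesian type. To keep the proposition as a statement about all of ${\rm Sub}_\mathcal{E}(A)$ you would have to replace $\Omega_U(\xi)$ by the sieves on $(U,\xi)$ in $\mathcal{F}$ and redefine the transition maps accordingly; otherwise the statement should be read, as the rest of the paper effectively uses it, as the construction of a fiberwise (relative) classifier rather than as the subobject classifier of $\mathcal{E}$.
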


In general the functor $F_\alpha^{\star }$ is not \emph{geometric}; by definition, it is so if and only if its left adjoint \[\left(F_\alpha\right)_!=\left(F^{\alpha}\right)_!\], which
is right exact (i.e. commutes with the finite colimits), is also \emph{left exact} (i.e. commutes with the finite limits).
Also by definition, this is the case if and only if the morphism $F_\alpha$ is a \emph{morphism of sites} from $\mathcal{F}_U$ to $\mathcal{F}_{U'}$,
\cite[IV 4.9.1.1.]{SGA4}, not to be confused with a comorphism, \cite[III.2]{SGA4}, \cite{caramello2021relative}. \\
Important for us: it results from the work of Giraud in \cite{giraud-classifying}, that $F_\alpha^{\star }$ is geometric when $F_\alpha$ is itself a stack,
and when finite limits exist in the sites $\mathcal{F}_U$ and $\mathcal{F}_{U'}$ and are preserved by $F_\alpha$.
(We will see in the next section, that these stacks $\mathcal{F}\rightarrow \mathcal{C}$, made by stacks between fibers,
correspond to some admissible contexts in a dependent type theory, when $\mathcal{C}$ is the site of a $DNN$.)\\
When $F_\alpha^{\star }$ is geometric, a great part of the logic in $\mathcal{E}_{U'}$ can be transported to $\mathcal{E}_{U}$:\\
\indent Let us write $f=F_\alpha^{\star }$ and $f^{\star }=(F_\alpha)_!$ its left adjoint, supposed to be left exact, therefore exact, as just mentionned. This functor $f^{\star }$
preserves the monomorphisms, and the final elements of the slices categories. Then it induces a map between the sets of subsets, called the inverse
image or pullback by $f$, for any object $X'\in \mathcal{E}_{U'}$:
\begin{equation}
f^{\star }:{\rm Sub}(X')\rightarrow {\rm Sub}(f^{\star }X').
\end{equation}
When $X'$ describes the Yoneda objects $(\xi')^{\wedge}$, this gives the morphism $\Omega_\alpha:\Omega_{U'}\rightarrow F_\alpha^{\star }\Omega_U$.\\
As it is shown in MacLane-Moerdijk \cite[p. 496]{MacLaneMoerdijk1992}, this map is a morphism of lattices, it preserves the ordering and the operations $\wedge$ and $\vee$.
If $h:Y'\rightarrow X'$ is a morphism in $\mathcal{E}_{U'}$,  the reciprocal image $h^{\star }$ between the sets of subsets has a
left adjoint $\exists_h$ and a right adjoint $\forall_h$. The morphism $f^{\star }$ commutes with $\exists_h$, but in general not with $\forall_h$, for which
there is only an inclusion:
\begin{equation}
f^{\star }(\forall_hP')\leq \forall_{f^{\star }h}(f^{\star }P').
\end{equation}

To have an equality, the morphism $f$ must be geometric and \emph{open}.
This is equivalent to the existence of  a left adjoint, in the sense of posets morphisms, for $\Omega_\alpha$, \cite[Theorem 3, p. 498]{MacLaneMoerdijk1992}.\\
In {MacLaneMoerdijk1992}, this natural transformation $\Omega_\alpha$ is denoted $\lambda_\alpha$, and its left adjoint when it exists is denoted $\mu_\alpha$.

When this left adjoint in the sense of Heyting algebras exists, we have, by adjunction, the counit and unit morphisms:
\begin{align}
\mu\circ \lambda & \leq {\sf Id}: \Omega_{U'} \rightarrow \Omega_{U'};\\
\lambda\circ \mu & \geq {\sf Id}: F^{\star }\Omega_{U} \rightarrow F^{\star }\Omega_{U}.
\end{align}

\noindent If $f$ is geometric and open, the map $f^{\star }$ also commutes with the negation $\neg$ and with the (internal) implication $\Rightarrow$.\\
If openness fails, only inequality (external implication) holds for the universal quantifier.
\begin{rmk*}
	\normalfont When $\mathcal{F}_{U'}$ and $\mathcal{F}_{U}$ are the posets of open sets of (sober) topological spaces $\mathcal{X}'$ and $\mathcal{X}$,
	and when $F_\alpha$ is given by the direct image of a continuous \emph{open} map $\varphi_\alpha:\mathcal{X}_{U'}\rightarrow \mathcal{X}_U$,
the functor $F^{\star }_\alpha$ is geometric and open. This extends to locale, \cite{MacLaneMoerdijk1992}.
\end{rmk*}

\indent When $F_\alpha^{\star }$ is geometric and open, it transports the predicate calculus of formal theories from $\mathcal{E}_{U'}$
to $\mathcal{E}_{U}$, as exposed in the book of Mac Lane and Moerdijk, \cite{MacLaneMoerdijk1992}. This is expressed by the following result,
\begin{prop}
	Suppose that all the $F_\alpha; \alpha:U\rightarrow U'$ are open morphisms of sites (in the direction from $F(U)$ to $F(U')$, then,
	\begin{enumerate}[label=(\roman*)]
		\item the pullback $\Omega_\alpha$ commutes with all the operations of predicate calculus;
		\item any theory at a layer $U'$, i.e. in $\mathcal{E}_{U'}$, can be read and translated in a deeper layer $U$, in $\mathcal{E}_U$, in particular at the output layers.
	\end{enumerate}
	
\end{prop}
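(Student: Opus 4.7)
The plan is to reduce (i) to the standard characterization of open geometric morphisms in terms of commutation of their inverse image with the operations of predicate calculus, and then to deduce (ii) by arguing that a formal theory is by definition determined by its language, axioms, and inference rules, each of which survives under such a commutating pullback. The hypotheses are tuned exactly to this: by Giraud's result invoked just before the proposition, each $F_\alpha^{\star}$ being geometric (left adjoint $F^\alpha_!$ is exact) gives automatically commutation with $\wedge,\vee,\exists,\top,\bot$ and with the Yoneda embedding; adding openness gives the remaining operations $\neg,\Rightarrow,\forall$.

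For (i), I would first recall that the pullback $\Omega_\alpha=\lambda_\alpha$ is the morphism $\Omega_{U'}\to F_\alpha^{\star}\Omega_U$ induced by $f^{\star}=F^\alpha_!$ on subobjects. Openness of $f$ is equivalent (by \cite[Theorem~3, p.~498]{MacLaneMoerdijk1992}) to the existence of a left adjoint $\mu_\alpha$ to $\lambda_\alpha$ in the category of Heyting algebras, and this adjunction forces $\lambda_\alpha$ to preserve all finite meets (already guaranteed), all joins (it has a right adjoint too, coming from $\forall$), the implication and the negation. I would then spell out the three non-trivial commutations: $f^{\star}\forall_h P'=\forall_{f^{\star}h}f^{\star}P'$, $f^{\star}(P'\Rightarrow Q')=f^{\star}P'\Rightarrow f^{\star}Q'$, and $f^{\star}\neg P'=\neg f^{\star}P'$, using the Beck-Chevalley condition that characterizes openness. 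The already known commutation with $\exists$ and the Boolean positive connectives, plus the geometric nature of $f$ giving commutation with products and pullbacks (hence with the interpretation of terms), completes the preservation of the whole predicate calculus.

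For (ii), I would recall that in a topos a theory $\mathbb{T}$ in the internal language is given by a signature (interpreted by objects and morphisms of $\mathcal{E}_{U'}$) together with a set of sequents $\varphi\vdash_{\vec x}\psi$ where $\varphi,\psi$ are formulas built from the predicate-calculus operations. Applying $F_\alpha^{\star}$ sends sorts to sorts and function symbols to function symbols in $\mathcal{E}_U$, and by part (i) it sends each formula $\varphi$ to a formula $F_\alpha^{\star}\varphi$ with the same logical shape; since $F_\alpha^{\star}$ preserves monomorphisms (being exact) the entailment $\varphi\vdash_{\vec x}\psi$, which is the statement that the subobject interpreting $\varphi$ is contained in the one interpreting $\psi$, is preserved. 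Thus the image of $\mathbb{T}$ is a theory $F_\alpha^{\star}\mathbb{T}$ in $\mathcal{E}_U$ with the same axioms; iterating along a directed chain of arrows in $\mathcal{C}$ down to an output layer, using $F_{\alpha\circ\beta}^{\star}=F_\beta^{\star}\circ F_\alpha^{\star}$, shows that the theory can be read in every deeper layer and in particular at the outputs.

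The main obstacle I anticipate is the verification of openness at the level of quantifiers, namely the Beck--Chevalley identity $f^{\star}\forall_h=\forall_{f^{\star}h}f^{\star}$, because the generic inequality $f^{\star}\forall_hP'\le\forall_{f^{\star}h}f^{\star}P'$ always holds but equality is exactly what openness buys us; I would treat this by invoking the adjunction $\mu_\alpha\dashv\lambda_\alpha$ fiber by fiber on the Heyting algebras $\Omega_{U'}(\xi')$ and $\Omega_U(F_\alpha\xi')$ and checking naturality in $\xi'$, which is immediate from the functoriality of $F_\alpha$. The rest is essentially bookkeeping and an induction on the complexity of formulas.
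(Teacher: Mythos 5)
Your proposal is correct and follows essentially the same route as the paper, which states this proposition as a summary of the immediately preceding discussion: the geometric hypothesis gives preservation of $\wedge$, $\vee$, $\exists_h$ and the lattice structure (citing MacLane--Moerdijk, p.~496), while openness --- characterized as the existence of a left adjoint $\mu_\alpha$ of $\lambda_\alpha$ (Theorem~3, p.~498) --- supplies the remaining commutations with $\forall_h$, $\Rightarrow$ and $\neg$, and part (ii) then follows by the standard soundness argument on formulas and sequents together with $F_{\alpha\circ\beta}^{\star}=F_\beta^{\star}\circ F_\alpha^{\star}$. Your explicit treatment of the Beck--Chevalley identity and the induction on formula complexity only makes precise what the paper leaves to the cited reference.
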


\noindent In the sequence we will be particularly interested by the case where all the $\mathcal{F}_U$ are groupoids and the $F_\alpha$
are morphisms of groupoids, in this case, the algebras of subobjects ${\rm Sub}_{\mathcal{E}}(X)$ are boolean, then, in this case,
the following lemma implies
that, as soon as $F_\alpha^{\star}$ is geometric, it is open:
\begin{lem}\label{bool-lat}
	In the boolean case the morphism of lattices $f^{\star}:{\rm Sub}(X')\rightarrow {\rm Sub}(f^{\star}X')$ is a morphism of algebras which commutes with the universal quantifiers $\forall_h$.
\end{lem}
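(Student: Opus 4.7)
The plan is to exploit the fact that, in the boolean setting, the operations $\neg$ and $\forall_h$ can be expressed in terms of $\wedge, \vee, \top, \bot$ and $\exists_h$, all of which are already known to be preserved by the inverse image of a geometric morphism. So once geometricity is assumed, the algebra and quantifier preservation follow formally from the boolean identities.

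Concretely, first I would record what is free. Since $f^{\star}=(F_\alpha)_!$ is a left adjoint which is also left exact, the functor $f^{\star}$ is a geometric inverse image: it preserves all finite limits (so pullbacks, hence intersections of subobjects) and all colimits (so coproducts and images, hence joins of subobjects). In particular the induced map $f^{\star}:\mathrm{Sub}(X')\to\mathrm{Sub}(f^{\star}X')$ preserves $\top, \bot, \wedge, \vee$, and the Beck--Chevalley square for the image factorization along any $h:Y'\to X'$ yields $f^{\star}(\exists_h P')=\exists_{f^{\star}h}(f^{\star}P')$. This is the standard package recalled in the preceding paragraphs (and in \cite[\S IX]{MacLaneMoerdijk1992}).

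Next I would use the boolean hypothesis to upgrade lattice-morphism to boolean-algebra-morphism. In a boolean algebra, the complement $\neg P$ is characterized by the two equations $P\wedge\neg P=\bot$ and $P\vee\neg P=\top$, and complements are unique. Applying $f^{\star}$ to these equations and using preservation of $\wedge,\vee,\top,\bot$, I get
\begin{equation}
f^{\star}(P)\wedge f^{\star}(\neg P)=\bot,\qquad f^{\star}(P)\vee f^{\star}(\neg P)=\top,
\end{equation}
so by uniqueness of complements in the boolean algebra $\mathrm{Sub}(f^{\star}X')$, $f^{\star}(\neg P)=\neg f^{\star}(P)$. Implication is then preserved via the boolean identity $P\Rightarrow Q=\neg P\vee Q$, so $f^{\star}$ is a genuine morphism of boolean algebras.

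Finally, for the universal quantifier, I invoke the De Morgan dual $\forall_h P=\neg\exists_h\neg P$, which is valid precisely because the lattices are boolean. Combining the three facts already established (commutation with $\neg$, with $\exists_h$ via Beck--Chevalley, and with $\neg$ again), I compute
\begin{equation}
f^{\star}(\forall_h P)=f^{\star}(\neg\exists_h\neg P)=\neg f^{\star}(\exists_h\neg P)=\neg\,\exists_{f^{\star}h}\bigl(f^{\star}(\neg P)\bigr)=\neg\,\exists_{f^{\star}h}\bigl(\neg f^{\star}(P)\bigr)=\forall_{f^{\star}h}(f^{\star}P),
\end{equation}
which is the claim. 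The only real subtlety, and the thing I would double-check carefully, is that Beck--Chevalley for $\exists_h$ applies to arbitrary $h$ and not only to projections; this is fine here since it is a consequence of $f^{\star}$ preserving pullbacks and epi/mono factorizations, which holds because $f^{\star}$ is left exact and cocontinuous. No non-formal step is needed beyond this.
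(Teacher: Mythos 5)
Your proof is correct and follows essentially the same route as the paper's: both establish that $f^{\star}$ preserves $\top$, $\bot$, $\wedge$, $\vee$ and hence commutes with the boolean complement, and then derive commutation with $\forall_h$ from the De Morgan duality $\forall_h=\neg\exists_h\neg$ together with the already-known commutation with $\exists_h$. Your version merely spells out the uniqueness-of-complements argument that the paper leaves implicit.
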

\begin{proof}
	Since $f^{\star}$ is right and left exact, it sends $0=\bot$ to $0=\bot$ and $X'=\top$
	to $X=\top$. Therefore, for every $A\in {\rm Sub}_{\mathcal{E}'}(X')$, $f^{\star}(X'\setminus A')=X\setminus f^{\star}(A')$, i.e.
	$f^{\star}$ commutes with the negation $\neg$. This negation establishes a duality between $\exists$ and $\forall$, then
	$f^{\star}$ commutes with the universal quantifier. More precisely:
	\begin{equation}
		f^{\star}(\neg(\forall x', P'(x')))=f^{\star}(\exists a', \neg P'(a'))=\exists a, f^{\star}(\neg P') (a)=\neg (\forall x f^{\star}(P') (x)),
	\end{equation}
	then by commutation with $\neg$, and $\neg\neg={\sf Id}$, we have
	\begin{equation}
		f^{\star}(\forall x', P'(x'))=\forall x, f^{\star}(P') (x).
	\end{equation}
\end{proof}

\indent Let us mention here a difficulty: in the case of groups or groupoids, $F_\alpha^{\star}$ is geometric if and only if $F_\alpha$ is an equivalence of categories
(then an isomorphism in the case of groups).
This is because a morphism of group is flat if and only if it is an isomorphism, \cite[4.5.1.]{SGA4}. The main problem is with the preservation of products.

However, it is remarkable that for any kind of group homomorphisms $F:G'\rightarrow G$, in every algebra of subobjects the map $f^{\star }$ induced by $F_!$
preserves "locally" and "naturally" all the logical operations:

\begin{lem}\label{morph-heyt}
For every object $X'$ in $B_{G'}$, note $X=F_!(X')$, then $f^{\star}$ induces a map of lattices $f^{\star}:{\rm Sub} (X')\rightarrow {\rm Sub} (X)$,
that is bijective. It preserves the order $\leq$, the elements $\top$ and $\bot$, and the operations $\wedge$ and $\vee$, therefore it is a morphism of
Heyting algebras. Moreover, for any natural transformation $h:Y'\rightarrow X'$, it commutes with both
the existential quantifier $\exists_h$ and the universal
quantifiers $\forall_h$.
\end{lem}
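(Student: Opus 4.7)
My plan is to prove the lemma via an explicit orbit-theoretic description of $F_!$. Recall that for a group homomorphism $F:G'\to G$, the left adjoint $F_!:B_{G'}\to B_G$ is the classical induction, sending a $G'$-set $X'$ to $G\times_{G'}X'$, the quotient of $G\times X'$ by $(gF(h'),x')\sim (g,h'x')$. A direct computation gives $F_!(G'/H')\cong G/F(H')$, and more generally $F_!$ carries the orbit decomposition of $X'$ to the orbit decomposition of $X=F_!(X')$, thereby yielding a natural bijection between the sets of orbits $X'/G'\cong X/G$.

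Since subobjects in $B_{G'}$ (resp.\ $B_G$) coincide with $G'$-invariant (resp.\ $G$-invariant) subsets, i.e.\ with arbitrary subsets of $X'/G'$ (resp.\ $X/G$), and since $F_!$ is cocontinuous, the map $A'\mapsto F_!(A')$ is a well-defined inclusion-preserving map $f^{\star}:{\rm Sub}(X')\to {\rm Sub}(X)$. It is a bijection because it is the map induced on power sets by the underlying bijection $X'/G'\cong X/G$. In particular $f^{\star}$ transports the Boolean algebra structure of one power set to the other, so it preserves $\top$, $\bot$, $\wedge$, $\vee$ and the complement $\neg$, hence also the Heyting implication $A\Rightarrow B=\neg A\vee B$.

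For the existential quantifier, given $h:Y'\to X'$ and $B'\in {\rm Sub}(Y')$, the subobject $\exists_h B'\subseteq X'$ is the set-theoretic image $h(B')$; since $F_!$ preserves monomorphisms (by the orbit argument just given) and is right exact, it commutes with epi-mono factorizations, so $f^{\star}(\exists_h B')=\exists_{f^{\star}h}(f^{\star}B')$. For the universal quantifier, one uses the Boolean identity $\forall_h=\neg\,\exists_h\,\neg$, valid because the subobject lattices are Boolean, exactly as in the proof of lemma \ref{bool-lat}: commutation of $f^{\star}$ with $\neg$ and with $\exists_h$ immediately yields commutation with $\forall_h$.

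The main delicate point is the bijectivity claim, which genuinely relies on the orbit decomposition of $G$-sets: for a general morphism of fibered categories the induced map on subobject lattices need not be surjective, and indeed $F_!$ itself need not even be left exact here (as it is for isomorphisms, cf.\ \cite[4.5.1.]{SGA4}). What saves the situation locally, in the sense of the lemma, is that in the group/groupoid setting every subobject is a disjoint union of orbits and $F_!$ realizes a perfect bijective correspondence of orbit sets, independently of whether $F_!$ preserves finite products of general objects.
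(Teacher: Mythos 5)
Your proof is correct and follows essentially the same route as the paper: the contracted-product description $F_!(X')=G\times_{G'}X'$, the induced bijection of orbit sets $X'/G'\cong X/G$, and hence a bijection of the Boolean subobject algebras, from which preservation of the lattice operations and of both quantifiers follows. The only cosmetic difference is that you derive commutation with $\exists_h$ from the epi-mono factorization and then get $\forall_h$ by Boolean duality, whereas the paper deduces both from the orbitwise bijection after noting compatibility with $h^{\star}$; both arguments rest on the same key fact.
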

\begin{proof}
As said in \cite[4.5.1]{SGA4}, if $F:G'\rightarrow G$ is a morphism of groups, the functor $F_!$ from $B_{G'}$ to $B_G$
is given on $X'$ by the \emph{contracted product}
$F_!(X')=G\times_{G'}X'$, that is the set of orbits of the action of $G'$ on the $G$-set $G_d\times X'$.\\
The algebra ${\rm Sub} (X')$ is the boolean algebra generated by the primitive representations of $G'$
on the orbits $G'x'$ of the elements of $X'$. But each orbit $G'x'$ is sent in $X$ to an orbit of $G$, that is the
product of $G/F(H'_{x'})$ with the singleton $\{G'x'\}$, where $H'_{x'}$ is the stabilizer of $x'$. These sets describe
the orbits of the action of $G$ on $X$, then the elements of ${\rm Sub}(X)$.\\
The commutativity with $h^{\star }$ for a $G'$-morphism $h:Y'\rightarrow X'$ is evident, the rest follows from the bijection
property, orbitwise.
\end{proof}

\noindent Therefore, even if $F^{\star}$ is not a geometric morphism, it is legitimate to say that in some sense, it is \emph{open}, because all
logical properties are preserved by the induced morphisms between the local Heyting algebras. We could say that $F^{\star}$ is "weakly geometric and open".\\

This can be easily extended to morphisms of groupoids. The left adjoint $F_!$ admits a description
which is analogous to the contracted product of groups. Lemma \ref{morph-heyt} holds true. The only difference is that $f^{\star}$ is not a bijection,
but it is a surjection when $F$ is surjective on the objects. More details  and the
generalization of the above results to fibrations of categories that are themselves fibrations in groupoids over posets
will be given in the text \emph{Search of semantic spaces}.\\

\indent In the reverse direction of the flow, it is important that a proposition in the fiber over $U$ can be understood over $U'$.\\
\indent Hopefully, this can always be done, at least in part: the functor $F_\alpha^{\star}$
is left exact and has a right adjoint $F_\star ^{\alpha}:\mathcal{E}_{U'}\rightarrow \mathcal{E}_{U}$, which can be described as a right Kan extension \cite{SGA4}:
for a presheaf $A'$ over $\mathcal{F}_{U'}$, the value of the presheaf $F_\star ^{\alpha}(A'_{U'})$ at $\xi_U\in \mathcal{F}_U$ is the limit of $A'_{U'}$ over the slice category
$F_\alpha|\xi_U$,
whose objects are the pairs $(\eta',\varphi)$ where $\eta'\in \mathcal{F}_{U'}$ and $\varphi: F_\alpha(\eta')\rightarrow  \xi_U$ is a morphism in $\mathcal{F}_U$,
and whose morphisms from $(\eta',\varphi)$ to $(\zeta',\phi)$ are the morphisms $u:\eta'\rightarrow\zeta'$ such that $\varphi=\phi\circ F_\alpha(u)$.\\
Therefore, if we denote $\rho$ the forgetting functor from $F_\alpha|\xi_U$ to $\mathcal{F}_{U'}$, we have
\begin{equation}
F_\star ^{\alpha}(A')(\xi_U)=H^{0}(F_\alpha|\xi_U;\rho^{\star}A'),
\end{equation}
that is the set of sections of the presheaf $\rho^{\star }A'$ over the slice category.
\begin{rmk*}
	\normalfont In the case where $F_\alpha:\mathcal{F}_{U'}\rightarrow \mathcal{F}_{U}$ is a morphism of groupoids, this set is the set of sections of $A'$
	over the connected components of $F_\alpha^{-1}(\xi_U)$.
\end{rmk*}

Therefore the functor $g=F_\star^{\alpha}$ is always geometric in our situation of presheaves. By definition, this proves that $F_\alpha$
is a comorphism of sites. Consequently, as shown in \cite{MacLaneMoerdijk1992}, the pullback of subobjects defines
a natural transformation of presheaves over $\mathcal{F}_{U'}$:
\begin{equation}
\lambda'_\alpha:\Omega_U\rightarrow F_\star ^{\alpha}\Omega_{U'};
\end{equation}
which corresponds by the adjunction of functors $F_\alpha^{\star}\dashv F_\star^{\alpha}$, to a natural transformation of sheaves over $\mathcal{F}_{U}$:
\begin{equation}
\tau'_\alpha:F_\alpha^{\star}\Omega_U\rightarrow \Omega_{U'}.
\end{equation}
\begin{lem}\label{lem:fib-open}
	If $F_\alpha$ is a fibration (not necessarily in groupoids), it is an open morphism of sites, and the functor $F^{\alpha}_\star$ is
	open \cite{giraud-classifying}.
\end{lem}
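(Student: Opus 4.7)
The plan is to derive both assertions from the cartesian-lifting property of the fibration $F_\alpha:\mathcal{F}_{U'}\to\mathcal{F}_U$. Recall that a geometric morphism is \emph{open} exactly when its inverse-image functor commutes with universal quantifiers $\forall_h$, equivalently when the induced map of subobject lattices admits a left adjoint in Heyting algebras (the map $\mu_\alpha$ dual to the lattice morphism $\lambda_\alpha$ used earlier in the text).

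First I would verify that $F^{\alpha}_!$ is left exact, so that $F_\alpha^{\star}$ is geometric and $F_\alpha$ is a bona fide morphism of sites. Using the pointwise formula
\[
F^{\alpha}_!(A')(\xi) \;=\; \operatorname*{colim}_{(\eta',\varphi:F_\alpha\eta'\to\xi)\in F_\alpha\downarrow \xi} A'(\eta'),
\]
I would exploit the fact that every object $(\eta',\varphi)$ of $F_\alpha\downarrow\xi$ admits a cartesian arrow $\tilde\varphi:\varphi^{\star}\eta'\to\eta'$ lying over $\varphi$. The full subcategory spanned by such cartesian lifts is cofinal in $F_\alpha\downarrow\xi$, and the existence-and-uniqueness (up to unique vertical isomorphism) of cartesian lifts makes this cofinal subcategory filtered: any finite diagram of cartesian lifts can be completed by lifting the corresponding diagram from $\mathcal{F}_U$. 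The colimit computing $F^{\alpha}_!$ is thereby filtered, hence left-exact. This is exactly Giraud's flatness theorem for fibrations in \cite{giraud-classifying}.

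Next, to construct the left adjoint $\mu_\alpha$ to $\lambda_\alpha:\Omega_{U'}\to F_\alpha^{\star}\Omega_U$ in the Heyting algebra sense, I would set $\mu_\alpha(S)$ to be the image of the monomorphism $F^{\alpha}_!(S)\hookrightarrow F^{\alpha}_!(X)$ for a subobject $S\hookrightarrow X$; left exactness of $F^{\alpha}_!$ ensures the image is a subobject. The adjunction $F^{\alpha}_!\dashv F_\alpha^{\star}$ then restricts to the required poset adjunction between subobjects of $X$ and subobjects of $F_\alpha^{\star}X$, with naturality in $X$ automatic. This establishes openness of the first geometric morphism. For the second claim, openness of $F^{\alpha}_\star$ means that the inverse image $F_\alpha^{\star}$ of the geometric morphism $(F_\alpha^{\star}, F^{\alpha}_\star):\mathcal{E}_{U'}\to\mathcal{E}_U$ commutes with every $\forall_h$; this is verified analogously, cartesian lifts once again providing the canonical correspondence between sections of $h$ and sections of $F_\alpha^{\star}h$ required for the quantifier exchange.

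The main obstacle is the left exactness of $F^{\alpha}_!$ — the flatness of a general fibration — since left Kan extensions are only rarely left-exact; the cartesian-lifting axiom is precisely what supplies the filtered structure on the slice categories that makes the proof go through. Once flatness is in hand, the two openness statements follow from relatively standard adjunction arguments on subobject lattices.
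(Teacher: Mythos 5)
Your central step --- that $F^{\alpha}_!$ is left exact because the comma categories admit a cofinal filtered subcategory of cartesian lifts --- is false, and the paper itself says so a few paragraphs earlier: for groups and groupoids, $F_\alpha^{\star}$ is geometric if and only if $F_\alpha$ is an equivalence of categories (citing SGA~4, 4.5.1). A surjective group homomorphism $F:G'\rightarrow G$ with non-trivial kernel is a fibration of one-object groupoids, yet $F_!$ is the contracted product $X'\mapsto G\times_{G'}X'$, which does not preserve binary products. The filteredness argument is exactly where this breaks: the cartesian lifts over a fixed arrow form a category with non-trivial vertical automorphisms, and filteredness would require parallel pairs of such automorphisms to be coequalized further along in the comma category, which fails as soon as a fiber has a non-trivial automorphism group. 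So there is no ``Giraud flatness theorem for fibrations'' in the form you invoke, and the left adjoint $\mu_\alpha$ you build from images under $F^{\alpha}_!$ does not get off the ground.

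Fortunately the lemma does not need any of that. The geometric morphism at stake is the one whose \emph{inverse image} is $F_\alpha^{\star}$ (precomposition, hence automatically left exact since it has adjoints on both sides) and whose direct image is $F^{\alpha}_\star$; the entire content of the statement is the \emph{openness} of this morphism. The paper obtains it by quoting the criterion of MacLane--Moerdijk (Proposition~1, pp.~509--513): a morphism of sites induces an open geometric morphism provided it lifts coverings, preserves covers, and each sliced functor $F|\xi':\mathcal{F}'|\xi'\rightarrow\mathcal{F}|F(\xi')$ is surjective on objects. The first two conditions are vacuous for the presheaf topologies in play, and the third is precisely the (pre-)fibration axiom --- this is the one place the hypothesis on $F_\alpha$ is used. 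Your final paragraph does name the correct target (``$F_\alpha^{\star}$ commutes with every $\forall_h$'') but disposes of it as ``verified analogously'' to an argument that is itself incorrect; that commutation is the step that actually has to be proved, and the surjectivity-on-slices condition is the tool that proves it.
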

\begin{proof}
	This results directly from \cite[Proposition 1, pp. 509-513]{MacLaneMoerdijk1992}. Precisely this proposition says
	that a morphism of sites $F:\mathcal{F}'\rightarrow \mathcal{F}$ induces an open geometric morphism $F_\star:{\sf Sh}(\mathcal{F}', J')\rightarrow {\sf Sh}(\mathcal{F}, J)$
	between the categories of sheaves, as soon as the following three conditions are satisfied:
	\begin{enumerate}[label=(\roman*)]
		\item $F$ has the property of lifting of the coverings:
		\begin{equation}
			\forall \xi'\in \mathcal{F}', \forall S\in J(F(\xi')),\exists T'\in J'(\xi'), F(T')\subseteq S;
		\end{equation}
		where $F(T')$ is the sieve generated by the images of the arrows in $T'$;
		\item $F$ preserves the covers, i.e.
		\begin{equation}
			\forall \xi'\in \mathcal{F}', \forall S'\in J'(\xi'), F(S')\in J(F(\xi'));
		\end{equation}
	\item for every $\xi'\in \mathcal{F}'$, the sliced morphism $F| \xi': \mathcal{F}'| \xi'\rightarrow \mathcal{F}| F(\xi')$ is surjective
	on the objects.
	\end{enumerate}
	\indent The two first conditions are true for the canonical topology of a stack \cite{giraud-classifying}. They are obvious in our case of presheaves. Condition (iii) is part of the definition of fibration (pre-fibration).
\end{proof}

If in addition $F$ itself is surjective on the objects, as it will be the case in our applications, the maps of
algebras $g^{\star}_X: {\rm Sub}(X)\rightarrow {\rm Sub}(f^{\star}X)$ are injective and the geometric open morphism $g=F_\star$ is surjective on the objects \cite[page 513]{MacLaneMoerdijk1992}.

\begin{lem}\label{lem:open-morphism}
	When $F_\alpha$ is a fibration, the relation between $\lambda_\alpha=\Omega_\alpha: \Omega_{U'}\rightarrow F_\alpha^{\star }\Omega_U$ and
	$\lambda'_\alpha: \Omega_U\rightarrow F_\star ^{\alpha}\Omega_{U'}$, is given by the adjunction of posets morphisms:\\
	\begin{equation}
		\Omega_\alpha\dashv \tau'_\alpha;
	\end{equation}
	where $\tau'_\alpha:F_\alpha^{\star }\Omega_U\rightarrow \Omega_{U'}$ is the dual of $\lambda'_\alpha$.\\
	The morphism $\Omega_\alpha$ is the left adjoint of the morphism $\tau'_\alpha$. Moreover, $\tau'_\alpha$
	is an injective section of the surjective morphism $\Omega_\alpha$.
\end{lem}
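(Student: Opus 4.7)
The plan is to realize the adjunction $\Omega_\alpha \dashv \tau'_\alpha$ as the fiberwise restriction of the ambient adjunction $F^\alpha_! \dashv F_\alpha^{\star}$ to subobject posets, and then to derive the section/retraction statement from the fibration hypothesis. Fix $\xi' \in \mathcal{F}_{U'}$ and write $\xi := F_\alpha(\xi')$. By the description given earlier in the text, $\Omega_\alpha(\xi')$ sends a subobject $Y \hookrightarrow (\xi')^{\wedge}$ to $F^\alpha_!(Y) \hookrightarrow \xi^{\wedge}$. Dually, transposing $\lambda'_\alpha$ under the adjunction $F_\alpha^{\star} \dashv F_\star^{\alpha}$ shows that $\tau'_\alpha(\xi')$ sends a subobject $X \hookrightarrow \xi^{\wedge}$ to the pullback $\iota^{-1}(F_\alpha^{\star} X) \hookrightarrow (\xi')^{\wedge}$, where $\iota : (\xi')^{\wedge} \to F_\alpha^{\star} \xi^{\wedge}$ is the unit of $F^\alpha_! \dashv F_\alpha^{\star}$.

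The Galois condition
\begin{equation*}
F^\alpha_!(Y) \leq X \ \Longleftrightarrow\ Y \leq \tau'_\alpha(X)
\end{equation*}
then follows from the hom-set bijection of $F^\alpha_! \dashv F_\alpha^{\star}$ applied to monomorphisms: a factorization of $F^\alpha_!(Y) \hookrightarrow \xi^{\wedge}$ through $X$ transposes to a factorization of $Y \hookrightarrow (\xi')^{\wedge}$ through $\iota^{-1}(F_\alpha^{\star} X) = \tau'_\alpha(X)$, and conversely. This establishes the adjunction $\Omega_\alpha \dashv \tau'_\alpha$ of poset morphisms, automatically yielding $\Omega_\alpha \circ \tau'_\alpha \leq \mathrm{id}$ and $\mathrm{id} \leq \tau'_\alpha \circ \Omega_\alpha$.

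For the injective-section claim it suffices to upgrade the counit inequality to $\Omega_\alpha \circ \tau'_\alpha = \mathrm{id}$, from which the surjectivity of $\Omega_\alpha$ and the injectivity of $\tau'_\alpha$ follow formally. Here I would invoke the fibration hypothesis and condition (iii) from the proof of Lemma \ref{lem:fib-open}: given a sieve $X$ on $\xi$ in $\mathcal{F}_U$ and a generator $u : \zeta \to \xi$ of $X$, the fibration $F_\alpha$ produces a cartesian lift $\tilde u : \tilde\zeta \to \xi'$ in $\mathcal{F}_{U'}$ with $F_\alpha(\tilde u) = u$. By definition $\tilde u \in \tau'_\alpha(X)$, and applying $F^\alpha_!$ to this lift returns $u$, so every generator of $X$ lies in $F^\alpha_!(\tau'_\alpha(X)) = \Omega_\alpha(\tau'_\alpha(X))$; combined with the reverse inequality from the Galois connection, the two sieves coincide.

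The main obstacle is precisely this last upgrade: for a generic poset adjunction $L \dashv R$ one only has $LR \leq \mathrm{id}$, and promoting this to equality requires a form of essential surjectivity or fullness. The fibration hypothesis supplies this ingredient through cartesian lifts, in direct parallel with the orbit-wise argument used for groupoids in Lemma \ref{morph-heyt} and with the surjectivity clause appearing in the concluding remark of Lemma \ref{lem:fib-open}.
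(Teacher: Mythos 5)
Your proof is correct, and it lands on the same two facts the paper's argument rests on --- the poset adjunction $\Omega_\alpha\dashv\tau'_\alpha$ and the identity $\Omega_\alpha\circ\tau'_\alpha={\sf Id}$ --- but it organizes them in the opposite order and distributes the hypotheses differently. The paper starts from the fibration hypothesis: it identifies $F_\alpha^{\star}\Omega_U$ with the sub-lattice of $F_\alpha$-saturated subobjects of $\mathbf{1}_{U'}$, reads $\tau'_\alpha$ as the inclusion of that sub-lattice and $\Omega_\alpha$ as the saturation retraction, \emph{asserts} the section property $\Omega_\alpha\circ\tau'_\alpha={\sf Id}$ without further argument, and only then derives the adjunction from the unit $P'\leq\tau'_\alpha\Omega_\alpha P'$ via the chain of implications $(\lambda P'\leq Q)\Leftrightarrow(P'\leq\tau'Q)$. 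You instead obtain the Galois connection first, purely formally, as the fiberwise restriction of the ambient adjunction $F^{\alpha}_!\dashv F_\alpha^{\star}$ to subobject posets --- which makes visible that this half needs no hypothesis on $F_\alpha$ at all --- and you isolate the fibration hypothesis in the one place it is genuinely used, namely upgrading the counit inequality to the equality $\Omega_\alpha\circ\tau'_\alpha={\sf Id}$ by producing cartesian lifts of the generators of a sieve. That cartesian-lift computation is precisely the justification that the paper's one-line assertion of the section property leaves implicit, and it correctly parallels the surjectivity clause (iii) invoked in the proof of Lemma \ref{lem:fib-open} and the orbitwise argument of Lemma \ref{morph-heyt}. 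The only step you should flag explicitly is that your transposition argument uses that $F^{\alpha}_!$ carries subobjects of $(\xi')^{\wedge}$ to subobjects of $\xi^{\wedge}$; this is not automatic for a left adjoint, but the paper records it (with the caveat that it holds because subobjects in these presheaf toposes are coherent families of subsets, not because $F^{\alpha}_!$ is left exact) in the discussion preceding the lemma, so a citation of that remark closes the gap.
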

\begin{proof}
	If $F_\alpha$ is a fibration, $F_\alpha^{\star }\Omega_U$ is isomorphic to $\Omega_U$, it is the sub-algebra of $\Omega_{U'}$
	formed by the subobjects of ${\bf 1}_{U'}$ that are invariant by $F_\alpha$, i.e. by $\lambda_\alpha:\Omega_{U'}\rightarrow F_\alpha^{\star }\Omega_U$.\\
	The map $\tau'_\alpha$ associates to an element $P$ of $\Omega_U$ the element $P\circ F_\alpha$, seen as a sub-sheaf of $1_{U'}$, that is
	an element of $\Omega_{U'}$ saturated by $F_\alpha$. Therefore, for every $P'\in \Omega_{U'}$,  the element $\tau'_\alpha\circ \lambda_\alpha (P')$
	of $\Omega_{U'}$ is the saturation of $P'$, then it contains $P'$. This gives a natural transformation
	\begin{equation}
		\eta: {\sf Id}_{\Omega_{U'}}\rightarrow \tau'_\alpha\circ \Omega_\alpha.
	\end{equation}
	In the other direction, $\tau'_\alpha$ is a section over $\Omega_{U'}$ of the map $\lambda_\alpha$, i.e.
	$\Omega_\alpha\circ \tau'_\alpha=Id_{F_\alpha^{\star }\Omega_U}$. Which gives a natural transformation
	\begin{equation}
		\epsilon:  \Omega_\alpha\circ\tau'_\alpha \rightarrow {\sf Id}_{F_\alpha^{\star }\Omega_U}.
	\end{equation}
	In the following lines, we forget the indices $\alpha$ everywhere, and show that $\eta$ and $\epsilon$ are respectively the unit and counit of an
	adjunction of posets morphisms.\\
	Let $P'$ and $Q$, be respectively elements of $\Omega_{U'}$ and $\Omega_{U}$, if we have a morphism from $\lambda P'$
	to $Q$, by applying $\tau'$, we obtain a morphism from $\tau'\circ\lambda P'$ to $\tau'Q$, then a morphism from $P'$
	to $\tau'Q$. All that is equivalent to the following implications:
	\begin{equation}
		(\lambda P'\leq Q)\Rrightarrow (P'\leq \tau'\lambda P'\leq \tau'Q).
	\end{equation}
	In the other direction,
	\begin{equation}
		(P'\leq \tau'Q)\Rrightarrow (\lambda P'\leq \lambda\tau'P'\leq Q).
	\end{equation}
	Therefore
	\begin{equation}
		(P'\leq \tau'Q)\Lleftarrow\Rrightarrow (\lambda P'\leq Q).
	\end{equation}
	Which is the statement of lemma \ref{lem:open-morphism}.
\end{proof}

\noindent From the above lemmas, we conclude the following result (central for us):

\begin{thm}\label{thm:semanticflow}
When for each $\alpha:U\rightarrow U'$ in $\mathcal{C}$, the functor $F_\alpha$ is a fibration, the logical formulas and their truth in the topos propagate from $U$ to $U'$
by $\lambda'_\alpha$
(feedback propagation in the DNN),
and if in addition $F_\alpha$ is a morphism of groupoids (surjective on objects and morphisms), the logic in the topos also propagates from $U'$ to $U$, by $\lambda_\alpha$ (feed-forward functioning in the DNN).\\
Moreover, the map $\lambda_\alpha$ is the left adjoint of the transpose $\tau'_\alpha$ of the map $\lambda'_\alpha$.
And we have, for any $\alpha:U\rightarrow U'$ in $\mathcal{C}$,
\begin{equation}\label{strongstandard}
\lambda_\alpha\circ \leftindex^{t}\lambda'_\alpha={\sf Id}_{\Omega_{U'}}.
\end{equation}
\end{thm}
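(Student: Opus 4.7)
The plan is to string together the lemmas already assembled: Lemma~\ref{lem:fib-open} handles the feedback direction, Lemma~\ref{morph-heyt} (together with Lemma~\ref{bool-lat} for the Boolean flavor of the groupoid case) handles the feed-forward direction under the groupoid hypothesis, and Lemma~\ref{lem:open-morphism} delivers both the adjunction $\lambda_\alpha\dashv\tau'_\alpha$ and the section identity \eqref{strongstandard}. The theorem is thus a synthesis rather than a new computation; the proof should simply wire the correct lemma to each clause.

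First I would treat the feedback direction. Assuming every $F_\alpha$ is a fibration, Lemma~\ref{lem:fib-open} ensures that $F_\alpha$ is an open morphism of sites, so that $F^{\alpha}_\star : \mathcal{E}_{U'} \to \mathcal{E}_U$ is an open geometric morphism of topoi. The general fact recalled from MacLane--Moerdijk just before the lemma then says that such a morphism induces a pullback of subobjects commuting with every operation of predicate calculus ($\wedge$, $\vee$, $\neg$, $\Rightarrow$, $\exists_h$, $\forall_h$). Concretely, this is the content of $\lambda'_\alpha : \Omega_U \to F^{\alpha}_\star \Omega_{U'}$ being a morphism of Heyting algebras, which is exactly what is needed to transport any theory formulated at layer $U$ to a theory at layer $U'$.

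Next I would handle the feed-forward direction, for which the pure-fibration hypothesis is not enough. The key subtlety, explicitly flagged in the text, is that a morphism between nontrivial groupoids almost never induces a geometric morphism of classifying topoi, so one cannot blindly reuse the MacLane--Moerdijk criterion. My substitute is Lemma~\ref{morph-heyt}: for any morphism of groups, the pullback $\lambda_\alpha$ defined on every local subobject algebra by $F^{\alpha}_!$ is a bijective morphism of Heyting algebras commuting with $\exists_h$ and $\forall_h$; this is the ``weakly geometric and open'' behavior mentioned in the discussion, and the remark following Lemma~\ref{morph-heyt} extends it to morphisms of groupoids (with surjectivity on objects, which is exactly the hypothesis of the theorem). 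Combined with Lemma~\ref{bool-lat}, which already isolates the Boolean logic invariance under $f^\star$, this gives full preservation of predicate calculus by $\lambda_\alpha$, hence the feed-forward propagation.

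It remains to package the adjunction and equation \eqref{strongstandard}. Under the single hypothesis that $F_\alpha$ is a fibration, Lemma~\ref{lem:open-morphism} identifies the transpose $\tau'_\alpha$ of $\lambda'_\alpha$ under $F_\alpha^{\star}\dashv F^{\alpha}_\star$ as a right adjoint of $\lambda_\alpha$ in the poset sense and, crucially, shows that $\tau'_\alpha$ is an injective section of $\lambda_\alpha$. The section relation $\lambda_\alpha \circ \tau'_\alpha = \mathrm{Id}$ of that lemma is precisely formula \eqref{strongstandard}. The only real obstacle---reconciling the groupoid-case failure of geometricity with the need to transport quantifiers---was already absorbed by invoking Lemma~\ref{morph-heyt} in place of the standard open-geometric-morphism machinery, so once the lemmas are cited in the order above the theorem follows without further work.
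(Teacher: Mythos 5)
Your proposal is correct and follows essentially the same route as the paper, which presents the theorem as a direct synthesis of Lemmas \ref{lem:fib-open}, \ref{bool-lat}, \ref{morph-heyt} and \ref{lem:open-morphism} ("From the above lemmas, we conclude..."), wiring each lemma to the same clause you do. The only cosmetic point is that the section identity from Lemma \ref{lem:open-morphism} is $\lambda_\alpha\circ\tau'_\alpha={\sf Id}$ on $F_\alpha^{\star}\Omega_U$, which is what \eqref{strongstandard} is meant to express despite the subscript $\Omega_{U'}$ in the displayed formula.
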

~

\begin{defn}\label{dfn:semanticflow}
When the conclusion of the above theorem holds true, even if the $F_\alpha$ are not fibrations, we say that the stack
$\pi:\mathcal{F}\rightarrow \mathcal{C}$ satisfies the \emph{strong standard hypothesis} (for logical propagation). Without the equation \eqref{strongstandard},
we simply say that the standard hypothesis is satisfied.
\end{defn}

\noindent In this case, the logic is richer in $U'$ than in $U$, like a fibration of Heyting algebras of subobjects of objects.\\

\indent To finish this section, let us describe the relation between the classifier $\Omega_\mathcal{F}$ and the classifier $\Omega_\mathcal{C}$ of the basis
category $\mathcal{C}$ of the fibration $\pi:\mathcal{F}\rightarrow \mathcal{C}$.\\
\indent As reminded above, proposition $2.1$ in \cite{giraud-cohomologie}, gives sufficient conditions for guarantying that the functor $\pi^{\star }$ is geometric.
But, even in the non-geometric case, when the fibers are groupoids, the morphism has locally (at the level of subobjects) the logical properties of an open geometric
morphism, (see lemmas \ref{bool-lat} and \ref{morph-heyt} ) and lemma \ref{lem:fib-open} says
that the functor $\pi_\star $, which is its right adjoint, is geometric and open. We can then apply lemma \ref{lem:open-morphism}, and get an
adjunction $\lambda_\pi\dashv \tau'_\pi$, where
\begin{equation}
\lambda_\pi:\Omega_\mathcal{F}\rightarrow \pi^{\star }\Omega_{\mathcal{C}},
\end{equation}
is a surjective morphism of lattices,
and
\begin{equation}
\tau'_\pi:\pi^{\star }\Omega_{\mathcal{C}}\rightarrow \Omega_{F},
\end{equation}
is the section by invariant objects.\\
\indent When $\pi$ is fibration of groupoids, $\pi^{\star }$ is open, and $\lambda_\pi$ is a morphism of Heyting algebras.
In this case, there exists a
perfect lifting of the theories in $\mathcal{C}$ to the theories in $\mathcal{F}$.\\

\section{Theories, interpretation, inference and deduction}\label{sec:theories}

\noindent Main references are Bell \cite{Bell}, Lambek and Scott \cite{Lambek1981IntuitionistTT}, \cite{lambek1988introduction} , MacLane and M\oe{rdijk} \cite{MacLaneMoerdijk1992}.\\

The formal languages, that we will mainly consider, are the typed languages of type theory, in the sense of Lambek and Scott \cite{Lambek1981IntuitionistTT}. In particular, in
such a type theory we have a notion of deduction, conditioned by a set $S$ of propositions, named axioms, which is denoted by $\vdash_S$. This is
a relation between two propositions, $P\vdash_SQ$, which satisfies the usual axioms, structural, logical, and set theoretical, also named rules of inference,
of the form
\begin{equation}
(P_1\vdash_SQ_1,P_2\vdash_SQ_2,...,P_n\vdash_SQ_n)/P\vdash_SQ,
\end{equation}
meaning that the truth (or validity) of the left (said upper) conjunction of deductions implies the truth of the right deduction (said lower).\\
The conditional validity of a proposition $R$ is noted $\vdash_SR$.\\
\indent A (valid) proof of $\vdash_SR$ is an oriented classical graph without oriented cycles,  whose vertices are labelled by
valid inferences,
and whose oriented edges are identifying one of the upper terms of its final extremity to the lower term of its initial extremity, and having
only one final vertex whose lower term is $\vdash_SR$. The initial vertices have
left terms that are empty or belonging to the set $S$. \\
A theory $\mathbb{T}$ in a formal language $\mathbb{L}$ is the set of propositions that can be asserted to be true if
some axioms are assumed to be true, this means that these propositions are deduced by valid proofs from the axioms.\\

A language $\mathbb{L}$ is interpreted in a topos $\mathcal{E}$ when some objects of $\mathcal{E}$ are associated to every type, the
object $\Omega_\mathcal{E}$ corresponding to the logical type $\Omega_{\mathbb{L}}$,
when some arrows $A\rightarrow B$ are associated to the variables (or terms) of $B$ in the context $A$, all that being compatible with the respective
definitions of products, subsets, exponentials, singleton, changes of contexts (substitutions), and logical rules, including the predicate
calculus, which includes the two projections (existential and universal) on the side
of topos \cite{Bell}, \cite{Lambek1981IntuitionistTT}.\\
\indent A theory $\mathbb{T}$ is represented in $\mathcal{E}$ when all its axioms are true in $\mathcal{E}$. The fact that all the deductions
are valid in $\mathcal{E}$ is the statement of the \emph{soundness theorem} of $\mathbb{T}$ in $\mathcal{E}$.
\begin{rmk*}
	\normalfont The \emph{completeness theorem} says that, for any language and any theory, there exists
	a minimal "elementary topos" $\mathcal{E}_{\mathbb{T}}$, which in general is not a Grothendieck topos, where the converse of the soundness
	theorem is true; validity
	in $\mathcal{E}_{\mathbb{T}}$  implies validity in $\mathbb{T}$. The different interpretations in a topos $\mathcal{E}$
	of a theory $\mathbb{T}$ form a category $\mathcal{M}(\mathbb{T},\mathcal{E})$, which is equivalent to the category of "logical functors"
	from $\mathcal{E}_{\mathbb{T}}$ to  $\mathcal{E}$. This equivalence needs precisions given by Lambek and Scott, in particular to fix representant
	of subobjects, which is automatic in a Grothendieck topos.
\end{rmk*}

\noindent As suggested by Lambek, an interpretation of a type theory in a topos constitutes a \emph{semantic} of this theory. \\

\noindent If a formal language $\mathbb{L}$ can be interpreted in a topos $\mathcal{E}$, and if $F:\mathcal{E}\rightarrow\mathcal{F}$ is a left exact
functor from $\mathcal{E}$ to a topos $\mathcal{F}$, the interpretation is transferred to $\mathcal{F}$. The condition for transporting any
theory $\mathbb{T}$ by $f$ is that it admits a right adjoint $f:\mathcal{F}\rightarrow \mathcal{E}$ which is geometric and open.\\
A geometric functor allows the transportation of the restricted family of geometric theories as in \cite{caramello2009duality}, \cite{caramello-18} or \cite{MacLaneMoerdijk1992}.
\begin{rmk*}
	\normalfont If $\mathbb{T}$ is a geometric theory, there is a Grothendieck topos $\mathcal{E}'_{\mathbb{T}}$
	which classifies the interpretations of $\mathbb{T}$, i.e. for every Grothendieck topos $\mathcal{E}$ the category
	of geometric functors from $\mathcal{E}$ to $\mathcal{E}'_{\mathbb{T}}$ is equivalent to $\mathcal{M}(\mathbb{T},\mathcal{E})$ \cite{caramello2009duality}, \cite{caramello-18},\cite{MacLaneMoerdijk1992}. A {\em logical functor} is the left adjoint of a {\em geometric functor}.
\end{rmk*}

\indent In many applications of $DNNs$, a network has to proceed to a semantic analysis of some data. Our aim now is to precise what this means, and how
we, observers, can have access to the internal process of this analysis.\\
\indent As before, the network is presented as a dynamic object $\mathbb{X}$ in a topos, with learning object of weights $\mathbb{W}$, and the considered topos $\mathcal{E}$ is the classifying
topos of a fibration $\pi:\mathcal{F}\rightarrow\mathcal{C}$.\\
\indent In the applications, the logic is richer in $U'$ than in $U$ when there is a morphism
$\alpha:U\rightarrow U'$ in $\mathcal{C}$.  We suppose given a family of typed language $\mathbb{L}_U;U\in \mathcal{C}$, interpreted in the
topos $\mathcal{E}_U;U\in \mathcal{C}$ of the corresponding layers.\\
We say that the functors $f=g^{\star }=F_\alpha^{\star }$ propagate these languages
backward, when for each morphism $\alpha:U\rightarrow U'$ in $\mathcal{C}$, there exists a natural transformation
\begin{equation}
\mathbb{L}_\alpha:\mathbb{L}_{U'}\rightarrow F_\alpha^{\star }\mathbb{L}_U,
\end{equation}
which extends $\Omega_\alpha=\lambda_\alpha$, implying that the types define objects or morphisms in $\mathcal{E}$,
in particular $0_U$, $1_U$.\\
And we say that the left adjoint functor $f^{\star }$ propagates the languages feed-forward, when for each morphism
$\alpha:U\rightarrow U'$ in $\mathcal{C}$, there exists a natural transformation
\begin{equation}
\mathbb{L}'_\alpha:\mathbb{L}_{U}\rightarrow F^{\alpha}_{\star }\mathbb{L}_{U'},
\end{equation}
which extends $\lambda'_\alpha$, implying that the types define objects or morphisms in the fibration $\mathcal{E}'$,
defined by the right adjoint functors $F^{\alpha}_{\star }$.\\

We assume that the standard hypothesis \ref{dfn:semanticflow} is satisfied for the extensions
$\mathbb{L}_\alpha$ and $\mathbb{L}'_\alpha$.\\

\noindent Note that in the case of stacks of DNNs, there exist two kinds of functors $F_\alpha:\mathcal{F}_{U'}\rightarrow\mathcal{F}_{U}$ over $C$, the ordinary ones, flowing from the input to the output, and the added canonical projections from the fiber at a fork $A$ to the fibers of their tines $a'$, $a"$, .... The second kind of functors are
canonically fibrations, but for the other functors, this is a condition we can require for a good semantic functioning (see theorem \ref{thm:semanticflow}).\\

\noindent Let $\mathbb{L}$ denote the corresponding presheaf in languages over $\mathcal{C}$, $\Omega_{\mathbb{L}}$ its logical type, and
for each $U\in \mathcal{C}$, we note $\Omega_{\mathbb{L}_U}$ the value of this logical type at $U$.
For each $U\in \mathcal{C}$, we write $\Theta_U$ the set of possible sets of axioms in $\mathbb{L}_U$, that is
$\Theta_U=\mathcal{P}(\Omega_{\mathbb{L}_U})$. This is also the set of theories.\\

\noindent We take as output ({\em resp.} input) the union of the output ({\em resp.} output) layers. In supervised and reinforcement learning,
we can tell that, for every input $\xi_{\sf in}\in \Xi_{\sf in}$ in a set of inputs for learning,
a theory $\mathbb{T}_{\sf out}(\xi)$ in $\mathbb{L}_{\sf out}$ is imposed at the output of the network., i.e. some propositions are asked to be true, other are
asked to  be false.\\
The set of theories in the language $\mathbb{L}_{\sf out}$ is denoted $\Theta_{\sf out}$. Then the objectives of the functioning is a map
$\mathbb{T}_{\sf out}:\Xi_{\sf in}\rightarrow \Theta_{\sf out}$.

\begin{defn*}
	A \emph{semantic functioning} of the dynamic object $X^{w}$ of possible activities in the network,
	with respect to the mapping $\mathbb{T}_{\sf out}$, is a family of quotient
	sets $D_U$ of $X_U^{w}$, $U\in \mathcal{C}$, equipped with a map $S_U: D_U\rightarrow \Theta_U$, such that for every
	$\xi_{\sf in}\in \Xi_{\sf in}$ and every $U\in \mathcal{C}$,
	the image
	$S_U(\xi_U)$ generates a theory which is coherent with $\mathbb{T}_{\sf out}\left(\xi_{\sf in}\right)$, for the transport in both directions along any path.
\end{defn*}

\begin{rmk*}
	\normalfont In the known applications, the richer logic relies on a richer language with more propositions and less axioms, present near the input layers, but the opposite happens to expressed theories; they are more constrained
	in the deepest layers, with more axioms in general.
\end{rmk*}

\indent In the examples we know \cite{logic-DNN}, the quotient $D_U$ (from \emph{discretized cells})
is given by the activity of some special neurons in the layer $L_U$, which saturate at a finite number of values,
associated to propositions in the Heyting algebras $\Omega_{\mathbb{L}_U}$. In this case, the definition of semantic functioning can be
made more concrete: for each neuron $a\in L_U$, each quantized value of activity $\epsilon_a$ implies the validity of a proposition $P_a(\epsilon_a)$
in $\Omega_{\mathbb{L}_U}$; this defines the map $S_U$. Then the definition of semantic functioning asks that, for each input $\xi_{\sf in}\in \Xi_{\sf in}$, the generated activity defines
values $\epsilon_a(\xi_{\sf in})$ of the special neurons, such that the generated set of propositions $P_a(\epsilon_a)$, implies
the validity of a given proposition in $\Omega_{\mathbb{L}_{\sf out}}$, which is valid for $\mathbb{T}_{\sf out}(\xi_{\sf in})$.\\
\indent In particular, we saw experimentally that the inner layers understand the language $\mathbb{L}_{\sf out}$, which is an
indication that the functors $f=g^{\star }=F_\alpha^{\star }$ propagate the languages backward.\\
\indent This gives a crude notion of \emph{logical information} of a given layer, or any subset $E$ of neurons in the union of the sets $D_U$: it is
the set of propositions predicted to hold true in $\mathbb{T}_{\sf out}(\xi_{\sf in})$ by the activities in $E$. If all the involved sets are finite,
the amount of information given by the set $E$ can be defined as the ratio of the number of predicted propositions over the number of wanted decisions, and a mean of this
ratio can be taken over the entries $\xi_{\sf in}$.\\
\begin{rmk*}
	\normalfont The above notion of semantic functioning and semantic information can be extended to sets of global activities $\Xi$, singletons sections
	of $X^{w}$, more general that the ones used for learning.
\end{rmk*}

Our experiments  in \cite{logic-DNN} have shown that the number of hidden layers, or the complexity of the architecture, strongly influences the nature
of the semantic functioning. This implies that the semantic functioning, then the corresponding accessible semantic information, depend on the characteristics of
the dynamic $X^{w}$, for instance the non-linearities for saturation and quantization, and of the characteristics of the learning, the influence of
the non-linearities of the gradient of backpropagation on the optimal weights $w\in W$. Therefore, it appears a notion of \emph{semantic learning},
which is a flow of natural transformations between dynamic objects $X^{w_t}$, increasing the semantic information.\\
\indent In the mentioned experiments, the semantic behavior appears only for sufficiently deep networks, and for non-linear activities.\\

\section{The model category of a DNN and its Martin-Löf type theory}\label{modelML}

In this section, we study the collection of stacks over a given layers architecture, with fibers in a given category, as groupoids,
and we show that it possesses a natural structure of closed model category of Quillen, giving both a theory of homotopy and an
intensional type theory, where the above stacks with free logical propagation, described by theorem \ref{thm:semanticflow}, correspond respectively to fibrant objects and
admissible contexts.\\

Consider two fibrations $(\mathcal{F}_U,F_\alpha)$ and $(\mathcal{F}'_U,F'_\alpha)$ over $\mathcal{C}$; a morphism $\varphi$ from the first to the second
is given by a collection of functors $\varphi_U:\mathcal{F}_U\rightarrow \mathcal{F}'_U$ such that for any arrow $\alpha:U\rightarrow U'$ of $\mathcal{C}$,
$\varphi_U\circ F_\alpha=F'_\alpha\circ \varphi_{U'}$. With the fibrations in groupoids, this gives a category ${\sf Grpd}_{\mathcal{C}}$. Natural transformations between two morphisms give it a structure of strict $2$-category.\\
\indent We consider this category fibred over $\mathcal{C}_\mathbf{X}$. Remind that the Grothendieck topology on $\mathcal{C}_\mathbf{X}$ that we consider is chaotic \cite{SGA4}.
If we consider an equivalent site, with a non-trivial topology, homotopical constraints appear for defining stacks \cite{giraud-classifying}, \cite{Hollander}.
However the category of stacks ({\em resp.} stacks in groupoids) is equivalent to the category obtained from $\mathcal{C}_\mathbf{X}$.\\
\indent Hofmann and Streicher \cite{HS}, have proved that the category ${\sf Grpd}$ of groupoids gives rise to a Martin-Löf type theory \cite{ML}, by taking for types the fibrations in groupoids, for terms their sections, for substitutions the pullbacks, and they have defined non-trivial (non-extensional) identity types in this theory.

\indent Hollander \cite{Hollander2001AHT}, \cite{Hollander}, using Giraud's work and homotopy limits, constructed a Quillen model theory on the category of fibrations ({\em resp.} stacks) in groupoids over any site $\mathcal{C}$, where the fibrant objects are the stacks, the cofibrant objects are generators,
and the weak equivalences are the homotopy equivalence in the fibers (see also Joyal-Tierney and Jardine cited in Hollander \cite{Hollander}).
These results were extended to the category of general stacks, not only in groupoids, over a site by Stanculescu \cite{Stanculescu}.

\indent  Awodey and Warren \cite{AWODEY_2009} observed that the construction of Hofmann-Streicher is based on the most natural closed model category structure in the sense of Quillen on ${\sf Grpd}$,
and proposed an extension of the construction to more general model categories. Thus they established the connection between Quillen's models and Martin-Löf intensional theories,
which was soon extended to a connection between more elaborate Quillen's models and Voedvosky univalent theory. 

\indent Arndt and Kapulkin, in \emph{Homotopy Theoretic Models of Type Theory} \cite{AK}, have proposed additional axioms on a closed model theory that are sufficient to formally deduce a Martin-Löf theory. This was extended later by Kapulkin and Lumsdaine \cite{KLV}, to obtain models of Voedvosky theory,
by using more simplicial techniques. Here, we will follow their approach, without
going to the special properties of HoTT, that are functions extensionality, Univalence axiom and Higher inductive type formations.\\

\indent In what follows, we focus on the model structure of groupoids and stacks in groupoids, which are the most useful models for our applications.
However, many things also work with ${\sf Cat}$ in place of ${\sf Grpd}$, and  some other model categories $\mathcal{M}$. The complication is due to the difference between fibrations ({\em resp.} stacks) in the sense of Giraud and Grothendieck and the fibrations in the sense of Quillen's models, which is not the case with groupoids.
For ${\sf Cat}$, there exists a unique closed model structure, defined by Joyal and Tierney, such that the weak equivalences are the equivalence of categories \cite{CSP-2012}. It is named for this reason the \emph{canonical model structure on ${\sf Cat}$}; in this structure, the cofibrations are the functors injective on objects and
the fibrations are the \emph{isofibrations}. An isofibration is a functor $F:\mathcal{A}\rightarrow \mathcal{B}$, such that every isomorphism of $\mathcal{B}$ can be lifted to an isomorphism of $\mathcal{A}$. Any fibration of category is an iso-fibration, but the converse is true only for groupoids. A different model theory was defined by Thomason \cite{CTGDC_1980__21_3_305_0}, which is better understandable in terms of $\infty$-groupoids
and $\infty$-categories.

The axioms of Quillen \cite{quillen1967homotopical} concern three subsets
of morphisms in a category $\mathcal{M}$, supposed to be (at least finitely) complete and cocomplete, the set ${\sf Fib}$ of fibrations, the set ${\sf Cofib}$ of cofibrations
and the set ${\sf WE}$ of weak equivalences. An object $A$ of $\mathcal{M}$ is said fibrant ({\em resp.} cofibrant) if $A\rightarrow {\bf 1}$, the final object ({\em resp.} $\emptyset\rightarrow A$ from the initial object) is a fibration
({\em resp.} a cofibration).
\begin{defns*}
	Two morphisms $i:A\rightarrow B$ and $p:C\rightarrow D$ in a category are said \emph{orthogonal}, written (non-traditionally)
	$i\rightthreetimes p$, if for any pair of morphisms $u:A\rightarrow C$ and $v:B\rightarrow D$, such that $p\circ u=v\circ i$, there exists a morphism
	$j:B\rightarrow C$ such that $j\circ i=u$ and $p\circ j=v$. The morphism $j$ is named a lifting, left lifting of $i$ and a right lifting of $p$.\\
	\indent Two sets $\mathcal{L}$ and $\mathcal{R}$ are said be the orthogonal one of each other if $i\in  \mathcal{L}$ is equivalent to $\forall p\in \mathcal{R},
	i\rightthreetimes p$ and $p\in \mathcal{R}$ is equivalent to $\forall i\in  \mathcal{L},i\rightthreetimes p$.
\end{defns*}

\noindent The three axioms of Quillen for a closed category $\mathcal{M}$ of models are:
\begin{enumerate}[label=\arabic*)]
	\item given two morphisms $f:A\rightarrow B$, $g:B\rightarrow C$, define $h=g\circ f$; if two of the morphisms $f,g,h$ belong to ${\sf WE}$, then
	the third one belongs to ${\sf WE}$;
	\item every morphism $f$ is a composition $f=p\circ i$ of an element $p$ of ${\sf Fib}$ and an element $i$ of ${\sf Cofib}\cap {\sf WE}$,
	and a composition $p'\circ i'$ of an element $p'$ of ${\sf Fib}\cap {\sf WE}$ and an element $i'$ of ${\sf Cofib}$;
	\item the sets ${\sf Fib}$ and  ${\sf Cofib}\cap {\sf WE}$ are the orthogonal one of each other and the sets ${\sf Fib}\cap {\sf WE}$ and  ${\sf Cofib}$ also.
\end{enumerate}

\noindent An element of ${\sf Fib}\cap {\sf WE}$ is named a \emph{trivial fibration}, and an element of ${\sf Cofib}\cap {\sf WE}$ is named a \emph{trivial cofibration}.\\

These axioms (and some more general) allowed Quillen to develop a convenient homotopy theory in $\mathcal{M}$, and to define a
homotopy category $Ho\mathcal{M}$ (see his book, \emph{Homotopical Algebra}, \cite{quillen1967homotopical}). The objects of $Ho\mathcal{M}$ are the fibrant
and cofibrant objects of $\mathcal{M}$,
and its morphisms are the homotopy classes of morphisms in $\mathcal{M}$; two morphisms $f,g$ from $A$ to $B$ are homotopic if there exists an object $A'$, equipped
with a weak equivalence $\sigma: A'\rightarrow A$ and two morhisms $i_0,i_1$ from $A$ to $A'$ such that $\sigma\circ i_0=\sigma\circ i_1$, and
a morphism $h:A'\rightarrow B$, such that $h\circ i_0=f$ and $h\circ i_1=g$. In the category $Ho\mathcal{M}$, the weak equivalences of $\mathcal{M}$ are inverted.\\

\noindent A particular example is the category of sets with surjections as fibrations, injections as cofibrations and all maps as equivalences. Another
trivial structure, which exists for any category is no restriction for ${\sf Fib}$ and ${\sf Cofib}$ but isomorphisms for ${\sf WE}$.\\
As we already said, an important example is the category of groupoids ${\sf Grpd}$, with the usual fibrations in groupoids, with all the functors injective on the objects as cofibrations,
and the usual homotopy equivalence (i.e. here category equivalence) as weak equivalences.\\
We also mentioned the canonical structure on ${\sf Cat}$, that is the only one where weak homotopy corresponds to the usual equivalence of category.\\
Other fundamental examples are the topological spaces ${\sf Top}$ and the simplicial sets ${\sf SSet}=\Delta^{\wedge}$, with Serre and Kan fibrations for $\sf Fib$ respectively.\\
The closed model theory of Thomason 1980 \cite{CTGDC_1980__21_3_305_0} on ${\sf Cat}$ is deduced by the above structure on ${\sf SSet}$, by using the nerve construction
and the square of the right adjoint functor f the barycentric subdivision. In this structure the weak equivalences are not reduced to the category equivalences
and the cofibrant objects are constrained \cite{cisinski2006prefaisceaux}; this theory is weakly equivalent to the Kan structure on ${\sf SSet}$.
Then in this structure, a category is considered through its weak homotopy type (the weak homotopy type of its nerve).\\

\indent We now call on a general result of Lurie 's book, \cite[appendix A.2.8, prop. A.2.8.2]{lurie2009higher}, which establishes the existence of two canonical
closed model structures on the category of functors $\mathcal{M}_\mathcal{C}={\sf Fun}(\mathcal{C}^{\sf op},\mathcal{M})$ when $\mathcal{M}$ is a model category.
(Caution, Lurie consider diagrams, i.e. $\mathcal{C}$ and not $\mathcal{C}^{\rm op}$.) An additional hypothesis is made on $\mathcal{M}$, that it is combinatorial
in the sense of Smith (see Rosicky in \cite{Rosick2009OnCM}), i.e. locally presentable (i.e. accessible by a regular cardinal), and generated by cofibrant objects,
which are both satisfied by ${\sf Grpd}$ and by ${\sf Cat}$. Moreover $\mathcal{M}$ is supposed to have all small limits and small colimits,
which is also the case for ${\sf Grpd}$ (or ${\sf Cat}$); as ${\sf Set}$, both are cartesian closed categories; every object is fibrant and cofibrant.\\
\indent The two Lurie structures are respectively obtained by defining the sets ${\sf Fib}$ or ${\sf Cofib}$ in the fiberwise manner, as for the set ${\sf WE}$,
and by taking respectively the set ${\sf Cofib}$ or ${\sf Fib}$ of morphism which satisfy the required lifting properties, respectively on the left and on the right, i.e. the orthogonality of Quillen.\\
The structure obtained by fixing ${\sf Fib}$ ({\em resp.} ${\sf Cofib}$) by the behavior in the fibers, is named the \emph{projective} structure, or \emph{right}
one ({\em resp.} the \emph{injective} one, or \emph{left} one). 

{\bf Caution:} depending on the authors, the term right and left may be exchanged.\\
The model structure of Hollander on ${\sf Grpd}_{\mathcal{C}}$ (or Stanculescu for ${\sf Cat}_{\mathcal{C}}$) is the right Lurie model. She called this
model a left model.\\
\indent A model category is said \emph{right proper} when the pullback of any weak equivalence
along an element of $Fib$ is again a weak equivalence. Dually, \emph{left proper} is when push-forward of weak equivalence along
cofibrations is again in ${\sf WE}$.\\
\noindent In the right proper case, the injective (left) structure of Lurie was defined before by D-C. Cisinski in
"{\em Images directes cohomologiques dans les catégories de modèles}" \cite{AMBP_2003__10_2_195_0}.\\

\noindent The cofibrations in the right model ({\em resp.} the fibrations in the left model) depend on the category $\mathcal{C}$. They certainly deserve to be better
understood. \\
See the discussion of Cisinski, in his book {\em Higher Categories and Homotopical Algebra}, \cite[section $2.3.10$]{cisinski_2019}.

\begin{prop}\label{prop:fib-cofib}
	If $\mathcal{C}$ has sufficiently many points, the elements of ${\sf Fib}$ for the left Lurie structure are fibrations in the fibers (i.e. elements of
	${\sf Fib}$ for the right structure) and the elements of ${\sf Cofib}$ for the right structure are injective on the objects in the fibers (i.e. elements of ${\sf Cofib}$ for the left structure.
\end{prop}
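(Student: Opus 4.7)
The two assertions are Eckmann--Hilton dual, so the plan is to prove the first inclusion (left ${\sf Fib}\subset$ fiberwise ${\sf Fib}$) in detail and to indicate the dual manoeuvre for cofibrations. The strategy is to transpose lifting problems in $\mathcal{M}$ to lifting problems in $\mathcal{M}_\mathcal{C}={\sf Fun}(\mathcal{C}^{\sf op},\mathcal{M})$ by means of left and right Kan extensions along the inclusion $\{c\}\hookrightarrow \mathcal{C}^{\sf op}$ of a single object. Concretely, the evaluation $\mathrm{ev}_c:\mathcal{M}_\mathcal{C}\to \mathcal{M}$ has a left adjoint $c_!$ and a right adjoint $c_*$ since $\mathcal{M}$ is bicomplete, and in the poset situation of the DNN sites these adjoints take the form
\begin{equation}
c_!(m)(d) =
\begin{cases} m & \text{if } d\leq c \\ \emptyset_\mathcal{M} & \text{otherwise}, \end{cases}
\qquad
c_*(m)(d) =
\begin{cases} m & \text{if } c\leq d \\ \mathbf{1}_\mathcal{M} & \text{otherwise}, \end{cases}
\end{equation}
where $\emptyset_\mathcal{M}$ and $\mathbf{1}_\mathcal{M}$ denote the initial and terminal objects of $\mathcal{M}$ (the empty and one-object groupoids when $\mathcal{M}={\sf Grpd}$).

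For the first inclusion, I would take $f\in {\sf Fib}$ in the left (injective) Lurie structure, fix $c\in \mathcal{C}$, and consider an arbitrary trivial cofibration $i:A\to B$ of $\mathcal{M}$. The key observation is that $c_!(i)$ is a \emph{fiberwise} trivial cofibration in $\mathcal{M}_\mathcal{C}$: at $d\leq c$ it equals $i$, and at other $d$ it is the identity of $\emptyset_\mathcal{M}$, which is trivially a trivial cofibration of $\mathcal{M}$. Hence $c_!(i)$ is a trivial cofibration in the left Lurie structure, and any lifting square for $i$ against $f(c)$ transposes via $c_!\dashv \mathrm{ev}_c$ into a lifting square for $c_!(i)$ against $f$; the left-fibration assumption on $f$ supplies a lift, whose image under $\mathrm{ev}_c$ solves the original square. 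Thus $f(c)\in {\sf Fib}$ for every $c$, as required.

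The second inclusion is obtained dually: for $f$ a right (projective) cofibration and $p:C\to D$ a trivial fibration of $\mathcal{M}$, the morphism $c_*(p)$ is fiberwise a trivial fibration (equal to $p$ on $d\geq c$ and to the identity of $\mathbf{1}_\mathcal{M}$ elsewhere), hence a trivial fibration in the right Lurie structure. The adjunction $\mathrm{ev}_c\dashv c_*$ transposes lifting squares in the opposite direction and shows that $f(c)$ has the left lifting property against every trivial fibration of $\mathcal{M}$, i.e.\ is a cofibration of $\mathcal{M}$; for $\mathcal{M}={\sf Grpd}$ this is precisely injectivity on objects.

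The hypothesis that $\mathcal{C}$ has sufficiently many points enters only to ensure that the family of evaluations $\{\mathrm{ev}_c\}_{c\in \mathcal{C}}$ actually detects the fiberwise conditions in the conclusion. In the DNN context $\mathcal{C}=\mathcal{C}_\mathbf{X}$ carries the Alexandrov topology and its topos points are exactly the elements of $\mathbf{X}$, so this is automatic. The only step I anticipate requiring genuine care is the identification of $c_!$ and $c_*$ with the explicit formulas above in the presheaf convention $\mathcal{M}_\mathcal{C}={\sf Fun}(\mathcal{C}^{\sf op},\mathcal{M})$; once those formulas and the bicompleteness of $\mathcal{M}$ are in hand, both the fiberwise trivial (co)fibration checks and the transposition of lifting squares are formal.
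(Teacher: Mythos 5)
Your argument is correct and takes essentially the same route as the paper's: the paper's proof likewise reduces the lifting problem to each point of $\mathcal{C}$, asserting that right-orthogonality to the trivial cofibrations of the left structure restricts to an orthogonality in $\mathcal{M}$ over each object, and your use of the adjunctions $c_!\dashv \mathrm{ev}_c\dashv c_*$ simply makes that transposition explicit. The only cosmetic difference is that the paper phrases the pointwise reduction via topos points while you work directly with evaluation at objects of the poset, which is what the fiberwise conclusion requires anyway.
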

\begin{proof}
	Suppose that a morphism $\varphi$ is right orthogonal to any trivial cofibration $\psi$ of the left Lurie structure; for every point $x$ in $\mathcal{C}$, this
	gives an orthogonality in the model ${\sf Grpd}$, then over $x$, $\varphi_x$ induces a fibration in groupoids. From the hypothesis, this implies that in every fiber over $\mathcal{C}$, $\varphi$ is a fibration, then an element of ${\sf Fib}$ for the right Lurie structure.\\
	The other case is analog.
\end{proof}

\noindent However in general, even if $\mathcal{C}$ is a poset, not all fibrations in the fibers are in ${\sf Fib}$ for the left model structure, and not all the injective in fibers
are in ${\sf Cofib}$ for the right model. This was apparent in Hollander \cite{Hollander2001AHT}.\\

\noindent Trying to determine the obstruction for a local fibration ({\em resp.} local cofibration) to be orthogonal to functors that are locally injective on the objects
({\em resp.} local fibrations) and locally homotopy equivalence, we see that 
the intuitionistic structure of $\Omega_{\mathcal{C}}$ enters the game, through the global constraints
on the complement of presheaves:
\begin{lem}
	The category $\mathcal{C}$ being the oriented segment $1\rightarrow 0$ and the category $\mathcal{M}$ being ${\sf Set}$
	(then $\mathcal{M}_\mathcal{C}$ is the topos of the Shadoks \cite{proute-shadok}); in the left Lurie model the fibrant objects are the (non-empty) surjective maps $f:F_0\rightarrow F_1$.
\end{lem}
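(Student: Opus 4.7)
First I would unpack $\mathcal{M}_\mathcal{C}={\sf Fun}(\mathcal{C}^{\sf op},{\sf Set})$ explicitly: an object is simply a map of sets $f:F_0\to F_1$, and a morphism is a commuting square of sets. Since the left (injective) Lurie structure has cofibrations and weak equivalences defined pointwise, and the fixed model on ${\sf Set}$ has injections for cofibrations and all maps for weak equivalences, the trivial cofibrations of $\mathcal{M}_\mathcal{C}$ are exactly the natural transformations that are injective in both components. An object is thus fibrant precisely when the unique morphism to the terminal object $(\{\ast\}\to\{\ast\})$ has the right lifting property against every such pointwise injection.

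\textbf{Necessity.} To force a fibrant $f:F_0\to F_1$ to be surjective and nonempty, I would test against a small collection of simple inclusions. The decisive test is the pointwise injection $A=(\emptyset\to\{\ast\})\hookrightarrow B=(\{\ast\}\xrightarrow{\mathrm{id}}\{\ast\})$: a morphism $A\to F$ is nothing but the choice of an element $y\in F_1$, and a lift $B\to F$ produces $x\in F_0$ with $f(x)=y$. Letting $y$ range over $F_1$ yields surjectivity of $f$. A similar test against $(\emptyset\to\emptyset)\hookrightarrow(\emptyset\to\{\ast\})$ rules out $F_1=\emptyset$, and surjectivity then also forces $F_0\neq\emptyset$.

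\textbf{Sufficiency.} Conversely, assume $f$ is surjective with $F_0$ and $F_1$ both nonempty, and let $\psi:A\hookrightarrow B$ be any pointwise injection equipped with a morphism $\varphi:A\to F$. Identifying $A_i\subseteq B_i$, I would build a lift $\tilde\varphi:B\to F$ in two steps. First, extend $\varphi_1$ to $\tilde\varphi_1:B_1\to F_1$ by assigning an arbitrary element of $F_1$ (nonempty) to each point of $B_1\setminus A_1$. Then, for every $b_0\in B_0\setminus A_0$, define $\tilde\varphi_0(b_0)$ to be any element of $f^{-1}\bigl(\tilde\varphi_1(h(b_0))\bigr)$, where $h:B_0\to B_1$ is the structure map of $B$; surjectivity of $f$ makes each such fiber nonempty. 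The required equality $f\circ\tilde\varphi_0=\tilde\varphi_1\circ h$ holds by construction off $A_0$, and on $A_0$ it reduces to the fact that $\varphi$ is already a morphism in $\mathcal{M}_\mathcal{C}$.

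\textbf{Main obstacle.} The only delicate point is bookkeeping in the sufficiency step: several $b_0\in B_0$ may share the same image $h(b_0)=b_1$, so their lifts must all fall inside the single fiber $f^{-1}(\tilde\varphi_1(b_1))$, and the arbitrary choices on $B_0\setminus A_0$ must not contradict those forced on $A_0$. Both obstructions dissolve once one fixes the order of extension (first define $\tilde\varphi_1$ globally on $B_1$, then extend $\tilde\varphi_0$ fiberwise over $h$), together with one application of the axiom of choice.
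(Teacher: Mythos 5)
Your proof is correct and follows essentially the same route as the paper's: identify the trivial cofibrations as the pointwise injections, observe that the only obstruction to lifting comes from new points of $B_0$ sitting over old points of $A_1$ (the paper's ``shadok with an egg''), and resolve it by first extending on the $1$-component and then choosing preimages under $f$, which is exactly where surjectivity enters. Your explicit test objects for necessity and your careful ordering of the extension (first $\tilde\varphi_1$ on all of $B_1$, then $\tilde\varphi_0$ fiberwise) make the empty-set and basepoint bookkeeping a bit cleaner than the paper's version, but the argument is the same.
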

\begin{proof}
	A trivial cofibration is a natural transformation
	\begin{equation}
		\eta:(h:H_0\rightarrow H_1)\rightarrow (h':H'_0\rightarrow H'_1);
	\end{equation}
	such that $\eta_0$ and $\eta_1$ are injective.\\
	Suppose given a natural transformation $u=(u_0,u_1)$ from $h$ to 
	$f:F_0\rightarrow F_1$; the lifting problem is the extension of $u$
	to $u'$ from $h'$ to $f$. If $H_1$ is empty, there is no problem. If not, we choose a point $\star_0$ in $H_0$ and note $\star_1=h(\star_0)$.
	If  $x'_1\in H'_1$ does'nt belong to $H_1$ we define $u'_1(x'_1)=u_1(\star_1)$, and for any $x'_0$ such that $h'(x'_0)=x'_1$, we define
	$u'_0(x'_0)=u_0(\star _0)$. Now the problem comes with the points $x"_0$ in $H'_0\backslash H_0$ such that $h'(x"_0)\in H_1$ (a shadok with an egg);
	their image by $u_1$ is defined, then $u'_1(h'(x"_0))$ is forced to be in the image of $F_0$ by $f$. If $f$ is not surjective there exists
	$\eta$ such that the lifting is impossible. But, if $f$ is surjective there is no obstruction: we define $u'_0(x"_0)$ to be any point $y_0$
	in $F_0$ such that $f(y_0)=u_1(h'(x"_0))$ in $F_1$.
\end{proof}
\begin{lem}
	Also $\mathcal{M}={\sf Set}$, but $\mathcal{C}$ being the (confluence) category $\bigwedge$ with three objects $0,1,2$ and two non-trivial
	arrows $1\rightarrow 0$ and $2\rightarrow 0$. In the left Lurie model, the fibrant objects are the pairs $(f_1:F_0\rightarrow F_1, f_2:F_0\rightarrow F_2)$,
	such that the product map $(f_1,f_2)$ is surjective.
\end{lem}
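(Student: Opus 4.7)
The plan is to mirror, step by step, the analysis performed for the oriented segment in the previous lemma. In the left (injective) Lurie model on $\mathcal{M}_\mathcal{C}={\sf Fun}(\mathcal{C}^{\sf op},{\sf Set})$, the cofibrations are the pointwise injective natural transformations, and since every map of ${\sf Set}$ is a weak equivalence, the trivial cofibrations are exactly these pointwise injections. Hence $F=(f_1\colon F_0\to F_1,\,f_2\colon F_0\to F_2)$ is fibrant iff the map $F\to\mathbf{1}$ has the right lifting property against every pointwise injective $\eta\colon H\hookrightarrow H'$, with $H=(h\colon H_0\to H_1,\,k\colon H_0\to H_2)$ and $H'=(h'\colon H'_0\to H'_1,\,k'\colon H'_0\to H'_2)$.

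Given such an $\eta$ and a map $u=(u_0,u_1,u_2)\colon H\to F$, I would construct an extension $u'\colon H'\to F$ by treating the new points one at a time, following the case analysis of the previous lemma. Points $x'_i\in H'_i\setminus H_i$ ($i=1,2$) that lie outside the image of $h'$ (resp.\ $k'$), or whose preimages in $H'_0$ lie already in $H_0$, can be assigned freely by picking an auxiliary base point $\star\in F_0$ and setting $u'_i(x'_i)=f_i(\star)$; similarly, a new point $x'_0\in H'_0\setminus H_0$ with $h'(x'_0)\notin H_1$ or $k'(x'_0)\notin H_2$ can be handled by first fixing the unconstrained value of $u'_1$ or $u'_2$ above it. The one genuinely constrained case is a new $x'_0\in H'_0\setminus H_0$ with both $h'(x'_0)\in H_1$ and $k'(x'_0)\in H_2$: naturality then forces
\begin{equation}
(f_1,f_2)\bigl(u'_0(x'_0)\bigr)=\bigl(u_1(h'(x'_0)),\,u_2(k'(x'_0))\bigr),
\end{equation}
and by varying $\eta$ and $u$ the right-hand side can be made to hit any prescribed element of $F_1\times F_2$.

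This already gives sufficiency: if $(f_1,f_2)\colon F_0\to F_1\times F_2$ is surjective then a preimage $u'_0(x'_0)$ always exists and the lifting closes. For necessity, if $(\alpha,\beta)\in F_1\times F_2$ is not in the image of $(f_1,f_2)$, I would take $H=(\emptyset\to\{\alpha\},\,\emptyset\to\{\beta\})$, $H'=(\{x\}\to\{\alpha\},\,\{x\}\to\{\beta\})$ with the evident structure maps and inclusion $\eta$, and define $u\colon H\to F$ by sending $\alpha\mapsto\alpha$, $\beta\mapsto\beta$; any lift $u'$ would be forced to satisfy $(f_1,f_2)(u'_0(x))=(\alpha,\beta)$, contradiction. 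The only real subtlety (rather than an obstacle) is, exactly as in the previous lemma, handling the degenerate empty cases coherently, which corresponds to the implicit non-emptiness clause indicated there by the parenthetical ``(non-empty)'' in front of ``surjective''.
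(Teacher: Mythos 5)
Your proposal is correct and follows essentially the same route as the paper: both mirror the case analysis of the preceding lemma for the oriented segment, isolate the troublesome new points of $H'_0$ whose images under $h'$ or $k'$ land in the old $H_1$ or $H_2$, and use bi-surjectivity of $(f_1,f_2)$ to define $u'_0$ there, with failure of bi-surjectivity yielding an unsolvable lifting problem. Your explicit counterexample for necessity and your remark on the degenerate empty cases only make explicit what the paper's terser proof leaves implicit.
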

\begin{proof}
	Following the path of the preceding proof, with an injective transformation $\eta$ from a triple $H_0,H_1,H_2$ to a triple $H'_0,H'_1,H'_2$,
	we are in trouble with the elements $x"_0\in H'_0$ that $h'_1$ or $h'_2$ sends into $H_1$ or $H_2$ respectively. Under the hypothesis of bi-surjectivity, we know
	where to define $u'_0(x"_0)$. But if this hypothesis is not satisfied, impossibility happen in general for $\eta$.
\end{proof}

\begin{lem}\label{lem:012}
	Also $\mathcal{M}={\sf Set}$, but $\mathcal{C}$ being the (divergence) category $\bigvee$ with three objects $0,1,2$ and two non-trivial
	arrows $0\rightarrow 1$ and $0\rightarrow 2$. In the left Lurie model, the fibrant objects are the pairs $(f_1:F_1\rightarrow F_0, f_2:F_2\rightarrow F_0)$,
	such that separately $f_1$ and $f_2$ are surjective.
\end{lem}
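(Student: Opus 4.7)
The plan is to mirror the proofs of the two preceding lemmas, adjusting only for the reversed direction of the arrows: presheaves on $\mathcal{C}^{\rm op}$ with $\mathcal{C}=\bigvee$ are spans $F_1\to F_0\leftarrow F_2$, and a trivial cofibration is a natural transformation $\eta=(\eta_0,\eta_1,\eta_2)$ whose three components are all injective (every map of ${\sf Set}$ is a weak equivalence). The lifting problem to solve is the extension of a given morphism $u=(u_0,u_1,u_2)$ from $(h_1:H_1\to H_0,\,h_2:H_2\to H_0)$ to $(f_1,f_2)$ along an injective $\eta$ from $(h_1,h_2)$ to $(h'_1:H'_1\to H'_0,\,h'_2:H'_2\to H'_0)$, producing $u'=(u'_0,u'_1,u'_2)$ subject to $f_1\circ u'_1=u'_0\circ h'_1$ and $f_2\circ u'_2=u'_0\circ h'_2$.

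For the sufficient direction I would observe that, since both $h'_1$ and $h'_2$ land in $H'_0$ and no arrow of $\mathcal{C}$ links $1$ to $2$ directly, the two branches decouple. First, extend $u_0$ to $u'_0$ by assigning an arbitrary fixed basepoint $\star_0\in F_0$ to each point $x'_0\in H'_0\setminus\eta_0(H_0)$; this is compatible with $u_0$ since we only touch complementary points. Next, extend $u_1$ branchwise: for each $x'_1\in H'_1\setminus\eta_1(H_1)$ the fiber $f_1^{-1}(u'_0(h'_1(x'_1)))$ is non-empty by surjectivity of $f_1$, so one picks any element in it; the construction of $u'_2$ is strictly analogous using surjectivity of $f_2$. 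Compatibility on the image of $\eta$ holds by the naturality of $u$, and on the complement by construction.

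For the necessary direction I would use the counterexample pattern already employed in the two preceding lemmas: if $f_1$ is not surjective, pick $y_0\in F_0\setminus f_1(F_1)$, take $H$ with $H_0=\{\ast_0\}$ and $H_1=H_2=\emptyset$, $H'$ with $H'_0=\{\ast_0\}$, $H'_1=\{\ast_1\}$ and $h'_1(\ast_1)=\ast_0$, $H'_2=\emptyset$, and $u_0(\ast_0)=y_0$; any lift would force $u'_1(\ast_1)\in f_1^{-1}(y_0)=\emptyset$, which is impossible. A symmetric argument handles $f_2$. The only real conceptual point to highlight — and essentially the only obstacle — is the contrast with the preceding confluence lemma: in the divergence case the branches $1$ and $2$ are genuinely independent, so the weaker condition of \emph{separate} surjectivity already suffices, whereas the joint surjectivity of $(f_1,f_2)$ required in the confluence case would be strictly stronger and is not needed here.
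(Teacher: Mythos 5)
Your proof is correct and follows essentially the same route as the paper's: the obstruction is located at the new points of $H'_1$ (resp.\ $H'_2$) whose image under $h'_1$ (resp.\ $h'_2$) lies in $H_0$ and hence already has a prescribed value in $F_0$, and surjectivity of $f_1$ (resp.\ $f_2$) separately resolves each branch since they decouple. You merely add detail the paper omits — an explicit counterexample for necessity (the paper only asserts it) — and your choice of a basepoint $\star_0\in F_0$ tacitly assumes $F_0$ non-empty, the same caveat the paper attaches to the Shadok lemma.
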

\begin{proof}
	following the path of the preceding proof, with an injective transformation $\eta$ from a triple $H_0,H_1,H_2$ to a triple $H'_0,H'_1,H'_2$,
	we are in trouble with the elements $x"_1\in H'_1$ ({\em resp.} $x"_2\in H'_2$) that $h'_1$ ({\em resp.} $h'_2$) sends into $H_0$. As in the proff of the lemma 1, the problem is solved under
	the hypothesis of surjectivity, but it cannot be solved without it.
\end{proof}

\noindent More generally, we can determine the fibrant objects of the left Lurie model (injective) for every closed model category $\mathcal{M}$, and a finite poset $\mathcal{C}$
which has the structure of a DNN, coming with a graph, with unique directed paths:

\begin{thm}\label{thm:fibrations-dnn}
	When $\mathcal{C}$ is the poset of a $DNN$, for any combinatorial category of model, the fibrations of $\mathcal{M}_\mathcal{C}$ for
	the injective (left) model structure are made by the natural transformations $\mathcal{F}\rightarrow\mathcal{F}'$ between functors in $\mathcal{C}$ to $\mathcal{M}$,
	that induce fibrations in $\mathcal{M}$ at each object of $\mathcal{C}$, such that the functor $\mathcal{F}$ is also a fibration in $\mathcal{M}$ along each arrow
	of $\mathcal{C}$ coming from an internal of minimal vertex (ordinary vertex, output or tip), and
	a fibration along each of the arrows issued from a minimal vertex (output and tip), and a multi-fibration at each confluence point, in particular
	at the maximal vertices (input or tank).
\end{thm}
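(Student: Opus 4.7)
The plan is to follow closely the approach of the three lemmas immediately preceding (the chain $1\to 0$, the confluence $\bigwedge$ and the divergence $\bigvee$), and to extend it to the full DNN poset $\mathbf{X}$ of Theorem \ref{thm:dnn} by decomposing arrows according to the four structural types present: chain arrows out of an ordinary vertex, arrows out of a minimal vertex (output or tip), arrows into a handle, and the converging stars at confluence vertices (tangs $A$ and more generally the input-side of any fork). The argument has two halves: necessity (a left-Lurie fibration forces the stated local conditions) and sufficiency (a natural transformation satisfying them lifts every trivial cofibration).

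For necessity, I would first apply Proposition \ref{prop:fib-cofib}. The poset $\mathbf{X}$ has enough points, each $x\in\mathbf{X}$ giving a geometric point $\epsilon_x$ of $\mathcal{C}_{\mathbf{X}}^{\sim}$, so any element of ${\sf Fib}$ in the injective model induces a fibration $\varphi_x$ in $\mathcal{M}$ at every object. To extract the extra condition along a single arrow $\alpha\colon x\to y$ of $\mathcal{C}$, I imitate the construction in the proof of the Shadoks lemma: build a trivial cofibration $\eta\colon\mathcal{H}\to\mathcal{H}'$ whose underlying data are concentrated on $\{x,y\}$, where $\mathcal{H}'$ differs from $\mathcal{H}$ by adjoining new elements at $x$ whose image under the transition functor is forced to land in prescribed elements at $y$. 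Solving the lifting square against $\varphi$ then forces $\mathcal{F}(\alpha)$ itself to be a fibration of $\mathcal{M}$, provided the situation at $x$ makes the construction of $\eta$ legitimate. The structural type of $x$ controls exactly which such $\eta$ can be produced: at ordinary vertices and at minimal vertices (outputs and tips) the free choice gives the single-arrow fibration condition, whereas at a confluence vertex, one cannot independently prescribe the behavior along each incoming arrow, and the argument of the $\bigwedge$ lemma yields instead the weaker condition that the product map to the fiber product of the $\mathcal{F}_{a'}$ be a fibration, i.e.\ a \emph{multi-fibration}.

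For sufficiency, assume $\varphi\colon\mathcal{F}\to\mathcal{F}'$ satisfies every enumerated condition and consider a trivial cofibration $\eta\colon\mathcal{H}\to\mathcal{H}'$ and a commutative square to be filled. Using the decomposition of $\mathbf{X}$ provided by Theorem \ref{thm:dnn} as a finite collection of trees rooted in the maximal points and joined at minimal points, I would construct the lifting inductively, following the Alexandrov order of Definition \ref{defn:alexandrov} from maximal to minimal (or the reverse, depending on convention). At each newly treated vertex $x$ the missing part of the lift falls into one of three cases: (i) the new data at $x$ in $\mathcal{H}'\setminus\mathcal{H}$ are unconstrained, in which case the pointwise fibration $\varphi_x$ produces the lift; (ii) the new data are constrained through a single incoming arrow, solved by the fibration condition on $\mathcal{F}$ along that arrow; (iii) the new data are jointly constrained through several incoming arrows at a confluence, solved by the multi-fibration condition. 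The rooted-tree structure of $\mathbf{X}$ ensures that these inductive choices never conflict with one another.

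The principal obstacle I expect is the bookkeeping at confluence vertices where the incoming arrows are of mixed type, that is tangs $A$ receiving both tips $a'$ and ordinary handles $a$. There one must state precisely with respect to which product (only the tips, all incoming edges, or the whole star in $\boldsymbol{\Gamma}$) the multi-fibration condition is taken, and verify that a single lifting in $\mathcal{M}$ can be chosen compatibly with all upstream constraints; this is where the precise wording \emph{multi-fibration at each confluence point} is doing genuine work and must be reconciled with the sheaf condition on forks described in Chapter \ref{chap:architecture}. A secondary technical point is that for a general combinatorial $\mathcal{M}$ in Smith's sense, the element-chasing of the Shadoks proof must be replaced by internal lifting arguments using generating trivial cofibrations of $\mathcal{M}$ itself; this is why the hypotheses of small (co)limits and of combinatoriality on $\mathcal{M}$ cannot be dropped, and it is the reason Lurie's \cite[Prop.~A.2.8.2]{lurie2009higher} is used as the foundational ingredient rather than a bare construction from scratch.
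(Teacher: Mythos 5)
Your proposal follows essentially the same route as the paper: the paper also argues by recurrence on the vertices, peeling off a maximal (confluence) vertex with its incoming star, extending the lift obtained on the remainder by means of the multi-fibration condition against the trivial cofibration $\eta_A\colon H_A\to H'_A$, and deriving necessity from Proposition \ref{prop:fib-cofib} together with the orthogonality axiom of closed model categories. Your extra care in constructing explicit trivial cofibrations concentrated on a single arrow for the necessity direction, and your remark about mixed-type confluences, only add detail to steps the paper treats more briefly.
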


\noindent By \emph{multi-fibration} $f_i,i\in I$ from an object $F_A$ of $\mathcal{M}$ to a family of objects $F_i,i\in I$ of $\mathcal{M}$, we mean a fibration
(element of ${\sf Fib}$) from $F_A$ to the product $\prod_{i\in I}F_i$.
\begin{proof}
	We proceed by recurrence on the number of vertices. For an isolated vertex, this is the
	definition of fibration in $\mathcal{M}$. Then consider an initial vertex (tank or input) $A$ with incoming arrows $s_i:i\rightarrow A$ for $i\in I$ in the graph poset
	$\mathcal{C}$, and note $\mathcal{C}^{\star }$ the category with the star $A,s_i$ deleted. A trivial cofibration in $\mathcal{M}_\mathcal{C}$ is a natural transformation
	$\eta;\mathcal{H}\rightarrow \mathcal{H}'$ between contravariant functors in $\mathcal{C}\rightarrow\mathcal{M}$, which is at each vertex injective on objects
	and an element of $\mathcal{WE}$. Let us consider a morphism $(u,u')$ in in $\mathcal{M}_\mathcal{C}$ from $\eta$ to a morphism
	$\varphi:\mathcal{F}\rightarrow \mathcal{F}'$, where $\mathcal{F}$ belongs to $\mathcal{M}_\mathcal{C}$.\\
	\indent Suppose that $\varphi$ satisfies the hypotheses of the theorem. From the recurrence hypothesis, there exists a lifting
	$\theta^{\star }:(\mathcal{H}')^{\star }\rightarrow\mathcal{F}^{\star }$ between the restrictions of the functors to $\mathcal{C}^{\star }$; it is in particular defined on the
	objects $H'_i,i\in I$ to the objects $F_i,i\in I$.\\
	Consider the functor from $H'_A$ to the product $\prod_iF_i$, which is obtained by composing the horizontal arrows of $\eta$, from $H'_A$ to the
	product $H'=\prod_iH'_i$ with $\theta'$. The fact that $F_A\rightarrow \prod_iF_i$ is a multi-fibration in $\mathcal{M}$ and the fact that
	$\eta_A:H_A\rightarrow H'_A$ is a trivial
	cofibration in $\mathcal{M}$ imply the existence of a lifting $\theta_A:H'_A\rightarrow F_A$, which is given on $H_A$.\\
	\indent Conversely, if the hypothesis of multi-fibration is not satisfied, there exists elements $\eta_A:H_A\rightarrow H'_A$ in ${\sf Cofib}\cap {\sf WE}$ of $M$,
	such that the lifting $\theta_A$ of $H'_A$ to $F_A$ does'nt exist, by the axiom $(3)$ of closed models. To finish the proof, we note that the necessity
	to be a fibration at each vertex in $C$ is given by proposition \ref{prop:fib-cofib}.
\end{proof}
\begin{cor*}
	Under the same hypotheses, the fibrant objects of $\mathcal{M}_\mathcal{C}$ for
	the injective (left) model structure are made by the functors that are a fibration in $\mathcal{M}$ at each internal of minimal vertex (ordinary vertex, output or tip), and
	a fibrant object at the minimal (output and tip), and a multi-fibration at each confluence point (see lemma \ref{lem:012}), in particular at the maximal vertices (input or tank).
\end{cor*}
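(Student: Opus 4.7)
The plan is to specialize Theorem \ref{thm:fibrations-dnn} to the natural transformation $\varphi\colon \mathcal{F}\to \mathbf{1}$, where $\mathbf{1}$ denotes the terminal object of $\mathcal{M}_{\mathcal{C}}$; by the very definition of fibrant object, $\mathcal{F}$ is fibrant exactly when this unique morphism to $\mathbf{1}$ is a fibration. So the corollary should reduce to an unpacking exercise.

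First I would identify $\mathbf{1}$ explicitly: since limits in $\mathcal{M}_{\mathcal{C}}$ are computed pointwise, $\mathbf{1}$ is the constant functor sending every vertex $x$ of $\mathcal{C}$ to the terminal object $1_{\mathcal{M}}$ of $\mathcal{M}$, with identity transition morphisms. For this target, all hypotheses of Theorem \ref{thm:fibrations-dnn} that concern $\mathcal{F}'$ alone (being a fibration in $\mathcal{M}$ along arrows of $\mathcal{C}$, being a multi-fibration at confluence points) are satisfied vacuously, because identity maps and maps into a terminal object trivially belong to $\mathsf{Fib}$ and any product of terminal objects is terminal. Thus the theorem collapses to the conditions that involve $\mathcal{F}$ together with the pointwise components $\varphi_x$.

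Next I would unpack the pointwise condition: $\varphi_x\colon \mathcal{F}(x)\to 1_{\mathcal{M}}$ lies in $\mathsf{Fib}$ if and only if $\mathcal{F}(x)$ is a fibrant object of $\mathcal{M}$, by the definition of fibrancy recalled at the beginning of the section. The remaining clauses of the theorem --- that $\mathcal{F}$ is a fibration in $\mathcal{M}$ along each arrow issued from an internal vertex, and a multi-fibration at each confluence point --- are statements about $\mathcal{F}$ only and therefore transfer without modification. At a minimal vertex (an output or a tip) there are no outgoing arrows in $\mathcal{C}$, so the clause about fibrations along outgoing arrows is empty and the only surviving constraint at such a vertex is precisely the pointwise fibrancy $\mathcal{F}(x)\in \mathcal{M}_{\mathrm{fib}}$, matching the statement of the corollary.

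I do not anticipate a genuine obstacle: the proof is a direct specialization of the preceding theorem. The only care needed is bookkeeping --- verifying that every condition on the target functor in Theorem \ref{thm:fibrations-dnn} becomes trivial when that target is $\mathbf{1}$, and then transcribing the residual conditions on $\mathcal{F}$ (fibration along internal arrows, multi-fibration at confluence points, fibrant value at minimal vertices) in the asymmetric vertex-type language used by the statement.
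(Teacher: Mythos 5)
Your proposal is correct and is precisely the intended derivation: the paper states the corollary without a separate proof because it is the specialization of Theorem \ref{thm:fibrations-dnn} to the unique map $\mathcal{F}\to\mathbf{1}$, where $\mathbf{1}$ is the constant functor at the terminal object of $\mathcal{M}$, so that the pointwise-fibration clause becomes pointwise fibrancy and the clauses on $\mathcal{F}$ (fibration along arrows from internal vertices, multi-fibration at confluence points) carry over verbatim. Your bookkeeping at minimal vertices, where only the fibrancy condition survives, matches the asymmetric phrasing of the corollary.
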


\noindent One interest of this result is that it will describe the allowed contexts in the associated Martin-Löf theory when it exists, as we will see just below.\\
Another interest is for the behavior of the classifying object $\boldsymbol{\Omega}_\mathcal{F}$: in the case of ${\sf Grpd}_\mathcal{C}$ the fibrant objects are all good
for the induction theory in logic over the network (see theorem \ref{thm:semanticflow}). In the case of ${\sf Cat}_\mathcal{C}$, with the canonical structure, we will see below that
it is not the case, only a subclass of fibrant objects are good, which are made by composition of Giraud-Grothendieck fibrations.\\
Last by not least, this corollary allows to enter the homotopy theory of the stacks, according to Quillen \cite{quillen1967homotopical}, because it associates
objects up to homotopy with the stacks that have a fluid semantic functioning as in theorem \ref{thm:semanticflow}.\\

\noindent In ${\sf Grpd}_{\mathcal{C}}$ the final object ${\bf 1}$ ({\em resp.} the initial object $\emptyset$) is the constant
functor on $\mathcal{C}$ with values a singleton, ({\em resp.} the empty set). It follows that any object is cofibrant.\\

\indent The additional axioms of Arndt and
Kapulkin for a \emph{Logical Model Theory} are as follows:
\begin{enumerate}[label=(\arabic*)]
	\item\label{item:right-adjoint} for any element $f\in {\sf Fib}$, $f:B\rightarrow A$, the pullback functor $f^{\star }:\mathcal{M}|A\rightarrow \mathcal{M}|B$, once restricted to
	the fibrations, possesses a right-adjoint, denoted $\Pi_f$.
	\item\label{item:pullback} The pullback of a trivial cofibration, i.e. an element of ${\sf Cofib}\cap {\sf WE}$, along an element of ${\sf Fib}$ is again a trivial cofibration.
\end{enumerate}
\begin{rmk*}
	\normalfont In Arndt and Kapulkin \cite{AK}, the first axiom is written without the restriction of the adjunction to fibrations, however they remark later \cite[section 4.1, acknowledging an anonymous reviewer]{AK} that this restricted axiom is sufficient for the application below.
\end{rmk*}

The second axiom is satisfied if separately ${\sf Cofib}$ and ${\sf WE}$ are stable by pullback along a fibration. As we already said, a model category satisfying the second
property for ${\sf WE}$ is called \emph{right proper}.\\

When every object in $\mathcal{M}$ is fibrant ({\em resp.} cofibrant) the theory is right ({\em resp.} left) proper \cite{Hirschhorn2003}. This is the case for ${\sf Grpd}$ (and ${\sf Cat}$).
And Lurie proved that his two model structures on diagrams (or phe-sheaves) are right (reps. left) proper as soon as $\mathcal{M}$ is so. Then in our case, all the considered
models are right proper and left proper. This was shown by Hollander \cite{Hollander2001AHT} for ${\sf Grpd}_\mathcal{C}$.\\

The injectivity on objects in the fibers and the equivalence of category in the fibers are preserved
by every pullback, thus condition \ref{item:pullback} is satisfied for the left injective structure. This is the structure we choose. What happens to the right structure? \\
Arndt and Kapulkin noticed the example of the injective structure \cite[Prop. 27, p.12]{AK} and its Bousfield-Kan localizations;
this gives in particular the injective model structures for the category of stacks over any site (see Hirschhorn, \emph{Localization of Model Categories}
\cite{Hirschhorn2003}).\\

The existence of a right adjoint and a left adjoint of the pullback of fibrations
in categories, as it holds for presheaves of sets, was proved by Giraud in 1964 \cite[section I.2.]{giraud-descente}. 

Then, by proposition \ref{prop:fib-cofib}, for $\mathcal{M}={\sf Grpd}$, both left and right structures satisfy the condition $(1)$. For $\mathcal{M}={\sf Cat}$ this
is true only if $f$ is a fibration in the geometric sense, not only an isofibration.
What happens to other models categories $\mathcal{M}$?\\

As noticed by Arndt and Kapulkin, the left adjoint of $f^{\star }:\mathcal{M}|A\rightarrow \mathcal{M}|B$ always exists, it is written $\Sigma_f$,
and the right properness implies that
it respects ${\sf WE}$.\\

If $\mathcal{M}$ satisfies the axioms \ref{item:right-adjoint} and \ref{item:pullback}, Arndt and Kapulkin generalized the constructions of Seely \cite{seely_1984}, Hofmann and Streicher \cite{HS}, and Awodey$-$Warren \cite{AWODEY_2009} to define a M-L theory:\\

\noindent A \emph{context} is a fibration $\Gamma\rightarrow\mathcal{C}$, that is a fibrant object. A \emph{type} $\mathcal{A}$ in this context is a fibration $\mathcal{A}\rightarrow \Gamma$.
The declaration (judgment) of a type is written $\Gamma\vdash \mathcal{A}$. A \emph{term} $a:A$ is a section $\Gamma\rightarrow\mathcal{A}$. It is denoted $\Gamma\vdash a:\mathcal{A}$.\\
A \emph{substitution} $x/a$ is given by a change of base $F^{\star }$ for a functor $F:\Delta\rightarrow \Gamma$ in $\mathcal{M}_{\mathcal{C}}$, not necessarily a fibration.\\
The adjoint functor $\Sigma_f$ and $\Pi_f$ of $f^{\star }$, allows to define new types of objects: given $\Gamma$ and $f:\mathcal{A}\rightarrow \Gamma$, and $g:\mathcal{B}\rightarrow\mathcal{A}$, we get $\Sigma_f(g):\Sigma_{x:\mathcal{A}}\mathcal{B}(x)\rightarrow\Gamma$ and $\Pi_f(g):\Pi_{x:\mathcal{A}}\mathcal{B}(x)\rightarrow\Gamma$. They respectively replace the union over $\mathcal{A}$ and the product over $\mathcal{A}$.\\
\indent On the types, logical operations are applied, $\mathcal{A}\wedge\mathcal{B}$, $\mathcal{A}\vee\mathcal{B}$, $\mathcal{A}\Rightarrow\mathcal{B}$,
$\bot$ is empty, $\exists x, B(x)$, $\forall x, B(x)$. The rules for these operations satisfy the usual axioms.\\
More types, like the integers or the real numbers or the well ordering  can be added, with specific rules. \\

As remarked by  Arndt and Kapulkin, it is not necessary to have a fully closed model 
theory to get a Martin-Löf type theory \cite[remarks pp. 12-15]{AK}.
They noticed that $M-L$ type theories are probably associated to fibration-categories (or categories with fibrant objects) in the sense of Brown \cite{Brown_1973} (see also \cite{Uemura_2017}).
In these categories, cofibrations are not considered, however a nice homotopy theory can be developed.\\
\indent We have the following result concerning the weak factorization system made by cofibrations and trivial fibrations in the canonical model ${\sf Cat}$:
\begin{lem}
	A canonical trivial fibration in ${\sf Cat}$ is a geometric fibration.
\end{lem}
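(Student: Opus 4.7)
The plan is to unpack the canonical model structure on ${\sf Cat}$ and then verify the Grothendieck cartesian lifting property by hand, using fully-faithfulness together with strict surjectivity on objects.

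First, I would recall the characterization: in the Joyal--Tierney canonical structure on ${\sf Cat}$, the weak equivalences are the equivalences of categories and the fibrations are the isofibrations (functors lifting isomorphisms). Thus a trivial fibration $F:\mathcal{A}\to\mathcal{B}$ is an isofibration that is moreover fully faithful and essentially surjective.

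Second, I would promote essential surjectivity to strict surjectivity on objects. Given $b'\in\mathcal{B}$, essential surjectivity yields some $a_{0}\in\mathcal{A}$ and an isomorphism $\phi:F(a_{0})\xrightarrow{\sim} b'$. Applying the isofibration property to $\phi^{-1}:b'\to F(a_{0})$ produces an object $a'\in\mathcal{A}$ together with an isomorphism $a'\xrightarrow{\sim} a_{0}$ whose image under $F$ equals $\phi^{-1}$; in particular $F(a')=b'$ on the nose.

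Third, I would construct the cartesian lift. Given an object $a\in\mathcal{A}$ and an arrow $g:b'\to F(a)$ in $\mathcal{B}$, pick $a'$ with $F(a')=b'$ as above. By fullness of $F$ there exists $f:a'\to a$ in $\mathcal{A}$ with $F(f)=g$. To verify that $f$ is cartesian over $g$, let $f'':a''\to a$ be any morphism of $\mathcal{A}$ and $h:F(a'')\to F(a')=b'$ any morphism of $\mathcal{B}$ with $g\circ h=F(f'')$. By fullness there exists $\tilde h:a''\to a'$ with $F(\tilde h)=h$; then $F(f\circ \tilde h)=g\circ h=F(f'')$, so faithfulness of $F$ forces $f\circ \tilde h=f''$. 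Faithfulness also gives uniqueness of $\tilde h$, which is precisely the universal property of a cartesian arrow. Hence $F$ is a Grothendieck (geometric) fibration.

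The only subtle point is the second step, since the definition of trivial fibration a priori only delivers essential surjectivity, whereas Grothendieck cartesianness is usually stated with strict equality $F(f)=g$; the isofibration property is exactly what turns "essentially surjective" into "surjective on objects," and this is where both halves of the hypothesis (isofibration and equivalence) are genuinely used together. Once that is in hand, the rest is a direct application of fully-faithfulness.
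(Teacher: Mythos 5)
Your proof is correct and follows essentially the same route as the paper: obtain surjectivity on objects, lift the arrow by fullness, and verify the (strong) cartesian universal property via full faithfulness. The only difference is that you explicitly derive strict surjectivity on objects from the isofibration property plus essential surjectivity, a step the paper's proof simply asserts; this is a welcome clarification rather than a change of method.
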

\begin{proof}
	Consider an isofibration $f:\mathcal{A}\rightarrow \mathcal{B}$ that is also an equivalence of category. Take $a\in \mathcal{A}$ and
	$f(a)=b\in \mathcal{B}$ and a morphism $\varphi:b'\rightarrow b$ of $\mathcal{B}$; because $f$ is surjective on the objects, there exists $a'\in A$ such that $f(a")=b'$,
	and because $f$ is an equivalence the map from ${\sf Hom}(a',a)$ to ${\sf Hom}(b",b)$ is a bijection, then there exists a unique morphism $\psi:a'\rightarrow a$
	such that $f(\psi)=\varphi$. In the same manner, every morphism $b"\rightarrow b'$ has a unique lift $a"\rightarrow a'$, and conversely any morphism $\psi':a"\rightarrow a'$
	defines a composed morphism $\chi:a"\rightarrow a$ and a morphism image $\varphi':b"\rightarrow b'$ that define the same morphism $\varphi\circ \varphi"$ from
	$b"$ to $b$. As the morphisms from $a"$ to $a$ are identified by $f$ with the morphisms from $b"$ to $b$, this gives a natural bijection between the morphisms
	$\psi'$ from $a"$ to $a'$ and the pairs $(\chi,\varphi')$ in ${\sf Hom}(a",a)\times {\sf Hom}(b",b')$ over the same element in ${\sf Hom}(b",b)$. Therefore $\psi$ is a strong cartesian
	morphism over $\varphi$.
\end{proof}

\noindent The same proof shows that a canonical trivial fibration is a geometric op-fibration,
that is by definition a fibration between the opposite categories.\\

\indent In the case where $\mathcal{C}$ is the poset of a $DNN$ and $\mathcal{M}$ is the category ${\sf Cat}$, we say that a model fibration $f:A\rightarrow B$,
in $\mathcal{M}_\mathcal{C}$
is a \emph{geometric fibration} if it is a Grothendieck-Giraud fibration, and if all the iso-fibrations that constitute the fibrant object $A$
are Grothendieck-Giraud fibrations (see theorem \ref{thm:fibrations-dnn}).

\begin{thm}\label{thm:M-L}
	Let $\mathcal{C}$ be a poset of $DNN$, there exists a canonical $M-L$ structure where contexts and types correspond to the geometric fibrations in the $2$-category of
	contravariant functors ${\sf Cat}_\mathcal{C}$, and such that base change substitutions correspond to its $1$-morphisms.
\end{thm}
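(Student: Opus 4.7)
The plan is to invoke the Arndt--Kapulkin recipe \cite{AK} on the injective (left) Lurie model structure carried by ${\sf Cat}_\mathcal{C}={\sf Fun}(\mathcal{C}^{\sf op},{\sf Cat})$ equipped with the canonical (Joyal--Tierney) model on ${\sf Cat}$, but read off only along the subclass of geometric fibrations singled out just before the statement (those Grothendieck--Giraud fibrations whose domain is a fibrant object built out of Grothendieck--Giraud fibrations, in the sense of theorem~\ref{thm:fibrations-dnn}). Since every object of ${\sf Cat}$ is fibrant and cofibrant and ${\sf Cat}$ is combinatorial, the Lurie injective structure on ${\sf Cat}_\mathcal{C}$ exists, is right and left proper, and its fibrant objects and fibrations are described by theorem~\ref{thm:fibrations-dnn} and its corollary. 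Throughout, contexts $\Gamma$ are taken to be geometric fibrant objects of ${\sf Cat}_\mathcal{C}$, types are geometric fibrations $\mathcal{A}\to\Gamma$, terms are sections, and substitution is the $1$-categorical base change $F^{\star}$ along an arbitrary $1$-morphism $F:\Delta\to\Gamma$.

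First I would verify the dependent product axiom of \cite{AK}, i.e.\ that for every geometric fibration $f:\mathcal{B}\to\mathcal{A}$ in ${\sf Cat}_\mathcal{C}$, the pullback functor $f^{\star}:{\sf Cat}_\mathcal{C}|\mathcal{A}\to{\sf Cat}_\mathcal{C}|\mathcal{B}$ admits a right adjoint $\Pi_f$ once restricted to (geometric) fibrations. Working pointwise over $\mathcal{C}$, this is precisely the existence, due to Giraud \cite{giraud-descente}, of the right adjoint $f_{\star}$ to pullback along a fibration in categories; the additional conditions on $f$ at confluence vertices of $\mathcal{C}$ imposed by theorem~\ref{thm:fibrations-dnn} reduce, fiberwise, to the pullback/product compatibility in ${\sf Cat}$ that Giraud's construction is set up to handle. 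One then checks that $\Pi_f g$ of a geometric fibration $g$ is still a Grothendieck--Giraud fibration with a fibrant codomain: the preceding lemma of the excerpt (canonical trivial fibrations are geometric fibrations, and likewise op-fibrations) together with closure of Grothendieck--Giraud fibrations under composition and under the cotensor construction underlying $\Pi_f$ does this.

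Second I would verify the stability axiom: pullback of a trivial cofibration along a (geometric) fibration is again a trivial cofibration. Since cofibrations in the injective Lurie structure are the natural transformations that are objectwise injective on objects and this property is preserved by any pullback in ${\sf Cat}$, the cofibration side is automatic. The weak-equivalence side is right properness of ${\sf Cat}_\mathcal{C}$, which follows from right properness of ${\sf Cat}$ (every object fibrant) via Lurie's stability results on the injective model \cite{lurie2009higher}, already used in theorem~\ref{thm:fibrations-dnn}. Combining the two, Arndt--Kapulkin's machine then delivers formation, introduction, elimination, and $\beta/\eta$ rules for $\Sigma_f$ and $\Pi_f$-types as well as the identity types obtained from path objects built from the cocylinder in ${\sf Cat}_\mathcal{C}$; the logical type constructors $\wedge,\vee,\Rightarrow,\exists,\forall$ are inherited fiberwise from the Heyting structure on each $\Omega_U$ and lift coherently thanks to theorem~\ref{thm:semanticflow} applied to the geometric fibrations at hand.

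The main obstacle I expect is the interface between the two notions of fibration. A model fibration in the injective Lurie structure on ${\sf Cat}_\mathcal{C}$ is, pointwise, only an isofibration, whereas the Arndt--Kapulkin axioms require the existence of a well-behaved right adjoint $\Pi_f$, which in ${\sf Cat}$ genuinely needs a Grothendieck--Giraud fibration and not merely an isofibration; this is exactly the failure we saw in proposition~\ref{prop:fib-cofib} for ${\sf Cat}$ as opposed to ${\sf Grpd}$. The remedy, encoded in the statement itself, is to restrict the class of admissible contexts and types to the geometric fibrations, and then one must check that this restricted class is stable under the type-forming operations (pullback along arbitrary $1$-morphisms of contexts, $\Sigma_f$, $\Pi_f$, identity-type factorizations). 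Each of these reduces, via the fiberwise description of theorem~\ref{thm:fibrations-dnn}, to a stability statement for Grothendieck--Giraud fibrations in ${\sf Cat}$, which is classical; this closure check is the core technical content and, once carried out, produces the required canonical Martin--L\"of structure.
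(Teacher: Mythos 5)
Your proposal is correct and follows essentially the same route as the paper: the paper's proof also invokes Arndt--Kapulkin (their Theorem~26) and reduces everything to showing that pullback along a geometric fibration admits a left adjoint $\Sigma_f$ (closure of Grothendieck--Giraud fibrations under composition) and a right adjoint $\Pi_f$ (Giraud's bi-adjunction theorem), both preserving geometric fibrations, with the stability axiom for trivial cofibrations having been checked in the discussion preceding the theorem exactly as you describe. Your extra attention to the isofibration versus Grothendieck--Giraud fibration mismatch is the same issue the paper resolves by defining the class of geometric fibrations just before the statement.
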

\begin{proof}
	We follow the lines of Arndt and Kapulkin \cite[theorem $26$]{AK}. The main point is to
	prove that if $f:A\rightarrow B$ is a geometric fibration in $\mathcal{M}_\mathcal{C}$, the pullback functor $f^{\star }:{\sf Cat}|A\rightarrow {\sf Cat}_B$,
	has a left adjoint $f_!=\Sigma_f$
	and a right adjoint $f_\star =\Pi_f$ that both preserve the geometric fibrations. For the first case it is the stability of Grothendieck-Giraud fibrations by composition. For the second one,
	this is Giraud theorem of bi-adjunction \cite{giraud-cohomologie}.
\end{proof}

There exist several equivalent interpretations of such  a type theory, as for the intuitionistic theory of Bell, Lambek Scott et al. (see Martin-Löf,
{\em Intuitionistic Type Theory}, \cite{ML}). For instance the types are sets, the terms
are elements, or a type is a proposition and a term is a proof, or a type is a problem (a task) and a term is a method for solving it. (For each interpretation, things are local over a context.)\\
In particular, \emph{Identity types} are admitted, representing equivalence of elements, proofs or methods that are not strict equalities, like homotopies, or invertible natural equivalences.\\
The types of identities, as in Hofmann and Streicher \cite{HS}, are fibrations ${\sf Id}_A:I_\mathcal{A}\rightarrow \mathcal{A}\times \mathcal{A}$ equipped with a cofibration
$r:\mathcal{A}\rightarrow I_\mathcal{A}$ (with a section) such that ${\sf Id}_A\circ r=\Delta$, the diagonal morphism. They are considered as paths spaces.\\
For instance, given a groupoid $A$, ${\sf Id}_A=(\{0\leftrightarrow 1\}\Rightarrow A=A^{\{0\leftrightarrow 1\}}$ is an identity type.\\
Axioms of inference for the types are expressed by rules of formation, introduction and determination, specific to each type \cite{ML}.\\

Let us compare to the semantics in a topos $\mathcal{C}^{\wedge}$: a context is an object $\Gamma$ which is a presheaf with values in ${\sf Set}$, so a fibration in sets
over $\mathcal{C}$ and a type is another object $A$; to get something over $\Gamma$ we can consider the projection $\Gamma\times A \rightarrow \Gamma$. A section
corresponds to a morphism $a:\Gamma\rightarrow A$, which is rightly a term of type $A$, $\Gamma\vdash a:\mathcal{A}$.\\
A substitution corresponds to a morphism $F:\Delta\rightarrow \Gamma$, and defines a pullback of trivial fibrations $\Delta\times A\rightarrow\Delta$.\\
If we have a morphism $g:B\rightarrow \Gamma\times A$ in the topos, we can define its existential image $\exists_\pi g(B)$ and its universal image $\forall_\pi g(B)$
as subobjects of $\Gamma$, which can be seen as a trivial fibrations over $\Gamma$.\\
\indent Therefore, we have analogs of M-L type theory in Set theory, but with trivial fibrations only and without fibrant restriction.\\

\section{Classifying the M-L theory ?}\label{sec:class-ML}

In what precedes the category ${\sf Grpd}$ has replaced the category ${\sf Set}$; it is also cartesian closed. Also we have seen that all small
limits and colimits exist in ${\sf Grpd}_{\mathcal{C}}$ (Giraud, Hollander, Lurie). However every natural
transformation between two functors with values in ${\sf Grpd}$ is invertible. Thus in the $2$-category, the morphisms in ${\sf Hom}_{\sf Grpd}(G,G')$ are like
homotopies. In fact they become homotopies when passing to the nerves.\\

\indent Let us introduce the categories of presheaves on every fibration in groupoids $\mathcal{A}\rightarrow\mathcal{C}$,
i.e. the classifying topos $\mathcal{E}_\mathcal{A}$ of the stack $\mathcal{A}$. Their objects are fibered in groupoids over $\mathcal{C}$, because the fibers $\mathcal{E}_U$ for $U\in \mathcal{C}$ are such (they take their values in ${\sf IsoSet}$), but their
morphisms, the natural transformations between functors, are taken in the sense of sets, not invertible.\\

\noindent In what follows we combine the type theory of topos with the groupoidal $M-L$ type theory.\\
\noindent We propose new types, associated to every object $X_{\mathcal{A}}$ in every $\mathcal{E}_\mathcal{A}$.\\

\noindent The fibration $\mathcal{A}\rightarrow\Gamma$ itself can be identified with the final object $\mathbf{1}_\mathcal{A}\in \mathcal{E}_\mathcal{A}$ in the context $\Gamma$.\\

\noindent Sections of $\mathcal{A}\rightarrow\Gamma$ are particular cases of objects. For the terms in an object $X_A$, we take any natural transformation from the
object $S$ corresponding to a section $\Gamma\rightarrow\mathcal{A}$
to the object $X_A$ in $\mathcal{E}_\mathcal{A}$.\\
A simple section is a term to $\mathbf{1}_\mathcal{A}$, the final object, which is a usual M-L type.\\

\noindent Due to the adjunction for the topos of presheaves, the construction $\Sigma$ and $\Pi$ extend to the new types.\\

\noindent Now a classifier of subobjects $\Omega_\mathcal{A}$ is available for any M-L type $\mathcal{A}$.\\
We define relative subobjects using the correspondence $\lambda_\pi:\Omega_\mathcal{A}\rightarrow \pi^{\star }\Omega_\Gamma$.\\

This extension of M-L theory allows to define languages and semantics over DNNs with internal structure in the model category $\mathcal{M}$.

\chapter{Dynamics and homology}\label{chap:dynamics}

\section{Ordinary cat's manifolds}\label{sect:cats}

Some limits, in the sense of category theory, of the dynamical object $X^{w}$ of $\mathcal{C}^{\sim}$
describe the sets of activities in the $DNN$ which correspond to some decisions taken by its output (the so called \emph{cat's manifolds} in the
folklore of Deep Learning).\\
\indent Here we consider the case of supervised learning or the case of reinforcement learning, because the success or the failure of an action integrating
the output of the network is also a kind of metric.\\
\indent For instance, consider a proposition $P_{out}$ about the input $\xi_{\rm in}$ which depends on the final states $\xi_{\rm out}$. It can be seen as a function $P$ on the product
$X_B=\prod_bX_b$
of the spaces of states over the output layers to the boolean field $\Omega_{{\sf Set}}=\{ 0,1\}$, taking the value $1$
if the proposition is true, $0$ if not. Our aim is to better understand the involvement of the full network in this decision; it is caused by the input data in a deterministic
manner, but it results from the chosen weights and from the full functioning of the $DNN$. One of the many ways to express the situation in terms of category is to enlarge
$\mathcal{C}$ (or $\boldsymbol{\Gamma}$) by several terminal layers (see figure \ref{fig:proposition}):
\begin{enumerate}[label=\arabic*)]
	\item a layer $B^{\star}$ which makes the product of the output layers, as we have done with forks,
	followed by the layer $B$ (remark that this can be replaced by $B$ only, with an arrow from $b\in x_{\rm out}$);
	\item a layer $\omega_b$ with one cell and two states in a set $\Omega_b$,
	as in $\Omega_{{\sf Set}}$,
	with one arrow from $\omega_b$ to $B$, for translating the proposition $P$, followed by a last layer $\omega_1$, with one arrow $\omega_b\rightarrow \omega_1$, the state's space
	$X_{\omega_1}$
	being a singleton $\star_1$, and the map $\star_1\rightarrow \Omega_b$ sending the singleton to ${\bf 1}$. This gives a category $\mathcal{C}_+$ enlarging $\mathcal{C}$
	by a fork with handle $B\leftarrow \omega_b \rightarrow \omega_1$, and a unique extension $X_+^{w}$, depending on $P$, of the functor $X^{w}$
	from $\mathcal{C}^{\rm op}$ to ${\sf Set}$ in a presheaf over $\mathcal{C}_+$.
\end{enumerate}

\begin{figure}[ht]
	\begin{center}
		\includegraphics[width=11cm]{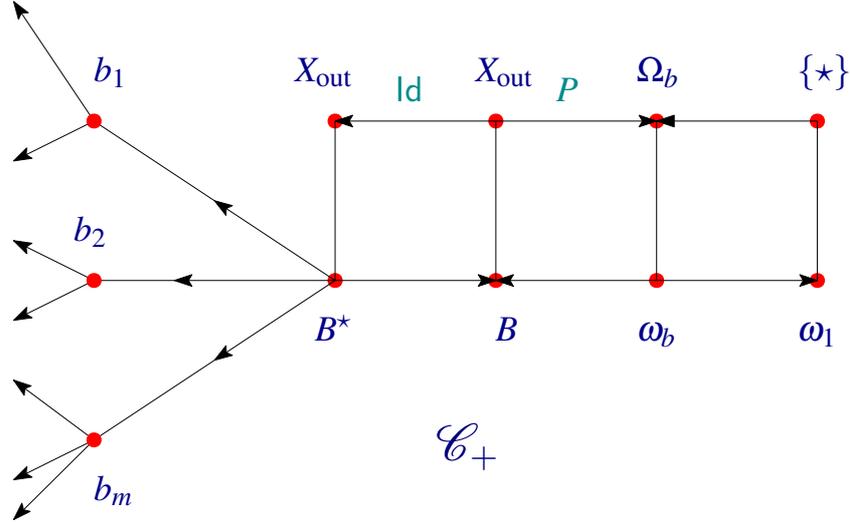}
		\caption{\label{fig:proposition}Interpretation of a proposition : categorical representation}
	\end{center}
\end{figure}

The space of sections singletons of $X_+^{w}$ is identified naturally with the space of sections of $X^{w}$ such that the output
satisfies $P_{\rm out}$, i.e. the subset of the product of all the $X^{w}(a)$ when $a$ describes $\mathcal{C}$ made by the coherent activities giving
the assertion "$P$ is true" at the output. In this picture, we also can consider that $P$ is the weight over the arrow $B\leftarrow \omega_b$, and note
$X_+^{w,P}$ the extension of $X^{w}$.\\
\noindent In other terms, the subset of activities of $X$ which affirm the proposition $P_{\rm out}$ is given by a value of the right Kan extension of $X_+$ along
the unique functor $p_+:\mathcal{C}_+^{\rm op}\rightarrow \star $:
\begin{equation}
M(P_{\rm out})(X)=\mathbf{R}{\rm Kan}_{\mathcal{C}_+}(X_+)={\rm lim}_{a\in \mathcal{C}_+^{\rm op}}X_+^{w}(a):
\end{equation}
\indent In the $AI$ folklore, the set $M(P_{\rm out})(X)$ is named a \emph{cat's manifold}, alluding to the case where the network has to decide if yes or no a cat is present in the image. $M(P_{\rm out})(X)$ can be identified with a subset of the product $X_{in}$ of the input layers.
It has to be compared with the assertion "$P$ is true" made by an observer, then studied in function of the weights $\mathbb{W}$ of the dynamics. \\
However, in general, $M(P_{\rm out})(X)$ cannot be identified with a product of subsets in the $X_a$'s, for $a\in \mathcal{C}$; it is a global invariant.\\
In fact, it is a particular case of a set of cohomology:
\begin{equation}
M(P_{\rm out})(X)=H^{0}(\mathcal{C}_+;X_+).
\end{equation}
If the proposition $P_{\rm out}$ is always true, $M$ coincides with the set of section of $X=X^{w}$, which can be identified with the product of the entry layers activities:
\begin{equation}
\Gamma(X)=H^{0}({C};X)\cong \prod_{a\in x_{\rm in}}X_a
\end{equation}\\

\noindent The construction of $\mathcal{C}_+$ and the extension of $X$ by $X_+$ can be seen as a conditioning. The map $X_+(\omega_b\rightarrow B)$ is equivalent to
a proposition, the characteristic map of a subset of $X_B$. In this case we have
\begin{equation}
H^{0}(\mathcal{C}_+;X_+)\subset H^{0}(\mathcal{C};X).
\end{equation}\\

In the same manner, we define the manifold of a theory $\mathbb{T}_{\rm out}$ expressed in a typed language $\mathbb{L}_{\rm out}$ in the
output layers, by replacing the above objects $\omega_b$, $\omega_1$, and the presheaf $X_+(P)$ over them, by larger sets and
$X_+(\mathbb{T})$, as the set of sections of $X_+(\mathbb{T})$ over the whole $\mathcal{C}_+$.\\

We will revisit the notion of cat's manifold when considering the homotopy version of semantic information.\\

\section{Dynamics with spontaneous activity}\label{section:spontaneous}

In our approach of networks functioning, the feed-forward dynamic coincides with the limit set $H^{0}(X)$. The coincidence with the traditional
notion of propagation from the input to the output relies on the particular choice of morphisms at the centers of forks (named tanks), product on one side and
isomorphism on the other. But this can be
generalized to other morphisms: the only condition being that the inner sources $A$ and the input from the outer world $I$ determine a unique section
of the object $X^{w}$ over $\mathcal{C}$. In concrete terms, this happens if and only if the maps from $A$ and $I$ give coherent values
at any tip of each fork.\\
This tuning involves the values in entry $\xi_0\in \Xi$, the values of the inner sources $\sigma_0\in \Sigma$ and the
weights, in particular from an $A$ to the $a', a",\ldots$'s. Therefore it depends on the learning process.\\

Then a possibility for defining coherent dynamical objects with spontaneous activities is to start with standard objects $X^{w}$, satisfying
the restriction of products and isomorphisms, then to introduce small deformations of the projections maps, and obtain the global dynamics
by using algorithms which
realize the Implicit Function Theorem in the Learning process.\\

Another possibility, closer to the natural networks in animals, and more readable, is to keep unchanged the projections to the tips $a'$,$\ldots$,
and to introduce new dynamical entries $y_A$ for each tang
$A$, then to send a message to the handle $a$ according to the following formula
\begin{equation}
x_a=f_a^{A}(x_{a'},x_{a"},\ldots;y_A).
\end{equation}
The state in $A$ being described by $(x_{a'},x_{a"},\ldots,y_A)$.\\
In such a manner the coherence is automatically verified. Each collection of inputs and tangs modulations generates a unique section.\\
These spontaneous entries can be learned by backpropagation, as the weights, by minimizing a functional, or realizing a task with
success.\\

It is important to remark that in natural brains, even for very small animals, having no more than several hundred neurons, the part of spontaneous activity is
much larger than the part due to the sensory inputs. This activity comes from internal rhythms, regulatory activities of the autonomous
system, internal motivations more or less planed. The neural network transforms them in actions or more general decisions. To make them
efficient, corrections are necessary, due to reentrant architectures. \\
\indent However these natural networks in general do not learn using fully supervised methods; they depend on reinforcement, by success of
actions, or by unsupervised methods, involving maximization of mutual information quantities. This will require much further works to
achieve this degree of integration in artificial networks. Also evolution plays a fundamental role, in particular by specifying the processes of weights transformations. 
But certainly experiments can be easily conducted in this direction, with simple networks as in \emph{Logical Information Cells} \cite{logic-DNN}, the experimental companion article.\\

\section{Fibrations and cofibrations of languages and theories}\label{sec:fibrations-and-cofibrations}

In this section, we define several sheaves and cosheaves over the stack $\mathcal{F}$, that are naturally associated to languages and theories,
defining moduli over monoids in the classifying topos $\mathcal{E}$, by using the semantic conditioning (see theorem \ref{thm:pistarpistar}), in such a manner that their homology or homotopy
invariants, in the sense of topos, give tentative semantical Information
quantities and spaces. In the most elementary cases, we recover in the following sections \ref{sec:sem-info}, \ref{sec:homotopy}, the definitions of Carnap and Bar-Hillel \cite{CBH52}, and their known generalizations
\cite{bao-basu}, \cite{basu-bao}, already used in Deep Learning (see for instance \cite{xie}), but at the end of this chapter, we will also show new promising elements
of information.\\
In this section and the following ones, we use the semantic functioning in usual $DNNs$ to define their semantic information content.
\noindent Taking into account the internal dimensions given by the stacks $\mathcal{F}$ over $\mathcal{C}$, several levels of information
emerge. Without closing the subject, they reflect different meaning of the word {\em information}.

\indent A first level concerns the pertinent types, or objects, to introduce in order to understand how the network performs a semantic task,
in addition to the types coming from $\mathbb{L}_{\rm out}$, that are put at
the hand by the observer, and guide the learning process, by backpropagation or reinforcement. A first conjecture, that we will not study in the
present text, is that new objects appear in cohomological forms, as
obstructions for integrating correctly the input data in the output theory. It is not excluded that this can appear spontaneously in the network,
but more probably it requires the intervention of the observer, for changing the functional (the metric) or the data, which induces a variation of the weights.
We will describe below in section \ref{chap:dynamics} examples of semantic groupoids which could generate or constrain these
obstructions. More precisely, we expect that the new objects are vanishing cycles, in the sense of Grothendieck, Deligne, Laumon \cite{Illusie2016produits}, for convenient
maps of sites, localized in the fibers $\mathcal{F}_U$, at points $(U,\xi)$.\\
In some regions of the weights, the network should become able to develop a semantic functioning about the new objects, formalized by the
languages $\mathbb{L}_U;U\in \mathcal{C}$ similarly to what happens
with singularities of functions or varieties, with imposed reality conditions. The analogy is made more precise in chapter \ref{chap:unfolding}.\\

\indent A second level, perhaps not independent of the first one, concerns the information contained in some theories about other theories, or about
decisions to take or actions to do, for instance $\mathbb{T}_{U'}$ in some layer, considered in relation to $\mathbb{T}_{U}$, when $\alpha:U\rightarrow U'$,
or $\mathbb{T}_{\rm out}$. As we saw, the expression of these theories in functioning networks depends on the given section $\epsilon$ of $X^{w}$.
However, we expect that the notion of information also allows to compare the theories made by different networks about a some class of problems.\\
\indent The semantic information that we want to make more precise must be attached to the communication between layers and the communication between networks,
and attached to some problems to solve, for a view of the necessity to introduce interaction in a satisfying view of information. See Thom in \cite{thom1983mathematical}.\\

\indent Some theories will be more informative than others, or more redundant, then we will be happy to attach quantitative notions of amount
of information to the notion of semantic information. However, efficient numerical measures should also take care of the expression of theories
by some axioms. Some systems of axioms are more economical than others, or more redundant than others. Redundancy is more the matter of
axioms, ambiguity is more the matter of theories. In the present approach, the notion of ambiguity comes first.\\

\indent In Shannon information theory, \cite{ShannonWeaver49}, the fundamental quantity is the entropy, which is in fact a measure
of the ambiguity of the expressed knowledge with respect to an individual fact, for instance a message. Only some algebraic combinations of entropies
can be understood as an information in the common sense, for instance the mutual information \[I(X;Y)=H(X)+H(Y)-H(X,Y).\]
\indent Here the theories $\mathbb{T}_U,U\in \mathcal{C}$ are seen as possible models, analogous to the probabilistic models $\mathbb{P}_X,X\in \mathcal{B}$ in Bayesian networks.
The variables
of the Bayesian network are analogous to the layers of the neural networks; the values of the variables are analogs of the states of the neurons
of the layers. In some version of Bayes analysis, for instance presented by Pearl \cite{pearl-book}, the Bayes
network is associated to a directed graph, but in some other versions it is an hypergraph \cite{NIPS2000_61b1fb3f}, or a more general poset \cite{bennequin2020extrafine}.\\
\indent In the case
of the probabilistic models, Shannon theorems have revealed the importance of entropy and of mutual information. It has been shown in \cite{Baudot-Bennequin} and \cite{vigneaux-TAC}), that
the entropy is a universal class of cohomology of degree one of the topos of presheaves over the Bayesian network, seen as a poset $\mathcal{B}$,
equipped with a cosheaf $\mathcal{P}$ of probabilities (covariant functor of sets). The
operation of joining variables gives a presheaf $\mathcal{A}$ in monoids over $\mathcal{B}$. On the other hand,
the numerical functions on $\mathcal{P}$ form a sheaf $\mathcal{F}_\mathcal{P}$, which becomes an $A$-module by considering the mean conditioning of Shannon. The entropy
belongs to the ${\sf Ext}_{\mathcal{A}}^{1}(K;\mathcal{F}_\mathcal{P})$ with coefficients in this module.
Moreover, in this framework, higher mutual information quantities \cite{McGill1954MultivariateIT}, \cite{HKT}
belong to the homotopical algebra of cocycles of higher degrees
\cite{Baudot-Bennequin}.\\
\indent We conjecture that something analog appears in the case of $DNNs$ and theories $\mathbb{T}$, and of axioms for them.\\

\indent The first ingredient in the case of probabilities was the operation of
marginalization of a probability law, interpreted as the definition of a covariant functor (a copresheaf); it
can be replaced here by the transfers of theories associated to the functors  $F_\alpha:\mathcal{F}_{U'}\rightarrow \mathcal{F}_{U}$, and to
the morphisms $h$ in the fibers $\mathcal{F}_U$ from objects $\xi$ to objects $F_\alpha(\xi')$, as we saw in the section $3$. For logics, this transfer can go
in two directions,
depending
on the geometry of $F_\alpha$, from $U'$ to $U$, and from $U$ to $U'$, as seen in section \ref{sec:objects-classifiers}.\\

\indent We start with the transfer from $U'$ to $U$, having in mind the flow of information
in the downstream direction to the output of the $DNN$; when it exists, a non-supervised learning should also correspond to this direction. However,
the learning by backpropagation or by reinforcement goes from the output layers to the inner layers, then the inner
layers have to understand something of the imposed language $\mathbb{L}_{\rm out}$ and the useful theories $\mathbb{T}_{\rm out}$ for concluding.
Therefore we will also laterconsider this backward or upstream direction.\\

\noindent For an arrow $(\alpha,h):(U,\xi)\rightarrow (U',\xi')$,  the map
\begin{equation}
\Omega_{\alpha,h}: \Omega_{U'}(\xi')\rightarrow \Omega_U(\xi),
\end{equation}
is obtained by composing the map $\lambda_\alpha=\Omega_\alpha$ at $\xi'$, from $\Omega_{U'}(\xi')$ to $\Omega_{U}(F_\alpha \xi')$ with the
map $\Omega_U(h)$ from $\Omega_{U}(F_\alpha \xi')$ to $\Omega_U(\xi)$.\\
More generally, for every object $X'$ in $\mathcal{E}_{U'}$, the map $F^{\alpha}_!$ sends the subobjects of $X'$ to the subobjects of $F^{\alpha}_!(X')$,
respecting the lattices structures. Then for any natural transformation over $\mathcal{F}_U$, $h:X\rightarrow F^{\alpha}_!(X')$, we get a transfer
\begin{equation}
\Omega_{\alpha,h}: \Omega_{U'}^{X'}\rightarrow \Omega_U^{X}.
\end{equation}
The object $X$ or $X'$ is seen as a local context in the topos semantics.\\
We assume in what follows that this mapping extends to the sentences in the typed languages $\mathbb{L}_U$, where the dependency on $\xi$ reflects the variation
of meaning in the included notions. In particular, the morphisms in the topos $\mathcal{E}_U$ express such variations. At the level of theories,
this induces in general a weakening, something which is implied at $(U,\xi)$ by the propositions at $(U',\xi')$, or more generally
at the context $X$ by telling what is true, or expected, in the context $X'$.\\
In what follows we note by $\mathcal{A}=\Omega^{\mathbb{L}}$ this presheaf of sentences in $\mathbb{L}$ over $\mathcal{F}$,
and by $\mathbb{L}_{\alpha,h}$, or $\pi_{\alpha,h}^{\star}$, its transition maps, extending $\Omega_{\alpha,h}$.\\

Under the strong standard hypotheses on the fibration $\mathcal{F}$, for instance if it defines a fibrant object in the injective groupoids models,
i.e. any $F_\alpha$ is a fibration,
(see definition \ref{dfn:semanticflow} above, following lemma \ref{lem:open-morphism} of section \ref{sec:objects-classifiers}) there exists a right adjoint of $\Omega_{\alpha,h}$:
\begin{equation}
\Omega'_{\alpha,h}: \Omega_{U}^{X}\rightarrow \Omega_{U'}^{X'}.
\end{equation}
It is given by extension of the operators $\lambda'_\alpha$, associated to $F_\alpha^{\star}$, in the place of $F^{\alpha}_!$, plus a transposition.\\
In what follows we note by $\mathcal{A}'=\leftindex^{t}\Omega^{\mathbb{L}}$ this copresheaf of sentences over $\mathcal{F}$, and by $^{t}\mathbb{L}'_{\alpha,h}$,
or simply $\pi^{\alpha,h}_\star$, its transition maps. The extended strong hypothesis requires that $\pi^{\star}_{\alpha,h}\circ \pi_\star^{\alpha,h}={\sf Id}$.\\

For fixed $U$ and $\xi\in \mathcal{F}_U$, the operation $\wedge$
gives a monoid structure on the set $\mathcal{A}_{U,\xi}=\mathcal{A}'_{U,\xi}$, which is respected
by the maps $\mathbb{L}_{\alpha,h}$ and $^{t}\mathbb{L}'_{\alpha,h}$.\\
Moreover, $\mathcal{A}_{U,\xi}$ has a natural structure of poset category, given by
the external implication $P\leq Q$, for which
$\mathbb{L}_{\alpha,h}$ and $^{t}\mathbb{L}'_{\alpha,h}$ are functors.\\
There exists a right adjoint of the functor $R\mapsto R\wedge Q$; this is the internal implication, $P\mapsto (Q\Rightarrow P)$.
Then, by definition, $\mathcal{A}_{U,\xi}=\mathcal{A}'_{U,\xi}$ is a \emph{closed monoidal category}. In fact this is the only structure
that is essentially needed for the information theory below; this allows the linear generalization of appendix \ref{app:nat-lang}.\\

The maps $\pi^{\star}$ and $\pi_\star$ give a fibration $\widetilde{\mathcal{A}}$ over
$\mathcal{F}$, and a cofibration $\widetilde{\mathcal{A}}'$ over $\mathcal{F}$, in the sense of Grothendieck \cite{AST_2005__301__R1_0}:

\noindent a morphism $\gamma$ in $\widetilde{\mathcal{A}}$ from $(U,\xi,P)$ to $(U',\xi',P')$, lifting a morphism $(\alpha,h)$ in $\mathcal{F}$
from $(U,\xi)$ to $(U',\xi')$, is given by an arrow $\iota$ in $\Omega^{\mathbb{L}_U}$ from $P$ to $\mathbb{L}_{\alpha,h}(P')=\pi_{\alpha,h}^{\star}P'$, that is
an external implication
\begin{equation}\label{aarrow}
P\leq \mathbb{L}_{\alpha,h}(P').
\end{equation}
\noindent Similarly, an arrow in the category $\widetilde{\mathcal{A}}'$
lifting the same morphism $(\alpha,h)$ in $\mathcal{F}$, is
an implication
\begin{equation}\label{aprimearrow}
^{t}\mathbb{L}'_{\alpha,h}(P)\leq P'.
\end{equation}

\noindent Remark that \emph{a priori} the left adjunction $\pi_{\alpha,h}^{\star}\dashv \pi^{\alpha,h}_\star$ does not imply something between $P$ and $\mathbb{L}_{\alpha,h}(P')$
when \eqref{aprimearrow} is satisfied. However, under the strong hypothesis $\pi^{\star}\circ\pi_\star={\sf Id}$, the relation \eqref{aprimearrow} implies the relation
\eqref{aarrow}. Then in this case, $\widetilde{\mathcal{A}}'$ is a subcategory of $\widetilde{\mathcal{A}}$.

\begin{rmk*}
	\normalfont An important particular case, where our standard hypotheses are satisfied, is when the $\Omega^{\mathbb{L}_{U,\xi}}=\mathcal{A}_{U,\xi}$
	are the sets of open sets of a topological spaces $Z_{U,\xi}$, and when there exist continuous open maps $f_{\alpha}:Z_{U',\xi'}\rightarrow Z_{U,\xi}$ lifting the
	functors  $F_\alpha$, such that the maps $\pi^{\star}$ and $\pi_\star$ are respectively the direct images and the inverse images. The strong hypothesis holds when the
	$f_\alpha$ are topological fibrations.
\end{rmk*}

\noindent $\widetilde{\mathcal{A}}$ and $\widetilde{\mathcal{A}}'$ belong to augmented model categories using monoidal posets \cite{Raptis2010HomotopyTO}. See section \ref{sec:class-ML}.\\

\noindent For $P\in \Omega^{\mathbb{L}_{U,\xi}}=\mathcal{A}_{U,\xi}$, we note $\mathcal{A}_{U,\xi,P}$ the set of proposition $Q$ such that $P\leq Q$.
They are sub-monoidal categories of $\mathcal{A}_{U,\xi}$. Moreover they are closed, because $P\leq Q, P\leq R$ implies $P\wedge Q=P$, then
$P\wedge Q\leq R$, then $P\leq(Q\Rightarrow R)$.\\
When varying $P$, these sets form a presheaf over $\mathcal{A}_{U,\xi}=\mathcal{A}'_{U,\xi}$.\\
\begin{lem}\label{lem:monoid-presheaf}
	The monoids $\mathcal{A}_{U,\xi,P}$, with the
	functors $\pi^{\star }$ between them, form a presheaf  over the category $\widetilde{\mathcal{A}}$.
\end{lem}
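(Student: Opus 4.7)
The plan is to exhibit a contravariant functor from $\widetilde{\mathcal{A}}$ to the category of (commutative) monoids that sends $(U,\xi,P)$ to $\mathcal{A}_{U,\xi,P}$ and an arrow of $\widetilde{\mathcal{A}}$ to an appropriate restriction of the transfer map $\pi^{\star}_{\alpha,h}$. All the needed properties of $\pi^{\star}$ have already been collected in the previous section (monotonicity with respect to $\leq$, preservation of $\wedge$ and $\top$, and the cocycle identity \eqref{compatibiltysheaf}); the only nontrivial thing to check is that the restriction lands in the right sub-monoid.

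First, I fix an arrow $\gamma = (\alpha, h, \iota) : (U,\xi,P) \to (U',\xi',P')$ in $\widetilde{\mathcal{A}}$, where by \eqref{aarrow} the datum $\iota$ is precisely the inequality $P \leq \pi^{\star}_{\alpha,h}(P')$. I then declare the induced map on the presheaf to be
\begin{equation}
\pi^{\star}_\gamma \;:\; \mathcal{A}_{U',\xi',P'} \longrightarrow \mathcal{A}_{U,\xi,P},\qquad Q' \longmapsto \pi^{\star}_{\alpha,h}(Q').
\end{equation}
To see this is well-defined, take $Q' \in \mathcal{A}_{U',\xi',P'}$, so $P' \leq Q'$. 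Since $\pi^{\star}_{\alpha,h}$ is a morphism of the lattices $\Omega^{\mathbb{L}_{U',\xi'}} \to \Omega^{\mathbb{L}_{U,\xi}}$ (established in the discussion following lemma \ref{morph-heyt}, extended to the sentences in $\mathbb{L}$), it is monotone and so $\pi^{\star}_{\alpha,h}(P') \leq \pi^{\star}_{\alpha,h}(Q')$. Combining with $\iota$ gives $P \leq \pi^{\star}_{\alpha,h}(P') \leq \pi^{\star}_{\alpha,h}(Q')$, i.e.\ $\pi^{\star}_{\alpha,h}(Q') \in \mathcal{A}_{U,\xi,P}$.

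Next I check that $\pi^{\star}_\gamma$ respects the monoid structure: preservation of $\wedge$ and of the unit $\top$ is immediate from the fact that $\pi^{\star}_{\alpha,h}$ is a morphism of Heyting (hence lattice) algebras, restricted to the sub-monoid. Functoriality is the last step: for the identity morphism of $(U,\xi,P)$, one has $\pi^{\star}_{\mathrm{id}_{(U,\xi)}} = \mathrm{id}$, so $\pi^{\star}_{\mathrm{id}} = \mathrm{id}_{\mathcal{A}_{U,\xi,P}}$. For composition, given arrows $(U,\xi,P) \xrightarrow{\gamma} (U',\xi',P') \xrightarrow{\gamma'} (U'',\xi'',P'')$ in $\widetilde{\mathcal{A}}$, the cocycle identity \eqref{compatibiltysheaf} applied to the presheaf $\mathcal{A} = \Omega^{\mathbb{L}}$ gives $\pi^{\star}_{\gamma' \circ \gamma} = \pi^{\star}_\gamma \circ \pi^{\star}_{\gamma'}$, which is the required contravariant functoriality.

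The proof has no real obstacle: the content is essentially that the transfer $\pi^{\star}$, being monotone and multiplicative, automatically restricts to monoid morphisms between the up-sets $\mathcal{A}_{U,\xi,P}$ as soon as the arrow in $\widetilde{\mathcal{A}}$ supplies the inequality $P \leq \pi^{\star}(P')$. The main bookkeeping burden is only to unwind that an arrow in $\widetilde{\mathcal{A}}$ is precisely the packaging of an arrow in $\mathcal{F}$ together with the inequality needed to make this restriction land in the right target.
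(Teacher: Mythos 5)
Your proof is correct and follows the same route as the paper: the one nontrivial point is exactly the well-definedness check, namely that the datum $\iota$ of an arrow in $\widetilde{\mathcal{A}}$ gives $P\leq \pi^{\star}P'$ and monotonicity of $\pi^{\star}$ then yields $P\leq\pi^{\star}P'\leq\pi^{\star}Q'$, which is precisely the paper's entire argument. The additional verifications you supply (preservation of $\wedge$ and $\top$, functoriality via \eqref{compatibiltysheaf}) are routine and left implicit in the paper, but are correctly identified and harmless to include.
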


\begin{proof}
	Given a morphism $(\alpha,h,\iota): \mathcal{A}_{U,\xi,P}\rightarrow \mathcal{A}_{U',\xi',P'}$ in $\widetilde{\mathcal{A}}$, the symbol
	$\iota$ means $P\leq \pi^{\star }P'$, then, from $P'\leq Q'$, we deduce $P\leq\pi^{\star }P'\leq \pi^{\star }Q'$.
\end{proof}

Lemma \ref{lem:open-morphism} in section \ref{sec:objects-classifiers} established the existence of a counit $\eta:\pi^{\star }\pi_\star \rightarrow {\sf Id}_U$,
for every morphism $(\alpha,h):(U,\xi)\rightarrow (U',\xi')$, then for every $P\in A_{U,\xi}$, we have $\pi^{\star }\pi_\star P\leq P$.\\
\indent Under the stronger hypothesis on the fibration $\mathcal{F}$, that $\eta={\sf Id}_{\Omega^{\mathbb{L}}}$, i.e. $\pi^{\star }\pi_\star P= P$, lemma \ref{lem:monoid-presheaf} holds also true for the category $\widetilde{\mathcal{A}}'$.\\

\begin{defn*}
	$\Theta_{U,\xi}$ is the set of theories expressed in the algebra $\Omega^{\mathbb{L}_U}$
	in the context $\xi$. Under our standard hypothesis on $\mathcal{F}$, both $\mathbb{L}_\alpha$ and $^{t}\mathbb{L}_\alpha$
	send theories to theories.
\end{defn*}
\begin{defn*}
	$\Theta_{U,\xi,P}$ is the subset of theories which imply the truth of proposition $\neg P$, i.e. the subset of theories excluding
	$P$.
\end{defn*}
\noindent Remind that $\neg P\equiv (P\Rightarrow \bot)$ is the largest proposition $R$ such that $R\wedge P\leq\bot$.\\
	It is always true that $P\leq P'$ implies $\neg P'\leq \neg P$, but the reciprocal implication in general requires a boolean logic.\\
	Then, for fixed $U,\xi$, the sets $\Theta_{U,\xi,P}$ when $P$ varies in $\mathcal{A}_{U,\xi}$, form a presheaf over $\mathcal{A}_{U,\xi}$; if $P\leq Q$, any theory excluding
	$Q$ is a theory excluding $P$.
\begin{lem}\label{lem:theories-morph}
	Under the standard hypotheses on the fibration $\mathcal{F}$, without necessarily axiom (\ref{strongstandard}),
	the sets $\Theta_{U,\xi,P}$ with the morphisms $\pi^{\star}$,  form a presheaf over $\widetilde{\mathcal{A}}$.
\end{lem}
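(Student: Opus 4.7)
The plan is to show that the assignment $(U,\xi,P) \mapsto \Theta_{U,\xi,P}$ extends to a contravariant functor on $\widetilde{\mathcal{A}}$, proceeding in parallel with the immediately preceding Lemma \ref{lem:monoid-presheaf} but replacing the easy upward-closure argument with a verification that the exclusion of $P$ is correctly transported by $\pi^{\star }$.

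First I would define the action on morphisms: given $\gamma = (\alpha,h,\iota):(U,\xi,P)\to (U',\xi',P')$ in $\widetilde{\mathcal{A}}$, where $\iota$ records the external implication $P\leq \pi^{\star }_{\alpha,h}(P')$, declare $\gamma^{\star }(\mathbb{T}'):=\pi^{\star }_{\alpha,h}(\mathbb{T}')$, applying $\pi^{\star }_{\alpha,h}$ propositionwise. Under the standard hypothesis, $\pi^{\star }_{\alpha,h}$ extends to a morphism of typed languages sending axioms to axioms, hence theories to theories (as was noted just before the statement).

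The key step is the verification that $\gamma^{\star }(\mathbb{T}')$ lies in $\Theta_{U,\xi,P}$. Assume $\mathbb{T}'$ excludes $P'$, i.e.\ some finite conjunction $\bigwedge S$ of elements of $\mathbb{T}'$ satisfies $\bigwedge S \wedge P' \leq \bot$ in $\mathcal{A}_{U',\xi'}$. The map $\pi^{\star }_{\alpha,h}=\Omega_U(h)\circ \lambda_\alpha$ preserves $\wedge$ (by the result of MacLane--Moerdijk recalled in Section \ref{sec:objects-classifiers} and the standard pullback of subobjects by $h$) and preserves $\bot$ (since $\lambda_\alpha$ is induced by the left adjoint $F^{\alpha}_!$, which preserves initial subobjects). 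Applying it yields
\[
\bigwedge \pi^{\star }(S)\wedge \pi^{\star }(P')\leq \bot,
\]
equivalently $\bigwedge \pi^{\star }(S)\leq \neg\,\pi^{\star }(P')$ in $\mathcal{A}_{U,\xi}$. Combining with the given $\iota:P\leq \pi^{\star }(P')$ and the antitony of negation in any Heyting algebra, $\neg\,\pi^{\star }(P')\leq \neg P$; composing, $\bigwedge \pi^{\star }(S)\leq \neg P$, and since $\pi^{\star }(S)\subseteq \pi^{\star }(\mathbb{T}')$ this means $\pi^{\star }(\mathbb{T}')$ excludes $P$, so $\pi^{\star }(\mathbb{T}')\in \Theta_{U,\xi,P}$.

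Functoriality of $\gamma\mapsto \gamma^{\star }$ for composition and identities is inherited termwise from the functoriality of $\pi^{\star }$ on the presheaf $\mathcal{A}$ of propositions over $\mathcal{F}$, i.e.\ from the cocycle identity \eqref{compatibiltysheaf}. The only subtlety worth flagging, rather than a serious obstacle, is that the argument uses only the preservation of $\wedge$ and $\bot$ by $\pi^{\star }$ together with the monotonicity of $\neg$, so the strong cancellation \eqref{strongstandard} is not needed here. The dual statement for the cofibration $\widetilde{\mathcal{A}}'$ would require precisely that axiom, since there exclusion would have to be pushed forward along $\pi_{\star }$ and not pulled back along $\pi^{\star }$.
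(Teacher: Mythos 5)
Your proof is correct and follows essentially the same route as the paper's: in both, the key step is applying the antitonicity of $\neg$ to the datum $\iota: P\leq \pi^{\star}P'$ to obtain $\neg\pi^{\star}P'\leq \neg P$, and then transporting the exclusion $T'\leq \neg P'$ along the monotone map $\pi^{\star}$. The only cosmetic difference is that you derive the needed inequality from preservation of $\wedge$ and $\bot$ (via the adjunction defining $\neg$) where the paper invokes the commutation $\pi^{\star}\neg P'=\neg\pi^{\star}P'$ directly.
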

\begin{proof}
	Let us consider a morphism $(\alpha,h,\iota): \mathcal{A}_{U,\xi,P}\rightarrow \mathcal{A}_{U',\xi',P'}$, where $\iota$ denotes $P\leq\pi^{\star }P'$;
	we deduce $\pi^{\star }\neg P'=\neg\pi^{\star }P'\leq \neg P$; then $T'\leq \neg P'$ implies $\pi^{\star }T'\leq \pi^{\star }\neg P'\leq \neg P$.
\end{proof}
\begin{cor*}
	Under the standard hypotheses on the fibration $\mathcal{F}$ plus the stronger one,
	the sets $\Theta_{U,\xi,P}$ with morphisms $\pi^{\star }$, also form a presheaf over $\widetilde{\mathcal{A}}'$.
\end{cor*}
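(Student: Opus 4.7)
The plan is to reduce the corollary directly to Lemma \ref{lem:theories-morph} by using the inclusion $\widetilde{\mathcal{A}}' \hookrightarrow \widetilde{\mathcal{A}}$ that the extra strong-hypothesis axiom \eqref{strongstandard} makes available. First I would spell out this inclusion: the objects of $\widetilde{\mathcal{A}}$ and $\widetilde{\mathcal{A}}'$ are the same triples $(U,\xi,P)$, and given any $\widetilde{\mathcal{A}}'$-arrow $(U,\xi,P) \to (U',\xi',P')$ above $(\alpha,h)$, i.e.\ a witness of $\pi_\star P \leq P'$, one applies the monotone functor $\pi^{\star}$ to obtain $\pi^{\star}\pi_\star P \leq \pi^{\star} P'$, which by $\pi^{\star}\circ \pi_\star = {\sf Id}$ reduces to $P \leq \pi^{\star} P'$, i.e.\ a $\widetilde{\mathcal{A}}$-arrow above the same $(\alpha,h)$. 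Identities are clearly sent to identities and composition is preserved by functoriality of $\pi^{\star}$, so one obtains a faithful inclusion functor $\iota: \widetilde{\mathcal{A}}' \hookrightarrow \widetilde{\mathcal{A}}$ (this formalizes the remark following equation \eqref{aprimearrow}).

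Next I would pull the presheaf $\Theta: \widetilde{\mathcal{A}}^{\rm op} \to {\sf Set}$ of Lemma \ref{lem:theories-morph} back along $\iota$; the composite $\iota^{\star}\Theta$ is automatically a presheaf on $\widetilde{\mathcal{A}}'$. On objects it is still $(U,\xi,P) \mapsto \Theta_{U,\xi,P}$, and on an arrow $\pi_\star P \leq P'$ it is given by the same transition map $\pi^{\star}: \Theta_{U',\xi',P'} \to \Theta_{U,\xi,P}$ as in Lemma \ref{lem:theories-morph}, because $\iota$ is the identity on triples and merely re-encodes the morphism data.

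The only point worth checking, and the only potential obstacle, is that the transition map $\pi^{\star}$ is well-defined as a map $\Theta_{U',\xi',P'} \to \Theta_{U,\xi,P}$ when the hypothesis on the arrow is the weaker-looking $\pi_\star P \leq P'$ rather than $P \leq \pi^{\star} P'$. But once we have used the strong hypothesis to pass from the former to the latter, the very computation done in Lemma \ref{lem:theories-morph} applies verbatim: the openness of $\pi^{\star}$ at the level of local Heyting algebras (Lemmas \ref{bool-lat} and \ref{morph-heyt}) gives $\pi^{\star}\neg P' = \neg \pi^{\star}P'$, and combining with $P \leq \pi^{\star} P'$ yields $\pi^{\star}\neg P' \leq \neg P$, so that any theory $T' \leq \neg P'$ satisfies $\pi^{\star} T' \leq \neg P$, i.e.\ $\pi^{\star}T' \in \Theta_{U,\xi,P}$. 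Functoriality in $\widetilde{\mathcal{A}}'$ is then inherited from the functoriality in $\widetilde{\mathcal{A}}$ established by Lemma \ref{lem:theories-morph}, and no further ingredient is needed.
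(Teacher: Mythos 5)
Your proof is correct and follows the paper's own (largely implicit) route: the strong hypothesis $\pi^{\star}\pi_{\star}={\sf Id}$ turns every $\widetilde{\mathcal{A}}'$-arrow $\pi_{\star}P\leq P'$ into a $\widetilde{\mathcal{A}}$-arrow $P\leq\pi^{\star}P'$, exhibiting $\widetilde{\mathcal{A}}'$ as a subcategory of $\widetilde{\mathcal{A}}$ exactly as in the remark following \eqref{aprimearrow}, after which the presheaf of Lemma \ref{lem:theories-morph} restricts. No gap.
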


\noindent What happens to $\pi_\star$?\\

\indent It is in general false that the collection $\mathcal{A}_{U,\xi,P}$ with the functors  $\pi^{\alpha,h}_\star $ forms a copresheaf
over  $\widetilde{\mathcal{A}}'$. However, if we restrict ourselves to the smaller category $\widetilde{\mathcal{A}}_{\rm strict}'$, with the same objects but with morphisms
from $\mathcal{A}_{U,\xi,P}$ to $\mathcal{A}_{U',\xi',P'}$ only when $P'=\pi_\star^{\alpha,h}P$, this is true.\\
\begin{proof}
	If $P\leq Q$, $\pi_\star P\leq \pi_\star Q$, then $P'\leq \pi_\star Q$.\\
\end{proof}
\indent The same thing happens to the collection of the $\Theta_{U,\xi,P}$ with the morphism $\pi_\star $: over the restricted category $\widetilde{\mathcal{A}}_{\rm strict}'$,
they form a copresheaf.
\emph{Proof}: if $T\leq \neg P$, we have $\pi_\star T\leq \pi_\star \neg P= \neg\pi_\star P=\neg P'$.\\
However for the full category $\widetilde{\mathcal{A}}'$ ({\em resp.} the category $\widetilde{\mathcal{A}}$),
the argument does not work: from $\pi_\star P\leq P'$ ({\em resp.} $P\leq \pi^{\star }P'$), it follows that $\neg P'\leq \neg \neg \pi_\star P=\pi_\star \neg P$
({\em resp.} $\pi_\star P\leq \pi_\star \pi^{\star }P'$ then $\neg \pi_\star \pi^{\star }P'\leq \pi_\star P$, then by adjunction $\neg P'\leq \neg \pi_\star P=\pi_\star \neg P$);
then $T\leq\neg P$ implies $\pi_\star T\leq \pi_\star \neg P$, not $\pi_\star T\leq \neg P'$.\\

\noindent To summarize what is positive with $\pi_\star  $,
\begin{lem}
	Under the strong standard hypothesis of defition \ref{dfn:semanticflow}, the collections $\mathcal{A}_{U,\xi,P}$ and $\Theta_{U,\xi,P}$ with the
	morphisms $\pi_\star $, constitute copresheaves over $\widetilde{\mathcal{A}}_{\rm strict}'$.
\end{lem}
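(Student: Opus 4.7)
The plan is to verify two things for each of the two collections: (i) that, for every morphism in $\widetilde{\mathcal{A}}_{\rm strict}'$, the functor $\pi_\star$ carries the fiber at $(U,\xi,P)$ into the fiber at $(U',\xi',P')$; (ii) that the assignment is functorial in composition and identities. The first point is where the strict condition $P'=\pi_\star^{\alpha,h}P$ does the essential work, and the second follows from the fact that for consecutive morphisms $(\alpha,h)$ and $(\beta,k)$ one has $\pi_\star^{(\beta,k)\circ(\alpha,h)}=\pi_\star^{\beta,k}\circ\pi_\star^{\alpha,h}$, which is just the functoriality of the right Kan extensions already recorded in the section (compatibility formula analogous to \eqref{compatibiltysheaf}).

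For the collection $\mathcal{A}_{U,\xi,P}$: fix a strict morphism $(\alpha,h):(U,\xi,P)\to(U',\xi',P')$, i.e.\ with $P'=\pi_\star^{\alpha,h}P$. Since $\pi_\star^{\alpha,h}$ is a functor between posets, it preserves the order $\leq$. Hence, for any $Q\in\mathcal{A}_{U,\xi,P}$, one has $P\leq Q$, so $\pi_\star^{\alpha,h}P\leq\pi_\star^{\alpha,h}Q$, which by the strict condition reads $P'\leq\pi_\star^{\alpha,h}Q$; this says $\pi_\star^{\alpha,h}Q\in\mathcal{A}_{U',\xi',P'}$, as required.

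For the collection $\Theta_{U,\xi,P}$: the point is to show that $\pi_\star$ commutes with negation under the strong standard hypothesis of Definition \ref{dfn:semanticflow}. Write $\lambda=\pi^{\star}$ and $\mu=\pi_\star$; the hypothesis states the triangle identity $\lambda\circ \leftindex^{t}\mu=\mathrm{Id}$, together with the adjunction $\lambda\dashv \mu^{\vee}$ inherited from Lemma \ref{lem:open-morphism} (and in our Boolean/Heyting fibers, Lemmas \ref{bool-lat} and \ref{morph-heyt} show the local behavior). These imply $\pi_\star(\neg P)=\neg\pi_\star P$ in the algebras considered. Hence, given a theory $T\leq\neg P$, order-preservation gives $\pi_\star T\leq\pi_\star(\neg P)=\neg\pi_\star P=\neg P'$, so $\pi_\star T\in\Theta_{U',\xi',P'}$.

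The hardest step, and the one most worth being careful about, is the commutation $\pi_\star(\neg P)=\neg\pi_\star P$; everywhere else only order-preservation and the strict equality $P'=\pi_\star P$ are used. In the groupoid case this is immediate from Lemma \ref{bool-lat} (Boolean algebras and commutation with negation). In the general (Heyting) case it must be read off from the strong standard hypothesis itself, which was precisely tailored so that the section $\leftindex^{t}\lambda$ of $\lambda$ makes $\pi_\star$ behave as an open geometric direct image at the level of subobject lattices. Once this commutation is in place, functoriality under composition and preservation of identities are automatic from the corresponding properties of the Kan extensions, completing the verification that both $\mathcal{A}_{U,\xi,P}$ and $\Theta_{U,\xi,P}$, together with $\pi_\star$, define copresheaves on $\widetilde{\mathcal{A}}_{\rm strict}'$.
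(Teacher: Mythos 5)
Your proof is correct and follows essentially the same route as the paper's: for $\mathcal{A}_{U,\xi,P}$ the paper likewise uses only order-preservation of $\pi_\star$ together with the strict condition $P'=\pi_\star P$ (``if $P\leq Q$, $\pi_\star P\leq \pi_\star Q$, then $P'\leq\pi_\star Q$''), and for $\Theta_{U,\xi,P}$ it uses the same one-line chain $\pi_\star T\leq\pi_\star\neg P=\neg\pi_\star P=\neg P'$. The only difference is that you flag the commutation of $\pi_\star$ with negation as the nontrivial ingredient and try to source it (the paper asserts it without comment); that is a reasonable extra care, not a divergence of method.
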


\noindent Note that the fibers $\mathcal{A}_{U,\xi,P}$ are not sub-categories of $\widetilde{\mathcal{A}}_{\rm strict}'$, they are subcategoris
of $\widetilde{\mathcal{A}}'$ and $\widetilde{\mathcal{A}}$.\\
\begin{defn*}
	A theory $\mathbb{T}'$ is said \emph{weaker} than a theory $\mathbb{T}$ if its axioms are true
	in $\mathbb{T}$. We note $\mathbb{T}\leq \mathbb{T}'$, as we made for weaker probabilistic models. This applies to theories
	excluding a proposition $P$, in $\Theta_{U,\xi,P}$.
\end{defn*}

\indent With respect to propositions in $\mathcal{A}_{U,\xi}$, if we take the joint $R$ by the operation "and"  of all the axioms
$\vdash R_i;i\in I$ of $\mathbb{T}$, and the analog $R'$ for $\mathbb{T}'$, the above relation corresponds to $R\leq R'$.\\
Remark: a weaker theory can also be seen as a simpler or more understandable theory; for instance in $\Theta_\lambda$,
the maximal theory $\vdash(\neg P)$ is dedicated to exclude $P$, and the propositions implying $P$.\\

\noindent Be careful that in the sense of sets of truth assertions, the pre-ordering by inclusion of the theories
goes in the reverse direction. For instance $\{\vdash\bot\}$ is the strongest theory, in it everything is true,
thus every other theory is weaker.\\

\noindent Now we introduce a notion of \emph{semantic conditioning}.\\
\begin{defn}\label{defn:sem-cond}
	For fixed $U,\xi$, $P\leq Q$ in $\Omega^{\mathbb{L}_{U,\xi}}$, and $\mathbb{T}$ a theory
	in the language $\mathbb{L}_{U,\xi}$, we define a new theory by the internal implication:
	\begin{equation}\label{semanticconditioning}
		Q.\mathbb{T}=(Q\Rightarrow \mathbb{T}).
	\end{equation}
	More precisely: the axioms of $Q.\mathbb{T}$ are the assertions $\vdash(Q\Rightarrow R)$ where $\vdash R$ describes the axioms
	of $\mathbb{T}$. We consider $Q.\mathbb{T}$ as the \emph{conditioning} of $\mathbb{T}$ by $Q$, in the logical or semantic sense, and frequently we write the resulting theory $\mathbb{T}|Q$.
	
\end{defn}

\noindent At the level of propositions, the operation $\Rightarrow$ is the right adjoint in the sense of the Heyting algebra of the relation $\wedge$, i.e.
\begin{equation}
(R\wedge Q\leq P)\quad iff \quad (R\leq (Q\Rightarrow P)).
\end{equation}
\begin{prop}
	The conditioning gives an action of the monoid $\mathcal{A}_{U,\xi,P}$ on the set of theories
	in the language $\mathbb{L}_{U,\xi}$.
\end{prop}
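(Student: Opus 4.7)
The plan is to verify the two defining axioms of a monoid action, leveraging the closed monoidal (Heyting) structure on $\mathcal{A}_{U,\xi}$ that was recorded just above the proposition. First I would check that $\mathcal{A}_{U,\xi,P}$ is indeed a (commutative) submonoid of $(\mathcal{A}_{U,\xi},\wedge,\top)$: the top element satisfies $P\leq\top$, and if $P\leq Q$ and $P\leq Q'$ then $P\leq Q\wedge Q'$, so the filter $\mathcal{A}_{U,\xi,P}$ is closed under the monoid operation and contains the unit.

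Next I would verify the unit axiom $\top.\mathbb{T}=\mathbb{T}$. By definition the axioms of $\top.\mathbb{T}$ are the assertions $\vdash(\top\Rightarrow R)$ for each axiom $\vdash R$ of $\mathbb{T}$. In any Heyting algebra $\top\Rightarrow R$ is equivalent to $R$ (formally, by the adjunction $S\wedge\top\leq R \Leftrightarrow S\leq(\top\Rightarrow R)$, setting $S=\top$ gives $\top\leq(\top\Rightarrow R)$ iff $\top\leq R$, i.e.\ the axioms generate the same deductively closed set), so $\top.\mathbb{T}$ and $\mathbb{T}$ coincide as theories.

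Then I would verify the compatibility axiom $(Q\wedge Q').\mathbb{T}=Q.(Q'.\mathbb{T})$. The axioms of the left-hand side are $\vdash((Q\wedge Q')\Rightarrow R)$ for $R$ an axiom of $\mathbb{T}$, while the right-hand side unfolds to the axioms $\vdash(Q\Rightarrow(Q'\Rightarrow R))$. The curry/uncurry identity
\begin{equation}
(Q\wedge Q')\Rightarrow R \;\equiv\; Q\Rightarrow(Q'\Rightarrow R)
\end{equation}
follows directly from the defining adjunction of the closed monoidal structure: for any $S$, one has $S\wedge(Q\wedge Q')\leq R$ iff $(S\wedge Q)\wedge Q'\leq R$ iff $S\wedge Q\leq(Q'\Rightarrow R)$ iff $S\leq(Q\Rightarrow(Q'\Rightarrow R))$, and also iff $S\leq((Q\wedge Q')\Rightarrow R)$. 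Thus the two sets of axioms coincide, and so do the generated theories.

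The only mild subtlety — and this is where I expect the reader might pause — is whether equality of theories means equality as sets of axioms or as deductively closed sets of provable propositions. Under the latter (the standard) convention both steps reduce to the Heyting identities above, and the proposition is established without further work; the argument is robust enough that it would carry over verbatim to the more general closed monoidal setting flagged in the remark on linear generalizations in appendix \ref{app:nat-lang}.
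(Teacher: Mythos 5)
Your proof is correct and takes essentially the same route as the paper: the paper's entire argument is the chain of adjunction equivalences $(R\wedge Q'\wedge Q\leq P)\;\mathrm{iff}\;(R\wedge Q')\leq(Q\Rightarrow P)\;\mathrm{iff}\;R\leq(Q'\Rightarrow(Q\Rightarrow P))$, i.e.\ exactly your curry/uncurry identity. Your additional checks (that $\mathcal{A}_{U,\xi,P}$ is a filter closed under $\wedge$ containing $\top$, and that $\top$ acts as the identity) are left implicit in the paper but are harmless and correct.
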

\begin{proof}
	\begin{equation}
		\begin{split}
			(R\wedge Q'\wedge Q\leq P)\quad {\rm iff} \quad  (R\wedge Q')\leq (Q\Rightarrow P)\\ \quad {\rm iff} \quad  (R\leq (Q'\Rightarrow(Q\Rightarrow P)).
		\end{split}
	\end{equation}
	
	\noindent Note that $Q\Rightarrow P$ is also the maximal proposition $Q'$ (for $\leq$) such that $Q\wedge Q'\leq P$.\\
	
	\noindent Therefore the theory $Q\Rightarrow \mathbb{T}$ is the largest one among all theories $\mathbb{T}'$ satisfying
	\begin{equation}
		Q\wedge \mathbb{T}'\leq \mathbb{T}.
	\end{equation}
	
	\noindent This implies that $\mathbb{T}|Q$ is weaker than $\mathbb{T}$ and than $\neg Q$.
	\begin{enumerate}[label=\arabic*)]
		\item In $Q\wedge \mathbb{T}$, the axioms are of the form $\vdash (Q\wedge R)$ where $\vdash R$ is an axiom of $\mathbb{T}$,
		and from $\vdash (Q\wedge R)$, we deduce $\vdash R$.
		\item Here $Q$ ({\em resp.} $\neg Q$) is understood as the theory with unique axiom $\vdash Q$ ({\em resp.} $\vdash \neg Q$),
		then if $\vdash (Q\wedge \neg Q)$ we have $\vdash\bot$ and all theories are true.
	\end{enumerate}
\end{proof}

\begin{rmk*}
	\normalfont The theory $\mathbb{T}|Q=(Q\Rightarrow \mathbb{T})$ can also be written $\mathbb{T}^{Q}$, by definition of the internal exponential,
	as the action by conditioning is also the internal exponential.
\end{rmk*}

\noindent {\em Notation:} for being lighter, in what follows, we will mostly denote the propositions by the letters $P,Q,R,P',...$
and the theories by the next capital letters $S,T,U,S',...$.\\

\noindent The operation of conditioning was considered by Carnap and Bar-Hilled \cite{CBH52}, in the case of Boolean theories, studying the
content of propositions
and looking for a general notion of sets of semantic Information. In this case $Q\Rightarrow T$ is equivalent to
$T\vee\neg Q=(T\wedge Q)\vee\neg Q$ (see the companion text on logicoprobabistic information for more details \cite{logico-probabilistic}).\\
Their main formula for the concept of information was
\begin{equation}\label{carnapbarhillelequation}
{\rm Inf}(\mathbb{T}|P)={\rm Inf}(\mathbb{T}\wedge P)\backslash {\rm Inf}(P);
\end{equation}
assuming that ${\rm Inf}(A\wedge B)\supseteq {\rm Inf}(A)\cup {\rm Inf}(B)$.\\
\begin{prop}\label{prop:conditioning}
	The conditioning by elements of $\mathcal{A}_{U,\xi, P}$, i.e. propositions $Q$ implied by $P$,
	preserves the set $\Theta_{U,\xi, P}$ of theories excluding $P$.
\end{prop}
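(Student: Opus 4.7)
The plan is to unfold everything to a direct Heyting-algebra computation, working at the level of the proposition obtained as conjunction of the axioms of a theory. Write $R_\mathbb{T}$ for the proposition $\bigwedge_{i\in I} R_i$ obtained from the axioms $\vdash R_i$ of $\mathbb{T}$ (treating the axiomatisation either as conjunctively generated, or, in the infinite case, arguing axiom-by-axiom, which is valid since a theory excludes $P$ iff every one of its theorems is compatible with $\neg P$). The hypothesis $\mathbb{T}\in \Theta_{U,\xi,P}$ then reads $R_\mathbb{T}\leq \neg P$, equivalently $R_\mathbb{T}\wedge P\leq \bot$. The hypothesis $Q\in \mathcal{A}_{U,\xi,P}$ reads $P\leq Q$. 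By definition \ref{defn:sem-cond}, the conditioned theory $Q.\mathbb{T}$ has $R_{Q.\mathbb{T}}=(Q\Rightarrow R_\mathbb{T})$, and the goal is to show $(Q\Rightarrow R_\mathbb{T})\leq \neg P$, i.e.\ $(Q\Rightarrow R_\mathbb{T})\wedge P\leq \bot$.

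The key step is the adjunction $(-)\wedge Q\dashv (Q\Rightarrow -)$ in the Heyting algebra $\Omega^{\mathbb{L}_{U,\xi}}$, which yields the usual modus ponens inequality
\begin{equation}
(Q\Rightarrow R_\mathbb{T})\wedge Q \leq R_\mathbb{T}.
\end{equation}
Combining with $P\leq Q$, so that $(Q\Rightarrow R_\mathbb{T})\wedge P\leq (Q\Rightarrow R_\mathbb{T})\wedge Q$, one obtains
\begin{equation}
(Q\Rightarrow R_\mathbb{T})\wedge P \leq R_\mathbb{T}.
\end{equation}
Meeting with the obvious $(Q\Rightarrow R_\mathbb{T})\wedge P\leq P$, this gives $(Q\Rightarrow R_\mathbb{T})\wedge P\leq R_\mathbb{T}\wedge P\leq \bot$, where the last inequality uses the assumption that $\mathbb{T}$ excludes $P$. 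Re-applying the adjunction, $Q\Rightarrow R_\mathbb{T}\leq \neg P$, which is the desired conclusion $Q.\mathbb{T}\in \Theta_{U,\xi,P}$.

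For the monoid action statement, I would then observe that this is compatible with the action already shown: if $Q,Q'\in \mathcal{A}_{U,\xi,P}$, the proposition verified earlier in the section gives $(Q'\cdot(Q.\mathbb{T}))=(Q'\wedge Q).\mathbb{T}$, and since $P\leq Q$ and $P\leq Q'$ imply $P\leq Q\wedge Q'$, the product stays in $\mathcal{A}_{U,\xi,P}$; the unit $\top$ acts trivially. Thus $\Theta_{U,\xi,P}$ is stable under the restricted monoid action.

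I do not expect any serious obstacle: the statement is essentially a two-line Heyting-algebra calculation, and the only mildly delicate point is to make explicit that excluding $P$ really is a condition on the conjunction of axioms (and hence behaves well under the internal implication applied term-by-term), which is handled by the convention to identify a theory with $R_\mathbb{T}$ already in force in the section.
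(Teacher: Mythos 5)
Your proof is correct and is essentially the paper's own argument: the paper shows that any $T'$ with $Q\wedge T'\leq T$ satisfies $T'\wedge P\leq T'\wedge Q\wedge P\leq T\wedge P\leq\bot$ and then specializes to the largest such $T'$, namely $Q\Rightarrow T$, whereas you apply the counit $(Q\Rightarrow T)\wedge Q\leq T$ directly to that same element — the two computations are identical in substance. The extra remarks on reducing a theory to the conjunction of its axioms and on the monoid action are consistent with the conventions already in force in the section and do not change the argument.
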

\begin{proof}
	Let $T$ be a theory excluding $P$ and $Q\geq P$; consider a theory $T'$ such that $Q\wedge T'\leq T$,
	we deduce $T'\wedge P\leq T$, thus $T'\wedge P\leq T\wedge P$. But $T\wedge P\leq\bot$, then $T'\wedge P\leq\bot$. But
	$Q\Rightarrow T$ is the largest theory such that $Q\wedge T'\leq T$, therefore $Q\Rightarrow T$ excludes $P$, i.e. asserts $\neg P$.
\end{proof}
\begin{rmk*}
	\normalfont Consider the sets $\Theta'_{U,\xi,P}$ of theories which imply the validity of the proposition $P$. These sets constitute a cosheaf
	over the category $\widetilde{\mathcal{A}}_{\rm strict}'$ for $\pi_\star $ and a sheaf for  $\pi^{\star }$.
	However, the formula
	\eqref{semanticconditioning} does'nt give an action of the monoid $\mathcal{A}_{U,\xi,P}$ on the set $\Theta'_{U,\xi,P}$, even in the boolean case,
	where $(Q\Rightarrow T)=T\vee\neg Q$.
\end{rmk*}

\indent We can also consider the set of all theories over the largest category $\widetilde{\mathcal{A}}$, without further localization;
they also form a sheaf for $\pi^{\star }$
and a cosheaf $\Theta$ for $\pi_\star $, which are stable by the conditioning.\\
\indent When necessary, we note $\Theta_{\rm loc}$ the presheaf for $\pi^{\star }$ made by the $\Theta_{U,\xi,P}$ over $\widetilde{\mathcal{A}}$.\\

\noindent The naturality over $\widetilde{\mathcal{A}}'_{strict}$ of the action of the monoids relies on the following formulas, for every arrow $(\alpha,h):(U,\xi)\rightarrow (U',\xi')$
in $\mathcal{F}$, we have the arrows $(U,\xi,P)\rightarrow (U',\xi',\pi_\star P)$ in $\widetilde{\mathcal{A}}_{\rm strict}'$; in the presheaf of monoids $\mathcal{A}_{U,\xi,P}$,
for the morphism $\pi^{\star }$, and the presheaf $\Theta_{U,\xi,P}$ with morphisms $\pi_\star $:
\begin{equation}
(\pi^{\star }Q').T=\pi^{\star }\left[Q'.\pi_\star (T)\right].
\end{equation}

\noindent This holds true under the strong hypothesis $\pi^{\star }\pi_\star ={\sf Id}$.\\

\indent If we want to consider functions $\phi$ of the theories, two possibilities appear: $\pi_\star $ for $\Theta$ with $\pi^{\star }$
for the monoids $\mathcal{A}$, or the opposite $\pi^{\star }$ for $\Theta$ with $\pi_{\star }$
for the monoids $\mathcal{A}$.
But Both cases give a cosheaf over $\widetilde{\mathcal{A}}$, however only the second one gives a functional module $\Phi$ over $\mathcal{A}$, even
with the strong standard hypothesis,
\begin{thm}\label{thm:pistarpistar}
	Under the strong hypothesis, in particular $\pi^{\star }\pi_\star ={\sf Id}$, 
	and over the restricted category $\widetilde{\mathcal{A}}_{\rm strict}'$, the cosheaf $\Phi'$
	made by the measurable functions (with any kind of fixed values) on the theories $\Theta_{U,\xi,P}$, with the morphisms $\pi^{\star }$,
	is a cosheaf of modules over the monoidal cosheaf $\mathcal{A}'_{\rm loc}$, made by the monoidal
	categories $\mathcal{A}_{U,\xi,P}$, with the morphisms $\pi_\star $.
\end{thm}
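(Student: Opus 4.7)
The plan is to decompose the statement into three verifications: that $\Phi'$ genuinely forms a cosheaf under the proposed transition maps, that each fiber $\Phi_{U,\xi,P}$ is a module over the monoid $\mathcal{A}_{U,\xi,P}$, and that the two cosheaf structures are compatible in the sense that, for each arrow of $\widetilde{\mathcal{A}}_{\rm strict}'$, the transition on functions is $\mathcal{A}$-equivariant with respect to the transition on monoids.

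First I would check (i). By Lemma \ref{lem:theories-morph} and its corollary, $\Theta_{U,\xi,P}$ is a presheaf on $\widetilde{\mathcal{A}}$ (a fortiori on $\widetilde{\mathcal{A}}_{\rm strict}'$) for the transitions $\pi^{\star}$; pulling back functions along these transitions automatically yields a cosheaf, namely $(\overline{\pi^{\star}}\phi)(T') := \phi(\pi^{\star} T')$, which preserves measurability because $\pi^{\star}$ is order-preserving. Next I would check (ii): the map $Q \mapsto (T \mapsto Q.T = Q \Rightarrow T)$ defines a monoid action of $\mathcal{A}_{U,\xi,P}$ on $\Theta_{U,\xi,P}$ (this is the content of Proposition \ref{prop:conditioning}, noting that $\top$ acts as the identity and $(Q_1 \wedge Q_2).T = Q_1.(Q_2.T)$), so composing with functions yields a module structure $(Q \cdot \phi)(T) := \phi(Q.T)$ on $\Phi_{U,\xi,P}$.

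The key step (iii) is the compatibility: for an arrow $\gamma:(U,\xi,P) \to (U',\xi',\pi_\star P)$ in $\widetilde{\mathcal{A}}_{\rm strict}'$ and for $Q \in \mathcal{A}_{U,\xi,P}$, $\phi \in \Phi_{U,\xi,P}$, one must verify
\[ \overline{\pi^{\star}}(Q \cdot \phi) \;=\; (\pi_\star Q) \cdot \overline{\pi^{\star}}\phi \]
in $\Phi_{U',\xi',\pi_\star P}$. Evaluating both sides at $T' \in \Theta_{U',\xi',\pi_\star P}$ reduces this to the identity $Q.\pi^{\star}T' = \pi^{\star}((\pi_\star Q).T')$ inside $\Theta_{U,\xi,P}$, which is precisely the displayed formula $(\pi^{\star}Q').T = \pi^{\star}(Q'.\pi_\star T)$ stated just before the theorem, specialised by taking $Q' := \pi_\star Q$ and using the strong hypothesis $\pi^{\star}\pi_\star = {\sf Id}$ to collapse $\pi^{\star}\pi_\star Q = Q$. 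Equivalently, the same identity can be read off directly from the fact that under the strong standard hypothesis $\pi^{\star}$ commutes with the internal implication $\Rightarrow$ (Theorem \ref{thm:semanticflow}, via Lemmas \ref{bool-lat} and \ref{morph-heyt}), whence $\pi^{\star}((\pi_\star Q) \Rightarrow T') = (\pi^{\star}\pi_\star Q) \Rightarrow \pi^{\star}T' = Q \Rightarrow \pi^{\star}T'$.

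The main obstacle, I expect, is handling the asymmetry of the adjunction $\pi^{\star} \dashv \pi_\star$: only $\pi^{\star}\pi_\star = {\sf Id}$ is given, while $\pi_\star\pi^{\star}$ is merely the unit and satisfies $\pi_\star\pi^{\star} \geq {\sf Id}$. Care is therefore needed to formulate the compatibility on the correct side (hence the restriction to $\widetilde{\mathcal{A}}_{\rm strict}'$, where the target object is forced to be $(U',\xi',\pi_\star P)$ rather than an arbitrary $(U',\xi',P')$), and to invoke the preservation of $\Rightarrow$ by $\pi^{\star}$ rather than any symmetric preservation by $\pi_\star$. Once these ingredients are in place, the module axioms pass through the cosheaf transitions pointwise and the theorem follows.
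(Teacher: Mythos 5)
Your proposal is correct and follows essentially the same route as the paper: the paper's proof consists precisely of your step (iii), a six-line pointwise computation showing $\pi_\star Q.(\Phi'_\star\phi_P)(T')=\phi_P[\pi^{\star}(T')|Q]$ by commuting $\pi^{\star}$ past $\Rightarrow$ and collapsing $\pi^{\star}\pi_\star Q$ to $Q$ via the strong hypothesis, exactly as you reduce the compatibility to $Q\Rightarrow\pi^{\star}T'=\pi^{\star}(\pi_\star Q\Rightarrow T')$. Your steps (i) and (ii) are left implicit in the paper (they follow from Lemma \ref{lem:theories-morph} and Proposition \ref{prop:conditioning} respectively), and your remark on the one-sidedness of the adjunction matches the paper's own observation, made in the remark immediately following the theorem, that the opposite pairing would require $\pi_\star\pi^{\star}={\sf Id}$.
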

\begin{proof}
	Consider a morphism $(\alpha,h, \iota):A_{U,\xi,P}\rightarrow A_{U',\xi',\pi_\star P}$, a theory $T'$ in
	$\Theta_{U',\xi',\pi_\star P}$, a proposition $Q$ in $A_{U,\xi,P}$, and an element $\phi_P$ in $\Phi'_{U,\xi,P}$, we have
	\begin{align*}
		\pi_\star Q.(\Phi'_\star \phi_P)(T')&=(\Phi'_\star \phi_P)(T'|\pi_\star Q)\\
		&=\phi_P[\pi^{\star }(T'|\pi_\star Q)]\\
		&=\phi_P[\pi^{\star }(\pi_\star Q\Rightarrow T')]\\
		&=\phi_P[\pi^{\star }\pi_\star Q\Rightarrow \pi^{\star }T']\\
		&=\phi_P[Q\Rightarrow \pi^{\star }T']\\
		&=\phi_P[\pi^{\star }(T')| Q].
	\end{align*}
\end{proof}
\begin{rmk*}
	\normalfont The same kind of computation shows that, in the case of the sheaf $\Phi$ of functions on the cosheaf $\Theta$
	with $\pi_\star $ and the sheaf $\mathcal{A}_{\rm loc}$ with $\pi^{\star }$, we have, for the corresponding elements $Q',T,\phi'$,
	\begin{equation}
		\pi^{\star }Q'.\Phi^{\star }(\phi')(T)=\phi'(\pi_\star \pi^{\star }Q'.\pi_\star T);
	\end{equation}
	which is not the correct equation of compatibility, under our assumption. It should be true for the other direction,
	if $\epsilon=\pi_\star \pi^{\star }={\sf Id}_{\mathcal{A}_{U',\xi'}}$.
\end{rmk*}

\noindent However, there exists an important case where both hypotheses $\pi^{\star }\pi_\star ={\sf Id}_U$ and $\pi_{\star }\pi^{\star }={\sf Id}_{U'}$
hold true; it the case where the languages over the objects $(U,\xi)$ are all isomorphic. In terms of the intuitive maps
$f_\alpha$, this means that they are homeomorphisms. This case happens in particular when we consider the restriction
of the story to a given layer in a network.\\

\begin{rmk*}
	\normalfont Considering lemmas \ref{lem:monoid-presheaf} and \ref{lem:theories-morph}, we could forget the functional point of view with a space $\Phi$.
	In this case we do not have an Abelian situation, but we have
	a sheaf of sets of theories $\Theta_{\rm loc}$, on which the sheaf of monoids $\mathcal{A}_{\rm loc}$ acts by conditioning,
\end{rmk*}

	\begin{prop}\label{prop:compatibility}
	The presheaf $\Theta_{\rm loc}$ for $\pi^{\star }$ is compatible with the monoidal action of
	the presheaf $\mathcal{A}_{\rm loc}$, both considered on the category $\widetilde{\mathcal{A}}$ (then over $\widetilde{\mathcal{A}}'$ by restriction, under
	the strong standard hypothesis on $\mathcal{F}$).
\end{prop}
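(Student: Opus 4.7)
The plan is to verify that for every arrow $(\alpha,h,\iota):(U,\xi,P)\rightarrow (U',\xi',P')$ of $\widetilde{\mathcal{A}}$, every $Q'\in \mathcal{A}_{U',\xi',P'}$ and every theory $T'\in \Theta_{U',\xi',P'}$, the transition map $\pi^{\star }=\pi^{\star }_{\alpha,h}$ satisfies
\begin{equation*}
\pi^{\star }(Q'.T')=\pi^{\star }(Q').\pi^{\star }(T'),
\end{equation*}
which by definition \ref{defn:sem-cond} amounts to the identity $\pi^{\star }(Q'\Rightarrow T')=\pi^{\star }Q'\Rightarrow \pi^{\star }T'$. Together with Lemmas \ref{lem:monoid-presheaf} and \ref{lem:theories-morph}, this identity expresses exactly that the conditioning action of section \ref{sec:fibrations-and-cofibrations} assembles into a morphism of presheaves on $\widetilde{\mathcal{A}}$; the compatibility on $\widetilde{\mathcal{A}}'$ then follows by restriction, since under the strong standard hypothesis the category $\widetilde{\mathcal{A}}'$ embeds in $\widetilde{\mathcal{A}}$ as noted just after equation \eqref{aprimearrow}.

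First I would check that both sides live in the expected fiber. By definition of an arrow in $\widetilde{\mathcal{A}}$ we have $P\leq\pi^{\star }P'$; combined with $P'\leq Q'$ and the order-preservation of $\pi^{\star }$, this gives $P\leq \pi^{\star }Q'$, hence $\pi^{\star }Q'\in \mathcal{A}_{U,\xi,P}$ as in Lemma \ref{lem:monoid-presheaf}. Similarly, Lemma \ref{lem:theories-morph} produces $\pi^{\star }T'\in \Theta_{U,\xi,P}$, using the commutation $\pi^{\star }\neg=\neg\pi^{\star }$ established in Lemmas \ref{bool-lat} and \ref{morph-heyt}. Proposition \ref{prop:conditioning} applied downstairs then places $\pi^{\star }Q'\Rightarrow \pi^{\star }T'$ in $\Theta_{U,\xi,P}$, while the same proposition applied upstairs together with the functoriality of $\pi^{\star }$ does the same for $\pi^{\star }(Q'\Rightarrow T')$.

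The heart of the argument is the commutation of $\pi^{\star }$ with the internal implication. Under the strong standard hypothesis on $\mathcal{F}$ (definition \ref{dfn:semanticflow}), the map $\pi^{\star }_{\alpha,h}$ decomposes as the morphism $\lambda_\alpha=\Omega_\alpha$ induced by $F^{\alpha}_!$, followed by the pullback $\Omega_U(h)$ along the fiberwise arrow $h$. Lemma \ref{morph-heyt} together with its groupoid extension discussed immediately after its proof shows that $\lambda_\alpha$ is a morphism of Heyting algebras, and $\Omega_U(h)$ is tautologically one since it is the reciprocal image inside the single topos $\mathcal{E}_U$. Composing, $\pi^{\star }$ preserves $\wedge$, $\vee$, $\top$, $\bot$, $\neg$ and $\Rightarrow$, from which the required identity drops out.

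The main obstacle is this last step. Outside the boolean or groupoid setting, the commutation of $F^{\alpha}_!$ with $\Rightarrow$ is not automatic; it rests on the openness of the induced geometric morphism, which is the content of Lemma \ref{lem:fib-open} when $F_\alpha$ is a fibration. If equation \eqref{strongstandard} of definition \ref{dfn:semanticflow} is dropped, only the external inequality $\pi^{\star }(Q'\Rightarrow T')\leq \pi^{\star }Q'\Rightarrow \pi^{\star }T'$ survives, yielding a lax action rather than a strict equivariance; the proposition as phrased therefore genuinely requires the strong form, and the transition to $\widetilde{\mathcal{A}}'$ through the adjunction $\lambda_\alpha\dashv \tau'_\alpha$ of Lemma \ref{lem:open-morphism} is precisely what the parenthetical clause in the statement is tracking.
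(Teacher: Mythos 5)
Your proposal is correct under the standing hypotheses of the section, but it proves considerably more than the paper's own argument does, and by a different route. The paper's proof of proposition \ref{prop:compatibility} is a three-line order computation: from $T'\leq \neg P'$ and $P\leq \pi^{\star}P'$ it deduces $\neg\pi^{\star}P'\leq\neg P$ and hence $\pi^{\star}T'\leq\neg P$ --- that is, it only re-verifies (essentially repeating lemma \ref{lem:theories-morph}) that $\pi^{\star}$ carries $\Theta_{U',\xi',P'}$ into $\Theta_{U,\xi,P}$, so that the fiberwise actions of proposition \ref{prop:conditioning} and the transition maps coexist on $\widetilde{\mathcal{A}}$. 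You instead take ``compatible'' to mean the strict equivariance $\pi^{\star}(Q'\Rightarrow T')=\pi^{\star}Q'\Rightarrow\pi^{\star}T'$ and prove it by decomposing $\pi^{\star}$ as $\Omega_U(h)\circ\lambda_\alpha$ and invoking the preservation of the Heyting operations (lemma \ref{morph-heyt} and openness). That stronger statement is not what the paper verifies here, but it is exactly the identity used later in the proof of theorem \ref{thm:pistarpistar} (the step $\pi^{\star}(\pi_\star Q\Rightarrow T')=\pi^{\star}\pi_\star Q\Rightarrow\pi^{\star}T'$), so your argument makes explicit something the paper leaves implicit; the paper's weaker reading buys a proof that needs only order-preservation and commutation with $\neg$. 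One correction to your last paragraph: the commutation of $\pi^{\star}$ with $\Rightarrow$ is already part of the \emph{plain} standard hypothesis of definition \ref{dfn:semanticflow} (propagation of the logic by $\lambda_\alpha$, secured by openness as in lemmas \ref{morph-heyt} and \ref{lem:fib-open}), not of equation \eqref{strongstandard}; the latter, $\lambda_\alpha\circ\leftindex^{t}\lambda'_\alpha={\sf Id}$, is what makes $\widetilde{\mathcal{A}}'$ a subcategory of $\widetilde{\mathcal{A}}$ and is therefore needed only for the parenthetical restriction, as the paper's statement itself indicates. So the main claim over $\widetilde{\mathcal{A}}$ does not ``genuinely require the strong form'' in the way you assert.
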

\begin{proof}
	If $T'\leq \neg P'$ and $P\leq \pi^{\star }P'$, we have $\neg \pi^{\star }P'\leq \neg P$, therefore
	$\pi^{\star }T'\leq \neg P$.
\end{proof}

In the Bayesian case, the conditioning is expressed algebraically by the Shannon mean formula on the
functions of probabilities:
\begin{equation}\label{shannonconditioning}
Y.\phi(\mathbb{P}_X)=\mathbb{E}_{Y_\star \mathbb{P}_X}(\phi(\mathbb{P}|Y=y))
\end{equation}
This gives an action of the monoid of the variables $Y$ coarser than $X$, as we find here for the fibers $A_{U,\xi,P}$
and the functions of theories $\Phi_{U,\xi,P}$.\\
\indent Equation \eqref{carnapbarhillelequation} was also inspired by Shannon's equation
\begin{equation}\label{shannonequation}
(Y.H)(X;\mathbb{P})=H((Y,X);\mathbb{P})-H(Y;Y_\star \mathbb{P}).
\end{equation}
However this set of equations for a system $\mathcal{B}$ can be deduced from the set of equations of invariance
\begin{equation}\label{shannoninvariantequation}
(H_X-H_Y)|Z=H_{X\wedge Z}-H_{Y\wedge Z}.
\end{equation}

In the semantic framework, two analogies appear with the bayesian framework: in one of them, in each layer, the role of random variables
is played by the propositions $P$; in the other one, their role is played by the layers $U$, augmented by the objects of a groupoid
(or another kind of category for contexts). The first analogy was chosen by Carnap and Bar-Hillel, and certainly will play a role
in our toposic approach too, at each $U,\xi$, to measure the logical value of functioning. However, the second analogy
is more promising for the study of DNNs, in order to understand the semantic adventures in the feedforward and feedback dynamics.\\

To unify the two analogies, we have to consider the triples $(U,\xi,P)$ as the semantic analog of random variables, with the covariant
morphisms of the category $\mathcal{D}=\widetilde{\mathcal{A}'}_{\rm strict}^{\rm op}$,
\begin{equation}
(\alpha,h,\pi^{\alpha,h}_\star ):(U,\xi,P)\rightarrow (U',\xi',P'=\pi^{\alpha,h}_\star P),
\end{equation}
as analogs of the marginals.\\
In fact, a natural extension exists and will be also studied, replacing the monoids $\mathcal{A}_{U,\xi,P}$ by the monoids $\mathcal{D}_{U,\xi,P}$
of arrows in $\mathcal{D}$ going to $(U,\xi,P)$, i.e. replacing $\widetilde{\mathcal{A}}_{\rm loc}$ by the left slice $\mathcal{D}\backslash \mathcal{D}$.
This will allow the use of combinatorial constructions over the nerves of $\mathcal{F}$ and $\mathcal{C}$.\\

If we consider the theories in $\Theta_{U,\xi,P}$ as the analogs of the probability laws, the analogs of the values of a variable $Q$
are the conditioned theories $T|Q$.\\
\indent When a functioning network is considered, the neural activities in $X^{w}_{U,\xi}$, can also be seen as
values of the variables, through a map $S_{U,\xi,P}:X^{w}_{U,\xi}\rightarrow \Theta_{U,\xi,P}$.\\

As defined in section \ref{sec:theories},
a \emph{semantic functioning} of the neural network $\mathbb{X}$ is given by a function
\begin{equation}\label{semanticfunctioning}
S_{U,\xi}:\mathbb{X}_{U,\xi}\rightarrow \Theta_{U,\xi}.
\end{equation}

\noindent The introduction of $P$, seen as logical localization, corresponds
to a refined notion of semantic functioning, a quotient of the activities made by the neurons that express
a rejection of this proposition. This generates a foliations in the individual layer's activities.\\
\begin{rmk*}
	\normalfont We could also consider the cosheaf $\Theta'$ or $\Theta'_{\rm loc}$ over $\widetilde{\mathcal{A}}_{\rm strict}'$, and obtain the cosheaf $\Sigma'$,
	of all possible maps $S_{U,\xi}:X_{U,\xi}\rightarrow \Theta'_{U,\xi};U\in \mathcal{C}, \xi\in \mathcal{F}_U$,
	where the transition from $U,\xi$ to $U',\xi'$ over $\alpha,h$ is given by the contravariance of $X$ and by the covariance of $\Theta'$:
	\begin{equation}
		\Sigma'_{\alpha,h}(s_U)_{U',\xi'}=^{t}\mathbb{L}'_{\alpha,h}\circ s_U\circ X_{\alpha,h}.
	\end{equation}
	However the above discussion shows that the compatibility with the conditioning would require $\pi_\star \pi^{\star }={\sf Id}$, which appears to be too restrictive.
\end{rmk*}

\indent In addition, the network's feed-forward dynamics $X^{w}$ makes appeal to a particular class of inputs $\Xi$,
and is more or less adapted by learning to the expected theories $\Theta_{\rm out}$ at the output. Therefore a convenient notion of information, if it exists,
must involve these ingredients.\\
\indent By using functions of the mappings $S_{U,\xi}$, we could not apply them to particular vectors in $X_{U,\xi}$. But using functions
on the $\Theta_{U,\xi}$ we can. Then this will be our choice. And this can give numbers (or sets or spaces) associated to a family of activities
$x_\lambda\in X_\lambda$, and to their semantic expression
$S_\lambda(x_\lambda)\in \Theta'_\lambda$. Moreover, we can take the sum over the set of $x$ belonging to $\Xi$, then a sum of
semantic information corresponding to the whole set of data and goals. Which seems preferable.\\

\noindent The relations
\begin{equation}
S_{U,\xi}\circ X^{\star }=\pi^{\star }\circ S_{U',\xi'},
\end{equation}
mean that the logical transmission of the theories expressed by $U'$ (in the context $\xi'$) coincide with the
theories in $U$ induced by the neuronal transmission from $U'$ to $U$.\\

If this coherence is verified, the object $\Sigma$ in the topos, replacing $\Sigma'$, could be taken
as the exponential object $\Theta^{\mathbb{X}}$ in the topos of presheaves over $\widetilde{\mathcal{A}}$. By definition, this is
equivalent to consider the parameterized families of functioning
\begin{equation}\label{coherentsemanticfunctioning}
S_{\lambda}:\mathbb{X}_{U,\xi}\times Y_\lambda\rightarrow \Theta_{U,\xi,P};
\end{equation}
where $Y$ is any object in the topos of presheaves over $\widetilde{\mathcal{A}}$.\\
\begin{rmk*}
	\normalfont In the experiments with small networks, we verified this coherence, but only
	approximatively, i.e. with high probability on the activities in $X$.
\end{rmk*}

On another hand, a semantic information over the network must correspond to the impact of the inner functioning on the output decision, given the inputs. For instance, it has to
measure how far from the output theory is the expressed theory at $U',\xi'$. We hope that this should be done by an analog of the \emph{mutual information}
quantities. If
we believe in the analogy with probabilities, this should be given by the topological coboundary of the family
of sections of the module $\Phi_\lambda;\lambda\in \widetilde{\mathcal{A}'}$ \cite{Baudot-Bennequin}\\
\indent Then we enter the theory of topological invariants of the sheaves of modules in a ringed topos. Here $\Phi$ over $\mathcal{D}$, or $\mathcal{D}\backslash\mathcal{D}$.\\

\noindent The category $\mathcal{D}=\widetilde{\mathcal{A}'}_{\rm strict}^{\rm op}$ gives birth to a refinement
of the cat's manifolds we have defined before in section \ref{sect:cats}:\\
Suppose, to simplify, that we have a unique initial point in $\mathcal{C}$; it corresponds to the output layer $U_{out}$.
Then look at a given $\xi_0\in \mathcal{F}_{\rm out}$, and a given proposition $P_{\rm out}$ in $\Omega_{\rm out}(\xi_0)=\mathcal{A}_{U_{\rm out},\xi_0}$;
it propagates in the inner layers through $\pi_\star $ in $P\in \mathcal{A}_{U,\xi}$ for any $U$ and any $\xi$ linked to $\xi_0$, and can
be reconstructed by $\pi^{\star }$ at the output, due to the hypothesis $\pi^{\star }\pi_\star ={\sf Id}$. Then we get
a section over $\mathcal{C}$ of the cofibration $\mathcal{D}^{\rm op}\rightarrow\mathcal{C}$. This can be
extended as a section of $\mathcal{D}^{\rm op}\rightarrow\mathcal{F}$, 
by varying $\xi_0$, when the $F_\alpha$ are fibrations, which is the
main case we have in mind.\\
Note that this does not give all the sections, because some propositions $P$ in a $\mathcal{A}_\lambda$
are not in the image of $\pi_\star $, even if all of them are sent by $\pi^{\star }$ to an element of a set $\Omega_{\rm out}(\xi_0)$.\\
However, these interesting sections are in bijection with the connected components of $\mathcal{D}^{\rm op}$.\\
\indent Let $\mathbb{K}$ be a commutative ring, and $c_P$ a non zero element of $\mathbb{K}$; we define the (measurable) function $\delta_P$
on the theories in the $\Theta_\lambda(P)$, taking the value $c_P$ over a point in the
above connected component of $\mathcal{D}$, and $0$ outside.\\
\indent Looking at the semantic functioning $S:X_{U,\xi}\rightarrow\Theta_\lambda$, we get a function $\delta_P$ on the sets
of local activities. This function takes the value $c_P$ on the set of activities that form theories excluding $P$.\\
\noindent Several subtle points appear:
\begin{enumerate}[label=\arabic*)]
	\item the function really depends on $P$, but when $P$ varies, it does not change when two
	propositions have the same negation $\neg P$;
	\item to conform with the before introduced notion of cat's manifold, we must assume that the activities in different
	layers which exclude $P$ in their axioms, are coherent, i.e. form a section of the object $X^{w}$.
\end{enumerate}
\noindent Without the coherence hypothesis between dynamics and logics, we have two different notions of cat's manifolds,
one dynamic and one linguistic or logical. In a sense, only the agreement deserves to be really named semantics.

\section{Semantic information. Homology constructions}\label{sec:sem-info}
\subsection*{Bar complex of functions of theories and conditioning by propositions.}

\noindent We start with the computation of the Abelian invariants, therefore with the module of functions $\Phi$ on $\Theta$
in the cases where conditioning act.\\

\noindent We consider first the most interesting case described by theorem \ref{thm:pistarpistar}, given by the presheaf $\Theta$
over the category $\mathcal{D}$, fibred over $\mathcal{F}$ which is itself fibred over $\mathcal{C}$. Note that over $\widetilde{\mathcal{A}}'_{\rm strict}$we get cosheaves,
thus we prefer to work over the opposite $\mathcal{D}$. Then $\mathcal{A}'_{\rm loc}$ with morphisms $\pi_\star $, becomes a sheaf of monoids over
$\mathcal{D}$, and $\Theta'_{\rm loc}$, with morphisms $\pi^{\star }$, becomes a cosheaf of sets over $\mathcal{D}$, in such a manner
that the functions $\Phi$ on $\Theta'_{\rm loc}$ constitute a sheaf of $\mathcal{A}'_{\rm loc}$ modules.\\

We suppose that the elements $\phi_\lambda$ in $\Phi_\lambda$ take their values in a commutative ring $K$
(with cardinality at most continuous).\\

The method of relative homological algebra, used for probabilities in Baudot, Bennequin \cite{Baudot-Bennequin}, and Vigneaux \cite{vigneaux-TAC}, cited above, can be applied here,
for computing ${\sf Ext}^{\star }_{\mathcal{A}'_{\rm loc}}(K,\Phi)$ in the toposic sense. The action of $\mathcal{A}'_{\rm loc}$ on $K$ is supposed trivial.\\

We note $\mathcal{R}=K\left[\mathcal{A}'_{\rm loc}\right]$ the cosheaf in $K$-algebras associated to the monoids $\mathcal{A}'_\lambda;\lambda\in \mathcal{A}'$.
The non-homogeneous bar construction gives a free resolution of the trivial constant module $K$:
\begin{equation}
0\leftarrow K\leftarrow B'_0\leftarrow B'_1\leftarrow B'_2\leftarrow ...
\end{equation}
where $B'_n;n\in \mathbb{N}$, is the free $\mathcal{R}$ module $\mathcal{R}^{\otimes (n+1)}$, with the action on the first
factor. In each object $\lambda=(U,\xi,P)$, the module $B'_n(\lambda)$ is freely generated over $K\left[\mathcal{A}'_\lambda\right]$  by the symbols
$[P_1|P_2|...|P_n]$, where the $P_i$ are elements of $\mathcal{A}'_\lambda$, i.e. propositions implied by $P$.
Then the elements of $B'_n(\lambda)$ are finite sums of elements $P_0[P_1|P_2|...|P_n]$.\\
The first arrow from $B'_0$ to $K$ is the coordinate along $[\emptyset]$.\\
The higher boundary operators are of the Hochschild type, defined on the basis by the formula
\begin{multline}
\partial [P_1|P_2|...|P_n]=P_1[P_2|...|P_n]+\sum_{i=1}^{n-1}(-1)^{i}[P_1|...|P_iP_{i+1}|...|P_n]+(-1)^{n}[P_1|P_2|...|P_{n-1}]
\end{multline}

\noindent For each $n\in \mathbb{N}$,  the vector space ${\sf Ext}^{n}_{\mathcal{A}'}(K,\Phi)$ is the $n$-th group of cohomology
of the associated complex ${\sf Hom}_{\mathcal{A}'}(B^{\star },\Phi)$, made by natural transformations which commutes with the
action of $K[\mathcal{A}']$.\\
\indent The coboundary operator is defined by
\begin{equation}
	\begin{split}
		\delta f_\lambda(T;Q_0|...|Q_n)=\\
		 &f_\lambda(T|Q_0;Q_1|...|Q_n)+\sum_{i=0}^{n-1}(-1)^{i+1}f_\lambda(T;Q_0|...|Q_iQ_{i+1}|...|Q_n)
		+(-1)^{n+1}f_\lambda(T;Q_0|...|Q_{n-1}).
	\end{split}
\end{equation}

\noindent A cochain of degree zero is a section $\phi_\lambda;\lambda\in \mathcal{D}$ of $\Phi$, that is, a collection
of maps $\phi_\lambda:\Theta'_\lambda\rightarrow K$, such that, for any morphism $\gamma: \lambda\rightarrow\lambda'$
in $\mathcal{D}^{\rm op}$,
and any $S'\in \Theta'_{\lambda'}$, we have
\begin{equation}\label{naturality}
\phi_{\lambda'}(S')=\phi_{\lambda}(\pi^{\star }S').
\end{equation}

If there exists a unique last layer $U_{\rm out}$, as in the chain, this implies that the functions $\phi_\mu$ are all
determined by the functions $\phi_{\rm out}$ on the sets of theories $S_{\rm out}$ in the final logic, excluding given
propositions, by definition of the sets $\Theta'_{U,\xi,P}$. And a priori these final functions are arbitrary.\\

\subsection*{Acyclicity and fundamental cochains}

\noindent To be a cocycle, $\phi$ must satisfy, for any $\lambda=(U,\xi,P)$, and $P\leq Q$,
\begin{equation}\label{degreezerococycle}
0=\delta \phi ([Q])(S)=Q.\phi_\lambda (S)-\phi_\lambda(S)=\phi_\lambda(Q\Rightarrow S)-\phi_\lambda(S).
\end{equation}

\noindent However, for any $P$ we have $P\leq\top$, and $S|\top=\top$; then the invariance \eqref{degreezerococycle}
implies that $\phi_\lambda$ is independent of $S$; it is equal to $\phi_\lambda(\top)$.\\

Then, a cocycle is a collection elements $\phi(\lambda)$ in $K$, satisfying $\phi_{\lambda'}=\phi_\lambda$ each time there exists an arrow from
$\lambda$ to $\lambda'$ in $\widetilde{\mathcal{A}}'_{\rm strict}$, thus forming a section of the constant sheaf over $\widetilde{\mathcal{A}}'_{\rm strict}$.\\

\noindent This gives:\\
\begin{prop}As
	\begin{equation}
		{\sf Ext}^{0}_{\mathcal{A}'}(K,\Phi)=H^{0}(\widetilde{\mathcal{A}}'_{\rm strict};K)=K^{\pi_0\left(\widetilde{\mathcal{A}}'_{\rm strict}\right)},
	\end{equation}
	then degree zero cohomology counts the propositions that are transported by $\pi_{\star }$
	from the output.
\end{prop}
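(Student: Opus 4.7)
The plan is to recognize that in degree zero, $\mathrm{Ext}^{0}$ and the cocycle module coincide (there are no coboundaries), so I would first identify the module of $0$-cochains and then pin down the cocycle condition. A $0$-cochain in $\mathrm{Hom}_{\mathcal{A}'}(B'_\bullet, \Phi)$ is an $\mathcal{R}$-linear map out of the free rank one module $B'_0 = \mathcal{R}$ generated by $[\emptyset]$, hence is the same as a section $\lambda \mapsto \phi_\lambda$ of $\Phi$ over $\mathcal{D}$, satisfying the naturality \eqref{naturality} with respect to the transfer maps $\pi^{\star}$.

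Next I would apply the Hochschild-type coboundary formula displayed just above the statement to a generator $[Q]$ at a point $\lambda = (U,\xi,P)$: this yields $\delta\phi_\lambda(T;[Q]) = \phi_\lambda(T|Q) - \phi_\lambda(T)$, so cocycles are exactly the sections invariant under the conditioning action of each local monoid $\mathcal{A}'_{U,\xi,P}$ on $\Theta'_\lambda$. Following the collapsing step given just above the statement, I would specialise the condition at $Q = \top$, which is available because $P \leq \top$; invoking the semantic conditioning rule of definition \ref{defn:sem-cond}, the invariance then forces $\phi_\lambda$ to take a single value $c_\lambda \in K$ on the whole of $\Theta'_\lambda$.

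With fiberwise constancy in hand, the surviving content of naturality \eqref{naturality} is $c_{\lambda'} = c_\lambda$ for every arrow $\lambda \to \lambda'$ in $\widetilde{\mathcal{A}}'_{\rm strict}$. This is exactly the datum of a locally constant $K$-valued function on the nerve of $\widetilde{\mathcal{A}}'_{\rm strict}$, i.e.\ an element of $K^{\pi_0(\widetilde{\mathcal{A}}'_{\rm strict})}$, which in turn coincides with the degree-zero cohomology $H^{0}(\widetilde{\mathcal{A}}'_{\rm strict}; K)$ of the constant sheaf $K$. For the interpretive clause, I would invoke the discussion at the end of section \ref{sec:fibrations-and-cofibrations}: under the strong hypothesis $\pi^{\star} \pi_\star = \mathrm{Id}$, each connected component of $\widetilde{\mathcal{A}}'_{\rm strict}$ is generated from the output by a single proposition $P_{\rm out}$ transported downstream by $\pi_\star$, so counting components counts propositions propagated from the output.

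The main obstacle I anticipate is the collapsing step: one must verify carefully that the conditioning action by $\top$ really does flatten every element of $\Theta'_\lambda$ to a common basepoint, which hinges on the precise role of $\top$ as the unit of the closed monoidal structure on $\mathcal{A}_{U,\xi}$ and on the compatibility of this action with the localization by $P$. A secondary check is that restricting from $\widetilde{\mathcal{A}}'$ to the strict subcategory $\widetilde{\mathcal{A}}'_{\rm strict}$ does not introduce extra cocycles beyond those counted by $\pi_0$; since the constancy argument is purely fiberwise and the additional non-strict morphisms can only add invariance constraints, this direction is automatic.
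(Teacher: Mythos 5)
Your overall route is the same as the paper's: identify degree-zero cochains with natural sections $\phi_\lambda$ of $\Phi$ satisfying \eqref{naturality}, note that ${\sf Ext}^{0}$ has no coboundaries so it equals the kernel of $\delta$, read the coboundary on a generator $[Q]$ as $\phi_\lambda(S|Q)-\phi_\lambda(S)$, collapse $\phi_\lambda$ to a constant $c_\lambda$, and then let naturality turn the family $(c_\lambda)$ into a locally constant function on $\widetilde{\mathcal{A}}'_{\rm strict}$, i.e.\ an element of $K^{\pi_0}$. The naturality and $\pi_0$ steps, and the interpretive clause about propositions transported by $\pi_\star$ from the output, are all fine.

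The one genuine gap is exactly the step you flagged as your ``main obstacle'', and your worry is justified: conditioning by $\top$ does \emph{not} flatten $\Theta'_\lambda$. By definition \ref{defn:sem-cond}, $S|\top=(\top\Rightarrow S)=S$, because $\top$ is the unit of the monoidal structure on $\mathcal{A}_{U,\xi}$; so the cocycle equation at $Q=\top$ is the tautology $\phi_\lambda(S)=\phi_\lambda(S)$ and yields no constraint. (The paper's own text writes ``$S|\top=\top$'' at this point, which appears to be a slip.) The correct specialization is at the other extreme of the monoid $\mathcal{A}_{U,\xi,P}$, namely $Q=P$, which is absorbing for $\wedge$ on propositions implied by $P$. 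Since every $S\in\Theta'_\lambda$ excludes $P$, i.e.\ $S\leq\neg P$, one checks that $T'\wedge P\leq S$ holds precisely when $T'\leq\neg P$, so $S|P=(P\Rightarrow S)=\neg P$ for \emph{every} such $S$ --- this is the identity ``$S|P=\neg P$'' used later in the same section. The cocycle equation at $Q=P$ then reads $\phi_\lambda(\neg P)=\phi_\lambda(S)$ for all $S$, which is the fiberwise constancy you need. With that substitution your argument closes; without it, the collapsing step fails and the proof does not go through.
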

The discussion at the end of section \ref{sec:fibrations-and-cofibrations} describes the relation between the zero cohomology of information and the cats manifolds, that was identified before with the degree zero cohomology in the sense of \v{C}ech.\\

\noindent A degree one cochain is a collection $\phi^{R}_\lambda$ of measurable functions
on $\Theta'_\lambda$, and $R\in \mathcal{A}'_\lambda$, which satisfies the naturality hypothesis:
for any morphism $\gamma: \lambda\rightarrow\lambda'$ in $\mathcal{D}^{\rm op}$,
and any $S'\in \Theta'_{\lambda'}$, we have
\begin{equation}\label{naturalitydegreeone}
\phi_{\lambda'}^{\pi_\star R}(S')=\phi^{R}_\lambda(\pi^{\star }S').
\end{equation}
The cocycle equation is
\begin{equation}\label{degreeonecocycle}
\forall U,\xi, \forall P, \forall Q\geq P, \forall R\geq P, \forall S\in \Theta'_{U,\xi,P},
\phi_\lambda^{Q\wedge R}(S)=\phi_\lambda^{Q}(S)+\phi_\lambda^{R}(Q\Rightarrow S).
\end{equation}

\noindent Let us define a family of elements of $K$ by the equation
\begin{equation}
\psi_\lambda(S)=-\phi^{P}_\lambda(S).
\end{equation}
Formula \eqref{naturalitydegreeone} implies formula \eqref{naturality}, then $\psi_\lambda$
is a zero cochain.\\
Take its coboundary
\begin{equation}
\delta \psi_\lambda ([Q])(S)=-\phi^{P}_\lambda(S)+Q.\phi^{P}_\lambda(S).
\end{equation}
using the cocycle equation and the fact that for any $Q\geq P$ we have $Q\wedge P=P$, this gives
\begin{equation}
\phi_\lambda^{Q}(S)=\phi_\lambda^{Q\wedge P}(S)-Q.\phi^{P}_\lambda(S)=-\delta \psi_\lambda ([Q])(S).
\end{equation}

\noindent Remark that the cochain $\psi$ is not unique, the formula $\psi=-\phi_\lambda^{P}$ is
only a choice. Two cochains $\psi$ satisfying $\delta \psi=\phi$ differ by a zero cocycle, that is a family of
numbers $c_\lambda$, dependent
on $P$ but not on $S$. Remind us that $P$ is part of the object $\lambda$.\\

\noindent Therefore every one cocycle is a coboundary, or in other terms:\\
\begin{prop}
	${\sf Ext}^{1}_{\mathcal{A}'}(K,\Phi)=0$.
\end{prop}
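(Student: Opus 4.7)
The plan is to verify the computation sketched in the paragraphs preceding the statement, promoting it into a complete argument. Given a $1$-cocycle $\phi_\lambda^R(S)$ satisfying the naturality condition \eqref{naturalitydegreeone} and the cocycle relation \eqref{degreeonecocycle}, I would exhibit a primitive $\psi$ with $\delta\psi = \phi$ by setting $\psi_\lambda(S) := -\phi_\lambda^P(S)$ for each $\lambda = (U,\xi,P)$.

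First I would check that $\psi$ is a genuine $0$-cochain of $\Phi$, i.e.\ satisfies the naturality equation \eqref{naturality}. For an arrow $\gamma:\lambda\to\lambda'$ of $\mathcal{D}^{\rm op}$ lying over $(\alpha,h)$, one has $\lambda'=(U',\xi',\pi_{\star}P)$ by the very definition of $\widetilde{\mathcal{A}}'_{\rm strict}$, and specialising \eqref{naturalitydegreeone} at $R=P$ yields exactly $\phi_{\lambda'}^{\pi_\star P}(S') = \phi_\lambda^P(\pi^\star S')$, which is \eqref{naturality} for $\psi$.

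Second, I would verify $\phi = \delta\psi$ by substituting $R = P$ into \eqref{degreeonecocycle}. Since every $Q\in\mathcal{A}'_\lambda$ dominates $P$, we have the absorbing identity $Q\wedge P=P$, and the cocycle equation collapses to $\phi_\lambda^P(S) = \phi_\lambda^Q(S) + \phi_\lambda^P(Q\Rightarrow S)$. Solving for $\phi_\lambda^Q(S)$ gives $\phi_\lambda^Q(S) = \phi_\lambda^P(S) - \phi_\lambda^P(Q\Rightarrow S)$, which, with the bar-complex sign conventions, is precisely $-\delta\psi_\lambda([Q])(S)$. Combined with the first step, this proves every $1$-cocycle is a coboundary, and hence ${\sf Ext}^1_{\mathcal{A}'}(K,\Phi)=0$. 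The ambiguity in $\psi$ (two primitives of $\phi$ differ by a $0$-cocycle, i.e.\ a section of the constant sheaf) is consistent with the preceding computation of ${\sf Ext}^0$ and does not affect the conclusion.

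The only point that requires care is the first one, namely that naturality of $\phi$ specialised at the distinguished proposition $P$ of $\lambda$ transports under $\pi_\star$ to the distinguished proposition $\pi_\star P$ attached to $\lambda'$. This is forced by the choice of the restricted category $\widetilde{\mathcal{A}}'_{\rm strict}$, whose morphisms by definition match up these distinguished propositions. No substantive obstacle arises, because the key insight (exploiting $P$ as an absorbing minimum via $Q\wedge P=P$) is already in place in the cocycle equation itself.
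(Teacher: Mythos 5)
Your proof is correct and follows essentially the same route as the paper: both exhibit the explicit primitive $\psi_\lambda=-\phi_\lambda^{P}$, deduce its naturality by specialising \eqref{naturalitydegreeone} at $R=P$ (using that morphisms of $\widetilde{\mathcal{A}}'_{\rm strict}$ carry $P$ to $\pi_\star P$), and recover $\phi$ as its coboundary via the absorbing identity $Q\wedge P=P$ in the cocycle equation. The only discrepancy is an inconsequential overall sign in relating $\phi^{Q}_\lambda$ to $\delta\psi_\lambda([Q])$, which the paper itself also glosses over.
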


The same argument applies to every degree $n\geq 1$, giving, 
\begin{prop}
	${\sf Ext}^{n}_{\mathcal{A}'}(K,\Phi)=0$.
\end{prop}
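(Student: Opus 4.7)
The plan is to extend the degree-one contracting homotopy argument to all degrees $n\geq 1$ by exploiting the crucial algebraic fact that the distinguished element $P$ of the fiber $\lambda=(U,\xi,P)$ is \emph{absorbing} in the monoid $\mathcal{A}'_\lambda = \mathcal{A}_{U,\xi,P}=\{Q:\ P\leq Q\}$: for every $Q\in \mathcal{A}'_\lambda$ one has $Q\wedge P = P$, because $Q\geq P$ implies $P\leq Q$ and hence $P\wedge Q = P$. This is exactly the property the paper used implicitly at $n=1$ when it wrote $Q\wedge P = P$ to solve the cocycle equation.

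First, I would define a candidate homotopy $h_n:C^n\to C^{n-1}$ on the Hochschild-type bar cochain complex by appending the base proposition $P$ in the last slot:
\begin{equation}
(h_n\phi)_\lambda(T;Q_0|\cdots|Q_{n-2}) = (-1)^{n}\,\phi_\lambda(T;Q_0|\cdots|Q_{n-2}|P).
\end{equation}
Note that for $n=1$ this specializes to $\psi_\lambda(T) = -\phi^{P}_\lambda(T)$, which is precisely the primitive exhibited in the paper's treatment of the degree one case.

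Next, I would check directly that $\delta h_n + h_{n+1}\delta = \mathrm{id}_{C^n}$. Expanding $\delta h_n\phi$ with the formula given in the paper produces the terms
$(-1)^{n}\phi(T|Q_0;Q_1|\cdots|Q_{n-1}|P) + (-1)^{n}\sum_{i=0}^{n-2}(-1)^{i+1}\phi(T;Q_0|\cdots|Q_iQ_{i+1}|\cdots|Q_{n-1}|P) + \phi(T;Q_0|\cdots|Q_{n-2}|P)$. Expanding $h_{n+1}(\delta\phi)(T;Q_0|\cdots|Q_{n-1}) = (-1)^{n+1}\delta\phi(T;Q_0|\cdots|Q_{n-1}|P)$ produces the same three kinds of terms with opposite signs, \emph{plus} two extra contributions: the merge of $Q_{n-1}$ with the appended $P$, which by absorption equals $(-1)^{n}(-1)^{n+1}\phi(T;Q_0|\cdots|Q_{n-2}|P) = -\phi(T;Q_0|\cdots|Q_{n-2}|P)$ and cancels the final term of $\delta h_n\phi$; and the rightmost term $(-1)^{n+1}(-1)^{n+1}\phi(T;Q_0|\cdots|Q_{n-1}) = \phi(T;Q_0|\cdots|Q_{n-1})$, which is exactly what we want to obtain. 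All other terms cancel pairwise. Hence $\delta h_n\phi + h_{n+1}\delta\phi = \phi$, so every cocycle is a coboundary and $H^n({\sf Hom}_{\mathcal{A}'}(B^{\bullet},\Phi))=0$ for all $n\geq 1$.

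The only nontrivial point to verify is \emph{naturality} of $h_n$: a cochain $\phi$ is a family of $\phi_\lambda$ satisfying the compatibility $\phi_{\lambda'}^{\pi_\star R}(S')=\phi_\lambda^{R}(\pi^\star S')$ from \eqref{naturalitydegreeone}, and we must show that $h_n\phi$ inherits the same compatibility. Since every morphism in the restricted category $\widetilde{\mathcal{A}}'_{\rm strict}$ sends the base proposition $P$ to $\pi_\star P=P'$ by construction, substituting $P$ in the final slot of $\phi_\lambda$ is compatible with substituting $P'$ in the final slot of $\phi_{\lambda'}$, and the strong hypothesis $\pi^\star\pi_\star = \mathrm{id}$ ensures coherence of the remaining arguments. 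I expect the main obstacle to be the bookkeeping of signs in the Hochschild-type expansion and, more conceptually, making sure that the homotopy $h_n$ respects not just naturality along a single fiber but the full sheaf structure over $\widetilde{\mathcal{A}}'_{\rm strict}$; once both are verified the acyclicity follows by the standard algebraic argument.
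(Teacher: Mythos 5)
Your proposal is correct and follows essentially the same route as the paper: both hinge on appending the base proposition $P$ in the last slot with sign $(-1)^n$ and using the absorption identity $Q\wedge P=P$ in $\mathcal{A}_{U,\xi,P}$ to show every positive-degree cocycle is a coboundary. The only (minor) difference is that you verify the full contracting-homotopy identity $\delta h_n+h_{n+1}\delta=\mathrm{id}$, whereas the paper applies the construction only to cocycles and checks $\delta\psi=\phi$ directly from the cocycle equation evaluated at $(Q_1,\dots,Q_n,P)$.
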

\begin{proof}
	If $\phi_\lambda^{Q_1;...;Q_n}$ is a cocycle of degree $n\geq 1$, where $\lambda=(U,\xi,P)$, the formula
	\begin{equation}
		\psi_\lambda^{Q_1;...;Q_{n-1}}=(-1)^{n} \phi_\lambda^{Q_1;...;Q_{n-1};P}
	\end{equation}
	defines a cochain of degree $n-1$ such that $\delta\psi=\phi$.\\
	Extracting $\phi_\lambda^{Q_1;...;Q_n}$ from the  last term of the cocycle equation for $\phi$, applied to $Q_1,..., Q_{n+1}$ with $Q_{n+1}=P$, gives
	\begin{equation}
		(-1)^{n}\phi_\lambda^{Q_1;...;Q_n}=Q_1.\phi_\lambda^{Q_2;...;Q_n;P}+\sum_{i=1}^{n-1}\phi_\lambda^{Q_2;...;Q_iQ_{i+1};...;Q_n;P}
		+(-1)^{n}\phi_\lambda^{Q_2;...;Q_n\wedge P}.
	\end{equation}
	As $Q_n\wedge P=P$ in $\mathcal{A}_\lambda$, this is exactly the coboundary of $\psi$ applied to $Q_1;...;Q_n$.
\end{proof}

\begin{rmk*}
	\normalfont At first sight this is a deception; however, there is a morality here, because it tells that
	the measure of semantic information reflects a value of a theory at the output, depending on many
	elements that the network does not know, without knowing the consequences of this theory. Some of these consequences can be
	included in the metric for learning, some other cannot be.
\end{rmk*}

\noindent When a cochain $\psi$ as above is chosen, it defines the degree one cocycle $\phi$ by the formula
\begin{equation}
\phi_\lambda^{Q}(S)=\psi_\lambda(Q\Rightarrow S)-\psi_\lambda(S).
\end{equation}
The cochain $\psi$ satisfied \eqref{naturality}, and the coboundary $\phi$ the
equation \eqref{naturalitydegreeone}.\\
\indent All the arbitrariness is contained in the values of $\psi_{\rm out}$, which are function of $P$ and of the theory excluding $P$.
Now examine the role of a proposition $Q$ implied by $P$. It changes the value of $\phi$ according to the equation
\begin{equation}
\phi_{\rm out}(Q;T)=\phi^{Q}_{\rm out}(T)=\phi^{P}_{\rm out}(T)-\phi^{P}_{\rm out}(T|Q)=\psi_{\rm out}(T|Q)-\psi_{\rm out}(T),
\end{equation}
then it subtracts from $\psi_{\rm out}(T)$ the conditioned value $\psi_{\rm out}(T|Q)$. And this is transmitted inside the network
by the equation
\begin{equation}
\phi_{\lambda'}^{\pi_\star Q}(S')=\phi^{Q}_\lambda(\pi^{\star }S');
\end{equation}
which is equivalent to the simplest equation
\begin{equation}
\psi_{\lambda'}(S')=\psi_{\lambda}(\pi^{\star }S').
\end{equation}

\noindent Note that we are working under the hypothesis $\pi^{\star }\pi_\star ={\sf Id}$, then it can happen that a theory
$S'$, in
the inner layers cannot be reconstructed (by $\pi_\star $) from its deduction $\pi^{\star }S'$ in the outer layer. Thus the logic inside
is richer than the transmitted propositions, but the quantity $\psi_{\lambda'}(S')$ depends only on $\pi^{\star }S'$.\\
This corresponds fairly well with what we observed in the experiments about simple classification problems,
with architectures more elaborated than a chain, (see Logical cells II, \cite{logic-DNN-2}). In some cases, the inner layers
invent propositions that are not stated in the objectives. They correspond to proofs of these objectives.\\

\subsection*{Mutual information, classical and quantum analogies}

\noindent We propose now an interpretation of the functions $\phi$ and $\psi$, when $\mathbb{K}=\mathbb{R}$,
or an ordered ring, as $\mathbb{Z}$: the value $\phi^{P}_{\rm out}(S)$ measures the ambiguity of $S$ with respect to
$\neg P$, then it is natural to assume that the value of $\psi_{\rm out}(S)$ is growing with $S$, i.e. $S\leq T$ implies $\psi_{\rm out}(S)\leq \psi_{\rm out}(T)$.\\

\noindent Among the theories which exclude $P$, there is a minimal one, which
is $\bot$, without much interest, even it has  the maximal information in the sense of Carnap and Bar-Hillel,
and a maximal theory, which is $\neg P$ itself; it is the more precise, but with the minimal information, if
we measure information by the quantity of exclusions of propositions it can give. Thus $\psi$ does not
count the quantity of possible information, but the closeness to $\neg P$.\\

\noindent Consequently, $\phi_P^{Q}(S)$ is always a positive number, which is decreasing in $Q$ when $S$ is given. Therefore,
we can take $\psi$ negative, by choosing $\psi_\lambda=-\phi_\lambda^{P}$. 
In what follows we consider this choice for $\psi$.\\
The maximal value of $\phi_P^{Q}(S)$, for a given $S$ is attained for $Q=P$, in this case $S|P=\neg P$, then the maximal value
is $\phi_\lambda^{P}(S)-\phi_\lambda^{P}(\neg P)$.\\

\noindent The truth of the proposition $\neg Q$ can be seen as a theory excluding $P$ when $P\leq Q$. Like a counterexample of $P$.\\

\noindent Note the following formula for $P\leq Q$:
\begin{equation}
\phi_\lambda^{Q}(S)=\phi_\lambda^{P}(S)-\phi_\lambda^{P}(S|Q).
\end{equation}

\noindent Remind that the entropy function $H$ of a joint probability is also always positive, and we have
\begin{equation}
I(X;Y)=H(X)-H(X|Y),
\end{equation}
as it follows from the Shannon equation and the definition of $I$.\\
This also gives $I(X;X)=H(X)$.\\

\noindent Then we interpret $\phi_\lambda^{Q}(S)$ as a mutual information between $S$ and $\neg Q$, and $\phi_\lambda^{P}(S)$
itself as a kind of entropy, thus measuring an ambiguity: the ambiguity of what is expressed in the layer $\lambda$
about the exclusion of $P$ at the output.\\
This is in agreement with next formula,
\begin{equation}
\phi_\lambda^{\pi_\star Q}(S)=\phi_{\rm out}^{Q}(\pi^{\star }S).
\end{equation}

\begin{rmk*}
	\normalfont In Quantum Information Theory, where variables are replaced by orthogonal decomposition of an Hilbert space, and probabilities are replaced by adapted positive hermitian operators of trace one \cite{Baudot-Bennequin}, the Shannon entropy $H$	(entropy of the associated classical law) appears as (minus) the coboundary of a cochain which is the Von Neumann entropy $S=-\log_2\mathrm{Trace}(\rho)$,
	\begin{equation}
		H_Y(Y;\rho)=S_X(\rho)-Y.S_X(\rho).
	\end{equation}
	Thus in the present case, it is better to consider that theories are analogs of density matrices, propositions are analogs of the observables, the function
	$\psi$ is an analog of the opposite of the Von-Neumann entropy, and the ambiguity $\phi$ an analog of the Shannon entropy.
\end{rmk*}

\noindent
Let us see what we get for a functioning network $X^{w}$, possessing a semantic functioning $S_{U,\xi}:X_{U,\xi}\rightarrow \Theta_{U,\xi}$,
not necessarily assuming the naturality \eqref{coherentsemanticfunctioning}. We can even specialize by taking a family of neurons having
an interest in the exclusion of some property $P$, and look at a family
\begin{equation}
S_\lambda:X_{U,\xi}\rightarrow \Theta'_\lambda,
\end{equation}
where $\lambda=(U,\xi,P)$.\\

\noindent To a true activity $x$ of the network, we get $x_{U,\xi}$, then, we define
\begin{equation}
H_\lambda^{Q}(x)=\phi_\lambda^{Q}(S_\lambda(x_{U,\xi})).
\end{equation}
And we propose it as the ambiguity in the layer $U,\xi$, about the proposition $P$ at the output, when $Q$ is given
as an example.\\

\noindent To understand better the role of $Q$, we apply the equation \eqref{naturalitydegreeone}, which gives
\begin{equation}
H_{\lambda'}^{\pi_\star Q}(x')=\phi_\lambda^{Q}(\pi^{\star }S'(x')).
\end{equation}
Therefore, evaluated on a proposition $\pi_\star Q$ which comes from the output, the above quantity $I\left(x'\right)$ in the hidden
layer $U'$, is the mutual information of $\neg Q$ and the deduction in $U_{\rm out}$ by $\pi^{\star }$
of the theory $S'(x')$, expressed
in $U'$ in presence of the given section (feedforward information flow), coming from the input,
by the activity $x'\in X_{U'}$.\\
\begin{rmk*}
	\normalfont Consider a chain $(U,\xi)\rightarrow (U',\xi')\rightarrow (U",\xi")$. We denote by $\rho_\star $ and $\rho^{\star }$ the applications
	which correspond to the arrow $(U',\xi')\rightarrow (U",\xi")$. Therefore $(\pi')^{\star }=\pi^{\star }\rho^{\star }$ and
	$\pi'_\star =\rho_\star \pi_\star $.\\
	For any section
	$x$, and proposition $P$ in the output $(U,\xi)$, consider the particular case $P=Q$, where $(Q\Rightarrow S)=\neg P$
	for every theory excluding $P$:
	\begin{align*}
		H(x')-H(x")&=\phi_\lambda^{P}(\pi^{\star }S'(x'))-\phi_\lambda^{P}(\pi^{\star }S'(x')|P)-(\phi_\lambda^{P}((\pi')^{\star }S"(x"))\\
		&\quad -\phi_\lambda^{P}((\pi')^{\star }S"(x")|P))\\
		&=\phi_\lambda^{P}(\pi^{\star }S'(x'))-\phi_\lambda^{P}((\pi')^{\star }S"(x"))\\
		&=\psi_\lambda(\pi^{\star }\rho^{\star }S"(x"))-\psi_\lambda(\pi^{\star }S'(x'))
	\end{align*}
	
	\noindent This is surely negative in practice, because the theory $S'(x')$ is larger than the theory $\rho^{\star }S"(x")$.
	For instance, at the end, we surely have $S_{\rm out}=\neg P$, as soon as the network has learned.\\
	Consequently this quantity has a tendency to be negative. Then it is not like
	the mutual information between the layers. It looks more as a difference of ambiguities. Because the ambiguity is decreasing in a functioning network, in reality.\\
	This confirms that $H$ is a measure of ambiguity.\\
	Therefore, the mutual information should come out in a manner that involves a pair of layers.
\end{rmk*}

\noindent To obtain a notion of mutual information, we make an extension of the monoids $\mathcal{A}_{U,\xi,P}$,
which continues to act by conditioning on the sets $\Theta_{U,\xi,P}$.\\

\noindent For that, we consider a fibration over $\mathcal{A}'_{\rm strict}$ made 
by monoids $\mathcal{D}_\lambda$ which contain
$\mathcal{A}_\lambda$ as submonoids.\\
By definition, if $\lambda=(U,\xi,P)$, an object of $\mathcal{D}_\lambda$ is an 
arrow $\gamma_0=(\alpha_0,h_0,\iota_0)$ of $\widetilde{\mathcal{A}}'_{\rm strict}$, going from a triple $(U_0,\xi_0,P_0)$ to a triple $(U,\xi,\pi_\star P_0)$, where $P\leq \pi_\star P_0$, and a morphism from $\gamma_0=(\alpha_0,h_0,\iota_0)$ to $\gamma_1=(\alpha_1,h_1,\iota_1)$
is a morphism $\gamma_{10}$ from $(U_0,\xi_0,P_0)$ to $(U_1,\xi_1,Q_1=\pi^{\alpha_{10},h_{10}}_\star P_0)$ such that $Q_1\geq P_1$.\\
\indent For the intuition it is better to see the objects as arrows in the opposite category $\mathcal{D}$ of  $\widetilde{\mathcal{A}}'_{\rm strict}$, in such a manner
they can compose with the arrows $Q\leq R$ in the monoidal category $\mathcal{A}_\lambda$, then we get a variant of the right slice $\lambda|\mathcal{D}$, just
extended by $\mathcal{A}_\lambda$. The category
$\mathcal{D}_\lambda$ is monoidal and strict if we define the product by
\begin{equation}
\gamma_1\otimes\gamma_2=(U,\xi,\pi^{\gamma_1}_{\star }P_1\wedge \pi^{\gamma_0}_{\star }P_2).
\end{equation}
The identity being the truth $\top_\lambda$.\\

\noindent We also define the action of $\mathcal{D}_\lambda$ on $\Theta_\lambda$ as follows:\\
for every arrow $\gamma_0:\lambda_0\rightarrow \lambda_{\pi_\star P_0}$, where $\lambda_0=(U_0,\xi_0,P_0)$, and where $\lambda_{\pi_\star P_0}$
denotes $(U,\xi,\pi_\star P_0)$, assuming $\pi_\star P_0\geq P$, we define
\begin{equation}
\gamma_0.\mathbb{T}=(\pi^{\gamma_0}_{\star }P_0\Rightarrow \mathbb{T}).
\end{equation}
This gives an action of the monoid of propositions in $\mathcal{A}_{\lambda_0}$ which are implied by $P_0$, whose images by $\pi_\star $
are implied by $P$.\\
If $P_0\leq Q_0$ and $P_0\leq R_0$, we have $\pi^{\gamma_0}_{\star }(Q_0\wedge R_0)=\pi^{\gamma_0}_{\star }(Q_0)\wedge\pi^{\gamma_0}_{\star }(R_0)$.\\

\noindent The monoidal categories $\mathcal{D}_\lambda;\lambda\in \mathcal{D}$ form a natural presheaf $\mathcal{D}\backslash\mathcal{D}$ over $\mathcal{D}$. For
any morphism $\gamma=(\alpha,h,\iota)$ of  $\widetilde{\mathcal{A}}'_{strict}$, going from $(U,\xi,P)$ to $(U',\xi',\pi_\star P)$, and
any object $\gamma_0:\lambda_0\rightarrow \lambda_{\pi_\star P_0}$ in $\mathcal{D}_\lambda$, we define $\gamma_\star (\gamma_0)$ by the
composition $(\alpha,h)\circ(\alpha_0,\xi_0)$ and the proposition $\pi^{\gamma}_\star \circ\pi_\star P_0$ in $\mathcal{A}_{\lambda'}$.\\

\noindent The naturalness of the monoidal action on the theories
follows from $\pi_\gamma^{\star }\pi^{\gamma}_\star ={\sf Id}_U$:
\begin{align*}
\pi_\gamma^{\star }[\gamma_\star (\pi_\star P_0).T']&=\pi_\gamma^{\star }[\pi^{\gamma}_\star \pi_\star P_0\Rightarrow T']\\
&=\pi_\gamma^{\star }\pi^{\gamma}_\star \pi_\star P_0\Rightarrow \pi_\gamma^{\star }T'\\
&=\pi_\star P_0\Rightarrow \pi_\gamma^{\star }T'
\end{align*}
Then, defining $[\Phi_\star (\gamma)(\phi_\lambda)](T')=\phi_\lambda(\pi_\gamma^{\star }T')$, we get the following result\\
\begin{lem}
	\begin{equation}
		[\Phi_\star (\gamma)\phi_\lambda](\gamma_\star (\gamma_0).T')=\phi_\lambda(\gamma_0.\pi^{\star }T').
	\end{equation}
\end{lem}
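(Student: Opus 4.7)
The plan is to unwind both sides by pure substitution, then collapse the two via the strong standard hypothesis $\pi_\gamma^{\star}\pi^{\gamma}_{\star}={\sf Id}_{U}$ combined with the fact that $\pi_\gamma^{\star}$, being induced by an open geometric morphism (under the hypotheses on $\mathcal{F}$), commutes with the internal implication $\Rightarrow$.

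First I would expand the left-hand side using the definition $[\Phi_\star(\gamma)\phi_\lambda](T') = \phi_\lambda(\pi_\gamma^{\star}T')$, obtaining
\begin{equation*}
[\Phi_\star(\gamma)\phi_\lambda](\gamma_\star(\gamma_0).T') = \phi_\lambda\bigl(\pi_\gamma^{\star}(\gamma_\star(\gamma_0).T')\bigr).
\end{equation*}
Next I would expand $\gamma_\star(\gamma_0).T'$ using the definition of the action together with the specification that $\gamma_\star(\gamma_0)$ carries the proposition $\pi^{\gamma}_{\star}\pi^{\gamma_0}_{\star}P_0$ in $\mathcal{A}_{\lambda'}$, giving
\begin{equation*}
\gamma_\star(\gamma_0).T' = \bigl(\pi^{\gamma}_{\star}\pi^{\gamma_0}_{\star}P_0 \Rightarrow T'\bigr).
\end{equation*}

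The key step is then the commutation of $\pi_\gamma^{\star}$ with $\Rightarrow$, already noted earlier in the text just before the statement of this lemma (exactly as in the computation of $\pi_\gamma^{\star}[\gamma_\star(\pi_\star P_0).T']$), yielding
\begin{equation*}
\pi_\gamma^{\star}\bigl(\pi^{\gamma}_{\star}\pi^{\gamma_0}_{\star}P_0 \Rightarrow T'\bigr) = \bigl(\pi_\gamma^{\star}\pi^{\gamma}_{\star}\pi^{\gamma_0}_{\star}P_0 \Rightarrow \pi_\gamma^{\star}T'\bigr).
\end{equation*}
Then the strong standard hypothesis $\pi_\gamma^{\star}\pi^{\gamma}_{\star}={\sf Id}$ simplifies the antecedent to $\pi^{\gamma_0}_{\star}P_0$, so that
\begin{equation*}
\pi_\gamma^{\star}(\gamma_\star(\gamma_0).T') = \bigl(\pi^{\gamma_0}_{\star}P_0 \Rightarrow \pi_\gamma^{\star}T'\bigr) = \gamma_0.\pi_\gamma^{\star}T',
\end{equation*}
the last equality being the definition of the action of $\gamma_0 \in \mathcal{D}_\lambda$ on a theory in $\Theta_\lambda$. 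Applying $\phi_\lambda$ to both sides matches the right-hand side of the lemma.

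There is essentially no serious obstacle: the whole argument is a one-line chain of substitutions once the right identities are invoked. The only subtlety worth stating explicitly is that the commutation of $\pi_\gamma^{\star}$ with $\Rightarrow$ requires the openness of the geometric morphism, which is precisely what the strong standard hypothesis of Definition \ref{dfn:semanticflow} guarantees; without it, one would only obtain an inequality and the lemma would fail. Thus the proof reduces to invoking (i) the definition of $\Phi_\star$, (ii) the definition of the monoidal action, (iii) openness to push $\pi_\gamma^{\star}$ inside $\Rightarrow$, and (iv) $\pi_\gamma^{\star}\pi^{\gamma}_{\star}={\sf Id}$ to cancel.
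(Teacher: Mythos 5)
Your proof is correct and is essentially identical to the paper's: the displayed computation immediately preceding the lemma carries out exactly your chain of substitutions (expand $\gamma_\star(\gamma_0).T'$ by the definition of the action, push $\pi_\gamma^{\star}$ through $\Rightarrow$, cancel via $\pi_\gamma^{\star}\pi^{\gamma}_{\star}={\sf Id}$), and the lemma then follows by inserting the definition $[\Phi_\star(\gamma)\phi_\lambda](T')=\phi_\lambda(\pi_\gamma^{\star}T')$, just as you do.
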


\noindent Consequently
the methods of Abelian homological algebra can be applied \cite{2012homology}.\\

The (non-homogeneous) bar construction makes now appeal to symbols $[\gamma_1|\gamma_2|...|\gamma_n]$, where the $\gamma_i$ are elements of
$\mathcal{D}_\lambda$. The action of algebra pass through the direct image of propositions $\pi_{\star }P_i;i=1,...,n$.\\
\indent Things are very similar to what happened with the precedent monoids $\mathcal{A}'_\lambda$:\\
the zero cochains are families $\phi_\lambda$  of maps on theories satisfying
\begin{equation}
\psi_\lambda(\pi^{\star }T')=\psi_{\lambda'}(T'),
\end{equation}
where $\gamma:\lambda\rightarrow\lambda'$ is a morphism in $\widetilde{A}'_{\rm strict}$.\\
The coboundary operator is
\begin{equation}
\delta \psi_\lambda([\gamma_1])=\psi_\lambda(T|\pi_\star^{\gamma_1}P_1)-\psi_\lambda(T).
\end{equation}
Then the cohomology is defined as before. We get analog propositions. For instance, the
degree one cochains are collections of maps of theories $\phi_\lambda^{\gamma_1}$ satisfying
\begin{equation}
\phi_{\lambda}^{\gamma_1}(\pi^{\star }T')=\phi_{\lambda'}^{\gamma_\star \gamma_1}(T');
\end{equation}
the cocycle equation is
\begin{equation}
\phi_\lambda^{\gamma_1\wedge \gamma_2}=\phi_\lambda^{\gamma_1}+\gamma_1.\phi_\lambda^{\gamma_2}.
\end{equation}
One more time, the cocycles are coboundaries; the following formula is easily verified
\begin{equation}
\phi_\lambda^{\gamma_1}=(\delta\psi_\lambda)[\gamma_1]=\pi_{\star }P_1.\psi_\lambda-\psi_\lambda;
\end{equation}
where
\begin{equation}
\psi_\lambda=-\phi_\lambda^{{\sf Id}_\lambda}.
\end{equation}

\noindent The new interesting point is the definition of a \emph{mutual information}. For that we mimic the formulas
of Shannon theory: we apply a combinatorial operator to the ambiguity. Then we consider the
canonical bar resolution for ${\sf Ext}^{\star }_{\mathcal{D}}(\mathbb{K},\Phi)$, with the trivial action of
$\mathcal{A}'|\lambda; \lambda\in \widetilde{\mathcal{A}}$. The operator is the combinatorial coboundary $\delta^{t}$ at degree two, and it gives:
\begin{equation}\label{mutualinformation}
I_\lambda(\gamma_1;\gamma_2)=\delta^{t}\phi_\lambda[\gamma_1,\gamma_2]\\=\phi_\lambda^{\gamma_1}-\phi_\lambda^{\gamma_1\wedge \gamma_1}+\phi_\lambda^{\gamma_2}.
\end{equation}
This gives the following formulas
\begin{equation}
I_\lambda(\gamma_1;\gamma_2)=\phi_\lambda^{\gamma_1}-\gamma_2.\phi_\lambda^{\gamma_1}=\phi_\lambda^{\gamma_2}-\gamma_1.\phi_\lambda^{\gamma_2}.
\end{equation}

More concretely,  for two morphisms $\gamma_1:\lambda_1\rightarrow\lambda$ and $\gamma_2:\lambda_2\rightarrow\lambda$,
denoting by $P_1,P_2$ their respective coordinates on propositions, and by $\psi_\lambda=-\phi_\lambda^{\lambda}$ the canonical $0$-cochain, we have:
\[
I_\lambda(\gamma_1;\gamma_2)(T)=\psi_\lambda(T|\pi_{\star }P_2)+\psi_\lambda(T|\pi_{\star }P_1)-\psi_\lambda(T|\pi_{\star }P_1\wedge\pi_{\star }P_2)-\psi_\lambda(T)
\]
\begin{rmk*}
	\normalfont We decided that the interpretation of $\phi_\lambda$ is better when $\psi_\lambda$ is growing. Now, assuming
	the positivity of $I_\lambda$, we get a kind of concavity of $\psi_\lambda$.
\end{rmk*}

\noindent More generally, we say that a real function $\psi$ of the theories, 
containing $\vdash\neg P$, in a given language, is \emph{concave} ({\em resp.}
strictly concave), if for any pair of
such theories $T\leq T'$ and any proposition $Q\geq P$, the following expression is positive ({\em resp.} strictly positive),
\begin{equation}
I_P(Q;T,T')=\psi(T|Q)-\psi(T)-\psi(T'|Q)+\psi(T').
\end{equation}
Remark that this definition extends \emph{verbatim} to any closed monoidal category, because it uses only the pre-order and the exponential.\\
The positivity of the mutual information is the particular case where $T'=T|Q_1$.\\

\noindent This makes $\psi$ look like the function $\log\left(\ln P\right)$ for a domain $\bot< P \leq \neg P$, analog of the interval
$] 0,1[$ in the propositional context.\\

The functions $\psi_\lambda$ can always be chosen such that $\phi_\lambda^{P}=-\psi_\lambda$. Then the above interpretation
of $\phi$ as an informational ambiguity is compatible with an interpretation of $\psi(T)$ as  a measure of the \emph{precision} of the theory.\\

\subsection*{The Boolean case, comparing to Carnap and Bar-Hillel \cite{CBH52}}

In the finite Boolean case, \emph{the opposite of the content} defined by Carnap and Bar-Hillel gives such a function $\psi$, strictly increasing and concave.
Remind that the content set $C(T)$ is the set of elementary propositions that are excluded by the theory $T$. Here we assimilate a theory with the language 
and its axioms, and with a subset of a finite set $E$. If $T< T'$, there is less excluded points by $T'$ than by $T$, then
$-c(T')-(-c(T))> 0$. If $P\leq Q$, the content set of $T\vee \neg Q$ is the intersection of $C(T)$ and $C(\vdash \neg Q)=C(Q)^{c}$, and the content of
$T'\vee \neg Q$ the intersection of $C(T)$ and $C(\vdash \neg Q)=C(Q)^{c}$, then the complement of $C(T'\vee \neg Q)$ in $C(T')$ is contained
in the complement of $C(T\vee \neg Q)$ in $C(T)$. Consequently
\begin{equation}
\psi(T|Q)-\psi(T)-(\psi(T'|Q)-\psi(T'))=c(T)-c(T|Q)-(c(T')-c(T'|Q))\geq 0.
\end{equation}
It is zero when $T'\wedge(\neg Q)\leq T$.\\
A natural manner to obtain a strictly concave function is to apply the logarithm function to the function $(c_{\max}-c(T))/c_{\max}$.\\
Therefore a natural formula in the boolean case is
\begin{equation}
\psi_P(\mathbb{T})=\ln\frac{c(\bot)-c(\mathbb{T})}{c(\bot)-c(\neg P)}
\end{equation}
But we also could take a uniform normalization:
\begin{equation}
\psi_\bot(\mathbb{T})=\ln\frac{c(\bot)-c(\mathbb{T})}{c(\bot)}
\end{equation}
Amazingly, this was the definition of the amount of information (with a minus sign) of Carnap and Bar-Hillel \cite{CBH52}.\\
A generalization along their line consists to choose any strictly positive function $m$ of the elementary propositions and
to define the numerical content $c(T)$ as the sum of the values of $m$ over the elements excluded by $T$. This corresponds to
the attribution of more or less value to the individual elements.\\
We essentially recover the basis of the theory presented by Bao, Basu et al. \cite{bao-basu}, \cite{basu-bao}.\\
\begin{ques}
	Does a natural formula exist, that is valid in every Heyting algebra, or at least in a class of Heyting algebras larger than Boole algebras?
\end{ques}
\begin{ex}
	\normalfont The open sets of a topology on a finite set $X$. The analog of the content of $T$ is the cardinality
	of the closed set $X\setminus T$. Then a preliminary function $\psi$ is the cardinality of $T$ itself, which is naturally
	increasing with $T$. However simple examples show that this function can be non-concave. The set $T|Q\backslash T$ is made by the
	points $x$ of $X\backslash T$ having a neighborhood $V$ such that $V\cap V\subset T$, there exists no relation between this set
	and the analog set for $T'$ larger than $T$, but smaller than $\neg P$.
\end{ex}

\noindent However, appendix \ref{app:non-boolean} constructs a good function $\psi$ for the sites of DNNs and the injective finite sheaves.
This applies in particular to the chains $0\rightarrow 1\rightarrow ...\rightarrow n$.\\

\subsection*{A remark on semantic independency}

In their 1952 report \cite{CBH52}, Carnap and bar-Hillel gave a different justification than us for taking the logarithm of
a normalized version of the content. This was in the Boolean situation, $n=0$, but our appendix \ref{app:non-boolean} extends what they said to some non-Boolean situations.\\
\indent They had in mind that independent assertions must give an addition of the amounts of information of the
separate assertions. However, as they recognized themselves, the concept of semantic independency is not very clear \cite[page 12]{CBH52}. In fact they studied a particular case of typed language that they named $\mathcal{L}_n^{\pi}$,
where there exists one type of subjects with $n$ elements, $a,b,c,...$, that can have a given number $\pi$
of attributes (or predicate). The example is three humans, their gender (male or female), and their age (old or young). For every elementary proposition $Z_i$, i.e. a point inn $E$, they choose a number $m_P(Z_i)$
in $]0,1|$, and define, as in the preceding section with $\mu$, the function $m$ of any proposition $L$, by taking
the sum of the $m_i$ over the elements of $L$, viewed as a subset of $E$.\\
\indent Carnap and Bar-Hillel imposed several axioms on $m_P$, for instance the invariance under the natural action of the symmetry group
$\mathfrak{S}_n\times\mathfrak{G}_\pi$, where $\mathfrak{G}_\pi$ describes the symmetries between the predicates, and the normalization by $m(E)=1$. The \emph{content} is an additive normalization
of the opposite of $m$. The number $c(L)$ evaluates the quantity of elementary propositions excluded by $L$.\\
\indent At some moment, they introduce axiom $h$, \cite[page 14]{CBH52}, $m(Q\wedge R)=m(Q)m(R)$, if $Q$ and $R$ do not consider
any common predicate. This axiom was rarely considered in the rest of the paper. However it is followed by a definition:
two assertions $S$ and $T$ were said inductively independent (with respect to $m_P$) if an only if
\begin{equation}
m(S\wedge T)=m(S)m(T).
\end{equation}
This was obviously inspired from the theory of probabilities \cite{Carnap1950-CARLFO}, where primitive predicates
are considered in relation to probabilities.\\
\indent If we think of the example with the age and the gender, the axiom is not very convincing from the point of view of probability, because in most sufficiently large population of humans it
is not true that age and gender are independent. However, from a \emph{semantic point of view}, this is completely justified!\\

Now, if we come to the amount of information, taking the logarithm of the inverse of $m(T)$ to measure ${\sf inf}(T)$ makes that
independency (inductive) is equivalent to the additivity:
\begin{equation}
\psi(S\wedge T)=\psi(S)+\psi(T).
\end{equation}

Under this form, the definition still has a meaning, for any function $\psi$. Even with values in a category of models, with a good notion
of colimit, as the disjoint union of sets.\\

In Shannon's theory, with the set theoretic interpretation of Hu Kuo Ting, \cite{HKT}, we recover the same thing.\\

\subsection*{Comparison of information between layers}

Another way to obtain a comparison between layers, i.e. objects $(U,\xi)$, comes from the ordinary cohomology
of the object $\Phi$ in the topos of presheaves over the opposite category of $\widetilde{\mathcal{A}}'_{\rm strict}$, that we named $\mathcal{D}$.\\
\indent This cohomology can be computed following the method exposed by Grothendieck and Verdier in SGA 4 \cite{SGA4}, using a canonical resolution of $\Phi$.
This resolution is constructed from the nerve $\mathcal{N}(\mathcal{D})$, made by the sequences of arrows $\mathcal{\lambda}\rightarrow\lambda_1\rightarrow \lambda_2 ...$
in $\widetilde{\mathcal{A}}'_{\rm strict}$, then associated to the fibration by the slices category $\lambda|\mathcal{D}$ over $\mathcal{D}$.
Be carefull that in $\mathcal{D}$, the arrows are in reverse order.\\

The nerve $\mathcal{N}(\mathcal{D})$ has a natural structure of simplicial set whose $n$ simplices are sequences of composable arrows $(\gamma_1,...,\gamma_n)$
between objects $\lambda_0 \to \cdots \to \lambda_n$
in $\widetilde{\mathcal{A}}'_{\rm strict}$, and whose face operators $d_i;i=0,...,n$ are given by the following formulas:
\begin{align*}
d_0(\gamma_1,...,\gamma_n)&=(\gamma_2,...,\gamma_n)\\
d_i(\gamma_1,...,\gamma_n)&=(\gamma_1,...,\gamma_{i+1}\circ \gamma_i,...,\gamma_n) \text{ if } 0 < i < n\\
d_n(\gamma_1,...,\gamma_n)&=(\gamma_1,...,\gamma_{n-1}).
\end{align*}

This allows to define a \emph{canonical cochain complex} $(C^n(\mathcal{D}, \Phi),d)$ which cohomology is $H^{\star }(\mathcal{D},\Phi)$.\\

\noindent The $n$-cochains are
\begin{equation}\label{topospresheafcocycle}
C^n(\mathcal{D},\Phi) = \prod_{\lambda_0 \to \cdots \to \lambda_n} \Phi_{\lambda_n}
\end{equation}
and the coboundary operator $\delta: C^{n-1}(\mathcal{D},\Phi)\to C^n(\mathcal{D},\Phi)$ is given by
\begin{equation}\label{coboundarypresheafcohomology}
(\delta \phi)_{\lambda_0\to \cdots \to \lambda_n} = \sum_{i=0}^{n-1} (-1)^i \phi_{d_i(\lambda_0 \to \cdots \lambda_n)}
+(-1)^{n}\Phi_{\star }(\gamma_n)\phi_{d_n(\lambda_0\to \cdots \lambda_n)}.
\end{equation}

\noindent For instance at degree zero, this gives, for $\gamma:\lambda\rightarrow \lambda'$
\begin{equation}
\delta\phi^{0}_\gamma(S')=\phi^{0}_{\lambda'}(S')-\phi^{0}_\lambda(\pi^{\star }S').
\end{equation}

For our cocycle $\phi_\lambda^{Q}$, with $P\leq Q$, a more convenient sheaf over $\mathcal{D}$ is given
by the sets $\Psi_\lambda$ of functions of the pairs $(S,Q)$, with $S$ excluding $P$ and $P$ implying $Q$,
with morphisms
\begin{equation}
\Psi_\star (\gamma)(S',Q')=\psi(\pi^{\star }S',\pi^{\star }Q').
\end{equation}
This gives
\begin{equation}
\delta\phi^{0}_\gamma(S',Q')=\phi^{0}_{\lambda'}(S',Q')-\phi^{0}_\lambda(\pi^{\star }S',\pi^{\star }Q').
\end{equation}
In our case, with $\phi^{0}_\lambda(S,Q)=\phi^{Q}_\lambda(S)$, we get the measure of the evolution
of the ambiguity along the network.\\

\noindent From now on, we change topic and consider the reverse direction of 
propagation of theories and propositions.\\

\subsection*{The particular case of natural isomorphisms}

\noindent Until the end of this subsection, we consider the particular case of isomorphisms between the logics in the layers, i.e.
$\pi^{\star }\pi_\star ={\sf Id}_{U}$ and $\pi_\star \pi^{\star }={\sf Id}_{U'}$.\\
As we will see, this is rather deceptive, giving a particular case of the preceding notion of ambiguity and information, obtained without the hypothesis of isomorphism,
then it can be skipped easily, but it seemed necessary to explore what possibilities were offered by the contravariant side of $\widetilde{A}$.\\

\indent In this case we are allowed to consider the sheaf of propositions $\mathcal{A}$ for $\pi^{\star }$ together
and the cosheaf of theories $\Theta$ for $\pi_\star $ over the category $\widetilde{\mathcal{A}}$. The action of $\mathcal{A}$
by conditioning on the sheaf $\Phi$ of measurable functions on $\Theta$ is natural, (see proposition \ref{prop:compatibility}).\\
Thus we can apply the same strategy as before, using the bar complex.\\

\noindent The zero cochains satisfy
\begin{equation}
\psi_{\lambda'}(\pi_\star T)=\psi_\lambda(T).
\end{equation}
This equation implies the naturality \eqref{naturality}.
However, there is a difference with the preceding framework, because we have more morphisms to take in account, i.e. the implications $P\leq P'$.
This implies that, for $U,\xi$ fixed, $\phi$ does not depend on $P$; there exists a function $\psi_{U,\xi}$ on all the theories
such that $\psi_\lambda$ on $\Theta(U,\xi,P)$ is its restriction.\\
\emph{Proof}: for any pair $P\leq Q$ in $\mathcal{A}_\lambda$,
and any theory which excludes $Q$ then $P$, we have
$\psi_P(S)=\psi_Q(S)$. Therefore $\psi_P=\psi_\bot$.\\
\noindent The equation of cocycle is the same as before, i.e. \eqref{degreezerococycle}. It
implies that $\psi_{U,\xi}$ is invariant by the action of $\mathcal{A}_\lambda$. In every case, boolean or not,
this implies that $\phi_{U,\xi}$ is also independent
of the theory $T$. Therefore the $H^{0}$ now simply counts the sections of $\mathcal{F}$.\\

\noindent The degree one cochains satisfy
\begin{equation}
\phi_{\lambda'}^{R'}(\pi_\star S)=\phi^{\pi^{\star }R'}_\lambda(S).
\end{equation}
In particular, for any triple $P\leq Q\leq R$,
and any $S\in \Theta_P$, we have
\begin{equation}
\phi_{U,\xi,Q}^{R}(S)=\phi_{U,\xi,P}^{R}(S),
\end{equation}
which allows us to consider only the elements of the form $\phi_\lambda^{P}$, that we denote simply $\phi_\lambda$.\\

\noindent The cocycle equation is as before, \eqref{degreeonecocycle}:
And taking $\psi_\lambda=-\phi_\lambda$ gives canonically a zero whose coboundary is $\phi$:
\begin{equation}
\phi_\lambda^{Q}(S)=\psi_\lambda(S)-\psi_\lambda(S| Q).
\end{equation}
Which defines the dependency of $\phi$ in $Q$.\\

\noindent The naturality, in the case of isomorphisms, for a connected network, with a unique
output layer, tells that everything can be computed in the output layer. The intervention of the layers
is illusory. Then it is sufficient to consider the case of one layer and logical calculus.\\
What follows is only a verification that things transport naturally to the whole category $\widetilde{A}$.\\

\noindent The extension of monoids is made via the left slices categories $\lambda| \mathcal{A}$;
the action of $\lambda| \mathcal{A}$ on $\Theta_\lambda$ is given by
\begin{equation}\label{arrowaction}
\gamma.\mathbb{T}=(\pi_\gamma^{\star }P'\Rightarrow \mathbb{T})=\mathbb{T}|\pi_\gamma^{\star }P'
\end{equation}
where $\gamma:\lambda\rightarrow \lambda'$, $\lambda=(U,\xi,P)$, $\lambda'=(U',\xi',P')$, $P\leq \pi^{\star }P'$,
and $\pi_\gamma=(\alpha,h)$ is the projected morphism of $\mathcal{F}$.\\
This defines an action of the monoid of propositions in $\mathcal{A}_{\lambda'}$ which are implied by $P'$.
If $P'\leq Q'$ and $P'\leq R'$, we have $\pi_\gamma^{\star }(Q'\wedge R')=\pi_\gamma^{\star }(Q')\wedge\pi_\gamma^{\star }(R')$.\\
\noindent A natural structure of monoid is given by
\begin{equation}
\gamma_1.\gamma_2=(U,\xi,\pi^{\star }\gamma_1\wedge \pi^{\star }\gamma_2).
\end{equation}
This works because, for a morphism $\gamma:\lambda\rightarrow \lambda'$, we have $P\leq \pi_\gamma^{\star }P'$.\\
The identity is the truth $\top_\lambda$.\\
\begin{lem}
	The naturality of the operations over $\mathcal{A}'$ follows from the further hypothesis: for every morphism $(\alpha,h)$,
	we assume that the counit $\pi^{\star }\pi_\star $ is equal to ${\sf Id}_{\mathbb{L}_{U,\xi}}$.
\end{lem}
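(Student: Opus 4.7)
The plan is to unfold what "naturality of operations over $\mathcal{A}'$" concretely requires, and then to pin down the single spot in the verification that is forced by the new hypothesis. Given a morphism $\tau:\lambda\to\lambda^*$ of $\widetilde{\mathcal{A}}'$ lifting $(\alpha,h)$ in $\mathcal{F}$, the presheaf structure on the functions $\Phi$ is the pullback $[\Phi(\tau)\phi_\lambda](T^*)=\phi_\lambda(\pi_\tau^\star T^*)$, and on the slice monoid $\lambda^*|\mathcal{A}$ each arrow $\gamma^*:\lambda^*\to\lambda'$ admits a transport to an arrow $\gamma:\lambda\to\lambda'$ of $\lambda|\mathcal{A}$. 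Naturality amounts to the two identities
\begin{equation*}
\pi_\tau^\star(\gamma^*.T^*)=\gamma.(\pi_\tau^\star T^*), \qquad \pi_\tau^\star(\gamma^*_1\otimes\gamma^*_2)\text{-action}=(\gamma_1\otimes\gamma_2)\text{-action on }\pi_\tau^\star T^*,
\end{equation*}
for any $T^*\in\Theta_{\lambda^*}$.

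First I would perform the direct calculation on a single arrow. Using the definition (\ref{arrowaction}),
\begin{equation*}
\pi_\tau^\star(\gamma^*.T^*)=\pi_\tau^\star\big(\pi_{\gamma^*}^\star P'\Rightarrow T^*\big)=\big(\pi_\tau^\star\pi_{\gamma^*}^\star P'\Rightarrow \pi_\tau^\star T^*\big),
\end{equation*}
where the second equality uses that $\pi_\tau^\star$ commutes with the internal implication $\Rightarrow$. In the groupoid setting this openness of $\pi_\tau^\star$ is provided by Lemma \ref{bool-lat} and Lemma \ref{morph-heyt}; in the general closed-monoidal setup it is part of the standard hypothesis of Definition \ref{dfn:semanticflow}. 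The desired conclusion then reduces to the propositional identity
\begin{equation*}
\pi_\tau^\star\pi_{\gamma^*}^\star P'=\pi_\gamma^\star P' \text{ in } \mathcal{A}_\lambda.
\end{equation*}

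Now I would invoke the further hypothesis. Under the assumption $\pi_\star\pi^\star={\sf Id}_{\mathbb{L}_{U^*,\xi^*}}$ already in place in this subsection together with the new one $\pi^\star\pi_\star={\sf Id}_{\mathbb{L}_{U,\xi}}$, the functor $\pi_\tau^\star$ is a bijection with inverse $\pi^\tau_\star$. The canonical way to transport $\gamma^*:\lambda^*\to\lambda'$ to an arrow $\gamma:\lambda\to\lambda'$ in the slice monoid is precisely $\pi_\gamma^\star=\pi_\tau^\star\pi_{\gamma^*}^\star$ on $\mathcal{A}_{\lambda'}$, equivalently $\pi_{\gamma^*}^\star=\pi^\tau_\star\pi_\gamma^\star$. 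Substituting this back, we obtain
\begin{equation*}
\pi_\tau^\star\pi_{\gamma^*}^\star P'=\pi_\tau^\star\pi^\tau_\star\pi_\gamma^\star P'=\pi_\gamma^\star P',
\end{equation*}
which is exactly the counit identity for the triple $(\pi^\star,\pi_\star,\pi^\star\pi_\star)$. The compatibility with the monoidal product follows in the same way because $\pi_\tau^\star$ is a morphism of Heyting algebras, hence preserves $\wedge$, so $\pi_\tau^\star(\pi_{\gamma^*_1}^\star P'_1\wedge\pi_{\gamma^*_2}^\star P'_2)=\pi_{\gamma_1}^\star P'_1\wedge\pi_{\gamma_2}^\star P'_2$.

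The only subtle point, and in my view the one to present carefully, is the identification $\pi_{\gamma^*}^\star=\pi^\tau_\star\pi_\gamma^\star$ defining the transport of an arrow in the slice: without both $\pi^\star\pi_\star={\sf Id}$ and $\pi_\star\pi^\star={\sf Id}$ the composition in $\widetilde{\mathcal{A}}$ used to express this transport would only yield an adjunction inequality (as in Lemma \ref{lem:open-morphism}) and the calculation above would collapse to an external implication rather than an equality. With the hypothesis of the lemma in force, the two naturality squares commute on the nose and the proof is complete.
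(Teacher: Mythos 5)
Your computation assembles the same ingredients as the paper's proof (commutation of $\pi^{\star}$ with the internal implication $\Rightarrow$, functoriality of pullback along the composite arrow, and the counit), but you locate the counit hypothesis in the wrong place, and as a result the key step is circular and what you actually prove is weaker than the lemma. The transport of $\gamma^{*}:\lambda^{*}\rightarrow\lambda'$ to the slice over $\lambda$ is simply precomposition, $\gamma=\gamma^{*}\circ\tau$, so the identity $\pi_{\gamma}^{\star}=\pi_{\tau}^{\star}\pi_{\gamma^{*}}^{\star}$ is pure contravariant functoriality and needs no hypothesis at all; your derivation of it by first rewriting $\pi_{\gamma^{*}}^{\star}$ as $\pi_{\star}^{\tau}\pi_{\gamma}^{\star}$ and then cancelling with $\pi_{\tau}^{\star}\pi_{\star}^{\tau}={\sf Id}$ assumes the very identity it is meant to establish. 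Consequently your naturality identity $\pi_{\tau}^{\star}(\gamma^{*}.T^{*})=\gamma.(\pi_{\tau}^{\star}T^{*})$ holds for free, with no counit anywhere --- which should be a warning sign, since the lemma asserts that the counit is exactly the further hypothesis required.

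The missing idea is that the naturality needed is $\rho^{\star}(\gamma_{1}).T=\pi_{\rho}^{\star}\bigl(\gamma_{1}.(\pi_{\rho})_{\star}T\bigr)$ for an \emph{arbitrary} theory $T\in\Theta_{\lambda}$, not merely for theories already of the form $\pi_{\tau}^{\star}T^{*}$. This is where the counit genuinely enters in the paper's proof: one writes $T=\pi_{\rho}^{\star}(\pi_{\rho})_{\star}T$, so that $\pi_{\rho}^{\star}\pi_{\gamma_{1}}^{\star}P'_{1}\Rightarrow T$ becomes $\pi_{\rho}^{\star}\bigl[\pi_{\gamma_{1}}^{\star}P'_{1}\Rightarrow(\pi_{\rho})_{\star}T\bigr]$ by openness, i.e.\ $\pi_{\rho}^{\star}\bigl[\gamma_{1}.(\pi_{\rho})_{\star}T\bigr]$. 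Your argument is repaired by adding exactly this remark (every theory at the downstream layer is the pullback of its pushforward), and by dropping the closing claim that the unit $\pi_{\star}\pi^{\star}={\sf Id}$ is also needed for this lemma: it is used elsewhere in the subsection (it is what makes the slice monoids $\lambda|\widetilde{\mathcal{A}}$ a presheaf over $\widetilde{\mathcal{A}}$, as the paper notes immediately after the lemma), but the proof of this particular naturality uses only the counit.
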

\begin{proof}
	Consider an arrow $\rho:\lambda\rightarrow\lambda_1$; it gives a morphism $\rho^{\star }:\lambda_1|\mathcal{A}\rightarrow \lambda|\mathcal{A}$.\\
	For a morphism $\gamma_1:\lambda_1\rightarrow\lambda'_1$, $\rho^{\star }(\lambda_1)=\gamma_1\circ \rho$.\\
	If $\gamma_1:\lambda_1\rightarrow \lambda'_1$ is an arrow in $\mathcal{A}'$, where $\lambda'_1=(U'_1,\xi'_1,P'_1)$, and $T$ a theory in $\Theta_\lambda$, we have
	\begin{align*}
		\rho^{\star }(\gamma_1).T&=\pi_{\gamma_1\circ \rho}^{\star }P'_1\Rightarrow T\\
		&=\pi_\rho^{\star }\pi_{\gamma_1}^{\star }P'_1\Rightarrow \pi_\rho^{\star }(\pi_\rho)_\star T\\
		&=\pi_\rho^{\star }[\pi_{\gamma_1}^{\star }P'_1\Rightarrow (\pi_\rho)_\star T]\\
		&=\pi_\rho^{\star }[\gamma_1.(\pi_\rho)_\star T]\\
		&=\rho^{\star }(\gamma_1.\rho_\star T)
	\end{align*}
\end{proof}	
The monoids $\lambda|\widetilde{\mathcal{A}}$ is a presheaf over $\widetilde{\mathcal{A}}$, only in the case of isomorphisms, i.e. $\pi_\star\pi^{\star}={\sf Id}_{\lambda'}$.

The bar construction now makes appeal to symbols $[\gamma_1|\gamma_2|...|\gamma_n|$, where the $\gamma_i$ are arrows issued
from $\lambda$. The action of algebra pass through the inverse image of propositions $\pi^{\star}P_i$.\\
The zero cochains are families $\phi_\lambda$  of maps on theories satisfying
\begin{equation}
\psi_\lambda(T)=\psi_{\lambda'}(\pi_\star  T),
\end{equation}
where $\gamma:\lambda\rightarrow\lambda'$ is a morphism in $\widetilde{A}$.\\
The coboundary operator is
\begin{equation}
\delta \psi_\lambda([\gamma_1])=\psi_\lambda(T|\pi_\gamma^{\star }P_1)-\psi_\lambda(T).
\end{equation}
Then the cohomology is as before.\\
The one cochains are collections of maps of theories $\phi_\lambda^{\gamma_1}$ satisfying
\begin{equation}
\phi_{\lambda'}^{\gamma'_1}(\pi_\star T)=\phi_\lambda^{\gamma'_1\circ\gamma}(T).
\end{equation}
The cocycle equation is
\begin{equation}
\phi_\lambda^{\gamma_1\wedge \gamma_2}=\phi_\lambda^{\gamma_1}+\gamma_1.\phi_\lambda^{\gamma_2}.
\end{equation}
One more time, the cocycles are coboundaries; the following formula is easily verified
\begin{equation}
\phi_\lambda^{\lambda_1}=(\delta\psi_\lambda)[\lambda_1]=\pi^{\star}P_1.\psi_\lambda-\psi_\lambda;
\end{equation}
where
\begin{equation}
\psi_\lambda=-\phi_\lambda^{Id_\lambda}.
\end{equation}

\noindent The combinatorial coboundary $\delta^{t}$ at degree two gives:
\begin{equation}
I_\lambda(\gamma_1;\gamma_2)=\delta^{t}\phi_\lambda[\gamma_1,\gamma_2]\\=\phi_\lambda^{\gamma_1}-\phi_\lambda^{\gamma_1\wedge \gamma_1}+\phi_\lambda^{\gamma_2}.
\end{equation}
This gives the following formulas
\begin{equation}
I_\lambda(\gamma_1;\gamma_2)=\phi_\lambda^{\gamma_1}-\gamma_2.\phi_\lambda^{\gamma_1}=\phi_\lambda^{\gamma_2}-\gamma_1.\phi_\lambda^{\gamma_2}.
\end{equation}

More concretely,  for two morphisms $\gamma_1:\lambda_1\rightarrow\lambda$ and $\gamma_2:\lambda_2\rightarrow\lambda$,
denoting by $P_1,P_2$ their respective coordinates on propositions, and by $\psi_\lambda=-\phi_\lambda^{\lambda}$ the canonical $0$-cochain, we have:
\begin{equation}
I_\lambda(\gamma_1;\gamma_2)(T)=\psi_\lambda(T|\pi^{\star}P_1\wedge\pi^{\star}P_2)-\psi_\lambda(T|\pi^{\star}P_1)-\psi_\lambda(T|\pi^{\star}P_2)+\psi_\lambda(T)
\end{equation}

In a unique layer $U$, for a given context $\xi$, we get
\begin{equation}
I(P_1;P_2)(T)=\psi(T|P_1\wedge P_2)-\psi(T|P_1)-\psi(T|P_2)+\psi(T).
\end{equation}
This is the particular case of the mutual information we got before, see equation \eqref{mutualinformation}, because now, the generating function $\psi$
is the restriction to $\Theta(P)$ of a function that is defined on $\Theta=\Theta(\bot)$.

\section{Homotopy constructions}\label{sec:homotopy}
\subsection*{Abelian homogeneous bar complex of information}

We start by describing an homogeneous version of the information cocycles, giving first the differences of ambiguities, from which
the above ambiguity can be derived by reducing redundancy.
For that purpose we consider equivariant cochains as in \cite{Baudot-Bennequin}.\\
\indent The sets $\Theta_\lambda$, where
$\lambda=(U,\xi,P)$, are now extended by the symbols $[\gamma_0|\gamma_1|...|\gamma_n]$, where $n\in \mathbb{N}$, and the $\gamma_i;i=0,...,n$,  are objects
of the category $\mathcal{D}_\lambda$ or arrows in $\widetilde{\mathcal{A}}'_{\rm strict}$ abutting to $\lambda_R=(U,\xi,R)$
for $P\leq R$.\\
This extension with $n+1$ symbols is denoted by $\Theta^{n}_\lambda$. It represents the possible theories in the local language and
its context $U,\xi$, excluding the validity of $P$, augmented by the possibility to use counter-examples $\neg Q_i, i=0,...,n$.
There is a natural simplicial structure on the union $\Theta^{\bullet}_\lambda$
of these sets. The face operators $d_i;i=0,...,n$ being given by the following formulas:
\begin{align*}
d_0(\gamma_0,...,\gamma_n)&=(\gamma_1,...,\gamma_n)\\
d_i(\gamma_0,...,\gamma_n)&=(\gamma_0,...,\gamma_{i-1},\gamma_{i+1}...,\gamma_n) \text{ if } 0 < i < n\\
d_{n}(\gamma_0,...,\gamma_n)&=(\gamma_0,...,\gamma_{n-1}).
\end{align*}
By definition, the geometric realization of $\Theta^{\bullet}_\lambda$ is named the space of theories at $\lambda$
or localized at $\lambda$. Its homotopy type
is named the \emph{algebraic homotopy type} of theories, also at $\lambda$.\\

\noindent Remind that a simplicial set $K$ is a presheaf over the category $\Delta$, with objects $\mathbb{N}$ and morphisms
from $m$ to $n$, the  non decreasing maps from $[m]=\{ 1,...,m\}$ to $[n]=\{ 1,...,n\}$. The \emph{geometric realization} $|K|$
of a simplicial set $K$ is the topological space obtained
by quotienting the disjoint union of the products $K_n\times \Delta(n)$, where $K_n= K([n])$ and $\Delta(n)\subset
{\mathbb R}^{n+1}$ is   the geometric standard simplex, by the equivalence relation that identifies $(x,\varphi_\star(y))$
and $(\varphi^{\star}(x),y)$ for every nondecreasing map $\varphi:[m]\rightarrow [n]$, every $x\in K_n$ and every
$y\in \Delta(m)$; here $f^\star$ is $K(f)$ and $f_\star$ is the restriction to $\Delta(n)$ of the unique linear map from $\mathbb{R}^{n+1}$
to $\mathbb{R}^{m+1}$ that sends
the canonical vector $e_i$ to $e_{f(i)}$. In this construction, for $n\in \mathbb{N}$, $K_n$ is equipped with the discrete topology
and $\Delta(n)$ with its usual topology, then compact, the topology on the union over $n\in \mathbb{N}$ is the weak topology, i.e. a subset is closed
if and only if its intersection with each closed simplex is closed, and the realization is equipped with the quotient topology, the finest
making the quotient map continuous.
In particular, even it is not obvious at first glance, the realization of the simplicial set $\Delta^{k}$ is the standard simplex $\Delta(k)$.\\

\noindent Let $\mathbb{K}$ be commutative ring of cardinality at most continuous (conditions of measurability will be considered later).
We consider the rings $\Phi^{n}_\lambda;n\in \mathbb{N}$ of (measurable) functions  on the respective $\Theta^{n}_\lambda$ with values in $\mathbb{K}$.\\
The above simplicial structure gives a differential complex on the graded sum $\Phi^{\bullet}_\lambda$ of the $\Phi^{n}_\lambda;n\in \mathbb{N}$,
with the simplicial (or combinatorial) coboundary operator
\begin{equation}\label{coboundarycombinatorialcohomology}
(\delta_\lambda \phi)_\lambda^{\gamma_0| \cdots  |\gamma_n} = \sum_{i=0}^{n} (-1)^i \phi^{\gamma_0|  \cdots |\widehat{\gamma_i}|\cdots |\gamma_n}.
\end{equation}
We call \emph{algebraic cocycles} the elements in the kernel.\\

\noindent As we have seen, the arrows $\gamma_Q\in \mathcal{D}_\lambda$ can be multiplied, using the operation $\wedge$ on propositions in $\mathcal{A}_\lambda$,
and this defines an action of monoid on $\Theta_\lambda$ by the conditioning operation. Therefore we can define the \emph{homogeneous functions} or
\emph{homogeneous algebraic cochains} of degree
$n\in \mathbb{N}$ as the (measurable)  functions $\phi_\lambda^{\gamma_0;\gamma_1;...;\gamma_n}$ on $\Theta_\lambda$,
such that for any $\gamma_Q$ in $\mathcal{D}_\lambda$, abutting in $(U,\xi,Q)$, for $P\leq Q$, and any $T\in \Theta_\lambda$, thus excluding $P$,
\begin{equation}\label{homogeneousalgebraiccochain}
\phi_\lambda^{\gamma_Q\wedge\gamma_0;\gamma_Q\wedge\gamma_1;...;\gamma_Q\wedge\gamma_n}(T)
=\phi_\lambda^{\gamma_0;\gamma_1;...;\gamma_n}(T|Q).
\end{equation}

\noindent The above operator $\delta_\lambda$ preserves the homogeneous algebraic cochains. The kernel restriction of $\delta_\lambda$ defines
the \emph{homogeneous algebraic cocycles}.\\

\noindent A morphism $\gamma:\lambda\rightarrow\lambda'$ naturally
associates $\phi_\lambda^{\gamma_0|\gamma_1|...|\gamma_n}$ with $\phi_{\lambda'}^{\gamma'_0|\gamma'_1|...|\gamma'_n}$ through the formula
\begin{equation}\label{algebraicnaturality}
\phi_{\lambda}^{\gamma_0|\gamma_1|...|\gamma_n}(\pi^{\star }T')=\phi_{\lambda'}^{\gamma_\star \gamma_0|\gamma_\star \gamma_1|...|\gamma_\star \gamma_n}(T').
\end{equation}
Then the hypothesis $\pi^{\star }\pi_\star ={\sf Id}_{U',\xi'}$ allows to define a cosheaf $\Phi^{n}_\lambda;\lambda\in \mathcal{D}$ over $\mathcal{D}$, not a sheaf, by
\begin{equation}\label{algebraiccosheaf}
(\Phi_{\star }\phi_{\lambda'})^{\gamma_0|\gamma_1|...|\gamma_n}(T)=\phi_{\lambda'}^{\gamma_\star \gamma_0|\gamma_\star \gamma_1|...|\gamma_\star \gamma_n}(\pi_\star T).
\end{equation}

\noindent However the first equation \eqref{algebraicnaturality} is more precise, and we take it as a definition of \emph{natural algebraic homogeneous cochains}.\\
\begin{rmk*}
	\normalfont We cannot consider it as a sheaf because of a lack of definition of $\gamma^{\star }\gamma'_i$.
\end{rmk*}

The operation of conditioning preserves the naturality, in reason of the following 
identity, involving $\gamma:\lambda\rightarrow\lambda'$, $\gamma_Q\in \mathcal{D}_\lambda$,
$S'\in \Theta^{n}_\lambda$:
\begin{equation}
\pi_\gamma^{\star}[S'|\gamma_\star(\gamma_Q)]=\pi_\gamma^{\star}S'| \gamma_Q.
\end{equation}

\noindent Therefore we can speak of \emph{natural homogeneous algebraic cocycles}.\\

\noindent For $n=0$, the cochains are collections of functions $\psi_\lambda^{\gamma_0}$ of the theories in $\mathcal{A}_\lambda$
such that
\begin{equation}
\psi_\lambda^{\gamma_Q\wedge\gamma_0}(S)=\psi_\lambda^{\gamma_0}(S|Q),
\end{equation}
and such that, for any morphism $\gamma:\lambda\rightarrow\lambda'$,
\begin{equation}
\psi^{\gamma_0}_\lambda(\pi_\gamma^{\star}T')=\psi^{\gamma_\star\gamma_0}_{\lambda'}(T').
\end{equation}
From the first equation, we can eliminate $\gamma_0$. We define $\psi_\lambda=\psi_\lambda^{\top}$, and get
\begin{equation}\label{reduction}
\psi_\lambda^{\gamma_Q}(S)=\psi_\lambda(S|Q).
\end{equation}
The second equation, with the transport of truth, is equivalent to
\begin{equation}
\psi_\lambda(\pi_\gamma^{\star}T')=\psi_{\lambda'}(T').
\end{equation}
A cocycle corresponds to a collection of constant $c_\lambda$, which are natural, then to the functions of the
connected components of the category $\mathcal{D}$.\\
Thus we recover the same notion as in the preceding section.\\

\noindent In degree one, the homogeneous cochain $\phi_\lambda^{\gamma_0;\gamma_1}$
cannot be \emph{a priori} expressed through the collection of functions $\varphi_\lambda^{\gamma_0}=\phi_\lambda^{\gamma_0;\top}$, but,
if it is a cocycle, it can:
\begin{equation}
\phi_\lambda^{\gamma_0;\gamma_1}=\varphi_\lambda^{\gamma_0}-\varphi_\lambda^{\gamma_1};
\end{equation}
as this follows directly from the algebraic cocycle relation applied to $[\gamma_0|\gamma_1|\top_\lambda]$.\\

\noindent But we also have, by homogeneity
\begin{equation}
Q.\varphi^{\gamma_Q}=Q.\phi^{\gamma_Q|\top}=\phi^{\gamma_Q\wedge\gamma_Q|\gamma_Q\wedge\top}=\phi^{\gamma_Q|\gamma_Q}
=\varphi^{\gamma_Q}-\varphi^{\gamma_Q}=0.
\end{equation}
Then, the homogeneity equation gives the particular case
\begin{equation}
\varphi^{Q\wedge Q_O}-\varphi^{Q\wedge Q}=Q.\varphi^{\gamma_{Q_O}}-Q.\varphi^{\gamma_Q}=Q.\varphi^{\gamma_{Q_O}},
\end{equation}
therefore
\begin{equation}
\varphi^{Q\wedge Q_O}=\varphi^{\gamma_ Q}+Q.\varphi^{\gamma_{Q_O}};
\end{equation}
which is the cocycle equation we discussed in the preceding section, under the form of Shannon.\\

\begin{rmk*}
	\normalfont All that generalizes to any degree, in virtue of the comparison theorem between projective resolutions,
	proved in the relative case in MacLane "Homology" \cite{2012homology}, or in SGA 4 \cite{SGA4}, more generally, because the above homogeneous bar complex
	and in-homogeneous bar complex are such resolutions of the constant functor $\mathbb{K}$.
\end{rmk*}

\subsection*{Semantic Kullback-Leibler distance}

In \cite{Baudot-Bennequin}, it was also shown that the Kullback-Leibler distance (or divergence) $D_{KL}(X;\mathbb{P};\mathbb{P}')$
between two probability laws on a random variable $X$ defines a cohomology class in the above sense. The cochains depend on a sequence $\mathbb{P}_0,...,\mathbb{P}_n$
of probabilities and a sequence of variables $X_0,...,X_m$ less fine than a given variable $X$; the conditioning the $n+1$ laws
by the value $y$ of a variable $Y\geq X$ is integrated over $Y_\star \mathbb{P}_0$, for giving an action on the set of measurable functions of the $n+1$ laws,
then the homogeneity is defined as before, and the coboundary is the standard combinatorial
one, as before. For $n=1$, the universal degree one class is shown to be the difference of divergences.\\
Remind that the $K-L$ divergence is given by the formula
\begin{equation}
D_{KL}(X;\mathbb{P};\mathbb{P}')=-\sum_{x_i}p_i\log \frac{p'_i}{p_i}.
\end{equation}
\indent In our present case, we consider functions of $n+1$ theories and $m+1$ propositions, all works as for $n=0$.
In degree zero, the cochains are defined by functions $\psi_\lambda(S_0,S_1)$ satisfying
\begin{equation}
\psi_\lambda(\pi_\gamma^{\star }S'_0;...;\pi_\gamma^{\star }S'_n)=\psi_{\lambda'}(S'_0;...;S'_n),
\end{equation}
for any morphism $\gamma:\lambda\rightarrow\lambda'$.\\
The formula for the homogeneous cochain is
\begin{equation}
\psi_\lambda^{\gamma_Q}(S_0;...;S_n)=\psi_\lambda(S_0|Q;...;S_n|Q).
\end{equation}
The non-homogeneous zero cocycles are the functions of $P$ only, invariant by the transport $\pi_\star $.\\
In degree one, the cocycles are defined by any function $\varphi_\lambda^{Q}(S_0;...;S_n)$ which satisfies
\begin{equation}
\varphi_\lambda^{Q}(\pi_\gamma^{\star }S'_0;...;\pi_\gamma^{\star }S'_n)=\varphi_{\lambda'}^{\pi_\star (Q)}(S'_0;...;S'_n),
\end{equation}
for any morphism $\gamma:\lambda\rightarrow\lambda'$, and verifies the cocycle equation
\begin{equation}
\varphi_\lambda^{Q\wedge R}(S_0;...;S_n)=\varphi_\lambda^{Q}(S_0;...;S_n)+\varphi_\lambda^{R}(S_0|Q;...;S_n|Q).
\end{equation}
The  homogeneous cocycle associated to $\varphi$ is defined by
\begin{equation}
\phi_\lambda^{\gamma_{Q_0};\gamma_{Q_1}}(S_0;...;S_n)=\varphi_\lambda^{Q_0}(S_0;...;S_n)-\varphi_\lambda^{Q_1}(S_0;...;S_n).
\end{equation}
As for $n=0$, there exists a function $\psi_\lambda(S_0;...;S_n)$ such that for any $Q\in \mathcal{A}_\lambda$, i.e. $Q\geq P$,
we have
\begin{equation}
\varphi_\lambda^{Q}(S_0;...;S_n)=\psi_\lambda(S_0|Q;...;S_n|Q)-\psi_\lambda(S_0;...;S_n).
\end{equation}
\indent In the particular case $n=1$, we can consider a basic real function $\psi_\lambda(S)$, seen as a logarithm of theories as before, and define
\begin{equation}
\psi_\lambda(S_0;S_1)=\psi_\lambda(S_0\wedge S_1)-\psi_\lambda(S_0).
\end{equation}
If the function $\psi_\lambda(S)$ is supposed increasing in $S$ (for the relation of weakness $\leq$, as before), this gives a negative function.\\
We obtain
\begin{equation}
\phi_\lambda^{Q}(S_0;S_1)=\psi_\lambda(S_0\wedge S_1|Q)-\psi_\lambda(S_0\wedge S_1)-\psi_\lambda(S_0|Q)+\psi_\lambda(S_0).
\end{equation}
The positivity of this quantity is equivalent to the concavity of $\psi_\lambda(S)$ on the pre-ordered set of theories.\\
Assuming this property we obtain an analog of the Kullback-Leibler divergence.\\
If $\psi_\lambda(S)$ is strictly concave, that is the most convenient hypothesis, this function takes the value zero if and only if $S_0=S_1$.
Therefore it can be taken as a natural \emph{semantic distance}, depending on the data of $Q$, as candidate from a counter-example of $P$.\\
As in the case of $D_{KL}$ this function is not symmetric, then it could be more convenient to take the sum
\begin{equation}
\sigma_\lambda^{Q}(S_0;S_1)=\phi_\lambda^{Q}(S_0;S_1)+\phi_\lambda^{Q}(S_1;S_0)
\end{equation}
to have a good notion of distance between two theories.\\

\subsection*{Simplicial homogeneous space of histories of theories}

\noindent Another argument to justify the consideration of the homogeneity is the interest of taking a \emph{pushout} of the theories.\\

\noindent The sheaf of monoidal categories $\mathcal{D}_\lambda$ over $\mathcal{D}$ acts in two manners on the algebraic space of theories
$\Theta^{\bullet}_\lambda$:
\begin{equation}\label{tensorconditioning}
\gamma_Q.(S\otimes [\gamma_0;...;\gamma_n])=(S|Q)\otimes [\gamma_0;...;\gamma_n],
\end{equation}
\begin{equation}\label{tensorproduct}
\gamma_Q\wedge (S\otimes [\gamma_0;...;\gamma_n])=S\otimes [\gamma_Q\gamma_0;...;\gamma_Q\gamma_n].
\end{equation}
Then we can consider the colimit $\Theta^{\bullet}_\lambda/\mathcal{D}$ of these pairs of maps over all the arrows $\gamma_Q$, i.e.
over $\mathcal{D}_\lambda$: this colimit is the disjoint union of the
coequalizers for
each arrow. This is a quotient simplicial set. The homogeneous cochains are just the (measurable) functions on this simplicial set.\\

This can be realized directly as a pushout, or coequalizer, of a unique pair of maps, by taking the union $Z$
of the products $\Theta_\lambda^{\bullet}\times\mathcal{D}_\lambda$, and the two natural maps $\mu, \nu$ to $T=\Theta_\lambda^{\bullet}$ given by
multiplication and conditioning respectively.\\

Remark that the two operations in \eqref{tensorconditioning} and \eqref{tensorproduct} are adjoint of each other, then we can speak of
\emph{adjoint gluing}.\\

Also interesting is the \emph{homotopy quotient}, taking into account that, geometrically, $Z$ has a higher degree in propositions belonging to $\mathcal{D}_\lambda$,
due to the presence of $\gamma_Q$. This homotopy colimit is the simplicial set $\Sigma^{\bullet}$ obtained from the disjoint union $(Z\times [0,1])\sqcup (T\times\{0\})\sqcup (T\times\{1\})$
by taking the identification
of $(z,0)$ with $\mu(z)$ and of $(z,1)$ with $\nu(z)$. It can be named a \emph{homotopy gluing}, because the arrows are used geometrically as
continuous links between points in
$T\times\{0\}$ and $T\times\{1\}$. The simplicial set $\Sigma^{\bullet}$ is equipped with a natural projection onto the ordinary coequalizer
$\Theta_\lambda^{\bullet}/\mathcal{D}_\lambda$. See for instance Dugger \cite{Dugger2008APO} for a nice exposition of this notion, and its interest for
homotopical stability with respect to the ordinary colimit. Then we propose that a more convenient notion of homogeneous cochains could be the
functions on $\Sigma^{\bullet}$.\\

\indent Similarly, we have two natural actions of the category $\mathcal{D}$ of arrows leading to $\lambda$ and issued from $\lambda'$: the first one
being of the type
\begin{equation}
\Theta_{\lambda'}\otimes \mathcal{D}_\lambda^{\otimes (n+1)}\rightarrow \Theta^{n}_{\lambda};
\end{equation}
the second one of the type
\begin{equation}
\Theta_{\lambda'}\otimes \mathcal{D}_\lambda^{\otimes (n+1)}\rightarrow \Theta^{n}_{\lambda'}.
\end{equation}
They are respectively defined by the following formulas:
\begin{equation}\label{feedforwardtheories}
\gamma^{\star }(S'\otimes [\gamma_0;...;\gamma_n])=(\pi^{\star }_\gamma S')_\lambda \otimes [\gamma_0;...;\gamma_n]
\end{equation}
The second one is
\begin{equation}\label{feedbackpropositions}
\gamma_{\star }(S'\otimes [\gamma_0;...;\gamma_n])=S' \otimes [\pi^{\gamma}_\star \gamma_0;...;\pi^{\gamma}_\star \gamma_n]
\end{equation}

\noindent They are both compatibles with the quotient by the actions of the monoids, then they define
maps at the level of $\Sigma^{\bullet}$.\\

\noindent The \emph{natural} cochains are the functions that satisfy, for each $\gamma:\lambda\rightarrow \lambda'$, the equation
\begin{equation}
\phi_\lambda\circ \gamma^{\star }=\phi_{\lambda'}\circ \gamma_\star .
\end{equation}\\

\noindent Note that no one of the above equations, for homogeneity and naturality, necessitates numerical values, but the second necessitates values in a constant set or a
constant category, at least along the orbits of $\mathcal{D}$.\\
\indent And it is important for us that the cochains can take their values in a category $\mathcal{M}$ admitting limits, like ${\sf Set}$ or ${\sf Top}$,
non necessarily Abelian, because our aim is to obtain a theory of
\emph{information spaces} in the sense searched by Carnap and Bar-Hillel in $1952$ \cite{CBH52}.\\

Define a set $\Theta^{n}_{1}$ ({\em resp.} $\Theta^{n}_{0}$) by the coproduct, or disjoint union, over $\gamma:\lambda\rightarrow\lambda'$ ({\em resp.} $\lambda$)
of the sets $\Theta_{\lambda'}\otimes \mathcal{D}_\lambda^{\otimes (n+1)}$ ({\em resp.} $\Theta^{n}_{\lambda}$). When the integer $n$ varies,
we note the sum by $\Theta^{\bullet}_{1}$ ({\em resp.} $\Theta^{\bullet}_{0}$). They are canonically simplicial sets.\\
\indent The collections of maps $\gamma^{\star }$ and $\gamma_\star $ define two (simplicial) maps from $\Theta^{\bullet}_{1}$ to $\Theta^{\bullet}_{0}$,
that we will denote respectively $\varpi$ and $\vartheta$, for {\em past} and {\em future}. The colimit or \emph{coequalizer} of these two maps, is the
quotient $\mathrm{H}^{\bullet}_{0}$ of $\Theta^{\bullet}_{0}$ by the equivalence relation
\begin{equation}
(\pi^{\star }_\gamma S')_\lambda \otimes [\gamma_0;...;\gamma_n]_\lambda\quad \sim \quad S'_{\lambda'} \otimes [\pi^{\gamma}_\star \gamma_0;...;\pi^{\gamma}_\star \gamma_n]_{\lambda'}.
\end{equation}

\noindent Once iterated over the arrows, this relation represents the complete story of a theory, from the source of its formulation in the network to the final layer.\\
It is remarkably conform to the notion of cat's manifold, and compatible with the possible presence of inner sources in the network.\\

Remark that the two operations in \eqref{feedforwardtheories} and \eqref{feedbackpropositions} are also adjoint relative to each other, then again
the corresponding colimit can be named an adjoint gluing.\\
\begin{rmk*}
	\normalfont The above equivalence relation is more fine than the relation we would have found with the covariant functor, i.e.
	\begin{equation}
		(\pi_{\star }^{\gamma} S)_{\lambda'} \otimes [\pi^{\gamma}_{\star }\gamma_0;...;\pi^{\gamma}_{\star }\gamma_n]_{\lambda'}\quad
		\sim \quad S_{\lambda} \otimes [\gamma_0;...;\gamma_n]_{\lambda};
	\end{equation}
	because this relation is implied by the former, when we applied it to $S'=\pi_\star S$, in virtue of our hypothesis $\pi^{\star }\pi_\star ={\sf Id}$.\\
	\indent The two relations ar equivalent if and only if $\pi_\star \pi^{\star }={\sf Id}$, that is the case of isomorphic logics among the network.
\end{rmk*}
\noindent 
We define the \emph{natural cochains} as the (measurable) functions on $\mathrm{H}^{\bullet}_{0}$, and the \emph{natural homogeneous cochains} as the
functions on the quotient $\mathrm{H}^{\bullet}_{0}/\mathcal{D}$ by the identification of junction with conditioning.
And we are more interested in the homogeneous case.\\

However, in a non-Abelian context, the stability under homotopy will be an advantage, therefore we also consider the homotopy colimit
of the maps $\varpi$ and $\vartheta$, or homotopy gluing between  past and future, and propose that this colimit $\mathrm{I}^{\bullet}_{0}$
(or $ho\mathrm{I}$ if we reserve $\mathrm{I}$ for the usual
colimit) is a better notion of the histories of theories in the network.
It is also naturally a simplicial set. Then the \emph{natural homotopy homogeneous cochains} will be functions on the homotopy gluing
$ho\mathrm{I}$.\\

\noindent The homotopy type of the theories histories $\mathrm{I}^{\bullet}_{0}$
itself is an interesting candidate for representing the information, and information flow in the network.\\
\indent For instance, its connected components gives the correct notion of zero-cycles, and the functions on
them are zero-cocycles. The Abelian construction is sufficient to realize these cocycles.\\
\indent We will later consider functions from the space $\mathrm{I}^{\bullet}_{0}$ to a closed model category $\mathcal{M}$, their homotopy
type in the sense of Quillen can be seen as a non-Abelian set of cococycles.\\

What we just have made above for the cochains (homogeneous and/or natural) is a particular case of a \emph{homotopy limit}.\\

\noindent The notion of homotopy limit was introduced in Bousfield-Kan $1972$, \cite[chapter XI]{BK} where
it generalized the classical bar resolution in a non-linear context,
see MacLane's book "Homology" \cite{2012homology}. The authors attributed its origin to Milnor, in the article "On axiomatic homology theory" \cite{pjm/1103036730}.
For this notion and more recent developments (see \cite{Hirschhorn2003}, \cite{Hirschhorn},
or \cite{Dugger2008APO}).\\

In this spirit, we extend now the two maps $\varpi,\vartheta$ from $\Theta^{\bullet}_{1}$  to $\Theta^{\bullet}_{0}$, in higher degrees,
by using the nerve of the category $\mathcal{D}$.\\

\indent The nerve $\mathcal{N}=\mathcal{N}(\mathcal{D})$ of the category $\mathcal{D}$ is the simplicial set made by the sequences $A$
of successive arrows in $\mathcal{D}$. For $k\in \mathbb{N}$, $\mathcal{N}_k$ is the set of sequences of length $k$. A sequence is written $(\delta_1,...,\delta_k)$,
where $\delta_i;i=1,...,k$ goes from $\lambda_{i-1}$ to
$\lambda_{i}$ in $\mathcal{D}$. We use the symbols $\delta_i^{\star }$, or the letters $\gamma_i$ when there is no ambiguity,
for the arrow $\delta_i$ considered in the opposite category $\mathcal{D}^{op}=\widetilde{\mathcal{A}}'_{\rm strict}$;
this reverse the direction of the sequence, going now upstream. When necessary, we write $\delta_i(A), \lambda_{i-1}(A), ...$,
for the arrows and vertices of a chain $A$. \\
\indent For $k\in \mathbb{N}$, we define $\Theta^{n}_{k}$ as the disjoint union over $A=(\delta_1,...,\delta_k)$
of the sets $\Theta_{\lambda_0}\otimes \mathcal{D}_{\lambda_k}^{\otimes (n+1)}$. Thus the theory is attached to the beginning in the sense
of $\mathcal{D}$, and the involved propositions are at the end. The chain in $\mathcal{D}$ goes in the dynamical direction, downstream.
When the integers $n$ and $k$ vary,
we note $\Theta^{\bullet}_{\star }$ the sum (disjoint union). This is a bi-simplicial set.\\

We have $k+1$ canonical maps $\vartheta_i;i=1,...,k+1$ from $\Theta^{n}_{k+1}$ to $\Theta^{n}_{k}$. Each map
deletes a vertex, moreover at the extremities it also deletes the arrow, and inside the chain, it composes the arrows
at $i-1$ and $i$. In $\lambda_0$,
the map $\pi^{\star }_{\gamma_1}$ is applied to the theory, to be transmitted downstream, and in $\lambda_{k+1}$, the map
$\pi_{\star }^{\gamma_{k+1}}$ is applied to the $n+1$ elements $\gamma_{Q_j}$ in $\mathcal{D}_{\lambda_{k+1}}$,
to be transmitted upstream.\\

\noindent By analogy with the definition of the homotopy colimit of a diagram in a model category cf. references upcit, we take
for a more complete space of histories, the whole geometric realization of the simplicial functor $\Theta^{\bullet}_{\star }$, seen now as
a \emph{simplicial space} with the above skeleton in degree $k$, and the above gluing maps $\vartheta_i$.
The expression $g\mathrm{I}$ denotes this space, that we understand as the geometrical space of complete
histories of theories.\\
\indent The extension of information over the nerve incorporates the topology of the categories $\mathcal{C},\mathcal{F},\mathcal{D}$.
The degree $n$  was for the logic, the degree $k$ is for its transfer through the layers.\\
\indent $g\mathrm{I}$, or its homotopy type, represents for us the logical part of the available information;
it takes into account
\begin{enumerate}[label=\arabic*)]
	\item the architecture $\mathcal{C}$,
	\item the pre-semantic structure, through the fibration $\mathcal{F}$
	over $\mathcal{C}$, which constrains the possible weights, and also generates the logical transfers $\pi^{\star }$, $\pi_\star $,
	\item the terms of a language through $\widetilde{\mathcal{A}}$, and the propositional judgements through $\mathcal{D}$ and $\Theta$.
\end{enumerate}
The dynamic is given by the semantic functioning $S^{w}:X^{w}\rightarrow\Theta$, depending on the data and the learning. Its analysis needs
an intermediary, a notion of cocycles of information, that we describe now. \\

\noindent The information appears as a tensor $F^{\gamma_0,...,\gamma_n}_{\delta_1,...,\delta_k}(S)$.
\emph{A priori} its components take their values in the category $\mathcal{M}$, that  can be $\sf Set$ or $\sf Top$.\\

\noindent The points in $g\mathrm{I}$ are classes of elements
\begin{equation}
u=S\otimes[\gamma_0,...,\gamma_n]\otimes[\delta_1,...,\delta_k](t_0,...,t_n;s_1,...,s_k)
\end{equation}
where the $t_i;i=0,...,n$ and $s_j;j=1,...,k$ are respectively barycentric coordinates in $\Delta(n)$ and $\Delta(k-1)$.\\

\noindent It is tempting to interpret the coordinates $t_i$ as weights, or values, attributed to the propositions $Q_i$,
and the numbers $s_j$ as times, conduction times perhaps, along the chain of mappings.\\

\noindent Therefore we see the tensor $F$ as a local system $F_u;u\in g\mathrm{I}$ over $g\mathrm{I}$.

\subsection*{Simplicial dynamical space of a DNN, information content}

\noindent Considering a semantic functioning $S:X\rightarrow\Theta$, we can enrich it by the choice of propositions
in each layer $U$ and context $\xi_U$ (or better collections of elements of $\mathcal{D}_\lambda$), and consider sequences
over the networks, relating activities and enriched theories. Then, for each local activity, and each chain of arrows in
the network, equipped with propositions at one end (downstream), the function $F$ gives a space of information.\\

\noindent More precisely, we form the topological space of activities $g\mathbb{X}$, by taking the homotopy colimit
of the object $\mathbb{X}$, fibred over the object $\mathbb{W}$, in the classifying topos of $\mathcal{F}$, lifted to $\mathcal{D}$, and seen as
a diagram over $\mathcal{D}$. This space is defined in the same manner $gI_\star $ was defined from $\Theta_\star $ over $\mathcal{D}$;
it is the geometric realization of the simplicial set $g\mathrm{X}_\star $, whose $k$-skeleton is the sum of the pairs $(A_k,x_\lambda)$
where $A$ is an element of length $k$ in $\mathcal{N}(\mathcal{D})$ and $x_\lambda$ an element in $\mathbb{X}_\lambda$, at
the origin of $A$ in $\mathcal{D}$. The degeneracies $d_i;i=1,...,k+1$ from $\mathrm{X}_{k+1}$ to $\mathrm{X}_{k}$ are given
for $1< i< k+1$, by composition of the morphisms at $i$, by forgetting $\delta_{k+1}(A)$ for $i=k+1$, and by forgetting
$\delta_1$ and transporting $x_\lambda$ by $X_w^{\star }$ for $i=1$.\\

\noindent Then we can ask for an extension of the semantic functioning to a continuous or simplicial map
\begin{equation}
gS:g\mathrm{X}\rightarrow g\mathrm{I}.
\end{equation}

\noindent This implies a compatibility between dynamical functioning in $\mathbb{X}$ and logical functioning in $\Theta$. However,
this map factorizes by a quotient, that can be small, when the semantic functioning is poor. It is only for some regions
in the weight object $\mathbb{W}$, giving itself a geometrical space $g\mathbb{W}$, that the semantic functioning is interesting.\\

\noindent Given $F:g\mathrm{I}\rightarrow \mathcal{M}$, this gives a map $F\circ gS$ from $g\mathrm{X}$ to $\mathcal{M}$,
that can be seen as the information content of the network.\\
\indent To have a better analog on the Abelian quantities, we suppose that $\mathcal{M}$ is a closed model category, and we pass to the homotopy type
\begin{equation}
ho.F\circ gS:g\mathrm{X}\rightarrow ho\mathcal{M}.
\end{equation}

\noindent For real data inputs and spontaneous internal activities, this gives a homotopy type
for each image.\\

\noindent For instance, the degree one homogeneous cocycle $\phi_\lambda^{Q}(S)$ deduced from a precision function $\psi_\lambda(S)$
with real values, is replaced by a map to topological spaces, associated to some "propositional" paths between two points of $g\mathrm{I}$; a
degree two combinatorial cocycles, as the mutual information, is replaced by a varying space associated to a "propositional" triangle, up to homotopy.\\

\subsection*{Non-Abelian inhomogeneous fundamental cochains and cocycles. A tentative}\label{subsec:tentativeinfospaces}

\noindent Remember that the fundamental zero cochain $\psi^{Q_0}_\lambda$ with real coefficients, satisfied $\psi_\lambda^{Q}(S)=\psi_\lambda(S| Q)\geq \psi_\lambda(S)$.
Then, in the nonlinear framework, it is tempting to assume the existence in $\mathcal{M}$ of a class of morphisms replacing the inclusions of the sets,
namely cofibrations, and to generalize the increasing of the function $\psi_\lambda$ of $S$, by the existence of a cofibration,
$F(S)\rightarrowtail F(S|Q)$, or more generally a cofibration $F(S)\rightarrowtail F(S')$ each time $S\leq S'$.\\
\indent This is sufficient for defining an object of ambiguity, then an information object (non-homogeneous), by generalizing the relation between
precision and ambiguity of the Abelian case:
\begin{equation}
H^{Q}(S)=F(S|Q)\backslash F(S);
\end{equation}
where the subtraction is taken in a geometrical or homotopical sense.\\
All that supposes that $\mathcal{M}$ is a closed model category of Quillen.\\

\noindent This invites us to assume that $F$ is covariant under the action of the monoidal categories $\mathcal{D}_\lambda$,
i.e. for every arrow $\gamma_Q$ in $\mathcal{D}_\lambda$, and every theory $S$ in $\Theta_\lambda$, there exists a morphism $F(\gamma_Q;S):F(S)\rightarrow F(S|Q)$
in $\mathcal{M}$, and for two arrows $\gamma_Q$, $\gamma_{Q'}$,
\begin{equation}
F(\gamma_{Q'}\gamma_Q;S)=F(\gamma_{Q'};S|Q)\circ F(\gamma_Q;S)
\end{equation}
and we assume that every $F(\gamma_Q;S)$ is a cofibration.\\

\noindent In the same manner, the generalization of the concavity of the real function $\psi_\lambda^{Q}$ is the hypothesis
that, for two arrows $\gamma_Q$, $\gamma_{Q'}$, there exists a cofibration of the
quotient objects $H$:
\begin{equation}
H(Q,Q';S): H^{Q}(S|Q')\rightarrowtail H^{Q}(S).
\end{equation}
The same thing happening for $H^{Q'}(S|Q)\rightarrowtail H^{Q'}(S)$.\\
The difference space is the model category version of the \emph{mutual information} between $Q$ and $Q'$:\\
\noindent by definition
\begin{equation}
I_2(Q;Q')=H^{Q}\backslash[H^{Q\otimes Q'}\backslash H^{Q'}],
\end{equation}
or in other terms,
\begin{equation}
I_2(Q;Q')=(Q.F\backslash F)\backslash[(Q\otimes Q')F\backslash Q'.F],
\end{equation}

Reasoning on subsets of $H^{Q\otimes Q'}$, this gives the symmetric relation
\begin{equation}
I_2(Q;Q')\sim H^{Q}\cap H^{Q'}.
\end{equation}

\noindent The general concavity condition is the existence of a natural cofibration $H^{Q}(S')\rightarrowtail H^{Q}(S)$ as soon as
there is an inclusion $S\leq S'$.\\

\noindent This stronger property of concavity for the functor  $F$ implies in particular, for any pair of theories $S_0,S_1$,
the existence of a cofibration
\begin{equation}
J_{Q}(S_0;S_1):H^{Q}(S_0)\rightarrow H^{Q}(S_0\wedge S_1).
\end{equation}
This allows to define a homotopical notion of Kullback-Leibler
divergence space in $\mathcal{M}$, between two theories falsifying $P$, at a proposition $Q\geq P$:
\begin{equation}
D^{Q}(S_0;S_1)=H^{Q}(S_0\wedge S_1)\backslash F_\star H^{Q}(S_0).
\end{equation}

\subsection*{Comparison between homogeneous and inhomogeneous non-Abelian cochains and cocycles}

To be complete, we have to relate these maps $F,H,I,D, ...$ from theories and constellations of propositions
to $\mathcal{M}$ with the homogeneous
tensors $F^{\gamma_0,...,\gamma_n}_{\delta_1,...,\delta_k}(S)$. For that, the natural idea is to follow the path
we had described from the homogeneous Abelian bar-complex to the non-homogeneous one, at the beginning of this section.
This will give a homotopical/geometrical version of the MacLane comparison in homological algebra.\\

We consider the bi-simplicial set $\mathbf{I}^{\bullet}_\star $ as a simplicial set $\mathbf{I}_\star $
in the algebraic exponent $n$ for $\bullet$, then
it is a contravariant functor from the category $\Delta$ to the category of simplicial sets $\Delta_{Set}$. The morphisms
of $\Delta$ from $[m]$ to $[n]$ are the non-decreasing maps, their set is noted $\Delta(m,n)$.\\

Our hypothesis is that the above tensors form a \emph{cosimplicial local system} $\Phi$ with values
in the category $\mathcal{M}$ over the simplicial presheaf $\mathbf{I}_\star $,
in the sense of the preprint \emph{Extra-fine sheaves and interaction decompositions} \cite{bennequin2020extrafine}. In an equivalent manner, we consider the category $\mathcal{T}={\sf Set}(\mathbf{I}_\star )$ which
objects are the simplicial cells $u$ of $\mathbf{I}_\star $ and arrows from $v$ of dimension $n$ to $u$ of dimension $m$ are the non-decreasing
maps $\varphi\in \Delta(m,n)$ (morphisms in the category $\Delta$) such that $\varphi^{\star }(v)=u$. Here the map $\varphi^{\star }$
is simplicial in the index $k$ for $\star $, concerning the nerve complex of $\mathcal{D}$; then the cosimplicial local system is a
contravariant functor from $\mathcal{T}$ to $\mathcal{M}$.\\
All that is made to obtain a non-Abelian version of the propositional (semantic) bar-complex. Following a recent trend, we name \emph{spaces} the elements of $\mathcal{M}$.\\

\noindent We add that an inclusion of theories $S\leq S'$ gives a cofibration $\Phi(S')\rightarrowtail \Phi(S)$, in a functorial
manner over the poset of theories.\\

\noindent Let us repeat the arguments to go from homogeneous cochains or cocycles to non-homogeneous ones.\\

\noindent First, a zero-cochain is defined over the cells $S_\lambda\otimes [\gamma_0]$, where the arrow $\gamma_0$
abuts in a propositions $Q_0\geq P$. The associated non-homogeneous space $F(S)$ corresponds to $Q_0=\top$. The relation
between conditioning and multiplication gives the way to recover $\Phi^{Q_0}(S)$.\\

\noindent Second, we name degree one homogeneous cocycle a sheaf of spaces $\Phi^{[\gamma_0,\gamma_1]}(S)$, over the one skeleton
of $\varphi^{\star }$, which satisfies that for the triangle $[\gamma_0, \top,\gamma_1]$, the space $\Phi^{[\gamma_0,\gamma_1]}$
is homotopy equivalent to the \emph{difference} of the spaces $\Phi^{[\gamma_0,\top]}$ and $\Phi^{[\gamma_1,\top]}$.\\
Remark: more generally a degree one cocycle should satisfies this axiom for every zigzag $\gamma_0\leq \gamma_{\frac{1}{2}}\geq \gamma_1$.\\
This definition supposes that we have a notion of difference in $\mathcal{M}$, satisfying the same properties that the difference $A\backslash (A\cap B)$
satisfies in subsets of set. If all the theories considered contain a minimal one, then spaces are subspaces of a given space, and this hypothesis
has a meaning. However, this is the case in our situation, considering the sets $\Theta_P$, because we consider only propositions $Q,Q_0,Q_1,...$
that are implied by $P$.\\

\noindent To the degree one cocycle $\Phi^{[\gamma_0,\gamma_1]}(S)$ we associate the space $H^{\gamma_0}(S)=\Phi^{[\gamma_0,\top]}(S)$,
obtained by
replacing $\gamma_1$ by $\top$. The space $G^{\gamma_1}(S)$ is obtained by replacing $\gamma_0$ by $\top$ in $\Phi$.\\

\noindent Note the important point that $H$ and $G$ are in general non-homogeneous cocycles.\\

\noindent Applying the definition of $1$-cocycle to the triangle $[\gamma_0,\top,\gamma_1]$, we obtain that
\begin{equation}
\Phi^{[\gamma_0,\gamma_1]}(S)\sim  H^{\gamma_0}(S)\setminus H^{\gamma_1}(S).
\end{equation}
\begin{lem}
	The cocyclicity of $\Phi$ implies
	\begin{equation}
		Q.H^{Q}\sim H^{Q\otimes Q}\backslash H^{Q}.
	\end{equation}
\end{lem}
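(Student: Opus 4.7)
The plan is to identify $\Phi^{[Q\otimes Q,\, Q]}(S)$ in two different ways and equate the results. The first identification comes from the cocycle axiom stated just above the lemma, applied to a suitable 1-simplex whose vertices are $Q\otimes Q$ and $Q$, with $\top$ playing the intermediate role in the triangle; the second identification comes from the homogeneity property of $\Phi$ under the monoidal action of $\mathcal{D}_\lambda$.

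First I would apply the cocycle axiom to the pair $[\gamma_0, \gamma_1] = [Q\otimes Q,\, Q]$, using the zigzag $Q\otimes Q \leq \top \geq Q$. The axiom yields the homotopy equivalence
\begin{equation}
\Phi^{[Q\otimes Q,\, Q]}(S) \;\sim\; \Phi^{[Q\otimes Q,\,\top]}(S) \,\setminus\, \Phi^{[Q,\,\top]}(S) \;=\; H^{Q\otimes Q}(S)\,\setminus\, H^{Q}(S),
\end{equation}
since by definition $H^{\gamma}(S) = \Phi^{[\gamma,\top]}(S)$. Second, I would invoke the homogeneity relation, which in the homotopical setting is the analog of equation \eqref{homogeneousalgebraiccochain}, namely
\begin{equation}
\Phi^{[\gamma_Q \wedge \gamma_0,\; \gamma_Q \wedge \gamma_1]}(S) \;\sim\; \Phi^{[\gamma_0,\, \gamma_1]}(S\,|\,Q).
\end{equation}
Taking $\gamma_Q = Q$, $\gamma_0 = Q$, $\gamma_1 = \top$, and using $Q\wedge \top = Q$ together with $Q\wedge Q = Q\otimes Q$, I obtain
\begin{equation}
\Phi^{[Q\otimes Q,\, Q]}(S) \;\sim\; \Phi^{[Q,\,\top]}(S\,|\,Q) \;=\; H^{Q}(S\,|\,Q) \;=\; Q.H^{Q}(S),
\end{equation}
the last equality being the very definition of the conditioning action. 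Juxtaposing the two identifications delivers the lemma.

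The main obstacle, and the reason this needs a proof rather than being tautological, is to ensure that the operator $\setminus$ is used consistently in both steps and that the two homotopy equivalences are natural enough to be composed. In the Abelian bar complex the calculation reduces to the cocycle identity $\phi^{Q\wedge Q} = \phi^{Q} + Q.\phi^{Q}$, but in a closed model category $\mathcal{M}$ one must interpret $H^{Q\otimes Q}\setminus H^{Q}$ as a homotopy cofiber (or a quotient of a cofibration, following the construction of $H^Q$ from $F$ by the map $F(S)\rightarrowtail F(S|Q)$) and check that the cocycle axiom provides precisely this structure. Granted the cosimplicial local system hypothesis on $\Phi$, both identifications are natural in $S$, so their composition is a well-defined weak equivalence in $\mathcal{M}$, and no further coherence is needed.
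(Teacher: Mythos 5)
Your proof is correct and follows essentially the same route as the paper's: the paper's one-line computation $Q.H^{Q}=Q.\Phi^{[Q|\top]}=\Phi^{[Q\otimes Q|Q\otimes\top]}=\Phi^{[Q\otimes Q|Q]}=H^{Q\otimes Q}\backslash H^{Q}$ is exactly your two identifications of $\Phi^{[Q\otimes Q,\,Q]}(S)$ (homogeneity on one side, the cocycle axiom for the triangle $[Q\otimes Q,\top,Q]$ on the other), merely written as a single chain rather than meeting in the middle. Your closing remarks on naturality and the interpretation of $\setminus$ are a reasonable gloss on hypotheses the paper leaves implicit.
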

\begin{proof}
	\begin{equation}
		Q.H^{Q}=Q.\Phi^{Q|\top}=\Phi^{Q\otimes Q|Q\otimes\top}=\Phi^{Q\otimes Q|Q}=H^{Q\otimes Q}\backslash H^{Q}.
	\end{equation}
\end{proof}

\noindent From that we deduce,
\begin{prop}
	The homogeneity of $\Phi$ implies
	\begin{equation}\label{nonAbelianshannon}
		H^{Q\otimes Q'}\backslash H^{Q\otimes Q}\sim Q.H^{Q'}\backslash[H^{Q\otimes Q}\backslash H^{Q}].
	\end{equation}
\end{prop}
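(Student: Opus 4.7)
The plan is to transform the left-hand side into the right-hand side by three moves: invocation of the one-cocycle property of $\Phi$, application of the homogeneity axiom to factor out a common $\gamma_Q$, and substitution using the preceding lemma. Throughout, $\sim$ denotes weak equivalence in the model category $\mathcal{M}$ and $\backslash$ the non-Abelian difference construction built from the cofibrations prescribed by the concavity assumption.

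First I would apply the one-cocycle identity $\Phi^{[\gamma_0,\gamma_1]}(S) \sim H^{\gamma_0}(S)\backslash H^{\gamma_1}(S)$ in reverse, with $\gamma_0 = Q\otimes Q'$ and $\gamma_1 = Q\otimes Q$, obtaining
\[
H^{Q\otimes Q'}\backslash H^{Q\otimes Q} \;\sim\; \Phi^{[Q\otimes Q',\,Q\otimes Q]}(S).
\]
Next I would invoke the homogeneity axiom \eqref{homogeneousalgebraiccochain}, taking out the common factor $\gamma_Q$ from both coordinates, which converts the right-hand side above into $Q.\Phi^{[Q',Q]}(S)$ by the defining identity $\Phi^{[Q\otimes Q', Q\otimes Q]}(S) = \Phi^{[Q',Q]}(S|Q)$. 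Expanding $\Phi^{[Q',Q]}$ once more by the cocycle relation then gives $Q.\bigl(H^{Q'}\backslash H^Q\bigr)$.

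The decisive step is to commute the conditioning action $Q.(-)$ with the difference construction $\backslash$, producing $(Q.H^{Q'})\backslash(Q.H^{Q})$. This is the place where the proof requires genuine work in $\mathcal{M}$: the action $Q.(-)$ is precomposition on theories by the conditioning map $T\mapsto T|Q$, while $\backslash$ is assembled from a prescribed family of cofibrations; one must verify that the functoriality of $\Phi$ together with the concavity cofibrations $H(Q,Q';S)\colon H^{Q}(S|Q')\rightarrowtail H^{Q}(S)$ is sufficient to identify $Q.(H^{Q'}\backslash H^{Q})$ with $(Q.H^{Q'})\backslash (Q.H^{Q})$ up to weak equivalence. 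I expect this commutation to be the main obstacle, since it is exactly the non-Abelian replacement of the additivity of the Shannon coboundary and has no purely formal derivation from the axioms stated so far; pushout-stability of cofibrations along the conditioning arrow and the right-properness of $\mathcal{M}$ are the natural tools to bring in.

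Once the commutation is granted, I would substitute the preceding lemma $Q.H^{Q}\sim H^{Q\otimes Q}\backslash H^{Q}$ into the second factor, yielding
\[
(Q.H^{Q'})\backslash(Q.H^{Q}) \;\sim\; Q.H^{Q'}\backslash\bigl[H^{Q\otimes Q}\backslash H^{Q}\bigr],
\]
which is exactly the right-hand side of \eqref{nonAbelianshannon}. Concatenating the chain of weak equivalences then completes the proof and displays it as the homotopical analogue of the classical Shannon decomposition relating conditional and joint entropies of two variables.
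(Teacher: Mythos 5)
Your proof follows the paper's argument exactly: the paper's own proof is the one-line chain $H^{Q\otimes Q'}\backslash H^{Q\otimes Q}=Q.H^{Q'}\backslash Q.H^{Q}\sim Q.H^{Q'}\backslash[H^{Q\otimes Q}\backslash H^{Q}]$, i.e.\ homogeneity of the cocycle (via $\Phi^{Q\otimes Q'|Q\otimes Q}=Q.\Phi^{Q'|Q}$) followed by substitution of the preceding lemma $Q.H^{Q}\sim H^{Q\otimes Q}\backslash H^{Q}$, which is precisely your route. The one point where you diverge is in flagging the commutation of $Q.(-)$ with $\backslash$ as the main obstacle requiring right-properness and pushout-stability: in the paper this step is immediate, since $Q.(-)$ is just precomposition $S\mapsto S|Q$ and the difference is formed objectwise from cofibrations natural in $S$, so $Q.(H^{Q'}\backslash H^{Q})(S)=H^{Q'}(S|Q)\backslash H^{Q}(S|Q)=(Q.H^{Q'}\backslash Q.H^{Q})(S)$ by definition.
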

\begin{proof}
	\begin{equation}
		H^{Q\otimes Q'}\backslash H^{Q\otimes Q}=Q.H^{Q'}\backslash Q.H^{Q}\sim Q.H^{Q'}\backslash[H^{Q\otimes Q}\backslash H^{Q}].
	\end{equation}
\end{proof}

\noindent In the Abelian case of ordinary difference this is equivalent to
\begin{equation}
H^{Q\otimes Q'}\sim Q.H^{Q'}\cup H^{Q}.
\end{equation}

\noindent This is the usual Shannon equation; then \eqref{nonAbelianshannon} can be seen as a non-Abelian Shannon
equation. Taking homotopy in $Ho(\mathcal{M})$ probably gives a  more intrinsic meaning of semantic information.\\

\noindent It is natural to admit that, at the level of information spaces, $H^{Q\otimes Q}\sim H^{Q}$. Under this hypothesis,
we get the usual Shannon's formula under
\begin{equation}\label{naturalnonAbelianshannon1}
H^{Q\otimes Q'}\backslash H^{Q}\sim Q.H^{Q'}.
\end{equation}
That is, for every theory $S$ falsifying $P$:
\begin{equation}\label{naturalnonAbelianshannon2}
H^{Q\otimes Q'}(S)\backslash H^{Q}(S)\sim H^{Q'}(S|Q).
\end{equation}

\noindent Remind there is no reason \emph{a priori} that $H^{Q}\rightarrowtail H^{Q\otimes Q'}$. Then the above difference is
after intersection.\\

\noindent If $F$  is any non-homogeneous zero-cochain, we have a cofibration $F\rightarrowtail Q.F$, where $Q.F(S)=F(S|Q)$.
In this case we already defined a space $H^{Q}$ by
\begin{equation}
H^{Q}(S)=F(S|Q)\backslash F(S).
\end{equation}

\begin{prop}
	$H^{Q}$ automatically satisfies equation \eqref{nonAbelianshannon}.
\end{prop}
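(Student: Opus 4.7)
The plan is to reduce everything to a set-theoretic computation inside a common ambient, using the ``reasoning on subsets'' convention that was set up earlier (cf.\ the remark following the definition of $I_2(Q;Q')$). The cofibrations $F(S)\rightarrowtail Q.F(S)=F(S\vert Q)$ furnished by the hypothesis on $F$ allow us to treat, for any theory $S$ falsifying $P$ and any propositions $R,R'\geq P$, the objects $F(S\vert R)$ as subspaces of a common space (for instance the colimit of the $F(S\vert R)$ along the cofibrations), so that $\setminus$ and $\cup$ make sense and satisfy the usual Boolean identity
\begin{equation*}
(C\setminus A)\setminus (B\setminus A) \;=\; C\setminus (A\cup B).
\end{equation*}

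The first step is to establish the non-Abelian ``Shannon lemma'' at the level of $F$, namely
\begin{equation*}
Q.H^{Q}(S)\;\sim\; H^{Q\otimes Q}(S)\setminus H^{Q}(S),
\end{equation*}
automatically, i.e.\ without invoking the cocycle hypothesis used in the previous lemma. Unfolding definitions, $Q.H^{Q}(S)=H^{Q}(S\vert Q)=F(S\vert(Q\otimes Q))\setminus F(S\vert Q)$, while the right-hand side is $[F(S\vert(Q\otimes Q))\setminus F(S)]\setminus[F(S\vert Q)\setminus F(S)]$, which equals $F(S\vert(Q\otimes Q))\setminus(F(S)\cup F(S\vert Q))$ by the Boolean identity. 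Since the cofibration $F(S)\rightarrowtail F(S\vert Q)$ gives $F(S)\subseteq F(S\vert Q)$, the union collapses and the two sides coincide.

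The second step is to compute both sides of \eqref{nonAbelianshannon} by the same recipe. For the left-hand side,
\begin{equation*}
H^{Q\otimes Q'}(S)\setminus H^{Q\otimes Q}(S)=F(S\vert(Q\otimes Q'))\setminus\bigl(F(S)\cup F(S\vert(Q\otimes Q))\bigr)=F(S\vert(Q\otimes Q'))\setminus F(S\vert(Q\otimes Q)),
\end{equation*}
the last equality using the cofibration $F(S)\rightarrowtail F(S\vert(Q\otimes Q))$. For the right-hand side, $Q.H^{Q'}(S)=F(S\vert(Q\otimes Q'))\setminus F(S\vert Q)$ and, by the first step, $H^{Q\otimes Q}(S)\setminus H^{Q}(S)=F(S\vert(Q\otimes Q))\setminus F(S\vert Q)$. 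Applying the Boolean identity once more with $A=F(S\vert Q)$, $B=F(S\vert(Q\otimes Q))$, $C=F(S\vert(Q\otimes Q'))$, and using $F(S\vert Q)\subseteq F(S\vert(Q\otimes Q))$ (again a cofibration), the right-hand side also collapses to $F(S\vert(Q\otimes Q'))\setminus F(S\vert(Q\otimes Q))$, yielding the desired equivalence.

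The main obstacle is conceptual rather than computational: one must justify the ``reasoning on subsets'' in the closed model category $\mathcal{M}$. Concretely, one needs the cofibrations to be compatible enough so that the Boolean identity above has an unambiguous meaning up to the equivalence $\sim$ (weak equivalence in $\mathcal{M}$, or rather the quotient in $Ho(\mathcal{M})$). This is already tacitly used in the preceding proposition and lemma, and it is precisely the setting in which the ``difference'' operation $X\setminus Y$ was defined; once that convention is accepted, the computation above is formal.
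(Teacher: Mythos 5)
Your proof is correct and follows essentially the same route as the paper's: both unwind $H^{Q}=Q.F\setminus F$ and reduce each side of \eqref{nonAbelianshannon} to $(Q\otimes Q')F\setminus(Q\otimes Q)F$ via the identity $(C\setminus A)\setminus(B\setminus A)=C\setminus(A\cup B)$ and the chain of cofibrations $F\rightarrowtail Q.F\rightarrowtail(Q\otimes Q)F$. Your explicit flagging of the ``reasoning on subsets'' convention is a useful clarification of what the paper leaves tacit, but it is not a different argument.
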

\begin{proof}
	we have $F\rightarrowtail (Q\otimes Q')F$ and $F\rightarrowtail (Q\otimes Q)F$, then
	\begin{align*}
		H^{Q\otimes Q'}\backslash H^{Q\otimes Q}&=((Q\otimes Q')F\backslash F)\backslash((Q\otimes Q)F\backslash F)\\
		&\sim(Q\otimes Q')F\backslash (Q\otimes Q)F.
	\end{align*}
	Using $F\rightarrowtail Q.F\rightarrowtail (Q\otimes Q)F$, and assuming $Q.F\rightarrowtail (Q\otimes Q')F$, we get
	\begin{align*}
		Q.H^{Q'} \backslash[H^{Q\otimes Q}\backslash H^{Q}]&=Q.(Q'F\backslash F)\backslash[((Q\otimes Q)F\backslash F)\backslash (Q.F\backslash F)]\\
		&=(Q\otimes Q')F\backslash Q.F)\backslash[(Q\otimes Q)F\backslash Q.F]\\
		&\sim (Q\otimes Q')F\backslash (Q\otimes Q)F.
	\end{align*}
	Therefore, as wanted,
	\begin{equation}
		H^{Q\otimes Q'}\backslash H^{Q\otimes Q}\sim Q.H^{Q'} \backslash[H^{Q\otimes Q}\backslash H^{Q}].
	\end{equation}
\end{proof}

\noindent We also had suggested above to define the mutual information $I_2(Q;Q')$ associated to a cocycle $H$
by the formula $I_2(Q:Q')=H^{Q}\backslash Q'.H^{Q}$.\\
The restricted concavity condition on $H$ is the existence of a natural cofibration $Q'.H^{Q}\rightarrowtail H^{Q}$.\\
\begin{rmk*}
	\normalfont This goes in the opposite direction to $F$: the more precise the theory $S$ is, the bigger $H^{Q}(S)$ is,
	i.e. $S\leq S'$ implies $H^{Q}(S')\rightarrowtail H^{Q}(S)$.
\end{rmk*}

\noindent We assume also that for all pair $Q,Q'$ we have $H^{Q\otimes Q'}\sim H^{Q'\otimes Q}$.\\
\begin{prop*}
	under the above hypothesis and the assumption that $H^{Q\otimes Q}\sim H^{Q}$ and $H^{Q'\otimes Q'}\sim H^{Q'}$, we can consider $H^{Q}$
	and $H^{Q'}$ as subsets of $H^{Q\otimes Q'}$, and we have
	\begin{equation}
		I_2(Q;Q')=I_2(Q';Q)=H^{Q}\cap H^{Q'}.
	\end{equation}
\end{prop*}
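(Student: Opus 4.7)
The plan is to realise $H^{Q}$ and $H^{Q'}$ as subobjects of the common ambient space $H^{Q\otimes Q'}$, read off their complements via the non-Abelian Shannon equation, and then perform the elementary subset manipulation
\[
A\setminus(U\setminus B)=A\cap B
\]
inside $U=H^{Q\otimes Q'}$, where $A=H^{Q}$ and $B=H^{Q'}$. The symmetry assumption $H^{Q\otimes Q'}\sim H^{Q'\otimes Q}$ will deliver both $I_{2}(Q;Q')$ and $I_{2}(Q';Q)$ from the same ambient object, so their equality follows automatically.

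First, I would apply equation \eqref{naturalnonAbelianshannon1} in the form $H^{Q\otimes Q'}\setminus H^{Q}\sim Q.H^{Q'}$. Together with the hypothesis $H^{Q\otimes Q}\sim H^{Q}$, which ensures that the Shannon relation degenerates correctly on the diagonal, this identifies $H^{Q}$ as a cofibrant subobject of $H^{Q\otimes Q'}$ whose complement (in the sense of $\setminus$ in $\mathcal{M}$) is $Q.H^{Q'}$. Swapping the roles of $Q$ and $Q'$ and using both $H^{Q'\otimes Q'}\sim H^{Q'}$ and the commutation hypothesis $H^{Q\otimes Q'}\sim H^{Q'\otimes Q}$, the same argument exhibits $H^{Q'}$ as a cofibrant subobject of $H^{Q\otimes Q'}$ with complement $Q'.H^{Q}$. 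This justifies the clause of the statement saying that $H^{Q}$ and $H^{Q'}$ may be regarded as subsets of $H^{Q\otimes Q'}$.

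Next I would compute $I_{2}(Q;Q')=H^{Q}\setminus Q'.H^{Q}$ inside this ambient object. Since $Q'.H^{Q}$ is the complement of $H^{Q'}$ in $H^{Q\otimes Q'}$, and the restricted concavity hypothesis ensures that $Q'.H^{Q}\rightarrowtail H^{Q}$ is a genuine cofibration (so the subtraction on the left makes sense), the elementary identity $A\setminus(U\setminus B)\sim A\cap B$ in a Boolean-like category of subobjects of $U$ yields
\[
I_{2}(Q;Q')\;\sim\;H^{Q}\cap H^{Q'}.
\]
Exchanging $Q$ and $Q'$ and invoking $H^{Q\otimes Q'}\sim H^{Q'\otimes Q}$ together with commutativity of $\cap$ gives $I_{2}(Q';Q)\sim H^{Q'}\cap H^{Q}$, which is the same object up to the fixed weak equivalence.

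The main obstacle is purely formal: making precise the set-theoretic manipulation $A\setminus(U\setminus B)\sim A\cap B$ in a general closed model category $\mathcal{M}$ where $\setminus$ is only defined up to weak equivalence. I would handle this by working in the lattice of subobjects of $H^{Q\otimes Q'}$ in $Ho(\mathcal{M})$ up to cofibrant replacement, interpreting $A\cap B$ as a homotopy pullback of $A\rightarrowtail U\hookleftarrow B$, and verifying that the cofibrations produced by the restricted concavity and naturality axioms are compatible enough to give a well-defined complement functor on this sub-lattice. Once this formal point is settled, the chain of equivalences above is tight and the statement follows.
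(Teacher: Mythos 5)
Your proposal is correct and follows essentially the same route as the paper: invoke the non-Abelian Shannon equation to identify $Q'.H^{Q}$ with the complement $H^{Q'\otimes Q}\backslash H^{Q'}$ inside the common ambient object $H^{Q\otimes Q'}$, and then reduce $I_2(Q;Q')=H^{Q}\backslash Q'.H^{Q}$ to $H^{Q}\cap H^{Q'}$ via the identity $A\setminus(U\setminus B)\sim A\cap B$. Your added remarks on making the subtraction precise up to weak equivalence in $Ho(\mathcal{M})$ are a reasonable elaboration of a point the paper leaves informal, but they do not change the argument.
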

\begin{proof}
	The Shannon formula \eqref{naturalnonAbelianshannon1} tells that $Q.H^{Q'}$ is $H^{Q\otimes Q'}\backslash H^{Q}$
	and $Q'.H^{Q}$ is $H^{Q'\otimes Q}\backslash H^{Q'}$, then
	\begin{equation}
		I_2(Q;Q')=H^{Q}\backslash [H^{Q\otimes Q'}\backslash H^{Q'}]\sim H^{Q}\cap H^{Q'}.
	\end{equation}
\end{proof}
\begin{rmk*}
	\normalfont We cannot write the relation with the usual union, but, under the above hypotheses, there is a cofibration
	\begin{equation}
		j\vee j':H^{Q}\vee H^{Q'}\rightarrowtail H^{Q\otimes Q'},
	\end{equation}
	giving rise to a quotient
	\begin{equation}
		I_2(Q;Q')\cong  H^{Q}\times_{H^{Q\otimes Q'}}H^{Q'}.
	\end{equation}
\noindent Generalizing the suggestion of Carnap and Bar-Hillel, and a Shannon theorem in the case of probabilities, we propose, to tell
that $Q,Q'$ are \emph{independent} (with respect to $P$) at the theory $S$, when $H^{Q}\cap H^{Q'}$ is empty (initial element of $\mathcal{M}$).
\end{rmk*}

\noindent With $I_2$, we can continue and get a semantic version of the \emph{synergy} quantity of three variables:
\begin{equation}
I_3(Q_1;Q_2;Q_3)(S)=I_2(Q_1;Q_2)(S)\backslash I_2(Q_1;Q_2)(S|Q_3).
\end{equation}
However, there is no reason why it must be a true space, because in the Abelian case it can be a negative number; (see \cite{Baudot_2019}
for the relation with the Borromean rings).\\

\begin{rmk*}
	\normalfont This invites us to go to $Ho(\mathcal{M})$, where there exists a notion of relative objects:
	for a zigzag $A\twoheadleftarrow C\rightarrowtail B$, with a trivial fibration to the left, and a cofibration to the right, the deduced arrow $A\rightarrow B$
	in $Ho(\mathcal{M})$, can be considered as a kind of difference of spaces as in  Jardine, Cocyle categories \cite{Jardine2009}, and
	Zhen Lin Low, Cocycles in categories of fibrant objects \cite{low2015cocycles}. Before Quillen and Jardine this kind of homotopy construction was
	introduced by Gabriel and Zisman \cite{gabriel1967calculus}, as a calculus of fraction, in the framework of simplicial objects, their book being the
	first systematic exposition of the simplicial theory.
\end{rmk*}

With respect to the Shannon information, what is missing is an analog of the expectation of functions over the states
of the random variables. In some sense, this is replaced by the properties of growing and concavity of the function $\psi$, or spaces $F$
and $H$, which give a manner  to compare the theories. The true semantic information is not the value attributed to each individual theory,
it is the set of relations between these values, either numerical, either geometric, as expressed by functors over the simplicial
space $gI^{\bullet}_\star $, or better, more practical, over the part of ot that is accessible to a functioning network $g\mathbb{X}$. \\

\subsection*{The example of the theory $\mathcal{L}^{2}_3$ of Carnap and Bar-Hillel}

Let us try to describe the structure of Information, as we propose it, in the simple (static) example that was chosen for development by Carnap and Bar-Hillel in their report in 1952,
\cite{CBH52}.\\

The authors considered a language $\mathcal{L}^{\pi}_n$ with $n$ subjects $a,b,c,...$ and $\pi$ attributes of them $A,B,...$, taking some possible values,
respectively $\pi_A,\pi_B,...$. In their developed example $n=3$, $\pi=2$ and every $\pi_i$ equals $2$. The subjects are human persons, the two attributes are the gender $G$, male $M$
or female $F$, and the age $A$, old $O$ or young $Y$.\\
\indent The elementary, or ultimate, states, $e\in E$ of the associated Boolean algebra $\Omega=\Omega^{E}$ are given by choosing values of all the attributes for all
the subjects. For instance, in the language $\mathcal{L}^{2}_3$, we have $4^{3}=64$ elementary states.\\
\indent The propositions $P,Q,R,...$ are the subsets of $\Omega$, their number is $2^{64}$. The theories $S,T,...$, in this case, are also described by their
initial assertion, that is the truth of a given proposition, obtained by conjunction, and also named $S,T,...$.\\
\indent With our conventions, for conditioning and information spaces or quantities,
it appears practical to define the propositions by the disjunction of their elements $e_I=e_{i_1}\vee ...\vee e_{i_k}$ and the theories by the conjonction
of the complementary sets $\neg e_i=S_i$, that is $S_I=(\neg e_{i_1})\wedge ...\wedge (\neg e_{i_k})$. Experimentally \cite{logic-DNN} the theories exclude something,
like $P$, i.e. contain $\neg P$, then with $S_I$ we see that $P=e_I$ is excluded, as are all the $e_{i_j}$ for $1\leq j\leq k$. A proposition $Q$ which is implied by $P$,
corresponds to a subset which contains all the elementary propositions $e_{i_j}$ for $1\leq j\leq k$.\\

In what follows,  the models of "spaces of information" that are envisaged are mainly groupoids, or sets, or topological spaces.\\
\indent A zero cochain $F_P(S)$ gives a space for any theory excluding $P$, in a growing manner, in the sense that $S\leq S'$ (inclusion of sets) implies
$F(S)\leq F(S')$. The coboundary $\delta F=H$, gives a space $H^{Q}_P(S)$ for any proposition $Q$ such that $P\leq Q$, whose formula is
\begin{equation}
H^{Q}_P(S)=F_P(S\vee\neg Q)\backslash F_P(S).
\end{equation}
By concavity, this function (space) is assumed to be decreasing with $S$, i.e. if $S\leq S'$,
\begin{equation}
H^{Q}_P(S)\leftarrowtail H^{Q}_P(S').
\end{equation}
And by monotonicity of $F$, it is also decreasing in $Q$, i.e. if $Q\leq Q'$,
\begin{equation}
H^{Q}_P(S)\leftarrowtail H^{Q'}_P(S').
\end{equation}

\noindent In particular,  we can consider the smaller $F_P(S)$ that is $F_P(\bot)$, as it is contained in all the spaces $F_P(S)$,
we choose to take it as the empty space (or initial object in $\mathcal{M}$), then
\begin{equation}
H^{Q}_P(\bot)=F_P(\neg Q).
\end{equation}

\noindent As we saw in general for every one-cocycle, not necessarily a coboundary, we have for any pair $Q,Q'$ larger than $P$,
\begin{equation}
H_P^{Q\wedge Q'}(S)\backslash H_P^{Q'}(S)\approx H_P^{Q}(S|Q')=H_P^{Q}(S\vee\neg Q').
\end{equation}
Therefore, in the boolean case, every value of $H$ can be deduced from its value on the empty theory:
\begin{equation}
H_P^{Q}(\neg Q')\approx H_P^{Q\wedge Q'}(\bot)\backslash H_P^{Q'}(\bot).
\end{equation}

\noindent We note simply $H_P^{Q}(\bot)=H_P^{Q}=F_P(\neg Q)$.\\
And they are the spaces to determine.\\

\noindent The localization at $P$ (i.e. the fact to exclude $P$) consists in discarding the elements $e_i$ belonging
to $P$ from the analysis. Therefore we begin by considering the complete situation, which corresponds to $P=\bot$.\\
\indent In this case we note simply $H^{Q}=H_\bot^{Q}=F(\neg Q)$.\\

The concavity of $F$ is expressed by the existence of embeddings (or more generally cofibrations) associated to each
set of propositions $R_0, R_1, R_2, R_3$ such that $R_0\leq R_1\leq R_3$ and $R_0\leq R_2\leq R_3$:
\begin{equation}
F(R_3)\setminus F(R_1)\rightarrowtail F(R_2)\setminus F(R_0).
\end{equation}
In particular, for any pair of proposition $Q,Q'$, we have $\bot\leq \neg Q \leq \neg (Q\wedge Q')$ and $\bot\leq \neg Q' \leq \neg (Q\wedge Q')$,
and $F(\bot)=H^{\top}=\emptyset$, then
\begin{equation}\label{mutualinforight}
j:H^{Q\wedge Q'}\setminus H^{Q'}\rightarrowtail H^{Q},
\end{equation}
and
\begin{equation}\label{mutualinfoleft}
j':H^{Q\wedge Q'}\setminus H^{Q}\rightarrowtail H^{Q'}.
\end{equation}
Then we introduced the hypothesis that the  subtracted spaces of both situations give equivalent
results, and defined the mutual information $I_2(Q;Q')$:
\begin{equation}\label{mutualinfo}
H^{Q}\setminus(j(H^{Q\wedge Q'}\setminus H^{Q'}))\approx I_2(Q;Q')\approx H^{Q'}\setminus(j'(H^{Q\wedge Q'}\setminus H^{Q})).
\end{equation}

\noindent Importantly, to get a cofibration, the subtraction cannot be replaced by a collapse with marked point,
but it can in general be a collapse without marked point.\\

Consequently, the main axioms for the brut semantic spaces $H^{Q}$ are: $(i)$ the existence of natural embeddings (or cofibrations) when $Q\leq Q'$:
\begin{equation}\label{embedding}
H^{Q'}\rightarrowtail H^{Q},
\end{equation}
and
$(ii)$ the above formulas \eqref{mutualinforight} and \eqref{mutualinfoleft} defining the same space $I_2(Q;Q')$, as in \eqref{mutualinfo},
which can perhaps all be interpreted after intersection.\\

We left open the relation between $I_2(Q;Q')$ and $H^{Q\vee Q'}$, however the axioms $(ii)$ imply that there exist natural embeddings
\begin{equation}\label{unioninequality}
H^{Q\vee Q'}\rightarrowtail I_2(Q;Q').
\end{equation}
\\
The idea, to obtain a coherent set of non-trivial information spaces, is to exploit the symmetries of the language, or other elements
of structure, which give
an action of a category on the language, and generate constraints of naturalness for the spaces.\\

There exists a Galois group $G$ of the language, generated by the permutation of the $n$ subjects, the permutations
of the values of each attribute and the permutations of the attributes that have the same number of possible values.\\
\indent To be more precise, we order and label the subjects, the attribute and the values, with triples $xY_i$.
In our example, $x=a,b,c$, $Y=A,G$, $i=1,2$,
the group of subjects permutation is $\mathfrak{S}_3$, the transposition of values are $\sigma_A=(A_1 A_2)$ and $\sigma_G=(G_1 G_2)$, and
the four exchanges of attributes are $\sigma=(A_1 G_1)(A_2 G_2)$,  $\kappa=(A_1 G_1 A_2 G_2)$,
$\kappa^{3}=\kappa^{-1}=(A_1 G_2 A_2 G_1)$, and  $\tau=(A_1G_2)(A_2G_1)$.\\
We have
\begin{equation}
\sigma_A\circ\sigma_G=\sigma_G\circ\sigma_A=(A_1 A_2) (G_1 G_2)=\kappa^{2};
\end{equation}
\begin{equation}
\sigma\circ\sigma_A=\sigma_G\circ\sigma=\kappa;\quad \sigma_A\circ\sigma=\sigma\circ\sigma_G=\kappa^{-1};
\end{equation}
\begin{equation}
\sigma_A\circ \sigma\circ \sigma_G=\tau;\quad \sigma_A\circ \tau\circ \sigma_G=\sigma
\end{equation}
The group generated by $\sigma,\sigma_A,\sigma_G$ is of order $8$; it is the dihedral group $D_4$ of
all the isometries of the square with vertices $A_1G_1, A_1 G_2, A_2 G_2, A_2 G_1$.
The stabilizer of a vertex is a cyclic group $C_2$, of type $\sigma$ or $\tau$, the stabilizer of an edge is of type
$\sigma_A$ or $\sigma_G$, noted $C_2^{A}$ or $C_2^{A}$.\\

Therefore, in the example $\mathcal{L}^{2}_3$, the group $G$ is the product of $\mathfrak{S}_3$ with a dihedral group
$D_4$.\\

\indent In the presentation given by the present article, the language $\mathcal{L}$ is a sheaf over the category $G$,
which plays the role of the fiber $\mathcal{F}$.
We have only one layer $U_0$, but the duality of propositions and theories corresponds to the duality between questions and answers (i.e. theories) respectively.\\

\noindent The action of $G$ on the set $\Omega$ is deduced from its action on the set $E$, which can be described as follows:
\begin{enumerate}[label=\arabic*)]
	\item One orbit of four elements, where $a,b,c$ have the same gender and age. The stabilizer of each element is
	$\mathfrak{S}_3\times C_2$, or order $12$.
	\item One orbit of $24$ elements made by a pair of equal subjects and one that differs from them by one attribute only.
	The stabilizer being the $\mathfrak{S}_2$ of the pair of subjects.
	\item One orbit of $12$ elements made by a pair of equal subjects and one that differs from them by the two attributes.
	The stabilizer being the product $\mathfrak{S}_2\times C_2$, where $C_2$ stabilizes the characteristic of the pair,
	which is the same as stabilizing the character of the exotic subject.
	\item One last orbit of $24$ elements, where the three subjects are different, then two of them differ by one attribute
	and differ from the last one by the two attributes. The stabilizer is the stabilizer $C'_2$ of the missing pair of values
	of the attributes.
\end{enumerate}

\noindent The action of $G$ on the set $E$ corresponds to the conjugation of the inertia subgroups.\\
\begin{rmk*}
	\normalfont All that looks like a Galois theory; however there exist subgroups of $G$, even normal subgroups, that cannot happen as stabilizers
	in the language, without adding terms or concepts. For instance, the cyclic group $\mathfrak{A}_3\subset \mathfrak{S}_3$; if it stabilizes a proposition $P$, this means
	that the subjects appear in complete orbits of $\mathfrak{A}_3$, but these orbits are orbits of $\mathfrak{S}_3$ as well, then
	the stabilizer contains $\mathfrak{S}_3$. The notion of cyclic ordering is missing.
\end{rmk*}

The collection of all the ultimate states of a given type defines a proposition, noted $T$, describing $I,II,III,IV$. This proposition has for stabilizer
the group $G$ itself. Its space of information must have a form attached to $G$, but it also must take into account the structure of its elements.\\

\begin{ans}
The information space of type $T$ corresponds to the natural groupoid of type $T$
\end{ans}

\noindent Remark that each type corresponds to a well formed sentence in natural languages: type $I$ is translated by
"all the subjects have the same attributes"; type $II$ by "all the subjects have the same attributes except one which differs
by only one aspect"; type $III$ "one subject is opposite to all the others"; type $IV$ "all the subjects are distinguished by
at least one attribute".\\
The union of the types $II$ and $III$ is described by the sentence "all the subjects have the same attributes except one".\\
The information space of $(II)\vee (III)$ is (naturally) a groupoid with $12$ objects and fundamental group $\mathfrak{S}_2$.
A good exercise is to determine the information spaces of all the unions of the four orbits. It should convince the reader that something interesting
happens here, even if the whole tentative here evidently needs to be better formalized.\\

\noindent Remark that other propositions have non-trivial inertia, and evidently support interesting semantic information. The most
important for describing the system are the \emph{numerical statements}, for instance "there exist two female subjects in the population".
Its inertia is $\mathfrak{S}_3\times C_2^{A}$.\\

\indent By definition, a \emph{simple} proposition is given by the form $aA$, telling that one given subject has one given value for one given attribute.
There exist twelve such propositions, they are permuted by the group $G$. The simple propositions form an orbit of the group $G$, of type $III$ above.\\

\noindent Amazingly, the set of the twelve simples is selfdual under the negation:
\begin{equation}
\neg (aA)=a\overline{A},
\end{equation}
where $\overline{A}$ denotes the opposite value.\\

\begin{ans}
	Each simple corresponds to a groupoid with one object, and four arrows, that form a Klein sub-group of $G$
which fixes the subject $a$ and fixes the attribute $A$ corresponding to $C_2$, generated by the transposition $\sigma_A$, also preserving $\overline{A}$.\\
\end{ans}

\noindent Another ingredient, introduced by Carnap and Bar-Hillel, is the \emph{mutual independency} of the
$12$ \emph{simple} propositions.\\

\noindent According to the definition of the spaces $I_2(Q,Q')$, this implies:
\begin{ans}
	The spaces of the simples are disjoint; the maximal information spaces, associated to full populations $e$, are unions of them, after some gluing.
\end{ans}

It is natural to expect that for each individual population $e\in X$, the information space
$H^{e}$ is a kind of marked groupoid $H^{T}_e$, that is a groupoid with
a singularized object. A good manner to mark the point $e$ in $H^{e}$ is to glue to the space $H^{T}$ of its type
a space $H^{P}$, where $P$ is the proposition which characterizes $e$ among the elements of the orbit $T$.
The groupoid of this space $H^{P}$  can contains several objects. \\

\noindent All kinds of gluing that we had to consider are realized by identifying two spaces $H_1, H_2$ with marked points along
a subspace $K$ (representing a mutual information or the space of the "or"), as asked by the axiom $(ii)$ above.\\
Therefore in general, the subspace has strictly less marked points  than any of the spaces that are glued. \\
When we mention \emph{cylinders} in this context, this means that one of the spaces, say $H_1$ is a cylinder
with basis $K$, and we say that $H_1$ is grafted on the other space $H_2$.\\

\begin{ans}\label{ans:infospaces}
		The information space of the ultimate element $e$ is obtained by gluing a cylinder to the space of its type, 
		based on a subspace associated to it, and containing as many objects as we need simple pieces
\end{ans}

\noindent For type $I$, one object is added; for type $II$ and $III$, two objects are added and for type $IV$, three objects. \\

\begin{ill*}
	\normalfont Associate to each $e$ a trefoil knot, presented as a braid with three colored strands, corresponding to its three simple
	constituents.\\
	Each subject corresponds to a strand, each pair of values $A,G$ of the attributes to a color, red, blue, green and black for the vertices ofthe
	square, red and green and blue and black being in diagonal.
\end{ill*}

\noindent Any proposition is a union of elementary ones, then to go farther, we have to delete pieces of the maximal spaces $H^{e}$,
for obtaining its information spaces.\\

The existence of a full coherent set of spaces is non-trivial and is described in detail in the forthcoming preprint, {\em A search of semantic spaces} \cite{sem-spaces}.\\

Then to describe the information of the more general propositions, we have to combine the forms given by the groups and groupoids, as for $H^{T}$ and $H^{e}$,
with a combinatorial counting of information, deduced from the content, as in Carnap and Bar-Hillel.\\
\indent A suggestion is to represent the combinatorial aspect by a dimension: all propositions are ranged by their numerical content,
for instance $e$ has $c(e)=63$, $\neg e$ has $c=1$, and $aA$ has $c=58$. We represent the groups and groupoids by $CW-$ complexes
of dimension $2$ or $\infty$, associated to a presentation by generators and relations of their fundamental group, possibly marked by several
base points. The spaces of information $H^{Q}$ are obtained by thickening
the complexes, by taking the product with a simplex or a ball of the dimension corresponding to $Q$. However, note that any manner
to code this dimension by a number, for instance, connected components, would work as well.\\

\noindent For some propositions, we cannot expect a form of information in addition of the dimension. This concerns propositions that are complex and not used in natural languages; example: "in this population,
there is two old mans, or there is a young woman, or there exist a woman that has the same age of a man". This is pure logical
calculus, not really semantic.\\

The general construction shows that the number of non-trivial semantic spaces is far 
from $2^{64}$, it is of the order of $64^{\alpha}$,
with $\alpha$ between $3$ or $4$.\\

Then, on this simple example we see that "spaces" of semantic information are more interesting and justified than numerical
estimations, but also that this concerns only few propositions, the ones which seem too have more sense. Then the structure
of spaces has to be completed by calculus
and combinatorics for most of the $2^{64}$ sentences. This touches the sensitive departure point from the \emph{admissible} sentences,
more relevant to Shannon theory, and the \emph{significant} sentences, more relevant for a future semantic theory, that we hope
to find in the above direction of homotopy invariants of spaces of theories and questions.

\chapter{Unfoldings and memories, LSTMs and GRUs}\label{chap:unfolding}

This chapter presents evidences that some architectures of $DNNs$, which are known to be efficient in syntactic and semantic
tasks, rely on internal invariance supported by some groupoids of braids, which also appear in enunciative linguistic, in relation
with cognition and representation of notions in natural languages.\\

\section{RNN lattices, LSTM cells}

Artificial networks for analyzing or translating successions of words, or any timely ordered set of data, have a structure in lattice,
which generalizes the chain: the input layers are arranged in a corner: horizontally $x_{1,0}$, $x_{2,0}$, $...$, named data, vertically $h_{0,1}$,
$h_{0,2}$, $...$, named hidden memories.\\
\indent Generically, there is a layer $x_{i,j}$ for each $i=1,2,...,N$, $j=0,1,2,...,M$, and a layer $h_{i,j}$ for each
$i=1,2,...,N$, $j=0,1,2,...,M$. The information of $x_{i,j-1}$ and $h_{i-1,j}$  are joined in a layer $A_{i,j}$, which sends information
to $x_{i,j}$ and $h_{i,j}$.\\
Then in our representation, the category $\mathcal{C}_\mathbf{X}$ has one arrow from $x_{i,j}$ to $A_{i,j}$, from $h_{i,j}$ to $A_{i,j}$,
from $x_{i,j-1}$ to $A_{i,j}$ and from $h_{i-1,j}$ to $A_{i,j}$, and it is all (see figure \ref{fig:RNN-poset}). If we want, we could add the layers $A^{\star }_{i,j}$, but there
is no necessity.\\
\indent The output is  generally a up-right corner horizontally $y_1=x_{1,M}$, $y_2=x_{2,M}$, $...$, named the result (a classification
or a translation), and vertically $h_{N,1}$, $h_{N,2}$, $...$, (which could be named future memories).\\

\begin{figure}[ht]
	\begin{center}
		\includegraphics[width=13cm]{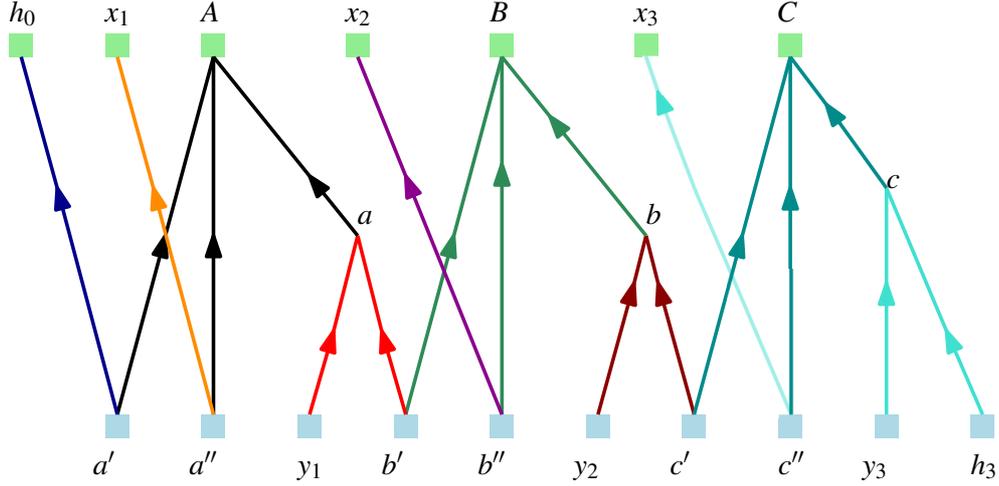}
		\caption{\label{fig:RNN-poset}Categorical representation of a RNN}
	\end{center}
\end{figure}

\noindent However, the inputs and outputs can have the shape of a more complex curves, transverse
to vertical and horizontal propagation. Things are organized as in a two dimensional Lorentz space, where a space coordinate is $x_{i,j-1}-h_{i-1,j}$ and a
time coordinate $x_{i,j-1}+h_{i-1,j}$. Input and output correspond to spatial sections, related by causal propagation.
\begin{rmk*}
	\normalfont In many applications, several lattices are used together, for instance a sentence or a book can be read backward after translation,
	giving reverse propagation, without trouble. We will discuss these aspects with the modularity.
\end{rmk*}

Most $RNNs$ have a dynamic of the type a non-linearity applied to a linear summation:\\
we denote the vectorial states of the layers by greek letters $\xi$ for layers $x$ and $\eta$ for layers $h$, like $\xi^{a}_{i,j}$ and $\eta^{b}_{k,l}$;
the lower indices denote the
coordinates of the layer and the upper indices denote the neuron, that is the real value of the state. In most applications, the basis of neurons plays an important role.\\
In the layer $A_{i,j}$ the vector of state is made by the pairs $(\xi^{a}_{i,j-1},\eta^{b}_{i-1,j});a\in x_{i,j-1}, b\in h_{i-1,j}$.\\
The dynamic $X^{w}$ has the following form:
\begin{equation}\label{fsigma}
\xi_{i,j}^{a}=f_x^{a}\left(\sum_{a'}w_{a';x,i,j}^{a}\xi_{i,j-1}^{a'}+\sum_{b'}u_{b';x,i,j}^{a}\eta_{i-1,j}^{b'}+\beta^{a}_{x,i,j}\right);
\end{equation}
\begin{equation}\label{fsigmah}
\eta_{i,j}^{b}=f_h^{b}\left(\sum_{a'}w_{a';h,i,j}^{b}\xi_{i,j-1}^{a'}+\sum_{b'}u_{b';h,i,j}^{b}\eta_{i-1,j}^{b'}+\beta^{b}_{x,i,j}\right).
\end{equation}
The functions $f$ are sigmoids or of the type $\tanh(Cx)$, the real numbers $\beta$ are named \emph{bias}, and the numbers $w$ and $u$ are the
weights.\\
In practice, everything here is important, the system being very sensitive, however theoretically, only the overall form
matters, thus for instance we can incorporate the bias in the weights, just by adding a formal neuron in $x$ or $h$, with fixed value $1$. The weights are
summarized by the matrices $W_{x,i,j}$, $U_{x,i,j}$, $W_{h,i,j}$, $ U_{h,i,j}$.\\
All these weights are supposed to be learned by backpropagation, or analog more general reinforcement.\\

Experiments during the eighties and nineties showed the strongness of the $RNN$s but also some weaknesses, in particular for learning or memorizing long
sequences. Then Hochreiter and Schmidhuber, in a remarkable paper in Neural Computation \cite{HochreiterSchmidhuber1997}, introduced a modification of the simple $RNN$, named the Long Short Term Memory,
or $LSTM$, which overcame all the difficulties so efficiently that more than thirty years after it continues to be the standard.\\
\indent The idea is to duplicate the layers $h$ by introducing parallel layers $c$, playing the role of longer time memory states, and just called
cell states, by opposition to hidden states for $h$.\\

\noindent In what follows we present the cell which replaces $A_{i,j}$ without insisting on the lattice aspect, which is unchanged for
many applications.\\

The sub-network which replaces the simple crux $A=A_{i,j}$ is composed of five tanks $A, F, I, H', V$, plus the inputs $C_{t-1},H_{t-1},X_{t-1}$,
and has nine tips $c'_{t-1},h'_{t-1},x'_{t},f,i,o,\widetilde{h},v_i,v_f$ plus the three outputs $c_t,h_t,y_t$. However, $y_t$ being a function
of $h_t$ only, it is forgotten in the analysis below.\\
In $A$, the two layers $h'$ and $x'$ (where we forget the indices $t-1$ and $t$ respectively)
join to give by formulas like \eqref{fsigmah} the four states of $i,f,o,\widetilde{h}$ respectively called input gate, forget gate, output gate,
combine gate, the first three are sigmoidal, the fourth one is of type $\tanh$, indicating a function of states separations. The weights
in these operations are the only parameters to adapt, they form matrices $W_i,U_i$, $W_f,U_f$, $W_o,U_o$ and $W_h,U_h$; which makes four times more than
for a $RNN$ (because the output $\xi_{i,j}$ is not taken in account).\\
Then the states in $v_f$ and $v_i$ are respectively given by combining $c'$ with $f$ and  $\widetilde{h}$ with $i$, in the simplest bilinear way:
\begin{equation}\label{hadamard}
\xi_v^{a}=\gamma^{a}\varphi^{a};a\in v;
\end{equation}
where $\gamma$ denotes the states of $c'$ or $\widetilde{h}$, and $\varphi$ the states of $f$ or $i$ respectively.\\
Note  that the above formulae have a sense if and only of the dimensions of $c$ and $f$ and $v_f$ are equal and
the dimension of $\widetilde{h}$ and $i$ and $v_i$ are equal. This is an important restriction.\\
At the level of vectors this diagonal product is name the
\emph{Hadamard product} and is written
\begin{equation}
\xi_v=\gamma\odot\varphi.
\end{equation}
It is free of parameters. Only the dimension is free for a choice.\\
Then, $v_i$ and $v_f$ are joined by a Hadamard sum, adding term by term, to give the new cell state
\begin{equation}
\xi_c=\xi_{v_f}\oplus\xi_{v_i};
\end{equation}
which implies that $v_i$ and $v_f$ have the same dimension.\\
And finally, a new Hadamard product gives the new hidden state:
\begin{equation}
\eta_h=\xi_o\odot\tanh \xi_c.
\end{equation}
We get an additional degree of freedom with the normalization factor $C$ in $\tanh Cx$ but this is all. However this implies that $c$
and $o$ and $h$ have the same dimension.\\
\indent Therefore the $LSTM$ has a discrete invariant, which is the dimension of the layers and is named its \emph{multiplicity} $m$.\\
Only the layers $x$ can have other dimensions; in what follows, we denote $n$ this dimension (see figure \ref{fig:lstm}).\\
\vspace{5mm}
\begin{figure}[ht]
	\centering
	\includegraphics[width=1\textwidth]{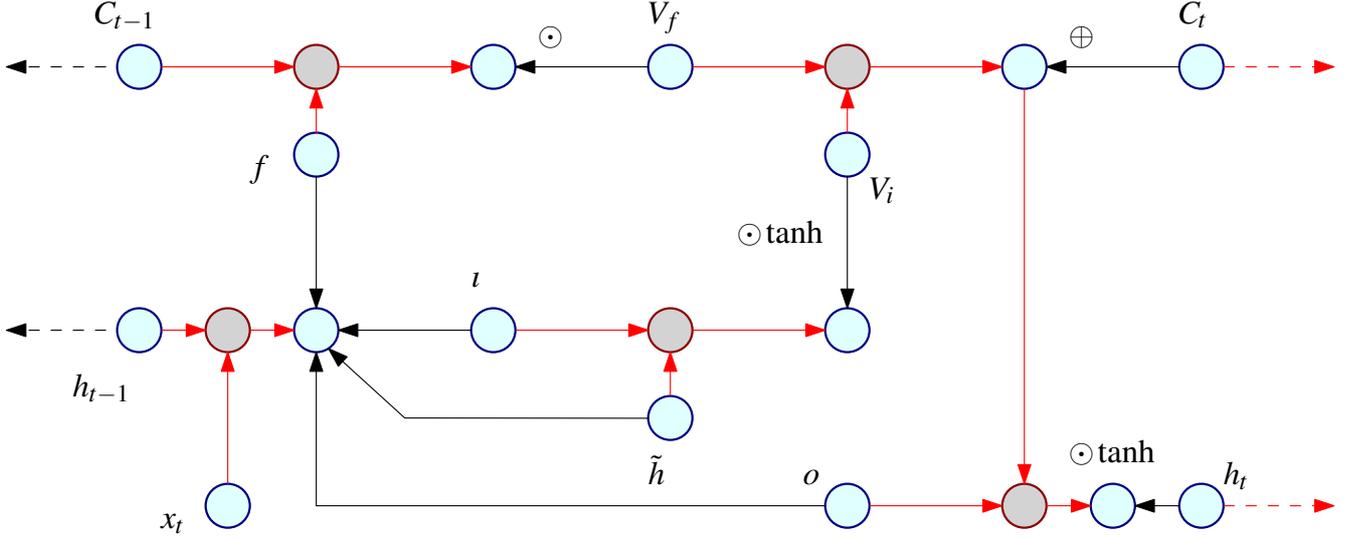}
	\caption{Grothendieck site representing a LSTM cell}
	\label{fig:lstm}
\end{figure}

\noindent Symbolically, the dynamics can be summarized by the two formulas:
\begin{equation}
c_t=c_{t-1}\odot \sigma_f(x_t,h_{t-1})\oplus \sigma_i(x_t,h_{t-1})\odot \tau_h(x_t,h_{t-1})
\end{equation}
\begin{equation}
h_t=\sigma_o(x_t,h_{t-1})\odot \tanh c_t,
\end{equation}
where $\sigma_k$ ({\em resp.} $\tau_k$) denotes the application of $\sigma$ ({\em resp.} $\tanh$) to a linear or affine form.\\
In what follows, $x_t$ is replaced by $x'$ and $h_{t-1}$, $c_{t-1}$ by $h'$, $c'$, like their tips.\\

Due to the non-linearities $\sigma$ and $\tanh$, there are several regimes of functioning, according to the fact that some of the
variables give or not a saturation; this can generate almost linear transformations or the opposite, a discrete-valued transformation. For instance,
$\pm 1$ when $\tanh$ is applied, or $\in \left\{0,1\right\}$ if $\sigma$ is applied. Here appears the fundamental aspect of discretization in the functioning of $DNNs$.\\

\noindent In the linear regime, the new state $c$ appears as a polynomial of degree $2$ in the vectors $x,h'$ and degree $1$
in $c'$, and $h$ appears as a polynomial of degree $3$ in $x',h'$.\\
Introducing the linear (or affine with bias) forms $\alpha_f,\alpha_i,\alpha_o,\alpha_h$, before application of $\sigma$ or $\tanh$, we have
\begin{equation}
h_t=\alpha_o\odot (c'\odot \alpha_f\oplus \alpha_i\odot \alpha_h).
\end{equation}
The dominant term in $x',h'$ is decomposable: $\alpha_o\odot \alpha_i\odot \alpha_h$; the term of degree $2$ in $x',h'$
is $\alpha_o\odot c'\odot\alpha_f$, and there is no linear term, because we forgot the bias. When separating $x'$ from
$h'$, we obtain all possible degrees $\leq 3$.\\
\noindent However, experiments with alternative memory cells, named $GRU$ and their simplifications, have shown that the degree
in $x'$ is apparently less important then the degree in $h'$. All trials with degree $<3$ in $h'$ gave a dramatic
loss of performance, but this was not the case for $x'$, where degree $1$ appears to be sufficient.\\

The number of parameters to tune is $4m^{2}+4mn$ or $4m^{2}+dmn$, with $1\leq d\leq 4$ is for the dependencies in $x$ in the four operations $\alpha_f,\alpha_i,\alpha_o,\alpha_h$. At least $d=1$ for $\alpha_h$ or for $\alpha_f$ seems to be necessary from the study of $MGU$.\\

\section{GRU, MGU}

Several attempts were made for diminishing the quantity of parameters to adapt in $LSTM$ without diminishing the performance.
The most popular solution is known as Gated Recurrent Unit, or $GRU$ (see  \cite{DBLP:journals/corr/ChoMBB14} and \cite{DBLP:journals/corr/ChungGCB14}
from Bengio's group). Then this cell has been simplified into several kinds of Minimal Gated Units, $MGU$ (\cite{DBLP:journals/corr/ZhouWZZ16} or \cite{DBLP:journals/corr/HeckS17}).\\
\indent The idea is to replace several gated layers by one, at the cost of a more complex architecture's topology.\\

\indent In the standard  $GRU$, the pair $h_t,c_t$  is replaced by $h_t$ alone, as in the original $RNN$; there exists two input
layers $X_t,H_{t-1}$, the number of
joins, our tanks, is six: $R,F,I,V,W,H'$, the number of tips is six, $z,r,v_{1-z},v_r,v_x,v_h$ and one output
$h_t$.\\
The dynamic begins with two non-linear linear transform, of type $\sigma\sum$, like \eqref{fsigmah} in $R$, giving $z$ and $r$ from $x'$ and $h'$; then in
$I$, there is a Hadamard product $v_z=h'\odot (1-z)$, where $1-z$ designates the Hadamard difference between the saturation and the values of the states of $z$.
Moreover, in $F$, there is another Hadamard product $v_r=h'\odot r$.
A $\tanh \sum$, like \eqref{fsigmah} with $f=\tanh$, joins $x'$ with $v_r$ in $W$ to give $v_x$, which joins $z$ in $H'$ to give
$v_h$ by a third Hadamard product. Finally, $v_h$ and $v_{1-z}$ are joined together by a Hadamard sum in $V$, giving $h=v_z\oplus v_h$.\\

\noindent Symbolically, with the same conventions used for $LSTM$, the dynamic can be summarized by the following formula
\begin{equation}
h_t=(1-\sigma_z(x_t,h_{t-1}))\odot h_{t-1}\oplus\sigma_z(x_t,h_{t-1})\odot \tanh(W_x(x_t)+U_x(\sigma_r(x_t,h_{t-1})\odot h_{t-1})).
\end{equation}

In a $GRU$ as in a $LSTM$ we have three Hadamard products and one Hadamard sum, plus three non-linear-linear transforms $NLL$ (one with $\tanh$); $LSTM$ had \
four $NLL$ transforms (two with $\tanh$), but the complexity of $GRU$ stays in the succession of two $NLL$ with adaptable parameters.\\
Remark that $LSTM$ also contains a succession of non-linearities, $\tanh$ being applied to $c_t$, which is a sum of product on non-linear terms
of type $\sigma$ or $\tanh$.\\

\noindent In the linear (or affine) regime, the $GRU$ gives
\begin{equation}
h_t=[(1-\alpha_z)\odot h_{t-1}]\oplus [\alpha_z\odot [Wx_t +U(\alpha_r\odot h_{t-1})]].
\end{equation}

For the same reason than $LSTM$ a $GRU$ has a multiplicity $m$, and a dimension $n$ of data input. The parameters to be adapted are the matrices $W_z,U_z$, $W_r,U_r$
and $W_x, U_x$ in $W$. This gives $3m^{2}+3mn$ real numbers to adapt, in place of $4m^{2}+4mn$ for a complete $LSTM$.\\

The simplification which was proposed by Zhou et al. in \cite{DBLP:journals/corr/ZhouWZZ16} for $MGU$ consists in taking $\sigma_z=\sigma_r$, thus reducing the parameters
to $2m^{2}+2mn$. This unique vector is denoted $\sigma_f$, assimilated to the forget gate $f$ of $LSTM$.\\
It seems that the performance of $MGU$ was as good as the ones of $GRU$, which are almost as good as $LSTM$ for many tasks.\\

Heck and Salem \cite{DBLP:journals/corr/HeckS17} suggested further radical simplifications, some of them being as good as $MGU$. $MGU1$ consists in suppressing the dependency of the unique $\sigma_f$ in $x'$, and $MGU2$ in suppressing also the bias
$\beta_f$. An $MGU3$ removed $x'$ and $h'$, just keeping a bias, but it showed poor learning and accuracy in the tests.\\

The experimental results proved that $MGU2$ is excellent in all tests, even better than $GRU$.\\
Note that both $MGU2$ and $MGU1$ continue to be of degree $3$ in $h'$. This reinforces the impression that this degree is an
important invariant of the memory cells. But these results indicate that the degree in $x'$ is not so important.\\

\noindent Consequently we may assume
\begin{equation}
	h_t=(1-\sigma_z(h_{t-1}))\odot h_{t-1}
	\oplus\sigma_z(h_{t-1})\odot \tanh(W_x(x_t)+U_x(\sigma_z(h_{t-1})\odot h_{t-1}))).
\end{equation}
And in the linear regime
\begin{equation}
h_t=[(1-\alpha_z)\odot h']\oplus [\alpha_z\odot [Wx_t +U(\alpha_z\odot h')]].
\end{equation}
Only two vectors of linear (or affine) forms intervene, $\alpha_z^{a}(h');a=1,...,m$ and $h'$ itself, i.e. $\eta^{a}(h');a=1,...,m$.\\
The parameters to adapt are $U_z$, giving $\alpha_z$, and $U_x=U$, $W_x=W$, giving the polynomial of degree two in parenthesis, i.e. the state
of the layer called $v_h$.\\

\noindent The number of free parameters in $MGU2$ is $2m^{2}+mn$, twice less than the most economical $LSTM$.\\

The graph $\Gamma$ of a $GRU$ or a $MGU$ has five independent loops, a fundamental group free of rank five; it is non-planar.
The categorical representation of
a $LSTM$ has only three independent loops, and is planar (see figure \ref{fig:lstm}).

\section{Universal structure hypothesis}

A possible form of dynamic covering the above examples is a vector of dimension $m$ of non-linear functions of several vectors
$\sigma_{\alpha^{a}}$, $\sigma_{\beta^{b}}$, ..., that are $\sigma$ of $th$ functions of linear (or perhaps affine) forms of the variables
$\xi^{a},\eta^{b}$, for $a,b,c$ varying from $1$ to $m$. More precisely
\begin{equation}
\eta_t^{a}=\sum_{b,c,d}t_{b}^{a}\sigma_{\alpha^{b}}\tanh\left[\sum_{c,d}u_{c,d}^{a}\sigma_{\beta^{c}}\sigma_{\gamma^{d}}+\sum_{c}v_{c}^{a}\sigma_{\beta^{c}}
+\sum_{d}w^{a}_{d}\sigma_{\gamma^{d}}+ \sigma_{\delta^{a}}\right].
\end{equation}

\noindent Remark:  we have written $\sigma_\alpha,\sigma_\beta,...$ for the application to a linear form of a sigmoid or a $\tanh$ indifferently;
but for a more precise discussion of the examples, we must distinguish and write $\tau_\alpha,\tau_\beta,...$ when $\tanh$ is applied.
However, sometimes in the following lines, we will use $\tau$ when we are sure that a $\tanh$ is preferable to a $\sigma$.\\

The tensor $u_{c,d}^{a}$ would introduce $m^{3}$ parameters, leading to great computational difficulties.
A natural manner to limit the degrees of freedom at $Km^{2}$, inspired by $LSTM$ and $GRU$, is to use the Hadamard product, for instance $\sigma_{\beta^{a}}\sigma_{\gamma^{a}}$.\\
A second simplification, justified by the success of $MGU$ consists to impose $\alpha^{a}=\gamma^{a}$.\\
A third one, justified by the success of $MGU2$ is to limit the degree in $x'$ to $1$. This can be done by reserving the dependency
on $x'$ to the forms $\beta$ and $\delta$.\\
All that gives
\begin{equation}\label{mgu}
\eta_t^{a}=\sigma_{\alpha^{a}}(\eta)\tanh\left[\sigma_{\alpha^{a}}(\eta)\sigma_
{\beta^{a}}(\eta,\xi)+\sigma_{\beta^{a}}(\eta,\xi)+ \sigma_{\delta^{a}}(\xi)\right].
\end{equation}
This contains $2m^{2}+2mn$ free parameters to be adapted.\\

\begin{rmk*}
	\normalfont Here we have neglected the addition of the alternative term in the dynamic which is $(1-\sigma_{\alpha^{a})}\eta^{a}$ in $GRU$ and
	$MGU$, but this term is probably very important, therefore, we must keep in mind that it can be added in the applications. At the end it will
	reappear in the formulas we suggest below.
\end{rmk*}

\noindent For $MGU1,2$, the term of higher degree has no dependency in $x'$, then we can simplify further in
\begin{equation}\label{mgus}
\eta_t^{a}=\sigma_{\alpha^{a}}(\eta)\tanh\left[\sigma_{\alpha^{a}}(\eta)\sigma_{\beta^{a}}(\eta)+\sigma_{y^{a}}(\xi)\sigma_{\beta^{a}}(\eta)+ \tau_{\delta^{a}}(\xi)\right].
\end{equation}
\noindent Moreover, as $MGU2$ is apparently better than $MGU1$ in the tested applications, the forms $\alpha^{a}$ can be taken linear, not affine.\\

It looks like a simplified $LSTM$, if we define for the state of $c_t$ the following vector:
\begin{equation}
\gamma_t^{a}=\sigma_{\alpha^{a}}(\eta)\sigma_{\beta^{a}}(\eta)+\sigma_{y^{a}}(\xi)\sigma_{\beta^{a}}(\eta)
+ \tau_{\delta^{a}}(\xi),
\end{equation}
and impose the recurrence $y^{a}(\xi)=\gamma_{t-1}^{a}$.\\
This gives a kind of minimal $LSTM$, so-called $MLSTM$,
\begin{equation}
\gamma_t^{a}=\sigma_{\alpha^{a}}(\eta)\sigma_{\beta^{a}}(\eta)+\gamma_{t-1}^{a}\sigma_{\beta^{a}}(\eta)
+ \tau_{\delta^{a}}(\xi),
\end{equation}
\begin{equation}
\eta_t^{a}=\sigma_{\alpha^{a}}(\eta)\tanh[\gamma_t^{a}].
\end{equation}
Or with the forgotten alternative term,
\begin{equation}\label{mlstm}
\eta_t^{a}=\sigma_{\alpha^{a}}(\eta)\tanh[\gamma_t^{a}]+(1-\sigma_{\alpha^{a}}(\eta))\eta^{a}.
\end{equation}

Now we suggest to look at these formulas from the point of view of the deformation of singularities having polynomial
universal  models,
and trying to keep the
main properties of the above dynamics:
\begin{enumerate}[label=\arabic*)]
	\item on a generic straight line in the input space $h'$, and in any direction of the output space $h$,
	we have \emph{every possible shape of a 1D polynomial function of degree $3$}, when modulating by the functions of $x'$;
	\item the presence of non-linearity $\sigma$ applied to forms in $h'$ and $th$ applied to forms in $x'$
	allow discretized regimes for the full application, but also a regime where the dynamic is close to a simple
	polynomial model.
\end{enumerate}

In the above formulas the last application of $th$ renders possible the degeneration to degree $1$
in $h'$ and $x'$, we suggest to forbid that, and to focus on the coefficients of the polynomial. In fact the truncation
of the linear forms by $\sigma$ or $th$ is sufficient to warranty the saturation of the polynomial map. \\
From this point of view the terms of degree $2$ are in general not essential, being absorbed by a Viete
transformation. Also the term of degree zero, does not change the shape, only the values; but this can be non-negligible.\\
In the simplest form this gives
\begin{equation}\label{simplepolynomialform}
\eta_t^{a}=\sigma_{\alpha^{a}}(\eta)^{3}+u^{a}(\xi)\sigma_{\alpha^{a}}(\eta) +v^{a}(\xi);
\end{equation}
where $u$ and $v$ are $th$ applied to a linear form of $\xi$, and $\sigma_{\alpha}$ is a $\sigma$
applied to a linear form in $\eta$. This gives only $m^{2}+2mn$ free parameters, thus one order
less than $MGU2$ in $m$.\\

However, we cannot neglect the forgotten alternative $(1-z)h'$ of $GRU$, or more generally the possible function in the transfer
of a term of degree two, even if structurally, from the point of view of the deformation of shapes, it seems not necessary, thus the following form could be
preferable:
\begin{equation}
\eta_t^{a}=\sigma_{\alpha^{a}}(\eta)^{3}+(1-\sigma_{\alpha^{a}}(\eta))\eta^{a}+u^{a}(\xi)\sigma_{\alpha^{a}}(\eta)+v^{a};
\end{equation}
or more generally, with $2m^{2}+2mn$ free parameters:
\begin{equation}\label{polynomform}
\eta_t^{a}=\sigma_{\alpha^{a}}(\eta)^{3}+\sigma_{\alpha^{a}}(\eta)[\sigma_{\beta^{a}}(\eta)+u^{a}(\xi)]+
v^{a}(\xi);
\end{equation}
where $\beta$ is a second linear map in $\eta$.\\

\noindent {\bf Description of an architecture for this dynamic :} it has two input layers $H_{t-1}, X_t$, three sources or tanks $A$, $B$,
$C$, and seven internal layers that give six tips, $\alpha$,$\beta$, $v_\beta$, $u$, $v$, $v_{\alpha\beta}$, $v_{\alpha\alpha\alpha}$,
and one output layer $h_t$. First $h_{t-1}$ gives $\sigma_\alpha$ and $\sigma_\beta$, and $x_t$ gives $u$ and $v$; then $\sigma_\beta$ joins $u$ in $A$
to give $v_\beta=\sigma_\beta \oplus u$, then $\sigma_\alpha$ joins $v_\beta$ in $B$ to give $v_{\alpha\beta}=\sigma_\alpha\odot v_\beta$. In parallel,
$\sigma_\alpha$ is transformed along an ordinary arrow in $v_{\alpha\alpha\alpha}=\sigma_\alpha^{\odot 3}$. And finally, in $C$, the sum
of $v$, $v_{\alpha\alpha\alpha}$ and $v_\beta$ produces the only output $h_t$.\\

\noindent The simplified network is for $\beta=0$. It has also three tanks, $A$, $B$ and $C$, but only five tips, $\alpha$, $u$, $v$, $v_\alpha$,
$v_{\alpha\alpha\alpha}$. The schema is the same, without the creation of $\beta$, and $v_\beta$ ({\em resp.} $v_{\alpha\beta}$) replaced by $v_\alpha$
({\em resp.} $v_{\alpha\alpha}$). \\
\begin{rmk*}
	\normalfont In the models with $\tanh$ like \eqref{mlstm} the sign of the terms of effective degree three
	can be minus or plus; in the model \eqref{polynomform} it is always plus, however this can be compensated by the change
	of sign of the efferent weights in the next transformation.
\end{rmk*}

Equation \eqref{mgu} could induce the belief that $0$ goes to $0$, but in general this is not the case, because
the function $\sigma$ contrarily to $\tanh$ has only strictly positive values. For instance the standard $\sigma(z)=1/1+\exp (-z)$ gives $\sigma(0)=1/2$.\\
\indent However, the point $0$ plays apparently an important role, even if it is not preserved: 1) in $MGU2$ the absence of bias in $\alpha^{a}$ confirms this point;
2) the functions $\sigma$ and $th$ are almost linear in the vicinity of $0$ and only here. Therefore, let us define the space $H$ of the activities of the memory
vectors $h_{t-1}$ and $h_t$, of real dimension $m$; it is pointed by $0$, and the neighborhood of this point is a region of special interest.\\
\indent We also introduce the line $U$ of coordinate $u$ and the plane $\Lambda=U\times\mathbb{R}$ of coordinates $u,v$, where $0$ and its neighborhood is also crucial.
The input from new data $x_t$ is sent to $\Lambda$, by the two maps $u(\xi)$ and $v(\xi)$. By definition this constitutes an \emph{unfolding}
of the degree three map in $\sigma_\alpha(\eta)$.\\

\noindent A more complex model of the same spirit is
\begin{equation}\label{umbilicform}
\eta_t^{a}=\sigma_{\alpha^{a}}(\eta)^{3}\pm \sigma_{\alpha^{a}}(\eta)[\sigma_
{\beta^{a}}(\eta)^{2}+u^{a}(\xi)]+
v^{a}(\xi)\sigma_{\beta^{a}}(\eta)+w^{a}(\xi)[\sigma_
{\alpha^{a}}(\eta)^{2}+\sigma_{\beta^{a}}(\eta)^{2}]+z^{a}(\xi);
\end{equation}
it has $2m^{2}+4mn$ free parameters. The expression of $x_t$ is much richer and we will see below that it shares many
good properties with the model \eqref{simplepolynomialform}, in particular stability and universality. The corresponding space $U$
has dimension $3$ and the corresponding space $\Lambda$ has dimension $4$.\\

\section{Memories and braids}\label{sec:memories}

In every $DNN$, the dynamic from one or several layers
to a deeper one must have a sort of stability, to be
independent of most of the details in the inputs, but it must also be plastic, and sensitive to the important details
in the data, then not too stable, able
to shift from a state to another one, for constructing a kind of discrete signification. These two aspects are complementary.
They were extensively discussed a long time before the apparition of $DNN$s in the theory of dynamical
systems. The framework was different because most concepts in this theory were asymptotic, pertinent when the time
tends to infinity, and here in deep learning, to  the contrary,  most concepts are transient: one shot transformations for feed forward,
and gradient descent or open exploration for learning; however, with respect to the shape of individual transformation,
or with respect to the parameters of deformation, the two domains encounter similar problems, and probably answer in similar
manners.\\
\indent Structural stability is the property to preserve the shape after small variation of the parameters.
In the case of individual map between layers, this means that little change in the input has little effect on the output.
In the case of a family of maps, taking in account a large set of different inputs, this means that varying a little the weights,
we get little change
in the global functioning and the discrimination between data. The second level is deeper, because it allows to understand
what are the regions of the manifolds of input data, where the individual dynamics are stable in the first sense,
and what happens when individual dynamics changes abruptly, how are made the transitions and what are the properties of the
inputs at the boarders. A third level of structural stability concerns the weights, selected by learning: in the space of weights
it appears regions where the global functioning in the sense of family is stable, and regions of transitions where the global
functioning changes; this happens when the tasks of the network change, for instance detect a cat versus a dog. This last notion
of stability depends on the architecture and on the forms of dynamical maps that are imposed.\\

With $LSTM$, $GRU$ and their simplified versions like $MGU$, $MGU2$, we have concrete examples of these notions of
structural stability.\\
\indent The transformation is $X^{w}$ from $(h_{t-1},x_t)$ to $h_t$. The weights $w$ are made
by the coefficients of the linear forms, $\alpha^{a}(\eta),\beta^{a}(\eta),u^{a}(\xi)$, $v^{a}(\xi)$, but the structure
depends on the fixed architecture and the non-linearities, of two types, the tensor products and sums,
and the applied sigmoids and $tanh$.\\
\indent For simplicity we assume a response of the cell of the form \eqref{simplepolynomialform}, but the discussion is not very different with the other cell families
\eqref{polynomform}, \eqref{mgus} or \eqref{mlstm}.\\
We have a linear  endomorphism $\alpha$ of coordinates $\alpha^{a};a\in h$ of $\mathbb{R}^{m}=H$; when we apply to it the sigmoid function coordinate by coordinate,
we obtain a map $\phi$ from $H$ to a compact domain in $H$.
The invariance of the multiplicity $m$ of the memory cell suggests the hypothesis (to be verified experimentally) that  $\phi$ is
a diffeomorphism from $H$ to its image. However, as we will see just below, other reasons
like redundancy suggests the opposite, therefore we left open this hypothesis, with a preference for  diffeomorphism, for mathematical
or structural reasons. Probably, depending on the application, there exists a range of dimensions $m$ which performs the task,
such that $\phi$ is invertible.\\
We also have the two mappings $u^{a}(\xi);a\in h$ and $v^{a}(\xi);a\in h$ from the space $X=\mathbb{R}^{n}$ of states $x_t$, to $\mathbb{R}^{m}$.\\
This gives a complete description of the set of weights $W_{h;h',x'}$.\\
The formula \eqref{simplepolynomialform} defines the map $X^{w}$ from $H\times X$ to $H$.\\
We also consider  the restriction $X^{w}_\xi$ at a fixed state $\xi$ of $x_t$.\\

\begin{thm}\label{thm:activities}
	The map $X^{w}$ is not structurally stable on $H$ or $H\times X$, but each coordinate $\eta_t^{a}$, seen as function
	on a generic line of the input $h_{t-1}$  and a generic line of the input $x_t$, or as a function on $H$ or $H\times X$, is stable (at least in the bounded
	regions where the discretization does not apply).
\end{thm}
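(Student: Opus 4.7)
Proof plan.

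The plan is to apply Thom's elementary catastrophe theory: identify each coordinate $\eta_t^a$ with the universal unfolding of the cusp $y^3$, then use transversality of the weight data to deduce stability per coordinate, and finally exploit the diagonal/tensorial structure of the full map to exhibit codimension-one weight perturbations that change the topological type of $X^w$.

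First, for the single coordinate $\eta_t^a = \sigma_{\alpha^a}(\eta)^3 + u^a(\xi)\sigma_{\alpha^a}(\eta) + v^a(\xi)$, I would set $y = \sigma_{\alpha^a}(\eta)$ and recognize the right-hand side as the composition of the universal unfolding $F(y;u,v) = y^3 + uy + v$ of the cusp germ $y^3$ at $0$ with the map $(y,u,v) = (\sigma_{\alpha^a}(\eta), u^a(\xi), v^a(\xi))$. Thom's theorem says that $F$ is the universal unfolding in codimension two, and that every unfolding obtained by composition with a submersion into the base $(u,v)$-plane is $\mathcal{R}^+$-equivalent to the pullback of $F$, hence structurally stable as a function germ. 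On a generic line in $H$ the derivative of $\sigma_{\alpha^a}$ is nonzero, so the restriction of $y$ to this line is a local diffeomorphism onto a subinterval of $(0,1)$; on a generic line in $X$, $(u^a,v^a)$ is either an immersion (if $n=1$) or a submersion onto the $(u,v)$-plane (if $n\ge 2$ and the weight matrices of $u^a$ and $v^a$ are linearly independent), which is a generic condition on the weights. Either way the pullback is transverse to the bifurcation set of $F$, giving stability of the restricted function. Repeating the argument on the entire $H\times X$ shows that the smooth function $\eta_t^a:H\times X\to \mathbb R$ is Mather-stable in the bounded region where the sigmoid is non-degenerate.

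Second, for the non-stability of $X^w:H\times X\to H$, I would compute the Jacobian with respect to $\eta$, which takes the form $\mathrm{diag}(3y_a^2+u^a)\cdot \mathrm{diag}(\sigma'(\alpha^a(\eta)))\cdot A$, where $A$ is the $m\times m$ matrix whose $a$-th row is $\alpha^a$. This factorisation is the non-generic feature: the critical variety is cut out by the product of the $m$ one-dimensional cusp conditions $3y_a^2 + u^a(\xi)=0$, and its stratification is entirely controlled by the collection of linear forms $\alpha^a$. I would then exhibit an explicit one-parameter family of weights $w(s)$, with $w(0)$ satisfying a codimension-one coincidence (for instance two of the forms $\alpha^a,\alpha^b$ becoming parallel, or the two cuspidal strata of different coordinates becoming tangent), such that for $s<0$ and $s>0$ the map $X^w$ is not topologically equivalent: the preimage of a generic point changes its cardinality, or the critical set changes its number of connected components. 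This shows that $X^w$ crosses a bifurcation locus under arbitrarily small deformations of the weights, contradicting structural stability in the Mather $\mathcal{A}$-sense.

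The main obstacle will be the second part. It splits into two sub-obstacles. The smaller one is to fix the appropriate notion of stability: since the weight space is only a finite-dimensional slice of the infinite-dimensional space of smooth maps, one must argue stability either within the architecture class or in the full Mather sense, and then show the transition is genuine. The more serious one is to identify a codimension-one bifurcation that is actually realisable by deforming the weights of a single cell, and to rule out that a further reparametrisation of $H$ by the image of $\sigma\circ A$ undoes the bifurcation; this amounts to understanding the Thom-Boardman stratification of maps of the special form $\mathrm{diag}(F_a)\circ (\sigma\circ A\oplus \mathrm{id}_X)$, which is the essential geometric content of the theorem and the reason the classification by braids in the subsequent sections is non-trivial.
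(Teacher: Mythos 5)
Your treatment of the positive half (stability of each coordinate $\eta_t^a$) is essentially the paper's: reduce to the universal unfolding $P_{u,v}(z)=z^3+uz+v$ of the cusp via the substitution $y=\sigma_{\alpha^a}(\eta)$, and invoke Whitney--Thom--Mather stability of the gathered surface $(z,u)\mapsto(P_u(z),u)$; your added transversality checks on the weights only make explicit what the paper hides in the word ``generic''.

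The divergence, and the gap, is in the negative half. You propose to exhibit an explicit codimension-one bifurcation realisable inside the weight space, and you correctly flag that as the hard part --- but it is not the route the paper takes, and it is not needed. The paper's argument is a direct appeal to a classical fact of Mather's theory: while each factor $(z,u)\mapsto(P_u(z),u)$ is a stable map of the plane, the product $(z,u,w,v)\mapsto(P_u(z),u,P_v(w),v)$ is \emph{not} a stable map --- Mather's infinitesimal criterion fails for products of cusps (this is the standard counterexample showing that stability is not preserved under products, in the Golubitsky--Guillemin and Martinet references the paper cites). Since for $n=p=m\ge 2$ the map $X^w_\xi:H\to H$ is, after the diffeomorphism induced by $\sigma\circ\alpha$, exactly such an $m$-fold product of one-dimensional cusp unfoldings, it has infinite codimension in the space of germs of maps $\mathbb{R}^m\to\mathbb{R}^m$, admits no universal unfolding, and hence (by Mather's equivalence of versality and stability for mappings) is not structurally stable. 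No explicit bifurcating family of weights is required: the theorem asserts instability in the space of all smooth maps, and that follows from the failure of the infinitesimal criterion alone. Your worry about whether the destabilising perturbation ``is actually realisable by deforming the weights of a single cell'' is a red herring for this statement (it would matter only for a stronger claim about bifurcations within the architecture class), and as written your proposal leaves the negative half unproved.
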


\noindent These coordinates represent the activities of individual neurons,
then we get structural stability at the level of the neurons and not at the level of the layers.\\

\noindent As we justify in the following lines, this theorem follows from the results of the universal unfolding theory of smooth mappings,
developed by Whitney, Thom, Malgrange and Mather (see \cite{gibson1976} and  \cite{zbMATH03826815}).\\
The main point here (our hypothesis) is the observation that, for each neuron in the $h_t$ layer, the cubic degeneracy
$z^{3}$ can appear, together with its deformation by the function $u$. \\
For the deformation of singularities of functions, and their unfolding, see  \cite{zbMATH03438104} and
\cite{arnold2012singularities}.\\

\noindent The universal unfolding of the singularity $z^{3}$ is given by a polynomial
\begin{equation}
P_{u}(z)=z^{3}+uz,
\end{equation}
This means that for every smooth real function $F$, from a neighbor of a point $0$ in $\mathbb{R}^{1+M}$, such that
\begin{equation}
F(z,0,...,0)=z^{3},
\end{equation}
there exist a smooth map $u(Y)$ and a smooth family of maps $\zeta(z,Y)$ such that
\begin{equation}
F(z,Y)=\zeta(z,Y)^{3}+u(Y)\zeta(z,Y)
\end{equation}

\noindent Equivalently, the smooth map
\begin{equation}
(z,u)\mapsto (P_u(z),u),
\end{equation}
in the neighbor of $(0,0)$ is stable: every map sufficiently near to it can be transformed to it by a pair of diffeomorphisms of the
source and the goal. This result on maps from the plane to the plane, is the starting point of the whole theory, found by Whitney: the stability of the gathered
surface over the plane $v,u$.\\
The stability is not true for the product
\begin{equation}
(z,u,w,v)\mapsto (P_u(z),u,P_v(w),v)
\end{equation}
The infinitesimal criterion of Mather is not satisfied (see \cite{gibson1976}, \cite{zbMATH03826815}).\\

There also exists a notion of universal unfolding for maps from a domain of $\mathbb{R}^{n}$ to
$\mathbb{R}^{p}$ in the neighborhood of a point $0$, however in most cases, there exists no universal unfolding,
at the opposite of the case of functions, when $p=1$.\\
Here $n=p=m$, the transformation from $h_{t-1}$ to $h_t$ is an unfolding, dependent of $\xi\in x_t$,
but it does not admit a universal deformation. It has an infinite codimension in the space of germs of maps.\\
Also for mappings, universality of and unfolding and its stability as a map are equivalent (another theorem from Mather).\\

Our non-linear model from equation \eqref{simplepolynomialform} with $u$ free being equivalent to the polynomial model by diffeomorphism, we
can apply to it the above results. This establishes theorem \ref{thm:activities}.\\
\begin{cor*}
	Each individual cell plays a role.
\end{cor*}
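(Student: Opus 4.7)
The plan is to extract the corollary directly from the content of Theorem~\ref{thm:activities}, by showing that the per-neuron stability statement is incompatible with any notion of redundancy among the coordinates $\eta_t^{a}$. First, I would isolate what the theorem gives at the level of a single cell $a$: on a generic affine line in $H$ and a generic affine line in $X$, the map $(z,u)\mapsto(\sigma_{\alpha^{a}}(z)^{3}+u\,\sigma_{\alpha^{a}}(z)+v,u)$ is, after smooth changes of coordinates in the source and target, equivalent to the Whitney cusp $(z,u)\mapsto(z^{3}+uz,u)$. This is exactly the universal unfolding of the singularity $z^{3}$, and the parameter $u=u^{a}(\xi)$ is essential: its codimension in the space of germs of real functions in one variable is $1$, and no smaller unfolding realizes the full local behavior.

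Second, I would translate ``playing a role'' into a precise non-redundancy statement. Say that cell $a$ is \emph{dispensable} if there exists an open set of weights $w$ and of inputs $(\eta,\xi)$ on which the remaining coordinates $(\eta_{t}^{b})_{b\neq a}$, together with the weights carried by $a$, determine $\eta_{t}^{a}$ up to a diffeomorphism of the source and target. Were this to hold, the cusp germ at $a$ would be obtainable from cusp germs localized on other coordinates via pullback by a submersion, contradicting the fact that the cusp has isotropy only in its own jet space; concretely, the tangent space to the orbit of the cusp under the Mather group $\mathcal{A}$ already fills the full finite-codimension complement provided by $u$, so no extra parameter from a neighboring cell can be substituted for $u^{a}$ without leaving that orbit.

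Third, I would combine this with the product obstruction already noted in the proof of Theorem~\ref{thm:activities}: the joint map $(z,u,w,v)\mapsto(P_{u}(z),u,P_{v}(w),v)$ fails Mather's infinitesimal stability criterion. Applied coordinate by coordinate over the $m$ cells, this shows that although each $\eta_{t}^{a}$ is individually stable, their joint contributions do not fuse into a single smaller universal unfolding; equivalently, the number $m$ of cusp factors is a diffeomorphism invariant of the family, and in particular no proper subset of the coordinates suffices to reproduce $X^{w}$ up to the admissible changes of variable. Hence removing or identifying any cell strictly reduces the set of realizable behaviors, which is the content of the corollary.

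The main obstacle I expect is the third step: making ``each cell plays a role'' into a statement that is stable under the coordinate changes allowed in the stability theorem, rather than merely a statement about the algebraic form of \eqref{simplepolynomialform}. The cleanest route will be to phrase non-redundancy through Mather's infinitesimal stability and to invoke the fact, already used above, that a product of two cusps has strictly larger codimension than any single cusp, so the $m$-fold product cannot be a pullback of any $(m-1)$-fold product; the bulk of the work is checking this codimension count carefully in the presence of the non-generic inputs from $X$ carried by $u^{a}(\xi)$ and $v^{a}(\xi)$.
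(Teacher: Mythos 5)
The paper derives this corollary in a single interpretive step from Theorem \ref{thm:activities}: since structural stability holds for each coordinate $\eta_t^{a}$ separately but fails for the joint map $X^{w}$ (the product of two cusps already violates Mather's infinitesimal criterion), the locus of stable, generic behaviour is the individual neuron rather than the layer — that is the entire intended content of ``each individual cell plays a role.'' Your proposal instead sets out to prove a much stronger non-redundancy statement: that no coordinate is determined by the others up to diffeomorphism and that removing any cell strictly shrinks the set of realizable behaviours. That is not what the corollary asserts, and the paper explicitly warns against this reading in the very next sentence: ``This does not contradict the fact that frequently several cells send similar message, i.e.\ there exists a redundancy.'' Two cells with equal or nearly equal weight vectors $\alpha^{a}, u^{a}, v^{a}$ compute the same, or diffeomorphic, coordinate functions, so the strong dispensability you want to rule out can actually occur for admissible weights.

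The technical gap is in your second and third steps. Your notion of dispensability concerns one coordinate function being recoverable from the others after coordinate changes; this is a question about the joint map (its fibres, its behaviour on strata), not about the singularity type of a single coordinate, so the assertion that ``the tangent space to the orbit of the cusp under the Mather group already fills the full finite-codimension complement'' does not bear on it — a cusp germ in coordinate $a$ can perfectly well be reproduced by another coordinate carrying the same weights, with no pullback of germs involved. Likewise, the failure of infinitesimal stability for $(z,u,w,v)\mapsto(P_u(z),u,P_v(w),v)$ shows only that a product of stable maps need not be stable — which is exactly how the paper uses it, to get instability of the layer map — and does not establish that the number $m$ of cusp factors is a diffeomorphism invariant of the family, nor that no proper subset of coordinates suffices. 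If you want the corollary as the paper means it, the one-sentence reading of the theorem is all that is needed; if you want your stronger version, it fails without additional genericity hypotheses on the weights, and the codimension count you propose in your final step would not rescue it.
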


This does not contradict the fact that frequently several cells send
similar message, i.e. there exists a redundancy, which is opposite to the stability or genericity of the whole layer.
However, as said before, in some regime and/or for $m$ sufficiently small, the redundancy is not a simple repetition,
it is more like a creation of characteristic properties.\\

Let us look at a neuron $a\in h_t$, and consider the model \eqref{simplepolynomialform}.
If $u=u^{a}(\xi)$ does not change of sign, the dynamic of the neuron $a$ is stable under small
perturbations. For $u> 0$, it looks like a linear function, it is monotonic. For $u< 0$ there exist a unique stable minimum and a unique
saddle point which limits its basin of attraction. But for $u=0$ the critical points collide, the individual map is unstable.
This is named the \emph{catastrophe} point. For the whole theory, see \cite{Thom1972}, \cite{arnold2012singularities}.\\

If we are interested in the value of $\eta^{a}_t$, as this is the case in the analysis of the cat's manifolds seen before,
for understanding the information flow layer by layer, we must also consider the levels of the function, involving $v^{a}$ then $\Lambda$.
This asks to follow a sort of inversion of the flow, going to the past,
by finding the roots $z$ of the equations
\begin{equation}
P^{a}(z)=c.
\end{equation}
Depending on $u$ and $v$, there exist one root or three roots. For instance, for $c=0$, the second case happens if an only if
the numbers $u^{a}(\xi),v^{a}(\xi)$ satisfy the inequality $4u^{3}+27v^{2}<0$.  When the point $(u^{a}(\xi),v^{a})$ in the plane $\Lambda$
belongs to the \emph{discriminant} curve $\Delta$
of equation $4u^{3}+27\eta^{2}=0$, things become ambiguous, two roots collide and disappear together for $4u^{3}+27v^{2}>0$.\\
\noindent These accidents create ramifications in the cat's manifolds.\\

\noindent This analysis must be applied independently to all the neurons $a=1,...,m$ in $h$, that is to all the axis in $H$. If $\alpha$ is an
invertible endomorphism, the set of inversions has a finite number of solutions, less than $3^{m}$.\\

Remind that the region around $0$ in the space $H$ is especially important, because it is only here that the polynomial model
applies numerically, $\sigma$ and $\tanh$ being almost linear around $0$. Therefore the set of data $\eta_{t-1}$ and $\xi_t$ which gives
some point $\eta_t$ in this region have a special meaning: they represent ambiguities in the past for $\eta_{t-1}$ and critical parameters
for $\xi_t$. Thus the discriminant $\Delta$ of equation $4u^{3}+27v^{2}=0$ in $\Lambda$ plays an important role in the global dynamic.\\

The inversion of $X^{w}_\xi: H\rightarrow H$ is impossible continuously along a curve in $\xi$ whose $u^{a},v^{a}$ meet $\Delta$ for some component $a$.
It becomes possible if we pass to complex numbers, and lift the curve in $\Lambda$ to the universal covering $\Lambda_\star ^{\sim}(\mathbb{C})$ of the complement
$\Lambda_\mathbb{C}^{\star }$ of $\Delta_\mathbb{C}$ in $\Lambda_\mathbb{C}$ \cite{arnold2012singularities2}.\\
The complex numbers have the advantage that every degree $k$ polynomials has $k$ roots, when counted with multiplicities. The ambiguity in distinguishing
individual roots along a path is contained in the Poincaré fundamental group $\pi_1(\Lambda_\mathbb{C}^{\star })$. However the precise definition of this group
requires the choice of a base point in $\Lambda_\mathbb{C}^{\star }$, then it is more convenient to consider the fundamental groupoid $\Pi(\Lambda_\mathbb{C}^{\star })=\mathcal{B}_3$,
which is a category, having for points the elements of $\Lambda_\mathbb{C}^{\star }$ and arrows the homotopy classes of paths between two points. The choice
of an object $\lambda_0$ determine $\pi_1(\Lambda_\mathbb{C}^{\star };\lambda_0)$, which is the group of homotopy classes of loops from $\lambda_0$
to itself, i.e. the isomorphisms of $\lambda_0$ in $\mathcal{B}_3$. This group is isomorphic to the Artin braid group $B_3$ of braids with three strands \cite{arnold2012singularities2}.\\

\begin{figure}[ht]
	\centering
	\includegraphics[width=0.9\textwidth]{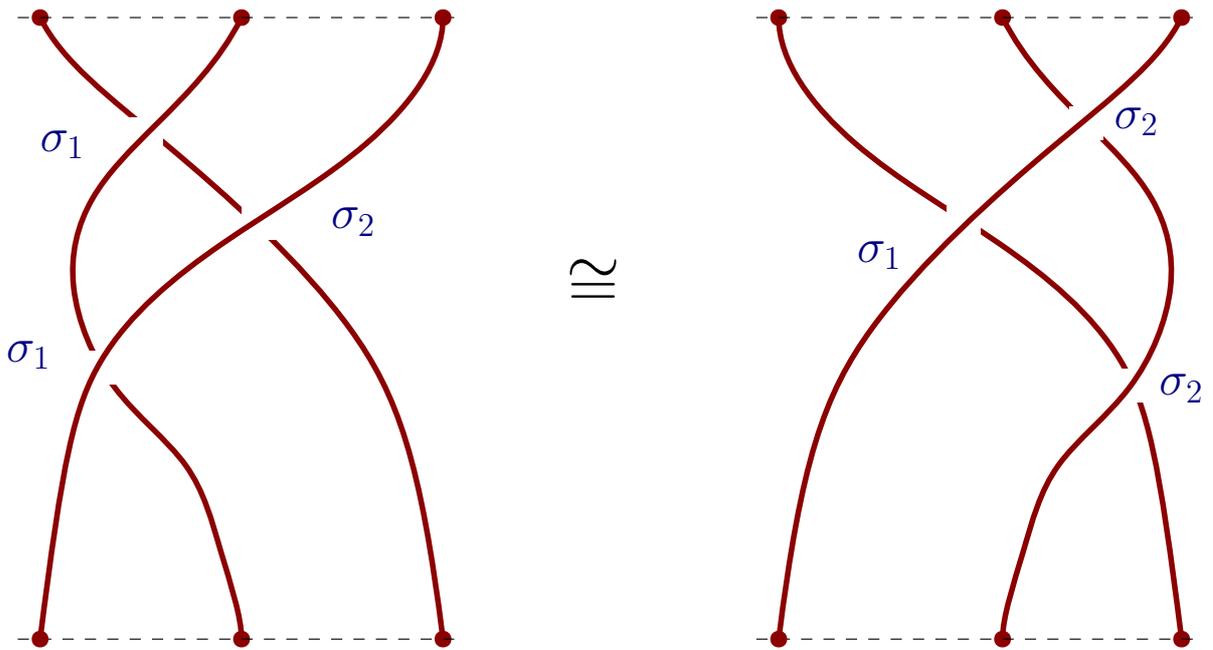}
	\caption{Two homotopic braids}
	\label{fig:braids}
\end{figure}

\noindent This group $B_3$ is generated by two loops $\sigma_1, \sigma_2$ that could be defined as follows: take a line $u=u_0\in \mathbb{R}_{-}\subset \mathbb{C}$,,
with complex coordinate $v$, and let $v^{+}_0, v_0^{-}$ be the positive and negative square roots of $-\frac{4}{27}u_0^{3}$; the loop $\sigma_1=\sigma^{+}$ ({\em resp.}
$\sigma_2=\sigma^{-}$) is based in $0$, contained in the line $u=u_0$ and makes one turn in the trigonometric sense around $v^{+}_0$  ({\em resp.} $v^{-}_0)$.
The relations between $\sigma_1$ and $\sigma_2$ are generated by $\sigma_1\sigma_2\sigma_1=\sigma_2\sigma_1\sigma_2$.\\
The center $C$ of $B_3$ is generated by $c=\left(\sigma_1\sigma_2\right)^{3}$. The quotient by this center is isomorphic to the group $B_3/C$ generated by $a=\sigma_1\sigma_2\sigma_1$
and $b=\sigma_1\sigma_2$ satisfying $a^{2}=b^{3}$; the quotient of $B_3/C$ by $a^{2}$ is the Möbius group $PSL_2\left(\mathbb{Z}\right)$ of integral homographies,
and the quotient of $B_3/C$ by $a^{4}$ is the
modular group $SL_2(\mathbb{Z})$ of integral matrices of determinant one, then a two fold covering of $PSL_2(\mathbb{Z})$. The quotient
$\mathfrak{S}_3$ of $B_3$ is defined by the relations $\sigma_1^{2}=\sigma_2^{2}=1$,
and by the relation which defines $B_3$, i.e. $\sigma_1\sigma_2\sigma_1=\sigma_2\sigma_1\sigma_2$ (see figure \ref{fig:braids}).\\

\indent Of course the disadvantage of the complex numbers is the difficulty to compute with them in $DNN$s, for instance $\sigma$ and $\tanh$ extended
to $\mathbb{C}$ have poles. Moreover all the dynamical regions are confounded in $\Lambda_\mathbb{C}^{\star }$; in some sense the room is too wide. Therefore,
we will limit ourselves to the sub-category $\Pi_\mathbb{R}=\mathcal{B}_3(\mathbb{R})$, made by the real points of
$\Lambda^{\star }$, but retaining all the morphisms between them, that is a full sub-category of $\mathcal{B}_3$. This means that only the paths are imaginary
in $\mathcal{B}_3(\mathbb{R})$.\\

\begin{figure}[ht]
	\centering
	\includegraphics[width=0.7\textwidth]{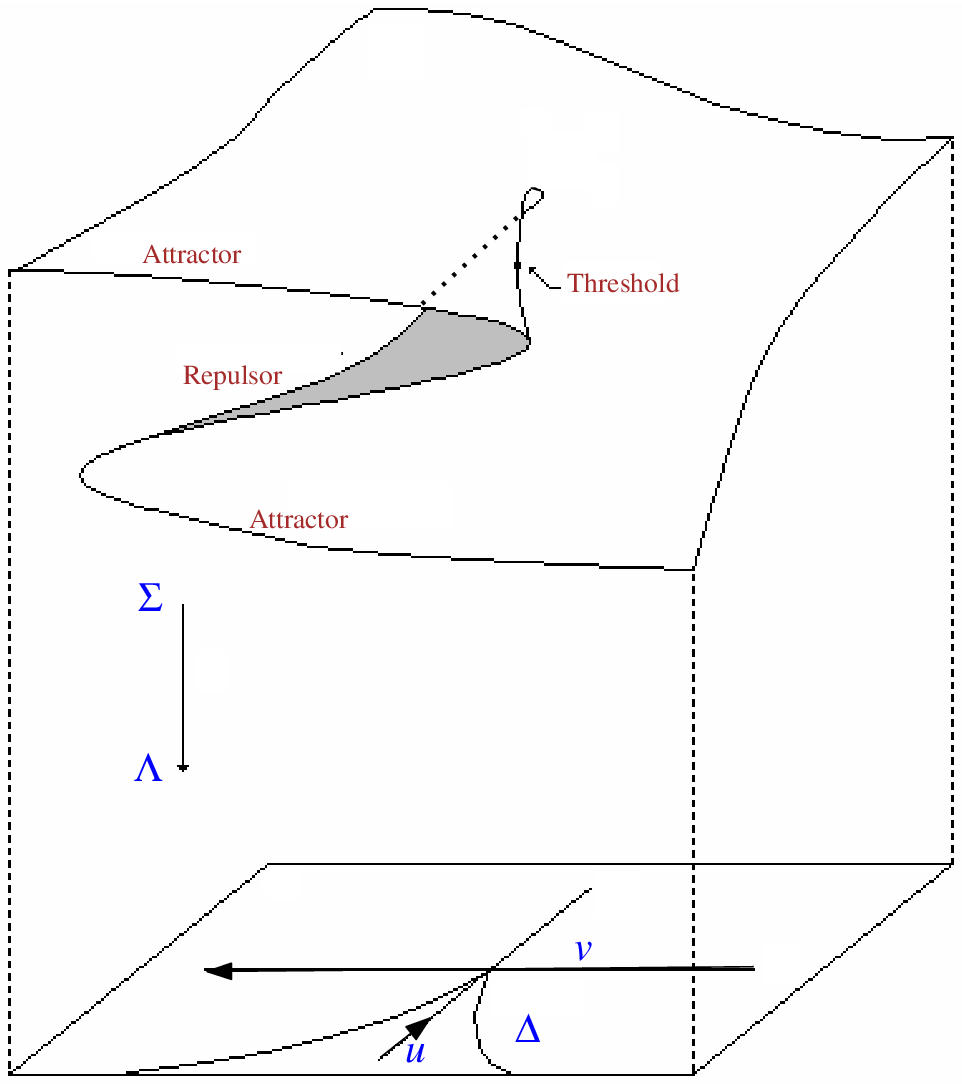}
	\caption{Cusp}
	\label{fig:cusp}
\end{figure}

Another sub-groupoid could be also useful (see figure \ref{fig:cusp}): consider the gathered surface $\Sigma$ in $\Lambda\times\mathbb{R}$ of equation
$z^{3}+uz+v=0$; let $\Delta_3$  be the natural lifting of $\Delta$ along the folding lines of $\Sigma$ over $\Lambda$,
the complement $\Sigma^{\star }$ of $\Delta_3$ in $\Sigma$ can be canonically embedded in the complex universal covering $\Lambda_\star^{\sim}$,
based in the real contractile region $\Lambda_0$ inside the real cusp, by taking, for each $(u,v)=\lambda$
in $\Lambda_0$ the points $\lambda_+$ and $\lambda_-$ respectively given by the paths $\sigma^{+}=\sigma_1$ and $\sigma^{-}=\sigma_2$, which
make simple turn over the branches of the cusp. When $\lambda$ approaches one of these branches, the corresponding point collides with it on $\Delta_3$, but the
other point continues to be isolated then the construction gives an embedding of $\Sigma^{\star }$. Therefore we can define the full sub-groupoid of $\mathcal{B}_3$ which has
as objects the points of $\Sigma^{\star }$, and name it $\mathcal{B}_3^{r}$ or $\Pi_r$.\\
\begin{rmk*}
	\normalfont The groupoid $\Pi_r$ can be further simplified, by taking one point in each region of interest: one point outside the preimage of the cusp $\Delta$,
	and three points in each region over the interior of the cusp.
\end{rmk*}
\begin{rmk*}
	\normalfont These four points correspond to the four real structures of Looijenga in
	the complex kaleidoscope \cite{Looijenga1978}.
\end{rmk*}

The groupoid $\mathcal{B}_3^{r}$ is naturally equipped with a covering (surjective) functor $\pi$ to the groupoid  $\mathcal{B}_3(\mathbb{R})$ of real points.\\
The interest of  $\mathcal{B}_3^{r}$ with respect to $\mathcal{B}_3(\mathbb{R})$ is that it distinguishes between the stable minimum and
the unstable one in the regime $u<0$. But the interest of $\mathcal{B}_3(\mathbb{R})$ with respect to $\mathcal{B}_3^{r}$ is that it speaks only of
computable quantities $u,v$ without ambiguity, putting all the ambiguities in the group $B_3$.\\

\noindent All these groupoids are connected, the two first ones, $\mathcal{B}_3(\mathbb{R})$ and $\mathcal{B}_3^{r}$ because they are full
subcategories of the connected groupoid $\mathcal{B}_3$, the other ones in virtue of the definition of a quotient (to the right) of a groupoid by a normal sub-group $H$
of its fundamental group $G$: it  has the
same objects, and two arrows $f,g$ from $a$ to $b$ are equivalent if they
differ by an element of $H$. This is meaningful because in $Aut_a$ ({\em resp.} $Aut_b$) the sub-group $H_a$ ({\em resp.} $H_b$) is well defined, being normal,
and moreover $f^{-1}g\in H_a$ is equivalent to $gf^{-1}\in H_b$.\\

Cardan formulas expresses the roots by using square roots and cubic roots. They give explicit formulas for the differences
of roots $z_2-z_1,z_3-z_1$. They can be seen directly in the surface $\Sigma$.\\
\begin{rmks*}
	\normalfont These formulas correspond to the simplest non trivial case of a map of period:
	\begin{enumerate}[label=(\roman*)]
		\item integral classes
		of the homology $H_0(P^{-1}_{u,v}(0)$ are transported along paths;
		\item the holomorphic form $dz$ is integrated
		on the integral classes.
	\end{enumerate}
	This gives a linear representation of $B_3$, which
	factorizes through $\mathfrak{S}_3$.\\
	
	\noindent Augment the variable $z$ by a variable $y$, the roots can be completed by the levels $Z_{u,v}$ over $(u,v)\in \Lambda$,
	which are the elliptic curves
	\begin{equation}
		P_{u,v}(z,y)=z^{3}+y^{2}+uz+v=0,
	\end{equation}
	the $2$-form $\omega=dz\wedge dy$ can be factorized as follows
	\begin{equation}
		\omega=-\frac{1}{2}dP\wedge \frac{dz}{y};
	\end{equation}
	the integral of $dx/y$ over the curve $Z_{u,v}$ is an elliptic integral, its periods over
	integral cycles, gives a linear representation of $B_3$ which factorizes through $SL_2(\mathbb{Z})$.\\
	Every stabilization of $z^{3}$ by a quadratic form gives rise to the representation of the first case in odd dimension and
	of the second case in even dimension.
\end{rmks*}

Natural groupoids smaller than $\mathcal{B}_3$ are given by quotienting the morphisms, replacing $B_3$ by $\mathfrak{S}_3$ or $SL_2(\mathbb{Z})$
or its projective version $PSL_2(\mathbb{Z})$ made by homographies.\\

\section{Pre-semantics}\label{sec:presemantics}

The natural languages have many functions, from everyday life to poetry and science, or politics and law, however all of them rely on
cognitive operations about meanings and shapes, as they appear in the many language-games of Wittgenstein or the
action/perception dimensions of Austin. Cf. \cite{Wittgenstein1953}, \cite{Austin1961-AUSPP}.\\
\indent The linguist Antoine Culioli, having studied in depth a great variety of natural languages, tried to characterize
some of these operations in meta-linguistic, for instance the generic structure and dynamics of a \emph{notional domain}. The notion here
can be "dog" or "cat" or "good" or "absent" or anything which has a meaning for most peoples, or specialists in some field.
To have a meaning must involve in general several occurrences and disappearances of the notion, a knowledge of its possible
properties and individuations, in a language
and in the world (data for instance, relations between them and classifications). \\
A good reference is the book \emph{Cognition and Representation in Linguistic Theory}, A. Culioli, Benjamins, \cite{Culioli}.\\
\indent The notional domain has an interior
$I$ where the properties of the notion are sure, an exterior $E$ where the properties are false, and a boundary $B$, where
things are more uncertain. A path through the boundary goes from "truly P" to "truly not P", through an uncertain region where "non-really P,
non really not P" can be said. In the center of $I$ are one or several prototypes of the notion. A kind of gradient vector leads
the mind to these archetypes, that Culioli named attracting centers, or attractors; however he wrote in 1989 (upcit.) the following
important precision: "Now the term attractor cannot be interpreted as an attainable last point (...) but as the representation
of the imaginary absolute value of the property (the predicate) which organizes an aggregate of occurrences into a structured
notional domain." Culioli also used the term of organizing center, but as we shall see this would conflict with another use.\\
\indent The division $I,B,E$ takes all its sense when interrogative mode is involved, or negation and double negation, or
intero-negative mode. In negation you go out of the interior, in interro-negation you come back inside from $E$.
"Is your brother really here" (it means that "I do not expect that your brother is here".)
"Now that, that is not a dog!" (you place yourself in front of proposition P, or inside the notion $I$, you know what is a dog,
then goes to $E$); "Shall I still call that a dog?" "I do not refuse to help"; here come back in $I$ of "help" after a turn in
its exterior $E$.
All these circumstances involve an imaginary place $IE$, where the regions are not separated, this is like the cuspidal point
before the separation of the branches $I$ and $E$ of the cusp.\\
\indent Mathematically this corresponds precisely to the creation of the external ({\em resp.}
internal) critical point of $z^{3}+uz+v$, on the curve $\Delta$. Example: "he could not have left the window open", the meaning
mobilizes the place $IE$ of indetermination, the maximum of ambiguity, where the two actions, "left" and "not to left" are possible,
then one of them is forbidden, and "not having left" is retained by the negation. In the terminology of Thom, the place $IE$
is the \emph{organizing center}, the function $z^{3}$ itself, the most degenerate one in the stable family, giving birth to the unfolding.\\
\indent To describe the mechanisms beyond these paths, Culioli used the model of the \emph{cam}: "the movement travels from one
place to another, only to return to the initial plane". Example: start from $IE$, then make a half-turn around $I$ which passes
by $E$ then come to $I$ by another half-turn. "This book is only slightly interesting." Here the meaning only appears if you imagine the
place where interesting and not interesting are not yet separated, then go to not interesting and finally temperate the judgment
by going to the boundary, near $I$; the compete turn leads you in another place, over the same point, thus the meaning is greatly
in the path, as an enclosed area. "This book is not uninteresting" means that it is more than interesting.
The paths here are well represented on the
gathered real surface $\Sigma$, of equation
\begin{equation}
z^{3}+uz+v=0,
\end{equation}
but they can also be made in the complement of $\Delta$ in $\Lambda$ in a complexified domain. It seems that
only the homotopy class is important, not the metric, however we cannot neglect a weakly quantitative aspect, on the way
of discretization in the nuances of the language. Consequently, the convenient representation of the moves of Culioli
is in the groupoid $\mathcal{B}_3^{r}$, that we propose to name the {\em Culioli groupoid}.\\

Remind that $LSTM$ and the other memory cells are mostly used in chains, to 
translate texts.\\
It is natural to make a rapprochement between their structural and dynamical properties and the meta-linguistic
description of Culioli. In many aspects René Thom was closed to Culioli in his own 
approach  of semantics,
see his book \emph{Mathematical Models of Morphogenesis} \cite{thom1983mathematical}, which is a translation of a French book published by Bourgois in 1980. The original theory was exposed in \cite{Thom1972}. In this approach, all the elementary catastrophes
having a universal unfolding of dimension less than 4
are used, through their sections and projections, for understanding in particular the valencies of the verbs, from
the semantic point of view,
according to Peirce, Tesnière, Allerton: impersonal, "it rains", intransitive "she sleeps", transitive "he kicks the ball",
triadic "she gives him a ball", quadratic "she ties the goat to a tree with a rope".\\
The list of organizing centers
is as follows:
\begin{multline}
y=x^{2},\quad y=x^{3},\quad y=x^{4},\quad y=x^{5},\quad y=x^{6},\\
y=x_1^{3}-x_2^{2}x_1,\quad y=x_1^{3}+x_2^{3}\quad (or \quad y=x_1^{3}+x_2^{2}x_1),\quad y=x_1^{4}+x_2^{2}x_1;
\end{multline}
respectively named: \emph{well}, \emph{fold}, \emph{cusp}, \emph{swallowtail}, \emph{butterfly},
\emph{elliptic umbilic}, \emph{hyperbolic umbilic} and \emph{ parabolic umbilic}, or with respect to the group which generalizes the Galois group $\mathfrak{S}_3$
for the fold, respectively: $A_1$, $A_2$, $A_3$, $A_4$, $A_5$, $D_4^{+}=D_4^{-}=D_4$ and $D_5$. The $A_n$ are the symmetric
groups $\mathfrak{S}_{n+1}$ and the $D_n$ index two subgroups of the symmetry groups of the hypercubes $I^{n}$ \cite{SB_1984-1985__27__19_0}.\\

\noindent It is not difficult to construct networks, on the model of $MLSTM$, such that the dynamics of neurons obey to the unfolding of
these singular functions. The various actors of a verb in a sentence could be separated input data, for different coordinates
on the unfolding parameters. The efficiency of these cells should be tested in translation.\\

Coming back to the memory cell \eqref{simplepolynomialform}, the critical parameters $x_t$ over $\Delta$
can be interpreted as boarders between regions of notional domains.\\
\indent The precise learned $2mn$ weights $w_x$ for the coefficients $u^{a}$ and $v^{a}$, for $a=1,...,m$, together with
the weights in the forms $\alpha^{a}$ for $h_{t-1}$ gives vectors (or more accurately matrices), which are like readers of the words $x$ in entry,
taking in account the contexts from the other words through $h$. Remember Frege: a word has a meaning only in the context od a sentence.
This is a citation of Wittgenstein, after he said that "Naming is not yet a move in a language-game" \cite[p. 49]{Wittgenstein1953}.\\
\indent To get "meanings", the names, necessarily
embedded in sentences, must resonate with other contexts and experiences, and must be situated with respect to the discriminant, along
a path, thus we suggest that the vector spaces of "readers" $W$, and the vector spaces of states $X$ are local systems $A$ over
a fibered category $\mathcal{F}$ in groupoids $\mathcal{B}_3^{r}$ over the network's category $\mathcal{C}$.\\
In some circumstances, the groupoid $\mathcal{B}_3^{r}$ can be replaced by the quotient over objects $\mathcal{B}_3(\mathbb{R})$,
or a quotient over morphisms giving $SL_2$ or $\mathfrak{S}_3$.\\

The case of $z^{3}$ corresponds to $A_2$. It is tempting to consider the case of $D_4$, i.e. the elliptic and hyperbolic umbilics, because
their formulas are very closed to $MGU2$ as mentioned at the end of the preceding section.\\
\indent This would allow the direct coding and translation of sentences by using three actant.\\
\begin{equation}
\eta=z^{3}\mp zw^{2}+uz+vw+x(z^{2}+w^{2})+y.
\end{equation}

\chapter{A natural $3$-category of deep networks}\label{chap:3-category}

In this chapter, we introduce a natural $3$-category for representing the morphisms, deformations and surgeries
of semantic functioning of $DNNs$ based on various sites and various stacks, which have connected models in their
fibers.\\
\indent Grothendieck's derivators will appear at two successive levels:
\begin{enumerate}
	\item formalizing internal aspects of this $3$-category;
	\item defining potential invariants of information over the objects of this $3$-category. Therefore we can expect that the interesting relations (for the theory and for its applications) appear at the level of a kind of "composition of derivators", and are analog to the spectral sequences of \cite{Grothendieck_1957}.
\end{enumerate}

\section{Attention moduli and relation moduli}

In addition to the chains of $LSTM$, another network's component is now recognized as essential for most of the tasks
in linguistic: to translate, to complete a sentence, to determine a context and to take into account a context for finding
the meaning of a word or sentence. This modulus has its origin in the \emph{attention operator}, introduced by Bahdanau et al. \cite{Bahdanauetal2014},
for machine translation of texts. The extended form that is the most used today was defined in the same context by Vaswani et al. 2017 \cite{DBLP:journals/corr/VaswaniSPUJGKP17},
under the common name of \emph{transformer} or simply \emph{decoder}. \\

\noindent Let us describe the steps of the algorithm: the input contains vectors $Y$ representing memories or hidden variables like contexts,
and external input data $X$ also in vectorial form.
\begin{enumerate}[label=\arabic*)]
	\item Three sets of linear operators are applied:
	\begin{align*}
		Q&=W^{Q}[Y],\\
		K&=W^{K}[Y,X],\\
		V&=W^{V}[Y];
	\end{align*}
	where the $W$'s are matrices of weights, to be learned.
	The vectors $Q,K,V$ are respectively called \emph{queries}, \emph{keys} and \emph{values}, from names used in Computer Science;
	they are supposed to be indexed by "heads" $i\in I$, representing individuals in the input,
	and by other indices $a\in A$, representing for instance different instant times, or aspects, to be integrated together.
	Then we have vectors $Q^{a}_i,K^{a}_i,V^{a}_i$.
	\item The inner products $E^{a}_i=k(Q^{a}_i|K^{a}_i)$  are computed (implying that $Q$ and $K$ have the same dimension),
	and the soft-max function is applied to them, giving a
	probability law, from the Boltzmann weights of energy $E^{a}_i$
	\begin{equation}
		p^{a}_i=\frac{1}{Z^{a}_i}e^{E^{a}_i},
	\end{equation}
\item a sum of product is computed
\begin{equation}
	V'_i=\sum_ap^{a}_iV^{a}_i.
\end{equation}
\item A new matrix is applied in order to mix the heads
\begin{equation}
	A_j=\sum_iw_j^{i}V'_i.
\end{equation}
\end{enumerate}

All that is summarized in the formula:
\begin{equation}
A_j(Y,X)=\sum_i\sum_a w_{j}^{i} {\rm softmax}\left[k(W^{Q}(Y)^{a}_i|W^{K}(Y,X)^{a}_i)\right]W^{V}(Y)^{a}_i.
\end{equation}
\indent A remarkable point is that, as it is the case for $MGU2$ or $LSTM$ and $GRU$ cells, the transformer corresponds to a mapping of degree $3$,
made by multiplying a linear form of $Y$ with non-linear function of a bilinear form of $Y$. Strictly speaking the degree $3$ is
only valid in a region of the parameters. In other regions, some saturation decreases the degree.\\

Chains of $LSTM$ were first used for language translations, and were later on used for image description helped by sentences predictions, as in
\cite{DBLP:journals/corr/KarpathyF14} or \cite{MaoYuille2015}, where they proved to outperform
other methods for detection of objects and their relations.\\
\indent In the same manner, the concatenation of attention cells has been proven to be very  beneficial in this context \cite{DBLP:journals/corr/abs-1806-01830}, then
it was extended to develop reasoning about the
the relations between objects in images and videos \cite{DBLP:journals/corr/RaposoSBPLB17},
\cite{pmlr-v80-barrett18a}, \cite{pmlr-v80-barrett18a}, \cite{San-Rap},
or \cite{DBLP:journals/corr/abs-2012-08508}.\\

In the $MHDPA$ (multi-head dot product attention) algorithm \cite{Santoro2018RelationalRN}, the inputs $X$ are either words, questions and features of objects
and their relations, coded into vectors, the inputs $Y$ combine hidden and external memories, the outputs $A$ are new memories, new relations and new questions.
\begin{rmk*}
	\normalfont Interestingly, the method combines fully supervised learning with unsupervised learning (or adaptation) by maximization of a learned functional
	of the above variables.
\end{rmk*}

\indent In particular, the memories or hidden variables issued from the transformer were re-introduced in the $LSTM$ chain; giving the
following symbolic formulas:
\begin{equation}
c_t=c_{t-1}\odot \sigma_f(x_t,h_{t-1})\oplus \sigma_i(x_t,m_{t})\odot \tau_h(x_t,h_{t-1});
\end{equation}
where $m_t$ results of transformer applied to the antecedent sequence of $h_s$, $c_s$ and $x_s$ ; and
\begin{equation}
h_t=\sigma_o(x_t,h_{t-1})\odot \tanh c_t.
\end{equation}

\noindent Geometrically, this can be seen as a \emph{concatenation of folds}, as proposed by Thom \emph{Esquisse d'une Sémiophysique} \cite{Thom1988}, to explain many
kinds of organized systems in biology and cognition. From this point of view, the concatenation of folds, giving the possibility of coincidence of cofolds \cite{argemi}, is a necessary condition for representing the emergence of a meaningful structure and oriented dynamic in a living system.\\
Note that, in the unsaturated regimes, $h_t$ has a degree $5$ in $h_{t-1}$, then its natural groupoid can be embedded in a braids groupoid of type $\mathcal{B}_5$. This augmentation, from the fold to the so called \emph{swallowtail}, could explain the greatest syntactic power of the $MHDPA$ with respect to
$LSTM$. However the concrete use of more memories in times $s$ before $t$ makes the cells much more complex than a simple mapping from
$t-1$ to $t$.\\

The above algorithm can be composed with other cells for detecting relations. For instance, Raposo et al. \cite{DBLP:journals/corr/RaposoSBPLB17}
have defined a \emph{relation operator}:
having produced contexts $H$ or questions $Q$ concerning two objects $o_i, o_j$ by a chain of $LSTM$ (that can be helped by external
memories and attention cells) the answer is taken from a formula:
\begin{equation}
A=f\left(\sum_{i,j}g\left(o_i,o_l;Q,H\right)\right),
\end{equation}
where $f$ and $g$ are parameterized functions, and $o_i:i\in I$ are vectors representing objects with their characteristics.\\
The authors insisted on the important invariance of this operator by the permutation group $\mathfrak{S}_n$ of the objects.\\

\noindent More generally, composed networks were introduced in 2016 by Andreas et al. \cite{DBLP:journals/corr/AndreasRDK16} for question answering about images.
The reasoning architecture $MAC$, defined by Hudson and Manning, \cite{HM18},
is composed of three attention operators named \emph{control}, \emph{write} and \emph{read}, in a $DNN$, inspired from the architecture of computers.\\

\noindent This leads us to consider the evolution of architectures and internal fibers of stacks and languages, in relation to the problems to be
solved in semantic analysis.\\

\section{The $2$-category of a network}

For representing languages in DNNs, we have associated to a small category $\mathcal{C}$ the class
$\mathcal{A}_\mathcal{C}={\sf Grpd}_\mathcal{C}^{\wedge}$ of presheaves over the category of fibrations in groupoids over $\mathcal{C}$.
The objects of $\mathcal{A}_\mathcal{C}$ were described in terms of presheaves $A_U$ on the fibers $\mathcal{F}_U$ for $U\in \mathcal{C}$
satisfying gluing conditions, cf. sections $3$ and $4$.
\begin{rmk*}
	\normalfont Other categories than groupoids, for instance posets or fibrations in groupoids over posets, can replace the
	groupoids in this section, and are useful in the applications, as we mentioned before, and as we will show in the forthcoming article on semantic communication.
\end{rmk*}

\indent Natural morphisms between objects $(\mathcal{F},A)$ and $(\mathcal{F}',A')$ of $\mathcal{A}_\mathcal{C}$ are defined
by a family of functors $F_U:\mathcal{F}_U\rightarrow\mathcal{F}'_U$, such that for any morphism $\alpha:U\rightarrow U'$ in $\mathcal{C}$,
\begin{equation}
F'_\alpha\circ F_{U'}=F_U\circ F_\alpha;
\end{equation}
and by a family of natural transformations $\varphi_U:A_U\rightarrow F_U^{\star}(A'_U)=A'_U\circ F_U$, such that for any morphism $\alpha:U\rightarrow U'$ in $\mathcal{C}$,
\begin{equation}
F^{\star}_{U'}(A'_\alpha)\circ\varphi_{U'}=F_\alpha^{\star}(\varphi_U)\circ A_\alpha,
\end{equation}
from $A_{U'}$ to $F_\alpha^{\star}(F_U^{\star}A'_U)=F_{U'}^{\star}\left((F'_\alpha)^{\star}A'_U\right)$.\\
Note that the family $\left\{F_U;U\in \mathcal{C}\right\}$ is equivalent to a $\mathcal{C}$-functor $F:\mathcal{F}\rightarrow \mathcal{F}'$ of fibered categories in groupoids, and the family $\varphi_U$ is equivalent to a morphism $\varphi$ in the topos $\mathcal{E}_\mathcal{F}$ from the object $A$ to the object $F^{\star}(A')$.\\
\begin{rmk*}
	\normalfont These morphisms include the morphisms already defined for the individual classifying topos $\mathcal{E}_\mathcal{F}$. But, even for one fibration $\mathcal{F}$
	and its topos $\mathcal{E}$, we can consider non-identity end-functor from $\mathcal{F}$ to itself, which give new morphisms in $\mathcal{A}_\mathcal{C}$.
\end{rmk*}

\indent The composition of $(F_U,\varphi_U);U\in \mathcal{C}$ with $(G_U,\psi_U)$ from $(\mathcal{G},B)$ to $(\mathcal{F},\mathcal{A})$ is defined
by the ordinary composition of functors $F_U\circ G_U$, and the twisted composition of natural transformation
\begin{equation}
(\varphi\circ\psi)_U=G_U^{\star }(\varphi_U)\circ\psi_U: B_U\rightarrow (F_U\circ G_U)^{\star}A'_U.
\end{equation}
This rule gives a structure of category to $\mathcal{A}_\mathcal{C}$.\\

\indent In addition, the natural transformations between functors give the vertical arrows in ${\sf Hom}_\mathcal{A} (\mathcal{F},A: \mathcal{F}',A')$, that form
categories:\\
a morphism from $(F,\varphi)$ to $(G,\psi)$ is a natural transformations $\lambda: F\rightarrow G$, which in this case with groupoids, is an homotopy
in the nerve, plus a morphism $a:A\rightarrow A$, such that
\begin{equation}
A'(\lambda)\circ\varphi=\psi\circ a:A\rightarrow G^{\star }A'.
\end{equation}
For a better understanding of this relation, we can introduce the points $(U,\xi)$ in $\mathcal{F}$ over $\mathcal{C}$, and read
\begin{equation}
A'_U(\lambda_U(\xi))\circ\varphi_U(\xi)=\psi_U(\xi)\circ a_U(\xi):A_U(\xi)\rightarrow A'_U(G_U(\xi)).
\end{equation}
This can be understood geometrically, as a lifting of the deformation $\lambda$ to a deformation of the presheaves.\\
\indent Vertical composition is defined by usual composition for the deformations $\lambda$ and ordinary composition in ${\sf End}(A)$
for $a$. Horizontal compositions are for $\mathcal{F}\rightarrow \mathcal{F}'\rightarrow \mathcal{F}"$.\\

\noindent Horizontal arrows and vertical arrows satisfy the axioms of a $2$-category  \cite{giraud-cohomologie}, \cite{maclane:71}.\\
This structure encodes the relations between several semantics over the same network.\\

The relations between several networks, for instance moduli inside a network, or networks that are augmented  by external links,
belong to a $3$-category, whose objects are the above semantic triples, and the $1$-morphism are lifting of functors between sites $u:  \mathcal{C}\rightarrow\mathcal{C}'$.\\

\cite[Theorem 2.3.2]{giraud-cohomologie} tells us that, as for ordinary presheaves, there exist natural right and left adjoints $u_\star $ and $u_!$ respectively of the
pullback $u^{\star }$ from the $2$-category ${\sf Cat}_{\mathcal{C}'}$ of fibrations over $\mathcal{C}'$ to the $2$-category ${\sf Cat}_{\mathcal{C}}$ of fibrations over $\mathcal{C}$.
They are natural $2$-functors, adjoint in the extended sense. These $2$-functors define adjoint $2$-functors between the above $2$-categories of classifying toposes
$\mathcal{A}_{\mathcal{C}}$ and $\mathcal{A}_{\mathcal{C}'}$, by using the natural constructions of $SGA 4$ for the categories of presheaves. They can be seen
as substitutions of stacks and languages induced by functors $u$.\\

\noindent The construction of $\mathcal{A}_\mathcal{C}$ from $\mathcal{C}$ is a particular case of Grothendieck's derivators \cite{AMBP_2003__10_2_195_0}.\\

\section{Grothendieck derivators and semantic information}

For $\mathcal{M}$ a closed model category, the map  $\mathcal{C}\mapsto \mathcal{M}_\mathcal{C}$, or $\mathcal{M}^{\wedge}_\mathcal{C}$ (see section \ref{modelML}), is an example of \emph{derivator} in the sense
of Grothendieck. References are \cite{grothendieck1983pursuing}, \cite{grothendieck1990derivateurs}, the three articles of
Cisinski \cite{AMBP_2003__10_2_195_0}, and the book of Maltsiniotis on the homotopy theory of Grothendieck \cite{AST_2005__301__R1_0}.\\
\indent A derivator generalizes the passage from a category to its topos of presheaves, in order to develop homotopy theory, as topos were made to develop
cohomology theory. It is a $2$-functor $\mathbb{D}$ from
the category ${\sf Cat}$ (or a special sub-category of diagrams, for instance ${\sf Poset}$) to the $2$-category ${\sf CAT}$, satisfying four axioms.
\begin{enumerate}[label=\alph*)]
	\item The first one tells us that $\mathbb{D}$ transforms sums of categories into products,
	\item The second one that isomorphisms of images can be tested on objects,
	\item the third one
	that there exists, for any functor $u:\mathcal{C}\rightarrow \mathcal{C}'$, a right adjoint $u_\star $ (defining homotopy limit)
	and a left adjoint $u_!$ (defining homotopy colimit) of the functor
	$u^{\star }=\mathbb{D}(u)$;
	\item the fourth axiom requires that these adjoints are defined locally; for instance, if $X'\in \mathcal{C}'$, and $F\in \mathbb{D}(C)$,
	therefore $u_\star F\in \mathbb{D}(C)'$, the fourth axiom tells us that
	\begin{equation}
		(u_\star F)_{X'}\cong p_\star j^{\star }F;
	\end{equation}
	where $j$ is the canonical map from $\mathcal{C}|X'$ to $\mathcal{C}$, and $p$ the unique morphism from $\mathcal{C}|X'$ to $\star$.
\end{enumerate}

\indent Another formula that expresses the same thing is
\begin{equation}
(u_\star F)_{X'}\cong H^{\star }\left(\mathcal{C}|X';F|_{\mathcal{C}|X'}\right),
\end{equation}
abstract version of a Kan extension formula.\\
In general, the cohomology is defined by
\begin{equation}
H^{\star }(\mathcal{C};F)=(p_{\mathcal{C}})_\star F\in \mathbb{D}(\star ).
\end{equation}

A first example of derivator is given by an Abelian category ${\sf Ab}$, like commutative groups or real vector spaces, and it is defined by the
derived category of differential complexes, where quasi-isomorphisms (isomorphisms in cohomology) are formally inverted,
\begin{equation}
\mathbb{D}(I)={\sf Der}({\sf Hom}(I^{\rm op}, {\sf Ab})).
\end{equation}

Another kind of example is a \emph{representable  derivator}
\begin{equation}
\mathbb{D}_\mathcal{M}(I)={\sf Funct}(I^{\rm op}, \mathcal{M}),
\end{equation}
where $\mathcal{M}$
is a closed model category. This can be seen as a non-Abelian generalization of the above first example.\\

A third kind of examples is given by the topos of sheaves over a representable derivator $\mathcal{M}_{\mathcal{C}}^{\wedge}$.\\

Then representable derivators allow to compare the elements of semantic functioning between several networks, for instance
a network with a sub-network of
this network, playing the role of a module in computation.\\

Consider the sub-categories $\Theta_P$, over the languages $\mathcal{A}_\lambda,\lambda\in \mathcal{F}_U$, made by the theories that exclude a
rigid proposition $P=!\Gamma$, in the sense they contain $P\Rightarrow\Delta$, for a given chosen $\Delta$, (see appendix \ref{app:nat-lang}). The right slice category $P|\mathcal{A_\lambda}$
acts on  $\Theta_P$. The information spaces $F$ define an object of $\mathcal{M}_{\Theta_P}$, its cohomology allow us to generalize the cat's manifolds, that
we defined below with the connected components of the category $\mathcal{D}$, in the following way: the dynamical object $\mathbb{X}$ is assumed to be defined over the stack
$\mathcal{F}$, then the dynamical space $g\mathbb{X}$ is defined over the nerve of $\mathcal{F}$, and the semantic functioning gives a simplicial map $gS:g\mathbb{X}\rightarrow gI^{\bullet}$
from $g\mathbb{X}$ space to the equipped theories, then we can consider the inverse image of $\Theta_P$ over the functioning network.
Composing with $F$ we obtain a parameterized object $M_P$ in $\mathcal{M}$, defining a local system over the category associated to $g\mathbb{X}$,
which depends on $\Gamma,\Delta$.
This represents the semantic information in $\mathbb{X}$ about the problem of (rigidly) excluding $P$ when considering that $\Delta$ is
(thought to be) false. Seen as an element of $\mathbb{D}(g\mathbb{X})$, its cohomology is an homotopical invariant of the information.\\

In this text, we have defined information quantities, or information spaces, by applying cohomology or homotopy limits, over the category $\mathcal{D}$ which expresses
a triple $\mathcal{C},\mathcal{F},\mathcal{A}$, made by a language over a pre-semantic over a site. The Abelian situation was studied through the bar-complex
of cochains of the module
of functions $\Phi$ on the fibration $\mathcal{T}$ of theories $\Theta$ over the category  $\mathcal{D}$. A non-Abelian tentative, for defining spaces of information,
was also proposed  at this level, using (in the non-homogeneous form) the functors $F$ from $\Theta_{\rm loc}$ to a model category $\mathcal{M}$ (see section \ref{sec:homotopy}).
Therefore information spaces were defined at the level of $\mathcal{M}_\mathcal{T}$, not at a level $\mathcal{M}_\mathcal{C}$.\\

Information spaces belong to $\mathbb{D}_\mathcal{M}(\mathcal{T})$. To compare spaces of information flows in two theoretical semantic networks, we have
at disposition the adjoint functors $\varphi_\star , \varphi_!$ of the functors $\varphi^{\star }=\mathbb{D}(\varphi)$ associated to $\varphi: \mathcal{T}\rightarrow\mathcal{T}'$,
between categories of theories. Those functors $\varphi$ can be associated to changes of languages $\mathcal{A}$, changes of stacks $\mathcal{F}$
and/or changes of basic architecture $\mathcal{C}$.\\

An important problem to address, for constructing networks and applying deep learning efficiently to them, is the realization of information relations
or correspondences,
by relations or correspondences between the underlying invariance structures. For instance, to realize a family of homotopy equivalences ({\em resp.} fibration, {\em resp.} cofibration)
in $\mathcal{M}$, by transformations of languages, stacks or sites having some properties, like enlargement of internal symmetries.\\

The analog problem for presheaves (set valued) is to realize a correspondence (or relation) between the topos $\mathcal{I}^{\wedge}$ and $(\mathcal{I}')^{\wedge}$
from a correspondence between convenient sites for them.\\

\noindent For toposes morphisms this is a classical result (see \cite[4.9.4]{SGA4} or the Stacks project \cite[7.16n 2.29]{stack-project}) that any geometric morphism $f_\star :{\sf Sh}(I)\rightarrow {\sf Sh}(J)$ comes
from a morphism of sites up to topos equivalence between $I$ and $I'$. More precisely, there exists a site $I'$ and a cocontinuous and continuous
functor $v:I\rightarrow I'$ giving an equivalence $v_!:{\sf Sh}(I)\rightarrow {\sf Sh}(I')$ extending $v$, and a site morphism $J\rightarrow I'$, given by a continuous
functor $u:I'\rightarrow J$ such that $f_\star=u_\star\circ v_!$.\\

\noindent From \cite{Shulman_2012}, a geometric morphism between ${\sf Sh}(I)$ and ${\sf Sh}(J)$ comes from a morphism of site
if and only if it is compatible with the Yoneda embeddings.\\

\section{Stacks homotopy of DNNs}

The characterization of fibrant and cofibrant objects in $\mathcal{M}_\mathcal{C}$ was the main result of chapter \ref{chap:stacks}.
All objects of $\mathcal{M}_\mathcal{C}$ are cofibrant and
the fibrant objects are described by theorem \ref{thm:fibrations-dnn}; we saw that they correspond to ideal semantic flows, where the condition $\pi^{\star }\pi_\star ={\sf Id}$
holds. They also correspond to the contexts and the types of a natural $M-L$ theory. The objects of $Ho(\mathcal{M}_{\mathcal{C}})$, \cite{quillen1967homotopical},
are  these fibrant and cofibrant objects of $\mathcal{M}_{\mathcal{C}}$, the $Ho$ morphisms being the homotopy classes of morphisms in $\mathcal{M}_{\mathcal{C}}$, generated
by inverting formally zigzags similar to the above ones. Thus we get a direct access to the homotopy category $Ho\mathcal{M}_{\mathcal{C}}$.
The $Ho$ morphisms are the homotopy equivalences classes of the substitutions of variables in the $M-L$ theory.\\

\indent From the point of view of semantic information, we just saw that homotopy is pertinent at the next level: looking first at languages over
the stacks, then at some functors from the posets of theories to a test model category $\mathcal{M}'$, then going to $Ho(\mathcal{M}')$.
However, the fact that we restrict to theories over fibrant objects and fibrations between them, implies that the homotopy of
semantic information only depends on the images of these theories over the category $Ho(\mathcal{M}_\mathcal{C})$.
How to use this fact for functioning networks?\\

\appendix
\noappendicestocpagenum
\addappheadtotoc
\renewcommand{\thesection}{\Alph{section}}
\chapter*{Appendices}
\section{Localic topos and Fuzzy identities}\label{app:localic}
\begin{defns*}
	let $\Omega$ be a complete Heyting algebra; a {\rm set over} $\Omega$, $(X,\delta)$, also named an {\rm $\Omega$-set}, is a set $X$ equipped with a map $\delta:X\times X\rightarrow \Omega$, which is symmetric
	and transitive, in the sense that for any triple $x,y,z$, we have $\delta(x,y)=\delta(y,x)$ and
	\begin{equation}
		\delta(x,y)\wedge \delta(y,z)\leq \delta(x,z).
	\end{equation}
	Note that $\delta(x,x)$ can be different from $\top$.\\
	But we always have $\delta(x,y)=\delta(x,y)\cap \delta(y,x)\leq \delta(x,x)$, and $\delta(x,y)\leq \delta(y,y)$.\\
	As $\Omega$ is made for fixing a notion of relative values of truth, $\delta$ is interpreted as {\rm fuzzy equality} in $X$; it generalizes the characteristic function
	of the diagonal when $\Omega$ is boolean. In our context of DNN, it can be understood as the
	progressive decision about the outputs on the trees of layers rooted in a given layer.\\
	A {\rm morphism} from $(X,\delta)$ to $(X',\delta')$ is an application $f:X\times X'\rightarrow \Omega$, such that, for every, $x,x',y,y'$
	\begin{align}\label{omegamap}
		\delta(x,y)\wedge f(x,x')&\leq f(y,x'),\\
		f(x,x')\wedge \delta'(x',y')&\leq f(x,y');\\
		f(x,x')\wedge f(x,y')&\leq \delta'(x',y').
	\end{align}
	Moreover
	\begin{equation}\label{domaine}
		\delta(x,x)=\bigvee_{x'\in X'}f(x,x').
	\end{equation}
	Which generalizes the usual properties of the characteristic function of the graph of a function in the boolean case.\\
	The {\rm composition} of a map $f:X\times X'\rightarrow \Omega$ with a map $f':X'\times X"\rightarrow\Omega$ is given by
	\begin{equation}
		(f'\circ f)(x,x")=\bigvee_{x'\in X'}f(x,x')\wedge f(x',x").
	\end{equation}
	And the {\rm identity morphism} is defined by
	\begin{equation}
		{\sf Id}_{X,\delta}=\delta.
	\end{equation}
	This gives the category ${\sf Set}_\Omega$ of sets over $\Omega$, also named {\rm $\Omega$-sets}.
\end{defns*}

\indent The Heyting algebra $\Omega$ of a topos $\mathcal{E}$ is made by the subobjects of the final object $\mathbf{1}$; the elements of $\Omega$ are named the
open sets of $\mathcal{E}$. In fact, there exists an object $\boldsymbol{\Omega}$ in $\mathbf{E}$, the Lawvere object, such that for every object $X\in \mathcal{E}$, the
set of subobjects of $X$ is naturally identified with the set of morphisms $\boldsymbol{\Omega}^{X}$. When $\mathcal{E}={\sf Sh}(\mathbf{X})$ is a Grothendieck topos, $\boldsymbol{\Omega}$ is the sheaf over $X$, which
is defined by $\boldsymbol{\Omega}(x)=\Omega(\mathcal{E}|x)$, the subobjects of $\mathbf{1}|x$. In the Alexandrov case, $\boldsymbol{\Omega}(x)$ is the set of open sets for the Alexandrov topology contained in $\Lambda_x$.\\

\noindent According to Bell, \cite{Bell}, a localic topos, as the one of a DNN, is naturally equivalent to the category ${\sf Set}_\Omega$ of $\Omega$-sets, i.e. sets equipped with fuzzy identities
with values in $\Omega$. We now give a direct explicit construction of this equivalence, because it offers a view of the relation between the network layers directly connected to the intuitionist logic of the topos.\\
\noindent Let us mention the PhD thesis of Johan Lindberg \cite[part III]{Lindberg-phd}, developing this point of view, and studying in details the naturalness of the geometric morphism of topos induced by a morphism of locale.
\\

\begin{defn}
	On the poset $\left(\Omega,\leq\right)$, the canonical Grothendieck topology $K$ is defined by the coverings by open subsets of the open sets.\\
	In the localic case, where we are, the topos is isomorphic to the Grothendieck topos $\mathcal{E}=Sh(\Omega,K)$.\\
	We assume that this is the case in the following exposition.
\end{defn}
\noindent In the particular case $\mathcal{E}=\mathbf{X}^{\wedge}$, where $\mathbf{X}$ is a poset, $\Omega$ is the poset of lower Alexandrov open sets and
the isomorphism with $Sh(\Omega,K)$ is given explicitly by proposition \ref{prop:alexandrov}.\\

Let $X$ be an object of $\mathcal{E}$; we associate to it the set $X^{\boldsymbol{\Omega}}$ of natural
transformation from $\boldsymbol{\Omega}$ to $X$. For two elements $x,y$ of $X^{\boldsymbol{\Omega}}$, we define $\delta_X(x,y)\in \Omega$ as the
largest open set over which $x$ and $y$ coincide.\\
An element $u$ of $X^{\boldsymbol{\Omega}}$ is nothing else than a sub-singleton in $X$, its domain $\omega_u$ is $\delta_X(u,u)$.
In other terms, in the localic case, $u$ is a section of the presheaf $X$ over an open subset $\omega_u$ in $\Omega$.\\
Then, if $u$, $v$ and $w$ are three elements of $X^{\boldsymbol{\Omega}}$, the maximal open set where $u=w$ contains the intersection
of the open sets where $u=v$ and $v=w$. Thus $X^{\boldsymbol{\Omega}}$ is a set over $\Omega$.\\
\indent In the same manner, suppose we have a morphism $f:X\rightarrow Y$ in $\mathcal{E}$, if we take $x\in  X^{\boldsymbol{\Omega}}$ and $y\in  Y^{\boldsymbol{\Omega}}$
we define $f(x,y)\in \Omega$ as the largest open set of $\boldsymbol{X}$ where $y$ coincides with $f_\star x$. This gives a morphism of $\Omega$-sets.\\
\noindent All that defines a functor from $\mathcal{E}$ to $Set_\Omega$.\\

\indent A canonical functor from ${\sf Set}_\Omega$ to $\mathcal{E}$ is given by a similar construction:\\
for $U\in \Omega$, $\Omega_U=\boldsymbol{\Omega}(U)$ is an $\Omega$-set, with the fuzzy equality defined by the internal equality
\begin{equation}
\delta_U(\alpha,\alpha')=(\alpha\asymp \alpha'),
\end{equation}
that is the restriction of the characteristic map of the diagonal subset:
$\Delta:\boldsymbol{\Omega}\hookrightarrow \boldsymbol{\Omega}\times\boldsymbol{\Omega}$. The set $\Omega_U$ can be identified with
the $\Omega$-set $U^{\boldsymbol{\Omega}}$ associated to the Yoneda presheaf defined by $U$. More concretely, an element $\omega$ of $\Omega_U$
is an open subset of $U$, and its domain $\delta(\omega,\omega)$ is $\omega$ itself.\\
Now, for any $\Omega$-set $(X,\delta)$, and for any element $U\in \Omega$, we define the set (see \eqref{omegamap},\eqref{domaine}), 
\begin{equation}
X_\Omega(U)={\sf Hom}_{{\sf Set}_\Omega}(\Omega_U,X)=\{f:\Omega_U\times X\rightarrow \Omega\}.
\end{equation}
In what follows, we sometimes write $X_\Omega=X$, when the notation does not introduce too much ambiguity.\\
If $V\leq W$, the formula $f(\omega_V,\omega_W)=\omega_V\cap \omega_W$ defines a $\Omega$-morphism from $\Omega_V$ to $\Omega_W$, which gives
a map from $X(W)$ to $X(V)$. Then $X_\Omega$ is a presheaf over $\Omega$.\\
\begin{prop}
	A morphism of $\Omega$-set $f:X\times Y\rightarrow \Omega$ gives by composition a natural transformation $f_\Omega:X_\Omega\rightarrow Y_\Omega$
	of presheaves over $\Omega$.
\end{prop}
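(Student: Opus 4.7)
The plan is to define $f_\Omega(U)$ simply as post-composition with $f$ in the category ${\sf Set}_\Omega$, and then to deduce naturality from associativity of this composition. Explicitly, for $g\in X_\Omega(U) = {\sf Hom}_{{\sf Set}_\Omega}(\Omega_U,X)$, I would set
\begin{equation*}
f_\Omega(U)(g)(\omega,y) \;=\; (f\circ g)(\omega,y) \;=\; \bigvee_{x\in X} g(\omega,x)\wedge f(x,y),
\end{equation*}
using the composition formula already given in the excerpt. The first task is to check that $f\circ g$ really is an $\Omega$-morphism $\Omega_U\to Y$, i.e.\ that it satisfies the four conditions \eqref{omegamap}--\eqref{domaine}.

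For the three ``compatibility'' inequalities in \eqref{omegamap}, I would argue by infinite distributivity of $\wedge$ over $\bigvee$ in the complete Heyting algebra $\Omega$, combined with the corresponding inequalities known separately for $f$ and $g$; these are entirely routine. The only point requiring a short calculation is the ``totality'' axiom \eqref{domaine}, which asks $\bigvee_{y}(f\circ g)(\omega,y)=\delta_{\Omega_U}(\omega,\omega)=\omega$. Here the key observations are that $\bigvee_y f(x,y)=\delta_X(x,x)$ by \eqref{domaine} for $f$, and that $g(\omega,x)\le\delta_X(x,x)$ by the symmetry–transitivity consequences of $g$ being an $\Omega$-morphism. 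Combined, they give
\begin{equation*}
\bigvee_{y}(f\circ g)(\omega,y) \;=\; \bigvee_{x} g(\omega,x)\wedge \delta_X(x,x) \;=\; \bigvee_x g(\omega,x) \;=\; \delta_{\Omega_U}(\omega,\omega).
\end{equation*}

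Next, for naturality I would invoke the restriction map $\rho_{V,W}\colon X_\Omega(W)\to X_\Omega(V)$, which is precomposition with the canonical $\Omega$-morphism $\iota_{V,W}\colon\Omega_V\to\Omega_W$ defined by $\iota_{V,W}(\omega_V,\omega_W)=\omega_V\cap\omega_W$ (one easily verifies these $\iota_{V,W}$ satisfy the axioms of an $\Omega$-morphism, and compose correctly for $V\le W\le Z$). The naturality square for $f_\Omega$ then reads
\begin{equation*}
f\circ(g\circ\iota_{V,W}) \;=\; (f\circ g)\circ\iota_{V,W},
\end{equation*}
which is exactly associativity of composition in ${\sf Set}_\Omega$ applied to the three composable morphisms $\Omega_V\xrightarrow{\iota_{V,W}}\Omega_W\xrightarrow{g}X\xrightarrow{f}Y$. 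This associativity is itself a direct consequence of the commutation of $\bigvee$ with $\wedge$ in $\Omega$, giving for each $(\omega_V,y)$ the common value $\bigvee_{\omega_W,x} \iota_{V,W}(\omega_V,\omega_W)\wedge g(\omega_W,x)\wedge f(x,y)$.

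The main ``obstacle'' is therefore not conceptual but notational: to keep the three roles of $\wedge$ and $\bigvee$ (inside $f$, inside $g$, and inside the composition) clearly separated and to apply distributivity at the right places. Once this is done, everything reduces to general properties of composition in the category of $\Omega$-sets, which is well known to behave like a ``category of fuzzy relations''. No further topos-theoretic input is required.
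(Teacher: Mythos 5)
Your proof is correct, but it takes a genuinely different route from the paper's. You treat the statement categorically: you define $f_\Omega(U)$ as post-composition $g\mapsto f\circ g$ in ${\sf Set}_\Omega$, verify directly that the composite $(f\circ g)(\omega,y)=\bigvee_{x}g(\omega,x)\wedge f(x,y)$ satisfies the four axioms of an $\Omega$-morphism (the only non-routine point being the totality axiom, which you handle correctly via $g(\omega,x)\leq\delta_X(x,x)$ and $\bigvee_y f(x,y)=\delta_X(x,x)$), and then obtain naturality from associativity of composition once the restriction maps are recognized as precomposition with the canonical morphisms $\Omega_V\to\Omega_W$. The paper instead proceeds by a concrete structural analysis of $X_\Omega$ alone: it shows that an element $f_U\in X(U)$ is determined by the family of open sets $f_U(u)=f(U,u)$, $u\in X$, which forms an open covering of $U$, that restriction to $V\leq U$ is the trace $f_U(u)\cap V$, and that $X_\Omega$ is in fact a \emph{sheaf} for the canonical topology on $\Omega$ — a stronger conclusion than the proposition states, and one that is used immediately afterwards in the proof of the equivalence of Proposition \ref{prop:etale}. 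Notably, the paper's proof never explicitly checks that composition with a morphism $f:X\times Y\to\Omega$ is well defined or natural; that part is left implicit, and your argument supplies exactly the verification the paper omits. Conversely, your argument does not yield the explicit description of $X_\Omega(U)$ by coverings satisfying \eqref{coveringlogic}, nor the sheaf property, both of which the paper needs later. The two proofs are therefore complementary: yours is the minimal, purely categorical justification of the literal statement, relying only on infinite distributivity in the complete Heyting algebra; the paper's is a computation that trades generality for the concrete data exploited in the rest of the appendix.
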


\begin{proof}
	Consider $f_U\in X(U)$; the axiom \eqref{domaine} tells that for every open set $V\subset U$, the family of open sets $f_U(V,u);u\in X$ is an open covering $f_U^{V}$
	of $V$.\\
	The first axiom of \eqref{omegamap}, which represents the substitution of the first variable, tells that on $V\cap W$ the two coverings $f_U^{V}$
	and $f_U^{W}$ coincide. Therefore, for every $u\in X$, the value $f_U(u)=f(U,u)$ of $f_U$ on the maximal element $U$ determines by intersection
	all the values $f_U(V,u)$ for $V\subset U$.\\
	\indent For $f_U\in X(U)$ and $V\leq U$, the functorial image $f_V$ of $f_U$ in $X(V)$ is the trace on $V$:
	\begin{equation}\label{heredity}
		\forall u\in X,\quad f_V(u)=\rho_{VU}f_U(u)=f_U(u)\cap V.
	\end{equation}
	
	\noindent This implies that $X_{\Omega}$ is a sheaf: consider a covering $\mathcal{U}$ of $U$, $(1)$  for two elements $f_U,g_U$ of $X(U)$,
	if the families of restrictions $f_U \cap V; V\in \mathcal{U}$, $g_U\cap V;V\in \mathcal{U}$, then $f_U=g_U$; $(2)$ if a family of coverings $f_V; V\in \mathcal{U}$
	is given, such that for any intersection $W=V\cap V'$, the restriction $f_V|W$ and $f_{V'}|W$ coincide, as open coverings, we can define an element $f_U$ of $X(U)$
	by taking for each $u\in X$ the open set $f_U(u)$ which is the reunion of all the $f_V(u)$ for $V\in \mathcal{U}$. The union of the sets $f_V(u)$ over $u\in X$
	is $V$, and the union of the sets $V$ is $U$, then the union of the $f_U(u)$ when $u$ describes $X$ is $U$.
\end{proof}

\noindent The second axiom of substitution tells that for any $u,v\in X$, $\delta(u,v)\cap f(u)=\delta(u,v)\cap f(v)$. The third axiom of \eqref{omegamap}, which expresses the functional
character of $f$, tells that for any $u,v\in X$, $\delta(u,v)\supseteq f(u)\cap f(v)$.\\
\indent Consequently, the elements of $X(\alpha)$ can be identified with the open coverings
$f_U(u);u\in X$ of the open set $U$, such that, in $\Omega$, we have
\begin{equation}\label{coveringlogic}
\forall u,v \in X, \quad f_U(u)\cap f_U(v)\subseteq \delta(u,v)\subseteq (f_U(u)\Leftrightarrow f_U(v));
\end{equation}
where $\Leftrightarrow$ denotes the internal equivalence $\Leftarrow\wedge\Rightarrow$ in $\Omega$.\\
Remind that $\alpha\Rightarrow \beta$ is the largest element $\gamma \in \Omega$ such that $\gamma\wedge\alpha\leq\beta$, and in our topological setting $\Omega=\mathcal{U}(\mathbf{X})$ it is the union of the open sets $V$ such that $V\cap \alpha\subseteq\beta$, therefore $f(u)\Leftrightarrow f(v)$
is the union of the elements $V$ of $\Omega$ such that $V\cap f(u)=V\cap f(v)$.\\

\begin{prop}\label{prop:etale}
	Let $\Omega$ be any complete Heyting algebra (i.e. a locale); the two functors $F:(X,\delta)\mapsto (U\mapsto X(U)={\sf Hom}_\Omega(\Omega_U,X)$
	and $G:X\mapsto (X^{\boldsymbol{\Omega}},\delta_X)={\sf Hom}_\mathcal{E}(\boldsymbol{\Omega},\mathbf{X})$
	define an equivalence of category between $\emph{Set}_\Omega$ and $\mathcal{E}=Sh(\Omega, K)$.
\end{prop}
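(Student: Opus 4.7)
The plan is to exhibit $F$ and $G$ as quasi-inverse functors by constructing a unit $\eta:\mathrm{Id}_{{\sf Set}_\Omega}\to G\circ F$ and a counit $\varepsilon:F\circ G\to \mathrm{Id}_{\mathcal{E}}$, and to verify that each is a natural isomorphism. The preceding proposition has already established that $F$ takes values in sheaves, so $F$ is a well-defined functor. A parallel argument shows $G$ is functorial: a morphism $\varphi:X\to Y$ in $\mathcal{E}$ carries sub-singletons to sub-singletons, and one sets $G(\varphi)(u,v)\in\Omega$ to be the largest open on which $\varphi_\star u$ coincides with $v$; the axioms \eqref{omegamap} follow from functoriality of $\varphi_\star$, while \eqref{domaine} expresses that every local section of $Y$ equals the join of the sub-singletons it dominates.

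For the unit, given $(X,\delta)$ and $u\in X$, set $\omega_u=\delta(u,u)$ and define $\tilde u\in X_\Omega(\omega_u)$ by $\tilde u(v)=\delta(u,v)$ for $v\in X$. The covering condition $\bigvee_{v\in X}\tilde u(v)=\omega_u$ holds because $\tilde u(u)=\omega_u$ and $\tilde u(v)\leq \delta(u,u)=\omega_u$ for all $v$; the bracketing conditions in \eqref{coveringlogic} reduce to the two-sided transitivity axiom of $\delta$. I then let $\eta_X:X\times X_\Omega^{\boldsymbol{\Omega}}\to\Omega$ send $(u,\sigma)$ to the largest open on which $\sigma$ agrees with $\tilde u$. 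The $\Omega$-morphism axioms are again transitivity, and \eqref{domaine} holds because, by the normal form \eqref{heredity}, every sub-singleton of $X_\Omega$ is locally of the form $\tilde u|_V$.

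For the counit, given a sheaf $X$ over $\Omega$ and $U\in\Omega$, an $\Omega$-morphism $h:\Omega_U\to X^{\boldsymbol{\Omega}}$ unpacks, by \eqref{omegamap} and \eqref{domaine}, into a family of sub-singletons $(x_V)_{V\leq U}$ of $X$ compatible under restriction on overlaps; the sheaf axioms glue this family to a unique section $s_h\in X(U)$. Conversely, $s\in X(U)$ produces $h_s:\Omega_U\to X^{\boldsymbol{\Omega}}$ sending $V\leq U$ to $s|_V$, with $h_s(V,u)$ the largest open on which $s|_V$ and $u$ coincide. The two assignments are mutually inverse and natural in $U$, so the components $\varepsilon_X$ assemble into a natural isomorphism $FG\to\mathrm{Id}_{\mathcal{E}}$.

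The triangle identities $G\varepsilon\circ \eta G=\mathrm{id}$ and $\varepsilon F\circ F\eta=\mathrm{id}$ then reduce to the tautologies that $\tilde u$ viewed as a sub-singleton of $X_\Omega$ has domain $\omega_u$ and agrees with itself on $\omega_u$, and that a section $s\in X(U)$ is recovered as the glueing of the sub-singletons it dominates. The main obstacle is the counit: one must verify carefully that the $\Omega$-morphism axioms \eqref{omegamap}--\eqref{domaine} for a map $\Omega_U\to X^{\boldsymbol{\Omega}}$ precisely encode a \emph{compatible} family of local sections of $X$ over a cover of $U$, with no additional data; once this translation is made, the sheaf property of $X$ furnishes the unique glued section and the rest is bookkeeping.
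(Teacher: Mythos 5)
Your overall strategy --- realizing the equivalence through sub-singletons on one side and coverings satisfying \eqref{coveringlogic} on the other, and checking that the two composites are naturally isomorphic to the identities --- is the same as the paper's. Your $\tilde u(v)=\delta(u,v)$ is the paper's special covering $f^{u}_\alpha(w)=\alpha\wedge\delta(u,w)$ taken at $\alpha=\delta(u,u)$, your $\eta_X(u,\sigma)$ coincides with the paper's map $H(u,f)$, and your counit is the inverse of the paper's transformation $\eta_{\mathcal{E}}:{\sf Id}_{\mathcal{E}}\rightarrow F\circ G$. The counit half of your argument is essentially complete and matches the paper's.

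The gap is on the unit side. You announce that $\eta$ will be shown to be a natural \emph{isomorphism}, but what you actually verify is only that each $\eta_X:X\times (G F X)\rightarrow\Omega$ satisfies the axioms \eqref{omegamap} and \eqref{domaine}, i.e.\ that it is a morphism of $\Omega$-sets; you then pass directly to the triangle identities. An adjunction $F\dashv G$ with invertible counit only exhibits $G$ as fully faithful, i.e.\ $\mathcal{E}$ as a reflective subcategory of ${\sf Set}_\Omega$; without the invertibility of $\eta_X$ for an \emph{arbitrary} $\Omega$-set $(X,\delta)$ (equivalently, essential surjectivity of $G$) you do not get an equivalence, and this invertibility is not a formal consequence of what you have established. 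Concretely, you must produce the inverse relation $\eta_X^{-1}(\sigma,u)=\eta_X(u,\sigma)$ and verify the two composites $\bigvee_{\sigma}\eta_X(u,\sigma)\wedge\eta_X(v,\sigma)=\delta(u,v)$ and $\bigvee_{u}\eta_X(u,\sigma)\wedge\eta_X(u,\sigma')=\delta'(\sigma,\sigma')$. The second is the delicate point: one needs that the domain of an arbitrary sub-singleton $\sigma=(f,U)$ of $X_\Omega$ is covered by the opens $f_U(u)$ on which $f$ coincides with $\tilde u$, which requires extracting from \eqref{coveringlogic} that $f_\alpha(u)=\alpha$ forces $f_\alpha(v)=\alpha\wedge\delta(u,v)$ for every $v$. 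This computation is exactly where the paper spends most of its proof (showing its map $H$ is an involution with $H'\circ H=\delta_X$ and $H\circ H'=\delta'$), so your closing remark that the counit is ``the main obstacle'' misplaces the difficulty: the counit is the easy half, and the unit's invertibility is the substantive content still owed.
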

\begin{proof}
	The composition $F\circ G$ sends a sheaf $X(U);U\in \Omega$ to the sheaf $X^{\boldsymbol{\Omega}}(U);U\in \Omega$
	made by the open coverings of $U$ by sets indexed by the sub-singletons $u$ of $X$ satisfying the two inclusions \eqref{coveringlogic}.\\
	\indent Consider an element $s_U\in X(U)$, identified with a section of $X$ over $U$. For each sub-singleton $v\in X^{\boldsymbol{\Omega}}$,
	we define the open set $f(v)=f^{s}_U(v)$ by the largest open set in $U$ where $v=s_{U}$. As the sub-singletons generate $X$, this forms an open covering of $U$.
	It satisfies \eqref{coveringlogic} for any pair $(u,v)$: $\delta(u,v)$ is the largest open set where $u$ coincides with $v$, then the first inclusion is
	evident, for the second one, consider the intersection $\delta(u,v)\cap f(u)$, on it we have $u=v$ and $u=s$, then it is included in $\delta(u,v)\cap f(v)$. 
	If $V\subset U$ and $s_V=s_{U}|V$, the open covering of $V$ defined by $s_V$ is the trace of the open covering defined by $s_U$.\\
	Moreover, a morphism $\phi:X\rightarrow Y$ in $\mathcal{E}$ sends sub-singletons to sub-singletons and induces injections of the maximal domain of extension;
	therefore the above construction defines a natural transformation $\eta_{\mathcal{E}}$
	from $Id_{\mathcal{E}}$ to $F\circ G$.\\
	This transformation is invertible: take an element $f$ of $X^{\boldsymbol{\Omega}}(U)$, and for every $U\in \Omega$, consider
	the set $S(f,U)$ of sub-singletons $u$ of $X$ such that $f_U(u)\neq\emptyset$. If $u$ and $v$ belong to this set, the first inequality of \eqref{coveringlogic}
	implies that $u=v$ on the intersection $f_U(u)\cap f_U(v)$, then, by the sheaf property $3$, $S(f,U)$ defines a unique element $u_U\in X(U)$.\\
	\indent In the other direction, the composition $G\circ F$ associates to a $\Omega$-set $(X,\delta)$ the $\Omega$-set $(X^{\boldsymbol{\Omega}},\delta_{X,\boldsymbol{\Omega}})$ made by the sub-singletons of the presheaf $X_\Omega$, i.e. the families $(f,U)$ of compatible coverings $f_V(v),v\in X$ of $V; V\subset U$. We have $\delta((f,U),(f,U))=U$; therefore, for simplifying the notations, we denote the singleton by $f$, and $U$ is $\delta(f,f)$.\\
	We saw that, for two elements $f$, $(f'$, the open set $\delta(f,f')$ is the maximal open subset of
	$U\cap U'$ where the coverings $f_V(u)$ and $f'_V(u)$ coincide for every $u\in X$ and $V\subset U$.\\
	For a pair $(u,f)$, of $u\in X$ and $(f\in X^{\boldsymbol{\Omega}}$, we define $H(u,f)\in \Omega$ as the unions of the open sets $f_V(u)$, over $V\subset \delta(f,f)\cap \delta(u,u)$.\\
	The formula \eqref{heredity} implies that $H(u,f)$ is also the union of open sets $\alpha$ such that $\alpha\subset f_\alpha(u)$, i.e. $f_\alpha(u)=\alpha$.\\
	We verify that $H$ is a morphism of $\Omega$-sets: the first axiom
	\begin{equation}
		\delta(u,v)\wedge H(u,f)\leq H(v,f)
	\end{equation}
	results from
	\begin{equation}
		\delta(u,v)\wedge f_\alpha(u)\leq f_\alpha(v)
	\end{equation}
	for every $\alpha\in \Omega$.\\
	The second axiom
	\begin{equation}
		H(u,f)\wedge \delta(f,f')\leq H(u,f')
	\end{equation}
	comes from the definition of $\delta(f,f')$ as an open set where the induced coverings
	coincide.\\
	For the third axiom,
	\begin{equation}
		H(u,f)\wedge H(u,f')\leq \delta (f,f');
	\end{equation}
	if $\alpha$ is included in the intersection we have $f_\alpha(u)=\alpha=f'_\alpha(u)$, then $\alpha\leq\delta (f,f')$.\\
	From \eqref{coveringlogic}, we have $f_\alpha(u)\subset\delta(u,u)$, then
	\begin{equation}
		H(u,f)\subset \delta(u,u)
	\end{equation}
	And for every $\alpha\leq \delta(u,u)$, we can define a special covering $f^{u}_\alpha$ by
	\begin{equation}
		f^{u}_\alpha(u)=\alpha,\quad f^{u}_\alpha(v)=\alpha\wedge \delta(u,v);
	\end{equation}
	it satisfies \eqref{coveringlogic}. Then
	\begin{equation}
		\delta(u,u)=\bigvee_{f\in X(U)}H(u,f)
	\end{equation}
	The $\Omega$-map $H$ is natural in $X\in {\sf Set}_\Omega$. To terminate the proof of proposition \ref{prop:etale}, we have to show
	that $H$ is invertible, that is to find a $\Omega$-map $H':X_\Omega^{\boldsymbol{\Omega}}\times X\rightarrow\Omega$, such that
	$H'\circ H=\delta_X$ and $H\circ H'=\delta_{X_\Omega, \boldsymbol{\Omega}}$. We note the first fuzzy identity by $\delta$
	and the second one by $\delta'$.\\
	\indent In fact $H'(f,u)=H(u,f)$ works; in other terms $H$ is an involution of $\Omega$-sets. let us verify this fact:\\
	by definition of the composition
	\begin{equation}
		H'\circ H(u,v)=\bigvee_fH(u,f)\wedge H'(f,v)
	\end{equation}
	is the reunion of the $\alpha\in \Omega$ such that there exists $f$ with $\alpha=f_\alpha(u)=f_\alpha(v)$, then by the first inequality in \eqref{coveringlogic}
	it is included in $\delta(u,v)$. Now consider $\alpha\leq \delta(u,v)\subseteq \delta(u,u)$, and define a covering of $\alpha$ by $f_\alpha^{u}(w)=\alpha\cap \delta(u,w)$
	for any $w\in X$, this gives $\alpha\leq f_\alpha^{u}(v)$ then $\alpha\subseteq H(v,f^{u})$, then $\alpha\subset H(u,f^{u})\wedge H'(f^{u},v)$.\\
	On the other side,
	\begin{equation}
		H\circ H'(g,f)=\bigvee_u H(g,u)\wedge H(u,f),
	\end{equation}
	is the reunion of the $\alpha\in \Omega$ such that there exists $u$ with $\alpha=f_\alpha(u)=g_\alpha(u)$. In this case, we consider
	the set $S(f,\alpha)$ of elements $v\in X$ such that $f_\alpha(v)\neq\emptyset$. If $v$ and $w$ belong to this set, the first inequality of \eqref{coveringlogic}
	implies that $v=w$ on the intersection $f_\alpha(v)\cap f_z(w)$, then, by the sheaf property, $S(f,\alpha)$ defines a unique element $u_\alpha\in X$. This element
	must be equal to $u$. The same thing being true for $g$, this implies that $f_\alpha(v)=g_\alpha(v)$ for all the elements $v$ of $X$, some of them giving $\alpha$
	the other giving the empty set. Consequently, $H\circ H'(g,f)\subseteq \delta'(f,g)$.\\
	The other inclusion $\delta'(f,g)\subseteq H\circ H'(g,f)$ being obvious, this terminates the proof of the proposition.
\end{proof}

This proposition generalizes to the localic Grothendieck topos the construction of the sheaf space
(espace étalé in French) associated to a usual topological sheaf. However the accent in $\Omega$-sets is put more on the gluing of sections
than on a well defined set of germs of sections, as in the sheaf space. In some sense, the more general $\Omega$-sets give
also a more global approach, as in the original case of Riemann surfaces. Replacing a dynamics for instance by its solutions, pairs of domains and functions on them, with the relation of prolongation over sub-domains. This seems to be well adapted to the understanding of a DNN, on sub-trees of its architectural graph $\boldsymbol{\Gamma}$.\\

The localic Grothendieck topos $\mathcal{E}_\Omega$ are the "elementary topos" which are sub-extensional (generated by sub-singletons) and defined over ${\sf Set}$ \cite[p. 207]{Bell}. \\
Particular cases are characterized by special properties of the lattice structure of the locale $\Omega$ \cite[pp. 208-210]{Bell}:
\begin{itemize}
	\item we say that two elements $U,V$ in $\Omega$ are separated by another element $\alpha\in \Omega$ when one of them is smaller than $\alpha$ but not the other one.
\end{itemize}

\noindent $\mathcal{E}_\Omega$ is the topos of sheaves over a topological space $\mathbf{X}$ if and only if $\Omega$ is \emph{spatial}, which means by definition, that any pair of elements of $\Omega$
is separated by a \emph{large} element, i.e. an element $\alpha$ such that $\beta\wedge\gamma\leq\alpha$ implies $\beta\leq\alpha$ or $\gamma\leq\alpha$.\\
\noindent Moreover, in this case, $\Omega$ is the poset of open sets of $\mathbf{X}$, and the large elements are the complement of the closures of points of $\mathbf{X}$.\\
The topological space is not unique, only the \emph{sober} quotient is unique. A topological space is sober when every irreducible closed set is the closure of one and only one point.\\

$\mathcal{E}_\Omega$ is the topos of presheaves over a poset $\mathcal{C}_\mathbf{X}$ if and only if $\Omega$ is an Alexandrov lattice, i.e. any pair of elements of $\Omega$
is separated by a \emph{huge} (very large) element, i.e. an element $\alpha$ such that $\bigwedge_{i\in I}\beta_i\leq\alpha$ implies that $\exists i\in I, \beta_i\leq\alpha$.\\
In this case $\Omega$ is the set of lower open sets for the Alexandrov topology on the poset.\\
If $\Omega$ is finite, large and huge coincide, then spatial is the same as Alexandrov.\\

\section{Topos of DNNs and spectra of commutative rings}\label{app:top-of-dnn}

A finite poset with the Alexandrov topology is sober. This is a particular case of 
Scott's topology. Then it is also a particular case of spectral
spaces \cite{10.2307/1995344}, \cite{priestley94}, that are (prime) spectra of a commutative ring with the Zariski topology.\\

\noindent From the point of view of spectrum, a tree in the direction described in theorem \ref{thm:dnn}, corresponds to a ring with a unique maximal ideal, i.e., by definition a local ring.\\
The minimal points correspond to minimal primes. The gluing of two posets along an ending vertex corresponds to the fiber product of the two rings over the
simple ring with only one prime ideal \cite{Tedd-thesis}. A ring with a unique prime ideal is a field, in this case the maximal ideal is $\{0\}$.
This gives the following result:
\begin{prop}
	The canonical (i.e. sober) topological space of a $DNN$ is the Zariski spectrum of a commutative ring which is the
	fiber product of a finite set of local rings over a product of fields.
\end{prop}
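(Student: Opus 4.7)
The plan is to combine Theorem \ref{thm:dnn} with Hochster's theorem and with the ring-theoretic translation of the gluing construction of posets. Throughout I use the fact, already invoked in the preceding discussion, that the Alexandrov topology on a finite poset is sober and hence realises a spectral space; the sober quotient of $\mathbf{X}$ is thus homeomorphic to the prime spectrum of \emph{some} commutative ring. The task is to identify a canonical such ring, taking advantage of the very special shape of $\mathbf{X}$.

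First, by Theorem \ref{thm:dnn}, the poset $\mathbf{X}$ decomposes as a finite union of trees $T_1,\ldots,T_r$, each rooted at a unique maximal element (an input layer or a tang $A$), any two of which meet only in a (possibly empty) set of common minimal elements (output layers or tips $a'$). In the lower Alexandrov topology of Definition \ref{defn:alexandrov}, open sets are downward closed, closed points are the maximal elements, and minimal elements are generic points (their closures exhaust the up-sets). For each individual tree $T_i$, the corresponding Alexandrov space has a unique closed point; by Hochster's theorem it is the spectrum of a commutative ring $R_i$, and the uniqueness of the closed point forces $R_i$ to be local with maximal ideal corresponding to the root of $T_i$. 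The finitely many minimal elements of $T_i$ correspond to the minimal primes of $R_i$.

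Second, I translate the gluing of the trees into a ring-theoretic construction. Each minimal element $m$ of $\mathbf{X}$, viewed as a point of $\operatorname{Spec}(R_i)$ for every $T_i$ containing it, corresponds to a minimal prime; its residue ring at that generic point is a field $k_m$, consistent with the paragraph recalled from the text that a ring with a unique prime ideal is a field. Gluing two trees $T_i,T_j$ along a common minimal element $m$ is a pushout in the category of sober (spectral) spaces along the one-point space $\operatorname{Spec}(k_m)$, and this pushout is dual to the fibre product $R_i\times_{k_m}R_j$ of rings, as already noted in the excerpt citing Tedd. More generally, if $M\subset\mathbf{X}$ is the set of shared minimal points and $M_i$ the subset of $M$ lying in $T_i$, I form the limit
\begin{equation*}
R \;=\; \Bigl\{(r_1,\ldots,r_r)\in R_1\times\cdots\times R_r \,\Big|\, \text{for every } m\in M_i\cap M_j,\ \overline{r_i}=\overline{r_j}\text{ in }k_m\Bigr\},
\end{equation*}
which can be written as an iterated fibre product of the $R_i$ over products of the fields $k_m$, or equivalently as a single fibre product of $R_1\times\cdots\times R_r$ with $\prod_{m\in M}k_m$ over $\prod_{m\in M}\bigl(\prod_{i:\,m\in M_i}k_m\bigr)$ via the diagonal maps.

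Third, I check that $\operatorname{Spec}(R)$ recovers $\mathbf{X}$ with its Alexandrov topology. The inclusions $R\hookrightarrow R_i$ yield morphisms $\operatorname{Spec}(R_i)\to\operatorname{Spec}(R)$ whose set-theoretic union is a bijection onto $\operatorname{Spec}(R)$, identifications taking place exactly at the shared minimal points, and the specialization order transports accordingly. Combined with the sober identification from step one, this gives the stated description.

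The main obstacle is step two: one must verify that the pushout-of-spectra / fibre-product-of-rings correspondence behaves correctly when the gluing is performed simultaneously along several (non-closed) generic points, rather than along a single closed point as in the familiar Schwede-style gluings. The key point is that each shared minimal element is the spectrum of a field and that the canonical maps $R_i\to k_m$ obtained by localising at the minimal prime followed by reduction are surjective, so that the iterated fibre product is a true equaliser in commutative rings and Hochster's theorem applies to its spectrum. Everything else is a bookkeeping exercise packaging the trees of Theorem \ref{thm:dnn} into the above limit.
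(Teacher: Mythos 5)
Your skeleton is the same as the paper's: a finite poset with the Alexandrov topology is sober and spectral (Hochster), Theorem \ref{thm:dnn} supplies the decomposition into trees rooted at the maximal points and joined at the minimal points, each tree with its unique closed point yields a local ring, and the joining is encoded by a fiber product over the residue fields at the shared minimal primes. The paper offers essentially no more than this sketch either, delegating the one substantive step --- that the spectrum of the fiber product really is the glued poset --- to Lewis's 1973 construction as reported in \cite{Tedd-thesis}.

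The gap is in your justification of precisely that step. You claim that the canonical maps $R_i\to k_m$ (localize at the minimal prime, then reduce) are surjective, and you lean on this to treat the gluing as a standard pinching along closed points. But if $p$ is a minimal prime of a local ring $R_i$ that is not maximal --- the generic situation here, since the trees have depth at least one --- the composite $R_i\to (R_i)_p\to k(p)$ is a localization followed by a quotient and is not surjective: the paper's own Example I in this appendix, $\mathbb{K}[[x]]\to\mathbb{K}((x))$, is a counterexample. Since the shared points are \emph{minimal} in every tree containing them, none of the structure maps into the product of fields is a surjection, so the Ferrand/conductor-square results for pinching along closed immersions do not apply. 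What is actually needed is the statement that the spectrum of a fiber product of local rings along their localizations at minimal primes with a common residue field is the corresponding pushout of posets; this is true (compare $\mathbb{Z}_{(2)}\times_{\mathbb{Q}}\mathbb{Z}_{(3)}=\mathbb{Z}_{(2)}\cap\mathbb{Z}_{(3)}$, whose spectrum is the three-point confluence poset) and is exactly Lewis's gluing-along-an-open-subset operation --- the set of minimal points is Alexandrov-open --- but it must be invoked or proved directly; your surjectivity argument would not establish it.
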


\noindent The construction of a local rings for a given finite poset can be made by recurrence over the number of primes, by successive application of two
operations: gluing a poset along an open subset of another poset,  and joining several maximal points; this method is due to Lewis 1973 \cite{Tedd-thesis}.\\
\begin{exs}\normalfont 
	\begin{enumerate}[label=\Roman* $-$]
		\item The topos of Shadoks \cite{proute-shadok} corresponds to the poset $\beta< \alpha$ with two points; this is the spectrum of any \emph{discrete valuation ring} only containing the ideal $\{0\}$
		and a non-zero maximal ideal. Such a ring is the subset of a commutative field $\mathbb{K}$ with a valuation $v$ valued in $\mathbb{Z}$, defined by $\{a\in \mathbb{K}| v(a)\geq0\}$.
		An example is $\mathbb{K}((x))$ the field of fractions of the formal series $\mathbb{K}[[x]]$, with the valuation given by the smallest power of $x$ (and $\infty$) for $a=0$.
		The valuation ring is $\mathbb{K}[[x]]$, also noted $K\{x\}$, its maximal ideal is $\mathfrak{m}_x=x\mathbb{K}[[x]]$.
		\item Consider the poset of length three: $\gamma<\beta<\alpha$. Apply the gluing construction to the ring $A=\mathbb{K}\{x\}$ embedded in $\mathbb{K}((x))$ and the
		ring $B=\mathbb{K}((x))\{y\}$ projecting to $\mathbb{K}((x))$; this gives the following local ring:
		\begin{equation}
			D=\mathbb{K}\{x\}\times_{\mathbb{K}((x))}\mathbb{K}((x))\{y\}\cong \{d=a+yb|a\in A, b\in B\}\subset B.
		\end{equation}
		The sequence of prime ideals is
		\begin{equation}
			\{0\}\subset yB\subset \mathfrak{m}_x+yB.
		\end{equation}
	\item Continuing this process, we get a natural local ring which spectral space 
	is the chain of length $n+1$, $\alpha_n< ...< \alpha_0$ or simplest $DNN$s.
	There is one such ring for any commutative field $\mathbb{K}$:
	\begin{equation}
		\begin{split}
			D_n=\{d=a_n+x_{n-1}b_{n-1}+...+x_1b_1\in \mathbb{K}((x_1,x_2,...,x_n))|\\a_n\in \mathbb{K}\{x_n\}, b_{n-1}\in \mathbb{K}((x_{n}))\{x_{n-1}\},...,b_1\in \mathbb{K}((x_2,...,x_{n}))\{x_1\}.
		\end{split}
	\end{equation}
	The sequence of prime ideals is
	\begin{equation}
		\begin{split}
			\{0\}\subset x_1\mathbb{K}((x_2,...,x_{n}))\{x_1\}\subset \\x_1\mathbb{K}((x_2,...,x_{n}))\{x_1\}+x_{2}\mathbb{K}((x_3,...,x_{n}))\{x_{2}\}\subset\\...
			\subset x_1\mathbb{K}((x_2,...,x_{n}))\{x_1\}+...+ x_n\mathbb{K}\{x_n\}.
		\end{split}
	\end{equation}
	\end{enumerate}
\end{exs}

\section{Classifying objects of groupoids}\label{app:classifying}
\begin{prop}
	There exists an equivalence of category between any connected groupoid $\mathcal{G}$ and its fundamental group $G$.
\end{prop}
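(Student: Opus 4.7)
The plan is to fix a base object and realize $G$ as the automorphism group at that object, then exhibit the inclusion functor and a retraction that are mutually quasi-inverse.

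First I would choose an object $x_0\in \mathcal{G}$ and set $G=\mathrm{Aut}_{\mathcal{G}}(x_0)$; because $\mathcal{G}$ is connected all such automorphism groups are isomorphic, so the chosen $G$ genuinely deserves to be called \emph{the} fundamental group. Using the axiom of choice, I would pick, for every object $x\in \mathcal{G}_0$, an arrow $\gamma_x\colon x_0\to x$ in $\mathcal{G}$, normalized so that $\gamma_{x_0}=\mathrm{id}_{x_0}$. This family of ``paths to the base point'' is the only piece of data the argument needs.

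Next I would define the two functors. View $G$ as a one-object category $\mathbf{B}G$. The inclusion functor $F\colon \mathbf{B}G\to \mathcal{G}$ sends the unique object to $x_0$ and each $g\in G$ to itself, regarded as an arrow $x_0\to x_0$ in $\mathcal{G}$. In the reverse direction, define $H\colon \mathcal{G}\to \mathbf{B}G$ by collapsing all objects to the unique object of $\mathbf{B}G$ and sending an arrow $f\colon x\to y$ of $\mathcal{G}$ to the automorphism $\gamma_y^{-1}\circ f\circ \gamma_x\in G$. A short check using the normalization $\gamma_{x_0}=\mathrm{id}_{x_0}$ shows that $H$ preserves composition and identities, so it is indeed a functor, and that $H\circ F=\mathrm{id}_{\mathbf{B}G}$ on the nose.

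Finally I would produce the natural isomorphism $\eta\colon F\circ H\;\Rightarrow\;\mathrm{id}_{\mathcal{G}}$ whose component at $x\in\mathcal{G}_0$ is the chosen arrow $\gamma_x\colon x_0\to x$; naturality amounts exactly to the identity $f\circ \gamma_x=\gamma_y\circ(\gamma_y^{-1}\circ f\circ \gamma_x)$ for every $f\colon x\to y$, which is trivially true. Each $\gamma_x$ is invertible in the groupoid, so $\eta$ is a natural isomorphism, and $(F,H,\mathrm{id},\eta)$ is an adjoint equivalence. There is really no hard step: the only subtlety worth flagging is the use of the axiom of choice to pick the family $\{\gamma_x\}$, which can be replaced by a canonical choice (e.g.\ $\gamma_{x_0}=\mathrm{id}$ only) and an appeal to essential surjectivity plus full faithfulness of $F$—full faithfulness is the definitional statement $G=\mathrm{Aut}_{\mathcal{G}}(x_0)$, and essential surjectivity is connectedness of $\mathcal{G}$.
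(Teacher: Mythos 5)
Your proof is correct and follows essentially the same route as the paper's: fix a base object, take $G$ as its automorphism group, choose a family of connecting arrows (using connectedness), and use them both to define the retraction functor and to furnish the natural isomorphism $F\circ H\Rightarrow \mathrm{id}_{\mathcal{G}}$. The only differences are notational (your $\gamma_x$ points out of the base object rather than into it, so your conjugation formula $\gamma_y^{-1}\circ f\circ\gamma_x$ is the mirror image of the paper's $\gamma_y\circ\gamma\circ\gamma_x^{-1}$), and your closing remark about full faithfulness plus essential surjectivity is a harmless alternative packaging of the same argument.
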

\begin{proof}
	let us choose an object $O$ in $\mathcal{G}$, the group $G$ is represented by the group $G_O$ of automorphisms of $O$.
	The inclusion gives a natural functor $J:G\rightarrow \mathcal{G}$ which is full and faithful. In the other direction, we choose for any
	object $x$ of $\mathcal{G}$, a morphism (path) $\gamma_x$ from $x$ to $O$, we choose $\gamma_O=id_O$, and we define a functor $R$ from $\mathcal{G}$ to $G$ by sending
	any object to $O$ and any arrow $\gamma:x \rightarrow y$ to the endomorphism $\gamma_y\circ\gamma\circ \gamma_x^{-1}$ of $O$. The rule of
	composition follows by cancellation. A natural isomorphism between $R\circ J$ and $Id_G$ is the identity. A natural transformation $T$ from $J\circ R$
	to $Id_\mathcal{G}$ is given by sending $x\in \mathcal{G}$ to $\gamma_x$, which is invertible for each $x$. The fact that it is natural results
	from the definition of $R$: for every morphism $\gamma:x \rightarrow y$, we have
	\begin{equation}
		T(y)\circ Id(\gamma)=\gamma_y\circ\gamma=(\gamma_y\circ\gamma)\circ\gamma_x^{-1}\circ\gamma_x=JR(\gamma)\circ T(x).
	\end{equation}
	
	\noindent What is not natural in general (except if $\mathcal{G}=G=\{1\}$) is the choice of $R$. This makes groupoids strictly richer than groups, but not from the
	point of view of homotopy equivalence. Every functor between two groupoids that induces an isomorphism of $\pi_0$, the set of connected
	components, and of $\pi_1$, the fundamental group, is an equivalence of category.
\end{proof}

One manner to present the topos $\mathcal{E}=\mathcal{E}_\mathcal{G}$ of presheaves over a small groupoid $\mathcal{G}$ (up to category equivalence)
is to decompose $\mathcal{G}$ in connected components $\mathcal{G}_a;a\in A$, then $\mathcal{E}$ will be product of the topos $\mathcal{E}_a;a\in A$
of presheaves over each component. For each $a\in A$, the topos $\mathcal{E}_a$ is the  category of $G_a$-sets,
where $G_a$ denotes the group of auto-morphisms of any object in $\mathcal{G}_a$.\\
The classifying object $\Omega=\Omega_\mathcal{G}$ is the boolean algebra of the subsets of $A$.\\
\indent In the applications, we are frequently interested by the subobjects of a fixed object $X=\{X_a;a\in A\}$.
The algebra of subobjects $\Omega^{X}$, has for elements all the subsets that are preserved by $G_a$ for each component $a\in A$ independently.\\
Thus we can consider what happens for a given $a$. Every element $Y_a\in \Omega^{X_a}$  has a complement $Y_a^{c}=\neg Y_a$, which is
also invariant by $G_a$, and we have $\neg\neg=Id$. Here the relation of negation $\leq$ is the set-theoretic one. It is also true for the operations $\wedge$
(intersection of sets), $\vee$ (union of sets), and the internal implication $p\Rightarrow q$, which is defined in this case by $(p\wedge q)\vee\neg p$.\\
\indent All the elements $Y_a$ of $\Omega^{X_a}$ are reunions of orbits $Z_i;i \in K(X_a)$ of the group $G_a$ in the $G_a$-set $X_a$.
On each orbit, $G_a$ acts transitively.\\
Each subobject of $X$ is a product of subobjects of the $X_a$ for $a\in A$. The product over $a$ of the $K(X_a)$ is a set $K=K(X)$. \\
\indent The algebra $\Omega^{X}$ is the Boolean algebra of the subsets of the set of elements  $\{Z_i;i\in K\}$, that we can note simply $\Omega_K$.\\
\indent The arrows in this category, $p\rightarrow q$, correspond to the pre-order $\leq$, or equivalently to the inclusion of sets,
and can be understood as implication of propositions. This is the implication in the external sense, if $p$ is true then $q$ is true,
not in the internal sense $q^{p}$, also denoted $p\Rightarrow q$, that is also the maximal element $x$ such that $x\wedge p\leq q$).\\
On this category, there exists a natural Grothendieck topology, named the canonical topology, which is the largest (or the finest) Grothendieck topology
such that, for any $p\in \Omega$, the presheaf $x\mapsto {\sf Hom}(x,p)$ is a sheaf. For any $p\in \Omega$, the set of coverings $J_K(p)$ is the set of collections
of subsets $q$ of
$p$ whose reunion is $p$. In particular $J_K(\emptyset)$ contains the empty family; this is a singleton.\\
\begin{prop}
	The topos $\mathcal{E}$ is isomorphic to the topos ${\sf Sh}(\Omega;K)$ of sheaves for this topology
	$J_K$ (see for instance {\rm Bell, Toposes and local set theories \cite{Bell}}).
\end{prop}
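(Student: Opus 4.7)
The plan is to derive the asserted equivalence as an instance of Grothendieck's comparison lemma (see \cite[Chap.\ III]{SGA4} and \cite[Theorem 1.1.8]{caramello-18}), taking $\Omega$ --- more precisely, the full sub-poset of $\mathcal{E}$ whose objects are the orbit-decompositions $\Omega^X$ for $X$ running over a generating family, viewed as a single Heyting algebra inside $\mathcal{E}$ --- as a small dense subsite of $\mathcal{E}$. First I would record the atomic decomposition already established: each $\mathcal{G}$-object $X$ is the disjoint union of its orbits $Z_i$, $i\in K(X)$, and the Boolean algebra $\Omega^X$ of its subobjects is canonically $\Omega_K=\mathcal{P}(K(X))$. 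The orbits $Z_i$ are the atoms of $\Omega^X$, and since $G_a$ acts transitively on each, they are (up to isomorphism) the representable objects of $\mathcal{E}_a$.

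Next I would verify that $J_K$, defined by declaring a family $\{q_\alpha\leq p\}$ to cover $p$ whenever $\bigvee_\alpha q_\alpha=p$, coincides with the canonical topology induced on $\Omega$ from $\mathcal{E}$. In $\mathcal{E}$, a family of subobjects is jointly epimorphic iff their union equals the ambient object; restricted to $\Omega$ this is precisely $J_K$. Moreover every representable $\mathrm{Hom}_{\mathcal{E}}(-,p)|_\Omega$ is a sheaf for $J_K$ because maps into $p$ glue uniquely along any covering family --- subobjects in a Boolean algebra have no overlap subtlety --- so $J_K$ is subcanonical, and the maximality of the canonical topology identifies the two.

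The heart of the argument is then the verification of density. Every $\mathcal{G}$-set $X$ is, by the orbit decomposition, a coproduct in $\mathcal{E}$ of objects of $\Omega$; the family of inclusions $\{Z_i\hookrightarrow X\}_{i\in K(X)}$ is a $J_K$-covering of $X$ viewed inside $\mathcal{E}$, and $\Omega$ is stable under the pullbacks that occur in the lemma's hypotheses because intersections of orbits are again orbit-unions (or empty). Grothendieck's comparison lemma then delivers an equivalence $\mathrm{Sh}(\Omega,J_K)\simeq \mathrm{Sh}(\mathcal{E},J_{\mathrm{can}})\simeq \mathcal{E}$, the last step being the tautological recovery of any Grothendieck topos as sheaves on itself for the canonical topology (applicable because $\mathcal{E}$ is essentially small relative to the generating orbit set). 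On the level of explicit functors, to a $\mathcal{G}$-set $X$ one associates the sheaf $p\mapsto \mathrm{Hom}_{\mathcal{E}}(p,X)$, and to a sheaf $F$ on $(\Omega,J_K)$ one associates the colimit $\varinjlim_{Z_i\in \Omega} F(Z_i)$ with its induced $\mathcal{G}$-action; the two constructions are quasi-inverse by the orbit decomposition together with the sheaf condition at the atomic coverings.

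The main obstacle is pinning down the right small site $\Omega$ in a way faithful to the statement: the algebra $\Omega^X$ depends on $X$, whereas the proposition names a single $\Omega$. Two remedies present themselves --- either fix a generating $\mathcal{G}$-set (the disjoint union over $a\in A$ of the regular $G_a$-representations, for instance) and run the argument inside $\mathcal{E}/X$, or take the filtered colimit of the $\Omega^X$, which agrees with the full subcategory of $\mathcal{E}$ spanned by the orbit subobjects of all generators. The first is cleaner but proves an equivalence of slices; the second matches the wording but requires checking that density and the sheaf condition are preserved under the filtered colimit, which is the delicate bookkeeping step. A secondary subtlety is the handling of connected components with infinite $G_a$-orbits, where $J_K$-coverings must be allowed to be infinite to match effective epimorphisms in $\mathcal{E}$; this is consistent with the definition of $J_K$ as the canonical topology, but must be verified explicitly.
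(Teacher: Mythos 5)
Your proposal takes a genuinely different route from the paper, and the route has a gap at exactly the step you call the heart of the argument. The paper's own proof is a direct computation on the site $(\Omega_K,J_K)$: every covering of $p$ is refined by the covering of $p$ by the atoms $Z_i\leq p$; these atoms are pairwise disjoint and the empty covering of $\bot$ forces $F(\bot)=\star$, so the sheaf condition degenerates to a canonical isomorphism $F(p)\cong\prod_{Z_i\leq p}F(Z_i)$ with no residual compatibility data --- a sheaf is freely determined by its values on the atoms. Your proposal instead invokes the comparison lemma with $\Omega$ as a dense subsite of $\mathcal{E}$ for the canonical topology, but the lemma requires the subsite to sit \emph{fully} (or at least locally fully) inside $\mathcal{E}$, and the poset $\Omega_K$ does not: its only morphisms are the inclusions $p\leq q$, whereas in $\mathcal{E}$ the orbits carry all their equivariant maps, in particular the automorphism group $N_{G_a}(H)/H$ of an orbit $Z_i\cong G_a/H$. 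Local fullness already fails at an atom with a nontrivial automorphism, since an atom admits no nontrivial refinement in the poset. Concretely, for $\mathcal{G}=G$ a nontrivial group and $X$ its regular representation, $K$ is a singleton, $\Omega_K=\{\bot,\top\}$, and ${\sf Sh}(\Omega_K,J_K)\simeq{\sf Set}$, so the inclusion of the poset into $B_G$ cannot be dense in the sense the comparison lemma needs. The same loss of information is why your proposed quasi-inverse cannot be constructed: a sheaf on the poset carries no action of $\mathcal{G}$ on the sets $F(Z_i)$ from which to form ``$\varinjlim_{Z_i}F(Z_i)$ with its induced $\mathcal{G}$-action''; likewise ${\sf Hom}_{\mathcal{E}}(Z_i,X)$ is the fixed-point set $X^{H_i}$ stripped of any action, and the resulting comparison functor is faithful but not full.

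What your method does prove correctly is the analogous statement for the \emph{full} subcategory of $\mathcal{E}$ spanned by the orbits, with all equivariant maps and the atomic topology: there density and fullness hold, the comparison lemma applies, and one recovers $\mathcal{E}=\prod_{a}B_{G_a}$. That is the standard atomic-site presentation of a groupoid topos, but it is a different site from the Boolean algebra $\Omega_K$ named in the statement. For the poset site itself, the only thing that can be extracted is what the paper's two-line computation gives, namely that ${\sf Sh}(\Omega_K,J_K)$ is the category of families of sets indexed by the atoms (the localic topos of the atomic frame $\mathcal{P}(K)$); any further identification with $\mathcal{E}$ has to be routed through the orbit decomposition of a chosen object rather than through density of $\Omega$ in $\mathcal{E}$. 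Your secondary observations --- that the representables are sheaves, that infinite coverings must be admitted, and that the statement is ambiguous about which $\Omega$ is meant (the paper fixes an object $X$ and works with $\Omega^{X}=\Omega_K$) --- are all reasonable, but none of them is where the argument breaks.
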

\begin{proof}
	For all $p$, any covering of $p$ has for refinement the covering made by the disjoint singletons $Z_i$ that belong to $p$,
	seen as a set; then, for every sheaf $F$ over $\Omega$,
	the restriction maps give a canonical isomorphism from $F(p)$ with the product of the sets $F(Z_i)$ over $p$ itself.\\
	In particular, any sheaf has for value in $\bot=\emptyset$ a singleton.
\end{proof}

\section{Non-Boolean information functions}\label{app:non-boolean}

\noindent This is the case of chains and injective presheaves on them.\\

\noindent The site $S_n$ is the poset $0\rightarrow 1\rightarrow ...\rightarrow n$. A finite  object $E$ is chosen
in the topos of presheaves $S_n^{\wedge}$, such  that each map $E_i\rightarrow E_{i-1}$ is an injection,
and we consider the Heyting algebra $\Omega^{E}$, that is made
by the subobjects of $E$. The inclusion, the intersection and the union of subobjects are evident.
The
only non-trivial internal operations are the exponential, or internal implication $Q\Rightarrow T$, and the negation $\neg Q$, that is a particular case $Q\Rightarrow\emptyset$.\\
\begin{lem}\label{lem:implication}
	Let $T_n\subset T_{n-1}\subset ...\subset T_0$ and $Q_n\subset Q_{n-1} \subset... \subset Q_0$
	be two elements of $\Omega^{E}$, then the implication $U=(Q\Rightarrow T)$ is inductively defined by the following formulas:
	\begin{align*}
		U_0&=T_0\vee (E_0\backslash Q_0),\\
		U_1&=U_0\wedge (T_1\vee (E_1\backslash Q_1),\\
		...&\\
		U_k&=U_{k-1}\wedge (T_k\vee (E_k\backslash Q_k),\\
		...&
	\end{align*}
\end{lem}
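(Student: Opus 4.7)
The plan is to identify the Heyting implication with the largest subobject making the usual universal inequality hold, and then to compute this maximum level by level, exploiting injectivity.

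First, I would use the injectivity of the structure maps of $E$ to identify all the $E_i$ with nested subsets of $E_0$, so $E_n\subseteq E_{n-1}\subseteq\cdots\subseteq E_0$. Under this identification, a subobject $T$ of $E$ becomes exactly a nested family $T_n\subseteq T_{n-1}\subseteq\cdots\subseteq T_0$ with $T_i\subseteq E_i$: the compatibility condition that the restriction map $T_{i}\to T_{i-1}$ exist is simply $T_i\subseteq T_{i-1}$ (inside $E_{i-1}$). Since $\wedge$ and $\leq$ in $\Omega^E$ are pointwise intersection and pointwise inclusion, this makes the Heyting algebra of subobjects entirely explicit.

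Next I would recall the universal property: by definition $U=(Q\Rightarrow T)$ is the largest subobject of $E$ with $U\wedge Q\leq T$. Pointwise, this says $U_i\cap Q_i\subseteq T_i$ for every $i$, equivalently $U_i\subseteq T_i\cup(E_i\setminus Q_i)$ for every $i$; together with the subobject condition $U_i\subseteq U_{i-1}$ and $U_i\subseteq E_i$. So the task becomes: find the pointwise maximum nested family satisfying these constraints. That such a maximum exists and is itself a subobject is a routine check: the join (pointwise union) of any family of subobjects of $E$ satisfying $V\wedge Q\leq T$ still satisfies the same inequality and still forms a nested family, so it is a subobject.

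The construction then proceeds by induction on $i$. For $i=0$ there is no nesting constraint from above, so the maximum value is $U_0=T_0\cup(E_0\setminus Q_0)$. Assuming $U_{i-1}$ built, the maximum $U_i\subseteq E_i$ that is contained in $U_{i-1}$ and in $T_i\cup(E_i\setminus Q_i)$ is their intersection, $U_i=U_{i-1}\cap(T_i\cup(E_i\setminus Q_i))$, and we note that $T_i\cup(E_i\setminus Q_i)\subseteq E_i$ automatically, so the formula stays inside $E_i$. This is precisely the recursion in the statement.

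The final step is to confirm the two universal-property checks. That the family $\{U_k\}$ is a subobject amounts to $U_k\subseteq U_{k-1}$, which is immediate from the recursion. That $U\wedge Q\leq T$ holds follows because $U_k\subseteq T_k\cup(E_k\setminus Q_k)$ by the last factor of the intersection, so $U_k\cap Q_k\subseteq T_k$. Maximality follows because any subobject $V$ with $V\wedge Q\leq T$ satisfies $V_k\subseteq T_k\cup(E_k\setminus Q_k)$ for every $k$, and by nesting $V_k\subseteq V_{k-1}\cap(T_k\cup(E_k\setminus Q_k))$, so by induction $V_k\subseteq U_k$. The only mild obstacle is purely bookkeeping: one must be careful when identifying the $E_i$ with subsets of $E_0$ that ``$E_k\setminus Q_k$'' everywhere means the complement taken inside $E_k$ (not inside $E_0$), which is why each stage of the recursion automatically lands in $E_k$ without a separate intersection with $E_k$ being written.
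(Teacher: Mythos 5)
Your proposal is correct and follows essentially the same route as the paper: an induction on the index, using the characterization of $Q\Rightarrow T$ as the largest subobject $U$ with $U\wedge Q\leq T$, and observing that at each level the maximal choice compatible with nesting is $U_{k-1}\cap\bigl(T_k\cup(E_k\setminus Q_k)\bigr)$. Your write-up is somewhat more explicit than the paper's (in particular about why the greedy levelwise maximum is the global maximum, and about taking complements inside $E_k$), but the argument is the same.
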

\begin{proof}
	By recurrence. For $n=0$ this is the well known boolean formula. Let us assume
	the result for $n=N-1$, and prove it for $n=N$. The set $U_N$ must belong to $U_{N-1}$ and must be the union
	of all the sets $V\subset E_N\cap U_{N-1}$ such that $V\wedge Q_N\subset T_N$, then it is the union of
	$T_N\cap U_{N-1}$ and $(E_N\backslash Q_N)\cap U_{N-1}$.
	
	\noindent In particular the complement $\neg Q$ is made by the sequence
\begin{equation}
\bigcap_{k=0}^{n} (E_k\backslash Q_k)\subset \bigcap_{k=0}^{n-1} (E_k\backslash Q_k)\subset ...\subset E_0\backslash Q_0.
\end{equation}
\end{proof}
\begin{defn}
	We choose freely a strictly positive function $\mu$ on $E_0$; for any subset $F$ of $E_0$,
	we note $\mu(F)$ the sum of the numbers $\mu(x)$ for $x\in F$.\\
	In practice $\mu$ is the constant function equal to $1$, or to $|F|^{-1}$.
\end{defn}
\begin{defn}
	Consider a strictly decreasing sequence $[\delta]$ of strictly positive real numbers $\delta_0> \delta_1> ...> \delta_n$;
	the function $\psi_{\delta}:\Omega^{E}\rightarrow\mathbb{R}$ is defined by the formula
	\begin{equation}
		\psi_\delta(T_n\subset T_{n-1}\subset ...\subset T_0)=\Sigma_{k=0}^{n}\delta_k\mu(T_k).
	\end{equation}
\end{defn}
\begin{lem}\label{lem:increase}
	The function $\psi_\delta$ is strictly increasing.
\end{lem}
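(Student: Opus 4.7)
The plan is to unpack what strict inclusion means for a morphism in $\Omega^{E}$ and reduce the statement to the positivity of $\mu$ and of the weights $\delta_k$, so that the strict monotonicity of $\delta$ actually plays no role at this stage (it will be needed later, presumably for concavity).

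First I would fix two subobjects $T=(T_n\subset\dots\subset T_0)$ and $T'=(T'_n\subset\dots\subset T'_0)$ with $T\leq T'$ in $\Omega^{E}$, and recall that in the presheaf topos $S_n^{\wedge}$ this inclusion is computed pointwise, so it is equivalent to the system of set-theoretic inclusions $T_k\subseteq T'_k$ for every $k=0,\dots,n$, compatibly with the restriction maps $E_k\hookrightarrow E_{k-1}$ (which are injective by hypothesis on $E$). Strictness of the inequality $T<T'$ then means that there is at least one index $k_0$ for which $T_{k_0}\subsetneq T'_{k_0}$.

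Next I would use the hypothesis that $\mu$ is strictly positive on $E_0$ (hence on every subset of every $E_k$, via the injections into $E_0$) to deduce $\mu(T'_{k_0})-\mu(T_{k_0})=\mu(T'_{k_0}\setminus T_{k_0})>0$, while for every other index $k$ monotonicity of $\mu$ under inclusion yields only $\mu(T_k)\leq \mu(T'_k)$. Multiplying each inequality by $\delta_k>0$ and summing gives
\begin{equation}
\psi_\delta(T')-\psi_\delta(T)=\sum_{k=0}^{n}\delta_k\bigl(\mu(T'_k)-\mu(T_k)\bigr)\geq \delta_{k_0}\bigl(\mu(T'_{k_0})-\mu(T_{k_0})\bigr)>0,
\end{equation}
which is the claim.

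There is essentially no obstacle here: all the work is bookkeeping on the pointwise nature of subobjects in a presheaf topos over a poset, together with positivity. The one small point worth flagging is to make explicit that strictness of $T<T'$ really does imply strictness at some level $k_0$ (this is immediate from pointwise computation of the order on subobjects, but is the only place where the injectivity hypothesis on $E$ intervenes, ensuring the chain description of subobjects used throughout the appendix). The strict decrease of $\delta$ is not used; I would simply note it and keep it in reserve for the concavity statement that will follow.
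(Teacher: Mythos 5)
Your proof is correct and follows the same route as the paper, whose entire argument is the one-line observation that the inclusion $T\leq T'$ holds index by index, so positivity of $\mu$ and of the $\delta_k$ gives the result. Your additional remarks (that strictness must occur at some level $k_0$, and that the strict decrease of $\delta$ is not used here but only later for concavity) are accurate elaborations of what the paper leaves implicit.
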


\noindent This is because index by index, $T'_k$ contains $T_k$.\\
\begin{defn}
	A function $\varphi:\Omega^{E}\rightarrow\mathbb{R}$ is \emph{concave} ({\em resp.}
	strictly concave), if for any pair of
	subsets $T\leq T'$ and any proposition $Q$, the following expression is positive ({\em resp.} strictly positive),
	\begin{equation}
		\Delta\varphi(Q;T,T')=\varphi(Q\Rightarrow T)-\varphi(T)-\varphi(Q\Rightarrow T')+\varphi(T').
	\end{equation}
\end{defn}

\noindent \textbf{Hypothesis on $\delta$}: for each $k$, $n\geq k\geq 0$, we assume that $\delta_k> \delta_{k+1}+...+\delta_n$.\\
This hypothesis is satisfied for instance for $\delta_0=1, \delta_1=1/2, ..., \delta_k=1/2^{k},...$.\\
\begin{prop}\label{prop:concavity}
	Under this hypothesis, the function $\psi_\delta$ is concave.
\end{prop}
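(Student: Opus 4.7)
The plan is to use Lemma \ref{lem:implication} to decompose $\Delta\psi_\delta(Q;T,T')$ level by level, and then reduce the sum to a telescoping estimate that is controlled by the strict inequalities $\delta_k > \delta_{k+1}+\cdots+\delta_n$. First I would set $U := Q\Rightarrow T$, $U' := Q\Rightarrow T'$, and introduce the ``increment'' sets
\begin{equation*}
D_k := T'_k\setminus T_k,\qquad R_k := U'_k\setminus U_k,
\end{equation*}
noting that $T_k\subseteq U_k$ and $T'_k\subseteq U'_k$ (since $\top\in(Q\Rightarrow \cdot)$), so that each $A_k := \mu(U_k)-\mu(T_k)-\mu(U'_k)+\mu(T'_k)$ admits the clean form
\begin{equation*}
A_k = \mu(D_k)-\mu(R_k).
\end{equation*}
The goal then becomes $\sum_{k=0}^n \delta_k(\mu(D_k)-\mu(R_k))\geq 0$.

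Second, I would establish the recursion
\begin{equation*}
R_k = \bigl(R_{k-1}\cap(E_k\setminus Q_k)\bigr)\sqcup \bigl(D_k\cap Q_k\bigr),\qquad R_{-1}=\emptyset,
\end{equation*}
directly from Lemma \ref{lem:implication} (the two pieces are disjoint because one lies in $Q_k^c$ and the other in $Q_k$). Combining this with $\mu(D_k)=\mu(D_k\cap Q_k)+\mu(D_k\cap Q_k^c)$ yields the useful pointwise identity
\begin{equation*}
A_k = \mu(D_k\cap Q_k^c)-\mu(R_{k-1}\cap Q_k^c),
\end{equation*}
and iterating the recursion expresses $R_k$ as a disjoint union $\bigsqcup_{j\leq k}B_{j,k}$ where $B_{j,k}=D_j\cap Q_j\cap(E_{j+1}\setminus Q_{j+1})\cap\cdots\cap(E_k\setminus Q_k)$. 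The sequence $\mu(B_{j,k})$ is non-increasing in $k$ and bounded by $\mu(D_j)$.

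Third, swap the order of summation:
\begin{equation*}
\sum_{k=0}^n\delta_k\mu(R_k) = \sum_{j=0}^n\Bigl(\sum_{k\geq j}\delta_k\mu(B_{j,k})\Bigr),
\end{equation*}
so that the concavity inequality reduces to the family of \emph{per-level} bounds
\begin{equation*}
\delta_j\mu(D_j)\geq \sum_{k\geq j}\delta_k\mu(B_{j,k}),\qquad j=0,\dots,n.
\end{equation*}
Here I would apply Abel summation to the right-hand side, using the monotonicity $\mu(B_{j,j})\geq\mu(B_{j,j+1})\geq\cdots\geq 0$ together with the hypothesis $\delta_j>\delta_{j+1}+\cdots+\delta_n$ (so that $\delta_j$ strictly dominates the total tail weight). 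The point is that the ``leakage'' of elements of $D_j$ into deeper levels can pick up at most the weight $\sum_{k>j}\delta_k$, which is strictly less than~$\delta_j$.

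The main obstacle, and the step requiring the most care, is precisely this last reduction: one must show that when $D_j\cap Q_j$ is large (forcing $B_{j,k}$ to remain close to $\mu(D_j)$ for many $k>j$), the complementary piece $D_j\cap Q_j^c$ (which contributes positively through $A_j$) is large enough to compensate — and this is exactly where the strict dominance hypothesis on $\delta$ must be used in full force, via a careful Abel/telescoping argument that tracks how the chain conditions $Q_{k+1}\subseteq Q_k$ and injectivity $E_{k+1}\hookrightarrow E_k$ force $B_{j,k}$ to stabilize after a single step.
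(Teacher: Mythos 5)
Your decomposition is correct as far as it goes: the recursion $R_k=\bigl(R_{k-1}\cap(E_k\setminus Q_k)\bigr)\sqcup\bigl(D_k\cap Q_k\bigr)$ does hold (the two contributions $R_{k-1}\cap D_k\cap Q_k$ and $U_{k-1}\cap D_k\cap Q_k$ recombine into $D_k\cap Q_k$ because $D_k\subseteq T'_{k-1}\subseteq U'_{k-1}=R_{k-1}\sqcup U_{k-1}$, while $T_k\subseteq U_{k-1}$ keeps the two cases disjoint), and so is the exact identity
\begin{equation*}
\Delta\psi_\delta(Q;T,T')=\sum_{j=0}^{n}\Bigl(\delta_j\mu(D_j)-\sum_{k\geq j}\delta_k\mu(B_{j,k})\Bigr).
\end{equation*}
The gap is the final reduction to the per-level bounds $\delta_j\mu(D_j)\geq\sum_{k\geq j}\delta_k\mu(B_{j,k})$: these are false, and no Abel summation can rescue them. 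Since $B_{j,j}=D_j\cap Q_j$, the $j$-th bracket equals $\delta_j\mu(D_j\setminus Q_j)-\sum_{k>j}\delta_k\mu(B_{j,k})$, and the positive and negative parts are carried by the \emph{disjoint} pieces $D_j\setminus Q_j$ and $D_j\cap Q_j$ of $D_j$; the hypothesis on $\delta$ compares weights but cannot transfer mass from one piece to the other. Concretely, take $n=1$, $E_0=E_1=\{a\}$, $\mu(a)=1$, $Q_0=\{a\}$, $Q_1=\emptyset$, $T=(\emptyset\subseteq\emptyset)$ and $T'=(\emptyset\subseteq\{a\})$: then $D_0=B_{0,0}=B_{0,1}=\{a\}$ and your bound for $j=0$ reads $\delta_0\geq\delta_0+\delta_1$. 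Worse, your own identity then gives $\Delta\psi_\delta=-\delta_1<0$ (directly: $Q\Rightarrow T=(\emptyset,\emptyset)$ while $Q\Rightarrow T'=(\{a\},\{a\})$), so the obstacle you flag in your last paragraph is not a delicate step but a genuine obstruction: the inequality cannot be derived from the stated hypothesis on $\delta$ alone.

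For comparison, the paper's proof takes a different route: it interpolates from $T$ to $T'$ one index at a time and reduces to the case where they differ only at a single index, say $0$; there the index-$0$ gain is $\delta_0\mu\bigl((T'_0\setminus T_0)\cap(E_0\setminus Q_0)\bigr)$ and the losses at $k\geq1$ are asserted to be carried by $(T'_0\setminus T_0)\cap(E_0\setminus Q_0)\cap W_k$, whence domination by $\delta_0>\delta_1+\cdots+\delta_n$. But $U'_0\setminus U_0=(T'_0\setminus T_0)\cap Q_0$, so those losses are in fact carried by $(T'_0\setminus T_0)\cap Q_0\cap W_k$, disjoint from the gain, and the same two-point example defeats that argument as well. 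In short, your bookkeeping is sharper than the paper's and correctly isolates where the inequality must come from, but as stated the per-level bound (and the proposition itself, in the generality of arbitrary $Q$) needs an additional hypothesis tying $Q$ to $T$ and $T'$ — some control forcing $D_j\cap Q_j$ to be small relative to $D_j\setminus Q_j$ — before either your route or the paper's can be completed.
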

\begin{proof}
	Let $T\leq T'$ in $\Omega^{E}$. We define inductively an increasing sequence $T^{(k)}$ of $S_n$-sets by taking
	$T^{(0)}=T$ and, for $k> 0$, $T_j^{(k)}$ equal to $T_j^{(k-1)}$ for $j< k$ or $j> k$, but equal to $T'_j$ for $j=k$. In other terms,
	the sequence is formed by enlarging $T_k$ to $T'_k$, index after index. Let us prove
	that $\Delta\psi_\delta(Q;T^{(k-1)},T^{(k)})$ is positive, and strictly positive when at the index $k$, $T_k$ is strictly included in $T'_k$.
	The theorem follows by telescopic cancellations.\\
	The only difference between $T^{(k-1)}$ and $T^{(k)}$ is the enlargement of $T_k$ to $T'_k$, and this generates a difference
	between $T_j^{(k-1)}|Q$ and $T_j^{(k)}|Q$ only for the indices $j> k$. This allows us to simplify the notations by assuming $k=0$.\\
	The contribution of the index $0$ to the double difference $\Delta\psi_\delta$ is the difference between the sum of $\delta_0\mu$ over the points
	in $E_0\backslash Q_0$ that do not belong to $T_0$ and the sum of $\delta_0\mu$ over the points
	in $E_0\backslash Q_0$ that do not belong to $T'_0$, then it is the sum of $\delta0\mu$ over the points
	in $E_0\backslash Q_0$ that belong to $T'_0\backslash T_0$.\\
	As in lemma \ref{lem:implication}, let us write $U_0=T_0\vee(E_0\backslash Q_0)$ and $U'_0=T'_0\vee(E_0\backslash Q_0)$. And for $k\geq 1$,
	let us write $V_k=T_k\vee(E_k\backslash Q_k)$, and $W_k=V_1\cap ...\cap V_k$.\\
	From the lemma 1, the contribution of the index $1$ to the double difference $\Delta\psi_\delta$, is the simple difference between
	the sum of $\delta_k\mu$ over the points in $U_0\cap W_k$ and its sum over the points in $U'_0\cap W_k$, then it is equal to the
	opposite of the sum
	of $\delta_k\mu$ over the points in $(T'_0\backslash T_0)\cap (E_0\backslash Q_0)\cap W_k$. The hypothesis on the sequence $\delta$
	implies that the sum over $k$ of these sums is smaller than the difference given by the index $0$.
\end{proof}

\begin{rmk*}
	\normalfont In general the function $\psi_\delta$, whatever being the sequence $\delta$, is not strictly concave,
	because it can happen that $T'_0$ is strictly larger than $T_0$, and the intersection of $T'_0\backslash T_0$ with
	$E_0\backslash Q_0$ is empty. Therefore, to get a strictly concave function, we take the logarithm, or another function from
	$\mathbb{R}_+^{\star }$ to $\mathbb{R}$ that transforms strictly positive strictly increasing concave functions to strictly increasing strictly concave functions.\\
	This property for the logarithm comes from the formulas
	\begin{equation}
		(\ln \varphi)"=[\frac{\varphi'}{\varphi}]'=\frac{\varphi\varphi"-(\varphi')^{2}}{\varphi^{2}}< 0.
	\end{equation}
\end{rmk*}

\noindent In what follows we take $\psi=\ln \psi_\delta$ as the fundamental function of precision.\\
By normalizing $\mu$ and taking $\delta_0=1$, we get $0<\psi_\delta\leq 1$, $-\infty< \psi\leq 0$.\\
\begin{rmk*}
	\normalfont Lemmas \ref{lem:implication}, \ref{lem:increase} and proposition \ref{prop:concavity} can easily be extended to the case where the basic site $\mathcal{S}$ is
	a rooted (inverse) tree, i.e. the poset that comes from an oriented graph with several initial vertices and a unique
	terminal vertex. The computation with intersections works in the same manner. The hypothesis on $\delta$ concerns
	only the descending branches to the terminal vertex.\\
	\indent Now, remember that the poset of a $DNN$ is obtained by gluing such trees on some of their initial vertices,
	interpreted as tips (of forks) or output layers. The maximal points correspond to tanks (of forks) of input layers.
	Therefore it is natural to expect that the existence of $\psi$ holds true for any site of a $DNN$.
\end{rmk*}

\section{Closer to natural languages: linear semantic information}\label{app:nat-lang}

Several attempts were made by logicians and computer scientists, since Frege and Russel, Tarski and Carnap, to approach the properties of
human natural languages by formal languages and processes. In particular, a computational grammar was proposed by Lambek \cite{Lambek58}: a syntactic category
is defined with sentences as objects and applications of grammatical rules as arrows, a second category is defined, that contains products and
exponentials, for instance a topos, and semantic is seen as some functor from the first category to the second one.
This is the first place where semantic is defined as interpretations of types and propositions in a topos. Precursors of the kind of grammar considered by Lambek
were Adjukiewicz in 1935 \cite{Adjukiewicz} and Bar-Hillel in 1953 \cite{Bar-Hillel-53}.\\
\indent Then a decisive contribution was made by Montague in 1970, \cite{Montague-grammar}, who developed in particular a formal treatment of pieces of English \cite{10.2307/4177871}. Also in this approach, semantics appears as a transformation from a syntactic algebraic structure, having lexis and multiple operations,
to a coarser structure. In the nineties
mathematicians and linguists observed that the categorical point of view, as in Lambek, gives a good framework for developing further Montague's theory \cite{10.2307/30226424}.\\
\indent The next step used intensional type theories, like Martin-Löf's theory \cite{ML}, named modern TT by Luo \cite{Luo-MTT}, or rich TT
by Cooper et al. \cite{Cooper2015ProbabilisticTT}. New types were introduced, corresponding to the many structural notions of linguistic,
e.g. noun, verb, adjective, and so on.
Also modalities like interrogative, performative, can be introduced (see Brunot \cite{brunot1936} for the
complexity of the enterprise in French). Recent experiment with
programming languages have shown that many properties of languages can be captured by extending TT. For instance, in Martin-Löf TT
it is possible to construct ZFT theories but also alternative Non-well-founded set theories,
like in \cite{aczel88}, taking into account paradoxical vicious circles as natural languages do \cite{10.2307/2275015}.
Even more powerful is the
homotopical type theory (HoTT) of Voevodski, Awodey, Kapulkin, Lumsdaine, Shulman, ..., \cite{KLV}. Also see Gylterud and Bonnevier \cite{gylterud2020nonwellfounded} 
for the inclusion of non-well-founded sets theories.\\
\indent These formal theories do not give a true definition of what is \emph{meaning}, (see the fundamental objections of Austin \cite{Austin1961-AUSPP}),
but they give an insight of the various ways the meanings can
be combined and how they are related to grammar, compatible with the intuition we have of human interpretations. We do not suggest that the
categorial presentation defines the natural languages, but here also we think that its capture something of toys languages, an some
languages games that can help the understanding of semantic functioning in networks, including properties of natural semantics of
human peoples.\\

\indent In what follows, we consider that a given category $\mathcal{A}$ represents the semantic for a given language, or some language
game \cite{Wittgenstein1953}, and reflects properties of a language, not the abstract rules, as in the algebra $\Omega^{\mathbb{L}}$ before. The objects
of $\mathcal{A}$ represent interpretations of sentences, or images, corresponding to the "types as propositions" (Curry-Howard) in a given grammar,
and its arrows represent the evocations, significations, or deductions, corresponding to
proofs or application of rules in grammar. Oriented cycles are \emph{a priori} admitted.\\
\indent We simply assume that $\mathcal{A}$ is a \emph{closed monoidal category} \cite{Eilenberg-Kelly}
that connects with linear logic and linear type theory as in
Mellies, "Categorical Semantics of Linear Logic"  \cite{Mellis2009CATEGORICALSO}.\\
In such a category, a bifunctor $(X,Y)\mapsto X\otimes Y$ is given, that is associative up to natural transformation, with a neutral element $\star$ also up to linear  transformation,
satisfying conditions of coherence. This product representing aggregation of sentences. Moreover there exists classifiers objects of morphisms, i.e. objects $A^{Y}$ defined
for any pair of objects $A,Y$, such that for any $X$, there exist natural isomorphisms
\begin{equation}
{\sf Hom}(X\otimes Y,A)\simeq {\sf Hom}(X,A^{Y}).
\end{equation}
The functor $X\mapsto X\otimes Y$ has for right-adjoint the functor $A\mapsto A^{Y}$.\\
\indent For us, this defines the semantic
conditioning, the effect on the interpretation $A$ that $Y$ is taken into account, when $A$ is evoked by a composition with $Y$.
Thus we also denote $A^{Y}$ by $Y\Rightarrow A$ or $A|Y$.\\
\indent When $A$ is given, and if $Y'\rightarrow Y$ we get $A|Y\rightarrow A|Y'$.\\
From $X\otimes \star\cong X$, it follows that canonically $A^{\star}\cong A$. We make the supplementary hypothesis that $\star$ is a final object,
then we get a canonical arrow $A\rightarrow A|Y$, for any object $Y$. This represents the internal constants.\\
\begin{rmk*}
	\normalfont In the product $X\otimes Y$, the ordering plays a role, and in linguistic, in the spirit of Montague, two functors can appear, the one we just said $Y\mapsto X\otimes Y$ and the other one $X\mapsto X\otimes Y$.
	If both have a left adjoint, we get two exponentials: $A^{Y}=A|Y$ and $^{X}A=X\ A$; the natural axiomatic becomes the \emph{bi-closed category} of Eilenberg and Kelly \cite{Eilenberg-Kelly}.
	Dougherty \cite{10.1305/ndjfl/1093634562} gave a clear exposition of part of the Lambek calculus
	in the Montague grammar in terms of this structure (same in \cite{Lambek1988grammar}). A theory of semantic information should benefit of this possibility, where composition depends on the
	ordering, but in what follows, to begin, we assume that $\mathcal{A}$ is \emph{symmetric}: there exist natural isomorphisms exchanging the two factors
	of the product.
\end{rmk*}

\indent All that can be localized in a context $\Gamma \in \mathcal{A}$ by considering the category $\Gamma\backslash\mathcal{A}$ of morphisms
$\Gamma\rightarrow A$, where $A$ describes $\mathcal{A}$, with morphisms given by the commutative triangles. For $\Gamma\rightarrow A$, and
$Y\in \mathcal{A}$, we get a morphism $\Gamma\rightarrow A|Y$ by composition with the canonical morphism $A\rightarrow A|Y$.
This extends the conditioning. We will discuss the existence of a restricted tensor product later on; it asks restrictions on $\Gamma$.\\

The analog of a theory, that we will also name theory here, is a collection $S$ of propositions $A$, that is stable by morphisms to the right, i.e. $A\in S$
and $A\rightarrow B$ implies $B\in S$. This can be seen as the consequences of a discourse. A theory $S'$ is said
weaker than a theory $S$ if it is contained in it, noted $S\leq S'$.
Then the analog of the conditioning of $S$ by $Y$
is the collection of the objects $A^{Y}$ for $A$ in $S$. The collection of theories is partially ordered.\\
We have $S|Y'\leq S|Y$ when there exists $Y'\rightarrow Y$.
In particular $S|Y\leq S$, as it was the case in simple type theory.\\
\indent When a context is given, it defines restricted theories, because it introduces a constraint of commutativity for $A\rightarrow B$,
to define a morphism from $\Gamma\rightarrow A$ to $\Gamma\rightarrow B$.\\

\noindent The monoidal category $\mathcal{A}$ acts on the set of functions from the theories to a fixed commutative group, for instance the real numbers.\\
We will later discuss how the context $\Gamma$ can be included in a category generalizing the category $\mathcal{D}$ of sections \ref{sec:sem-info} and \ref{sec:homotopy}, to obtain the analog of the classical ordinary logical case with the
propositions $P$ excluded. This needs a notion of negation, which, we will see, are many.\\
\begin{rmk*}
	\normalfont The model should be more complete if we introduce a syntactic type theory, as in Montague 1970, such that $\mathcal{A}$ is an
	interpretation of part of the types, compatible with products and exponentials. Then some of the arrows can interpret transformation
	rules in the grammar. The introduction of syntaxes will be necessary for communication between networks.
\end{rmk*}

Let us use the notations of chapter \ref{chap:stacks}. Between two layers $\alpha: U\rightarrow U'$ lifted by $h$ to $\mathcal{F}$, we assume the existence of a
functor $\pi_\star{\alpha, h}$ from $\mathcal{A}_{U,\xi}$ to $\mathcal{A}_{U',\xi'}$, with a  left adjoint $\pi^{\star}_{\alpha, h}$, such that
$\pi^{\star}\pi_\star={\sf Id}$, in such a manner that $\mathcal{A}$ becomes a pre-cosheaf over $\mathcal{F}$ for $\pi_\star$ and the sets of theories
$\Theta$ form a presheaf for $\pi^{\star}$.\\
\indent The information quantities are defined as before, by the natural bar-complex associated to the action of $\mathcal{A}$
on the pre-cosheaf $\Phi'$ of functions on the functor $\Theta$.\\
\indent The passage to a network gives a dynamic to the semantic, and the consideration of weights gives a model of learning semantic.
Even if they are caricature of the natural ones, we hope this will help to  capture some interesting aspects of them.\\

\noindent A big difference with the ordinary logical case, is the absence of "false", then in general, the absence of the negation operation. This can make the
cohomology of information non-trivial.\\

\noindent Another big difference is that the category $\mathcal{A}$ is not supposed to be
a poset, the sets ${\sf Hom}$ can be more complex than $\emptyset$ and $\star$, and they can contain isomorphisms. In particular loops can be present.\\

\noindent Consider for instance any function $\psi$ on the collection of theories; and suppose that there exist arrows from $A$ to $B$ and from $B$ to $A$;
then the function $\psi$ must take the same value on the theories generated by $A$ and $B$. This tells in particular that they contain the same information.\\

\noindent The homotopy construction of a bi-simplicial set $g\Theta$ can be made as before, representing the propagation feed-forward of theories and propagation
backward of the propositions, and the information can be defined by a natural increasing and concave map $F$ with values in a closed model category $\mathcal{M}$
of Quillen (see chapter \ref{chap:stacks}).\\
\indent The semantic functioning becomes a simplicial map $gS:g\mathbb{X}\rightarrow g\Theta$, and the semantic spaces are given by the composition $F\circ gS$.\\

Here is another interest of this generalization: we can assume that a measure of complexity $K$ is attributed to the objects, seen as expressions in a language,
and that this complexity is additive in the product, i.e. $K(X\otimes Y)=K(X)+K(Y)$, and related to the combinatorics of the syntax, and the complexity
of the lexicon, and the grammatical rules of formation. In this framework, we could compare the values of $K$ in the category, and define the \emph{compression}
as the ratio $F/K$ of information by complexity.\\
\begin{rmk*}
	\normalfont It is amazing and happy that the bar-complex for the information cocycles and the homotopy limit, can also be defined
	for the bi-closed generalization. The two exponentials $^{X}A$ and $A^{Y}$ an action of the monoid $\mathcal{A}$ to the right
	and to the left that commute on the functions of theories, and on the bi-simplicial set $g\Theta$. Then we can apply the work of MacLane, Beck on bi-modules
	and the work of Schulman on enriched categories.\\
	Taking into account the network, we get a tri-simplicial set $\Theta_\star^{\bullet\bullet}$ of information elements,
	or tensors, giving rise to a bi-simplicial space of histories of theories, with multiple left and right conditioning, $gI^{\bullet\bullet}$, that is the
	geometrical analog of the bar-complex of semantic information.
\end{rmk*}

\subsection*{Links with Linear Logic (intuitionist) and negations.}

The generalized framework corresponds to a fragment of an intuitionist Linear Logic (see Bierman and de Paiva \cite{10.2307/20016199}, Mellies \cite{Mellis2009CATEGORICALSO}). The arrows $A\rightarrow B$ in the category
are the expression of the assertions of consequence $A\vdash B$, and the product expresses the joint of the elements of the left members of consequences, in the sense that
a deduction $A_1,...,A_n\vdash B$
corresponds to an arrow $A_1\otimes ...\otimes A_n\rightarrow B$. There
is no necessarily a "or" for the right side, but there is an internal implication $A\multimap B$ which satisfies all the axioms of the above implication $A\Rightarrow B$,
right adjoint of the tensor product.
The existence of the final element corresponds to the existence of (multiplicative) 
truth ${\bf 1}=\star$. To be more complete, we should suppose that all the finite products exist
in the category $\mathcal{A}$. Then the (categorial) product of two corresponds to an additive disjunction $\oplus$, then a "or", that can generate the right side of
sequents $B_1,...,B_m$ in $A_1,...,A_n/B_1,...,B_m$; however, a neutral element for $\oplus$ could be absent,
even if it is always present in the full theory of Girard \cite{GIRARD19871}. No right adjoint is required for $\oplus$. And in what follows we do not assume the data $\oplus$.\\
\indent One of the main ideas of \cite{GIRARD19871} was to incorporate the fact that in real life the proposition $A$ that is used in a consequence $A\multimap B$ does not remain unchanged
after the event, however it is important to give a special status for propositions that continue to hold after the event. For that purpose
Girard introduced  an operator on the formulas, named a \emph{linear exponential}, and written $!$. It is named "of course" and has the meaning of a reaffirmation,
something stable. The functor $!$ is required to be naturally equivalent to $!!$, then a projector in the sense
of categories, such that, in a natural manner, the
objects $!A$ and the morphisms $!f$ between them satisfy the Gentzen rules of weakening and contraction, respectively $(\Gamma\vdash\Delta)/(\Gamma,!A\vdash \Delta)$
and $(\Gamma,A,A\vdash \Delta)/(\Gamma,A\vdash \Delta)$. (This corresponds to the traditional assertions $A\wedge B\leq  A$ and $A\leq A\wedge A$.)
Further axioms state, when translated in categorical terms, that $!$ is a monoidal functor equipped with two natural transformations
$\epsilon_A:!A\rightarrow A$ and $\delta_A:!A\rightarrow !!A$, that are monoidal transformations, satisfying the coherence rules of a comonad, and with natural
transformations $e_A:!A\rightarrow 1$ (useful when $1$ is not assumed final) and $d_A:!A\rightarrow !A\otimes !A$, that is a diagonal operator, also satisfying
coherence axioms telling that each $!A$ is a commutative comonoid, and each $!f$ a morphism of commutative comonoid. From all these axioms, it is proved that under $!$
the monoidal product becomes a usual categorial product in the category $!\mathcal{A}:=\mathcal{A}^{!}$,
\begin{equation}
!(A\otimes B)\cong !A\otimes !B \cong !(A \times B);
\end{equation}
and the category $\mathcal{A}^{!}$, named the  Kleisli category of $\left(\mathcal{A},!\right)$, is cartesian closed. More precisely, under $!$
the multiplicative exponential becomes the usual exponential:
\begin{equation}
!(A\multimap B)\cong !B^{!A}.
\end{equation}

\noindent Remind that a \emph{comonad} in a category is a functor $T$ of this category to itself, equipped with two natural transformations $T\rightarrow T\circ T$
and $\varepsilon:T\rightarrow {\sf Id}$, satisfying coassociativity and counity axioms. This the dual of a \emph{monad}, $T\circ T\rightarrow T$ and ${\sf Id}\rightarrow T$,
that is the generalization of monoids to categories.
The functor $!$ is an example of comonad \cite{maclane:71}.\\

The axioms of a closed symmetric monoidal category, plus the existence of finite products, plus the functor $!$, give the largest part of the Gentzen rules,
as they were generalized by Jean-Yves Girard in 1987 \cite{GIRARD19871}.\\
\begin{prop}\label{prop:lin-exp}
	The linear exponential $!$ allows to localize the product at a given proposition, in the sense that the slice category to the right $\Gamma |\mathcal{A}$
	is closed by products of linear exponential objects as soon as $\Gamma$ belongs to $\mathcal{A}^{!}$.
\end{prop}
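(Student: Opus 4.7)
The plan is to exploit the two features that the linear exponential $!$ provides simultaneously: (i) under $!$, the monoidal tensor becomes the categorial product, $!A\otimes !B\cong !(A\times B)\cong !A\times !B$ in $\mathcal{A}^{!}$; (ii) every object $\Gamma\in \mathcal{A}^{!}$ carries a canonical commutative comonoid structure $(d_\Gamma:\Gamma\to\Gamma\otimes\Gamma,\ e_\Gamma:\Gamma\to \mathbf{1})$, which is precisely the pair of natural transformations $d$ and $e$ when $\Gamma=!A'$ and, more generally, the coalgebra structure for arbitrary objects of $\mathcal{A}^{!}$. Together these facts allow one to pair two arrows out of $\Gamma$ into a single arrow of the same type, which is the content of the statement.

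Concretely, I would first fix two objects of the slice $\Gamma|\mathcal{A}$ of the form $f:\Gamma\to !A$ and $g:\Gamma\to !B$, and would define their candidate product as the composite
\[
\langle f,g\rangle\ :\ \Gamma\xrightarrow{\ d_\Gamma\ }\Gamma\otimes\Gamma\xrightarrow{\ f\otimes g\ }!A\otimes !B\ \cong\ !(A\times B).
\]
Since $!(A\times B)$ is again a linear exponential, this object belongs to the class in which the statement asserts closure. I would then take as projections the two obvious composites $\pi_A:!(A\times B)\cong !A\otimes !B\xrightarrow{\mathrm{id}\otimes e_{!B}}!A\otimes \mathbf{1}\cong !A$ and the symmetric one $\pi_B$.

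Next, I would check the universal property of a categorial product in the slice $\Gamma|\mathcal{A}$, namely that $\pi_A\circ \langle f,g\rangle=f$ and $\pi_B\circ\langle f,g\rangle=g$, and that $\langle f,g\rangle$ is the unique morphism with these two properties. Commutation reduces, by naturality and functoriality of $\otimes$, to the counit axiom $(\mathrm{id}\otimes e_\Gamma)\circ d_\Gamma=\mathrm{id}_\Gamma$ for the comonoid $\Gamma$, which holds precisely because $\Gamma\in\mathcal{A}^{!}$. Uniqueness follows from the isomorphism $!A\otimes !B\cong !A\times !B$ together with the universal property of the cartesian product in $\mathcal{A}^{!}$.

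The main obstacle is keeping the two relevant notions of product cleanly separated: the tensor $\otimes$, which is not cartesian on $\mathcal{A}$ itself, and the genuine categorial product, which becomes available only on the co-Kleisli (or coalgebra) category $\mathcal{A}^{!}$. The whole construction $\langle f,g\rangle$ is a tensor in $\mathcal{A}$ that, viewed through the lens of $!$, represents a true pair; verifying this identification rigorously requires the coassociativity, cocommutativity and counit diagrams of the comonoid $(\Gamma,d_\Gamma,e_\Gamma)$, together with the compatibility of $d$ and $e$ with the monoidal structure stated in the axioms of a linear exponential. These are the steps where the hypothesis $\Gamma\in\mathcal{A}^{!}$ is used in an essential way, and dropping it would destroy the construction — thus also giving the statement its sharp form.
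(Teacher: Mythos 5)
Your core construction is exactly the paper's: given $f:\Gamma\rightarrow Q$ and $g:\Gamma\rightarrow Q'$ in the slice, precompose $f\otimes g$ with the comonoid diagonal $d_\Gamma:\Gamma\rightarrow\Gamma\otimes\Gamma$ supplied by the hypothesis $\Gamma\in\mathcal{A}^{!}$, obtaining $\Gamma\rightarrow Q\otimes Q'$. That is all the paper does (it then adds, without detail, that the internal hom $\multimap$ restricts to the slice as well). Note that the paper's proof makes no use of $Q,Q'$ being linear exponential objects: the ``product'' being localized is the monoidal tensor $\otimes$, and the only structure needed is the diagonal on the domain $\Gamma$.

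Where your proposal goes beyond this, it develops a genuine gap. You claim that $\langle f,g\rangle$ exhibits $!(A\times B)$ as a \emph{categorial} product in the slice $\Gamma|\mathcal{A}$, with projections $\pi_A=(\mathrm{id}\otimes e_{!B})$ and $\pi_B$. But the verification of $\pi_A\circ\langle f,g\rangle=f$ computes to $(f\otimes(e_{!B}\circ g))\circ d_\Gamma$, and reducing this to the counit axiom $(\mathrm{id}\otimes e_\Gamma)\circ d_\Gamma=\mathrm{id}_\Gamma$ requires $e_{!B}\circ g=e_\Gamma$, i.e.\ that $g$ be a morphism of comonoids. Objects of the slice $\Gamma|\mathcal{A}$ are \emph{arbitrary} arrows of $\mathcal{A}$ out of $\Gamma$, not coalgebra maps, so this identity has no reason to hold; the same issue defeats the uniqueness argument, since the isomorphism $!A\otimes{!B}\cong{!A}\times{!B}$ is an isomorphism in the Kleisli category $\mathcal{A}^{!}$, whose morphisms are constrained, and does not transport a universal property to the unconstrained slice. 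The statement only asserts closure of the slice under the tensor (making it a monoidal, indeed closed monoidal, category over $\Gamma$), so you should stop after constructing $\langle f,g\rangle$ and, if you wish to match the paper fully, add that $\Gamma\rightarrow(Q\multimap R)$ is obtained by composing $\Gamma\rightarrow Q$ with the canonical map $Q\rightarrow(R\Rightarrow Q)$, giving the closed structure.
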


\begin{proof}
	If we restrict us to the arrows $!\Gamma\rightarrow Q$, then the product $!\Gamma\rightarrow Q\otimes Q'$
	is obtained by composing the diagonal $d_{!\Gamma}:!\Gamma\rightarrow !\Gamma\otimes !\Gamma$ with the tensor product $!\Gamma\otimes !\Gamma\rightarrow Q\otimes Q'$.\\
	Its right adjoint is given by $!\Gamma\rightarrow (Q\multimap R)$, obtained by composing $!\Gamma\rightarrow Q$ with the natural map $Q\rightarrow Q|R$.
\end{proof}

To localize the theories themselves at $P$, for instance at a $!\Gamma$, we used, in the Heyting case, a notion of negation. To exclude a given proposition
was the only coherent choice from the point of view of information, and this was also in accord with the experiments of spontaneous logics
in small networks \cite{logic-DNN}.\\

In the initial work of Girard, negation was a fundamental operator, verifying the hypothesis of involution $\neg\neg={\sf Id}$, thus giving  a duality.
That explains that the initial theory is considered as a classical Linear Logic; it generalizes the usual Boolean logic in another direction than
intuitionism. In a linear intuitionist theory, the negation is not necessary, but it is also not forbidden, and axioms were discussed in the nineties.\\
\indent We follow here the exposition of Paul-André Melliès in \cite{Mellis2009CATEGORICALSO} and of his article with Nicolas Tabareau \cite{mellies:hal-00339154}.
The authors work directly in a monoidal category $\mathcal{A}$, without assuming that it is
closed, and define negation as a functor $\neg:\mathcal{A}\rightarrow \mathcal{A}^{\rm op}$, such that the opposite functor $\neg^{\rm op}$ from $\mathcal{A}^{\rm op}$
to $\mathcal{A}$, also denoted by $\neg$, is the left-adjoint of $\neg$, giving a unit $\eta:{\sf Id}\rightarrow \neg \neg$ and a counit $\epsilon:\neg\neg\rightarrow {\sf Id}$,
that are not equivalence in general. Then there exist for any objects $A,B$ a canonical bijection bijection between ${\sf Hom}_{\mathcal{A}}(\neg A,B)$ and ${\sf Hom}_{\mathcal{A}}(\neg B,A)$.
Note that in this case $\varepsilon$ and $\eta$ coincide, because the morphisms in $\mathcal{A}^{\rm op}$ are the morphisms in $\mathcal{A}$ written in the reverse order.\\
\noindent The double negation $T=\neg^{\rm op}\neg$ forms a monad whose $\eta$ is the unit; the multiplication $\mu:\neg\neg\neg\neg\rightarrow\neg\neg$ is obtained
by composing ${\sf Id}_\neg$ with $\neg(\eta)$, to the left or to the right, that is $\mu_A=\neg(\eta_A)\circ {\sf Id}_{\neg A}={\sf Id}_{\neg \neg\neg A}\circ \neg(\eta_A)$.\\
In theoretical computer science, $T$ is called the \emph{continuation monad}, and plays an important role in computation and games logics as in the works of Kock, Moggi, Mellies,
Tabareau.\\

\noindent In the case of the Heyting algebra of a topos (elementary), this continuation defines a topology, named after Lawvere and Tierney, which defines the unique
subtopos that is Boolean and dense (i.e. contains the initial object $\emptyset$ \cite{caramello_2012}).\\

\indent The second important axiom tells how the (multiplicative) product $\otimes$ is transformed : it is required that for any objects $B,C$ the object $\neg (B\otimes C)$ represents
the functor $A\mapsto {\sf Hom}(A\otimes B,\neg C)\cong {\sf Hom}(C,\neg(A\otimes B)$; that is
\begin{equation}
{\sf Hom}(A\otimes B,\neg C)\cong {\sf Hom}(A,\neg (B\otimes C)).
\end{equation}
This bijection being natural in the three argument and coherent with the associativity and unit for the
product $\otimes$.\\
For instance all the sets ${\sf Hom}(ABC,\neg D)$, ${\sf Hom}(AB,\neg(CD)$, ${\sf Hom}(A, \neg(BCD))$, are identified with ${\sf Hom}(ABCD,\neg {\bf 1})$.\\
Mellies and Tabareau \cite{mellies:hal-00339154} called such a structure a \emph{tensorial negation}, and named the monoidal category $\mathcal{A}$, equipped with $\neg$, a \emph{dialogue category}.\\

\noindent The special object $\neg {\boldsymbol 1}$ is canonically associated to the chosen negation; it is named the \emph{pole} and frequently denoted by $\bot$.
It has no reason in general to be an initial object of $\mathcal{A}$.\\

\noindent A monoidal structure of (multiplicative) disjunction is deduced from the tensor product by duality:
\begin{equation}
A\wp B=\neg(\neg A\otimes \neg B).
\end{equation}
Its neutral element is the pole of $\neg$.\\
This implies that  the notion of "or" is parameterized by the variety of negations, that we will see equivalent to $\mathcal{A}$ itself.\\

\noindent In the same manner an additive conjonction is defined by
\begin{equation}
A \& B=\neg(\neg A\oplus \neg B).
\end{equation}
Its neutral element is $\top=\neg \emptyset$, when an initial element $\emptyset$ exists, that is the additive "false".\\

\noindent An operator $?$ was introduced by Girard in classical linear logic, that satisfies
\begin{equation}
?\neg A=\neg ! A,\quad \neg ? A=!\neg A
\end{equation}
For us, just these relations are not sufficient to define it, because $\neg$ is not a bijection.\\

The Girard operator $?$ means "why not?", as the operator $!$ means "of course"; they are examples of modalities,
and correspond to the modalities more frequently denoted $\Box$ and $\diamond$ in modal logics.\\

However, Hasegawa \cite{Hasegawa2003CoherenceOT}, Moggi \cite{MOGGI199155}, Mellies and Tabareau \cite{mellies:hal-00339154} have remarked that more convenient tensorial negations must satisfy a further axiom. Note that
this story started with Kock \cite{Kock} inspired by Eilenberg and Kelly \cite{Eilenberg-Kelly}.\\
\begin{lem}
	From the second axiom of a tensorial negation it results two natural transformations
	\begin{align}
		\neg\neg A\otimes B &\rightarrow \neg\neg(A\otimes B);\\
		A\otimes \neg\neg B &\rightarrow \neg\neg (A\otimes B).
	\end{align}
	
	\noindent A monad where such maps exist in a monoidal category, is named a \emph{strong monad} \cite{Kock} and
	\cite{MOGGI199155}.\\
	The first transformation is named the \emph{strength} of the monad $T=\neg\neg$, the second one its \emph{costrength}.
\end{lem}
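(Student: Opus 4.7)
The plan is to realize both natural transformations as mates, under the bijection $\Phi_{A,B,C}:{\sf Hom}(A\otimes B,\neg C)\xrightarrow{\sim}{\sf Hom}(A,\neg(B\otimes C))$ of the second axiom, of canonical evaluation morphisms analogous to the counit of an internal hom. First I would note that by setting $A=\neg(B\otimes C)$ in $\Phi$ and applying $\Phi^{-1}$ to the identity, one obtains a canonical evaluation $\mathrm{ev}_{B,C}:\neg(B\otimes C)\otimes B\to\neg C$. This is the categorical analogue of the counit of the internal hom, and it is the only piece of data needed to produce strengths.

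To build the strength $\sigma_{A,B}:\neg\neg A\otimes B\to\neg\neg(A\otimes B)$, I would apply $\Phi$ with the triple $(\neg\neg A,B,\neg(A\otimes B))$; the desired morphism then corresponds bijectively to a morphism of the form
\begin{equation}
\widetilde\sigma_{A,B}:\neg\neg A\to\neg\bigl(B\otimes\neg(A\otimes B)\bigr).
\end{equation}
Using the symmetric braiding, $B\otimes\neg(A\otimes B)\simeq\neg(A\otimes B)\otimes B$, and via the symmetry-induced identification $\neg(B\otimes A)\simeq\neg(A\otimes B)$, Step~1 applied to the pair $(B,A)$ yields an evaluation $\neg(A\otimes B)\otimes B\to\neg A$. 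Precomposing with the braiding gives $B\otimes\neg(A\otimes B)\to\neg A$; applying the contravariant functor $\neg$ then produces precisely $\widetilde\sigma_{A,B}$. Transporting across $\Phi$ gives the strength.

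The costrength $A\otimes\neg\neg B\to\neg\neg(A\otimes B)$ is obtained by the same recipe after swapping $A$ and $B$ via the symmetric braiding, which is legitimate because the symmetry allows the axiom to be rewritten with $A$ on the right of the tensor. Naturality of $\sigma_{A,B}$ and of the costrength in both variables then follows from the naturality of $\Phi$ in its three arguments and the naturality of the braiding, since mates under a natural bijection are natural whenever the morphism they are mated with is. The defining coherence relations that later identify these transformations as strength/costrength of the monad $T=\neg\neg$ (compatibility with the unit $\eta$, with the multiplication $\mu$, and with the associator of $\otimes$) all reduce, via the uniqueness of mates, to coherence identities satisfied by $\mathrm{ev}_{B,C}$.

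The hardest step will be Step~2: tracking which specialization of the axiom produces the required evaluation map in the right variance and checking that the braiding is inserted consistently, so that the resulting $\sigma_{A,B}$ agrees with the standard notion of strength used by Kock and Moggi. Because the axiom as stated places $B$ only on the right of $\otimes$, extracting the "other-side" evaluation used in Step~2 forces an explicit use of the symmetric braiding, and one must verify that the combination of $\Phi$ with the braiding is compatible with the coherence data of the tensorial negation; however this compatibility is automatic once one recognises $\neg(B\otimes -)$ as the representation of the functor $A\mapsto{\sf Hom}(A\otimes B,\neg-)$, which is precisely what the axiom asserts.
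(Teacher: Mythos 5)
Your proposal is correct and follows essentially the same route as the paper: extract the evaluation morphism $\neg(A\otimes B)\otimes B\to\neg A$ from the identity of $\neg(A\otimes B)$ via the second axiom (using the symmetry to place the factors correctly), apply the contravariant functor $\neg$, and transpose once more across the bijection to obtain the strength, with the costrength obtained by exchanging $A$ and $B$. The only difference is presentational — you isolate the evaluation map as a named intermediate and phrase the construction in the language of mates, which makes the naturality claim slightly more transparent but does not change the argument.
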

\begin{proof}
	Let us start with the Identity morphism of $\neg (A\otimes B)$; by the axiom, it can be interpreted as a morphism
	$B\otimes \neg (A\otimes B)\rightarrow \neg A$, then applying the functor $\neg$, we get a morphism
	\begin{equation}
		\neg\neg A\rightarrow \neg [B\otimes \neg (A\otimes B)];
	\end{equation}
	then, applying the axiom again, we obtain a natural transformation
	\begin{equation}
		\neg\neg A\otimes B\rightarrow \neg\neg (A\otimes B).
	\end{equation}
	Exchanging the roles of $A$ and $B$ gives the other transformation. \\
	Said in other terms, we have natural bijections given by the tensorial axiom, applied two times,
	\begin{multline}
		{\sf Hom}(\neg (A\otimes B),\neg (A\otimes B))\cong {\sf Hom}(\neg (A\otimes B)\otimes B,\neg A)\\
		\cong {\sf Hom}(A,\neg [B\otimes \neg (A\otimes B)]\cong {\sf Hom}(A\otimes B,\neg\neg (A\otimes B));
	\end{multline}
	and also natural bijections, obtained in the same manner,
	\begin{multline}
		{\sf Hom}(\neg (A\otimes B),\neg (A\otimes B))\cong {\sf Hom}(\neg (A\otimes B)\otimes A,\neg B)\\
		\cong {\sf Hom}(B,\neg [A\otimes \neg (A\otimes B)]\cong {\sf Hom}(A\otimes B,\neg\neg (A\otimes B));
	\end{multline}
	The identity of $\neg (A\otimes B)$ in the first term gives a natural marked point, that is also identifiable
	with $\eta_{A\otimes B}$ in the last term.\\
	On the set ${\sf Hom}((\neg (A\otimes B)\otimes B,\neg A)$ ({\em resp.} ${\sf Hom}(A\otimes \neg (A\otimes B),\neg B)$)
	we can apply the functor $\neg$; this gives a map to ${\sf Hom}(\neg\neg A,\neg [B\otimes \neg (A\otimes B)])$ ({\em resp.} ${\sf Hom}(\neg\neg B, \neg [A\otimes \neg (A\otimes B)])$),
	then the strength ({\em resp.} the costrength) after applying the second axiom.
\end{proof}

\indent The strength and costrength taken together give two \emph{a priori} different transformations $TA\otimes TB\rightarrow T(A\otimes B)$ (see \emph{ n lab cafe},
Kock, Moggi, Hazegawa).\\
The first one is the composition starting with the costrength of $TA$ followed by the strength of $B$, then ending with the product:
\begin{equation}
TA\otimes TB\rightarrow T(TA\otimes B)\rightarrow TT(A\otimes B)\rightarrow T(A\otimes B);
\end{equation}
the other one starts with the strength, then uses the costrength, and ends with the product
\begin{equation}
TA\otimes TB\rightarrow T(A\otimes TB)\rightarrow TT(A\otimes B)\rightarrow T(A\otimes B).
\end{equation}

\indent Then a third axiom was suggested by Kock in general for strong monads, and reconsidered by Hazegawa, Moggi, Mellies and Tabareau, it
consist to require that these two morphisms coincide. This is named, since Kock, a {\em commutative monad}, or a {\em monoidal monad}. We will say that
the negation itself is monoidal.\\

\noindent According to Mellies and Tabareau, Hasegawa observed that $T=\neg\neg$ is commutative, if and only if $\eta$ gives an isomorphism $\neg\cong\neg\neg$
on the objects of $\neg\mathcal{A}$, if and only if $\mu$ gives an isomorphism on the objects of $\mathcal{A}$.\\
\begin{prop}
	A necessary and sufficient condition for having $\neg$ monoidal is that for each object $A$, the
	transformation $\eta_{\neg A}$ is an equivalence from $\neg A$ and $\neg\neg\neg A$ in the category $\mathcal{A}$.
\end{prop}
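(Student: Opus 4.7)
The plan is to use the self-adjunction $\neg^{\rm op}\dashv\neg$ and its triangle identities as the backbone, and then decode the commutativity equation for $T=\neg\neg$ through the tensorial axiom's Hom-bijections. First I would set up the following basic observation, which reduces the problem to a statement about the multiplication $\mu$. The triangle identities of $\neg^{\rm op}\dashv\neg$ read $\neg(\eta_A)\circ \eta_{\neg A}=\mathrm{id}_{\neg A}$ for every $A\in\mathcal{A}$; in particular $\eta_{\neg A}$ is \emph{always} a split monomorphism, with retraction $\neg(\eta_A)$. Consequently, $\eta_{\neg A}$ is an isomorphism if and only if $\neg(\eta_A)\circ\eta_{\neg A}=\mathrm{id}$ and $\eta_{\neg A}\circ\neg(\eta_A)=\mathrm{id}$, i.e.\ if and only if $\mu_A=\neg(\eta_{\neg A}):\neg\neg\neg\neg A\to\neg\neg A$ is itself invertible (equivalently $\eta_{TA}$ is invertible, since $\mu_A\circ\eta_{TA}=\mathrm{id}$ is already a monad law). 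Thus the proposition is equivalent to: $\neg$ is monoidal iff $\mu_A$ is an iso for every $A$.

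Next I would unpack the two composites $\sigma,\tau:TA\otimes TB\to T(A\otimes B)$ that must agree for $\neg$ to be monoidal. By the tensorial axiom ${\sf Hom}(X\otimes Y,\neg Z)\cong {\sf Hom}(X,\neg(Y\otimes Z))$ applied twice, one has a natural bijection
\[
{\sf Hom}(TA\otimes TB,\neg\neg(A\otimes B))\;\cong\;{\sf Hom}(\neg(A\otimes B)\otimes TA\otimes TB,\;\bot).
\]
Chasing the construction of the strength and costrength through the proof of the preceding lemma, $\sigma$ and $\tau$ correspond in the right-hand Hom-set to the two composites obtained by successively ``evaluating'' $\neg(A\otimes B)$ against $TA$ then $TB$, respectively $TB$ then $TA$, via the unit $\eta$ together with the associator/symmetry of $\otimes$. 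Sufficiency is then nearly a formality: under the hypothesis $\eta_{\neg C}$ iso, the canonical map $\eta_{\neg(A\otimes B)}:\neg(A\otimes B)\to\neg\neg\neg(A\otimes B)$ is invertible, and this is precisely what is needed to collapse ``double-negating and re-pairing in either order'' to a single well-defined pairing into $\bot$; the symmetric-monoidal coherence axioms then force $\sigma=\tau$.

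For necessity I would exploit Hasegawa's trick of specialising the commutativity equation. Apply $\sigma=\tau$ on a pair of objects chosen so that the universal property of $\eta$ bites. The cleanest choice is $B=\neg A$: both sides become endomorphisms of $T(A\otimes\neg A)=\neg\neg(A\otimes\neg A)$, and evaluation at the canonical pairing $A\otimes\neg A\to\bot$ (i.e.\ the unit/counit of the tensorial adjunction) produces, on one side, $\mu_A\circ\eta_{TA}=\mathrm{id}$, and on the other, $\eta_{TA}\circ\mu_A$. Commutativity therefore forces $\eta_{TA}\circ\mu_A=\mathrm{id}$, which combined with the already-known splitting from the first paragraph shows $\mu_A$ is an iso, hence $\eta_{\neg A}$ is an iso.

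The main obstacle I expect is exactly the bookkeeping in the second paragraph: writing the strength and costrength as concrete morphisms and tracking their images under the double-adjunction isomorphism through the associator and symmetry without losing sign of which variable plays which role. Since $\mathcal{A}$ is only assumed (symmetric) monoidal—not closed—one cannot invoke internal homs, and everything must be phrased via the external Hom-bijection supplied by the tensorial axiom; verifying that the same ``universal evaluation'' morphism $\neg(A\otimes B)\otimes TA\otimes TB\to\bot$ is produced from both $\sigma$ and $\tau$ once the hypothesis is in force is the technical heart, and is also what makes the specialisation $B=\neg A$ in the necessity direction give a meaningful identity rather than a tautology.
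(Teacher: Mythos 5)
First, a point of orientation: the paper itself supplies no proof of this proposition. It is stated as an immediate restatement of the observation of Hasegawa, quoted from Melli\`es--Tabareau in the sentence directly preceding it ($T=\neg\neg$ is commutative iff $\eta$ is invertible on the objects of $\neg\mathcal{A}$ iff $\mu$ is invertible on the objects of $\mathcal{A}$), and is followed directly by its corollary. Your proposal is therefore a from-scratch reconstruction of Hasegawa's lemma rather than a variant of an argument in the paper; the overall strategy --- reduce to invertibility of $\mu$, then specialise the commutativity equation --- is the standard one and is the right shape. Two of its steps, however, are asserted rather than proved, and both are load-bearing.

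The first is the ``i.e.'' in your opening paragraph identifying ``$\eta_{\neg A}$ iso for all $A$'' with ``$\mu_A$ iso for all $A$''. The forward implication is immediate, since $\mu_A=\neg(\eta_{\neg A})$. But the converse, on which your necessity argument terminates (``$\mu_A$ is an iso, hence $\eta_{\neg A}$ is an iso''), does not follow from the monad law $\mu_A\circ\eta_{TA}=\mathrm{id}$: that law only yields invertibility of $\eta_{\neg\neg A}$, i.e.\ of $\eta$ at \emph{double}-negation objects, whereas the proposition demands it at single-negation objects, and these classes do not coincide a priori. The gap is repairable: naturality of $\eta$ at $\neg(\eta_A)$ gives $\eta_{\neg A}\circ\neg(\eta_A)=\neg^{3}(\eta_A)\circ\eta_{\neg^{3}A}$, whose right-hand side is invertible once $\mu$ is invertible everywhere (both factors are then inverses of instances of $\mu$ up to $\neg$), and since $\neg(\eta_A)\circ\eta_{\neg A}=\mathrm{id}$ by the triangle identity, the composite $\eta_{\neg A}\circ\neg(\eta_A)$ is an idempotent isomorphism, hence the identity, so the split mono $\eta_{\neg A}$ is an iso. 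Some such argument must be written down. The second gap is the computation at $B=\neg A$: as stated, $\mu_A\circ\eta_{TA}$ and $\eta_{TA}\circ\mu_A$ are endomorphisms of $TA$ and of $TTA$ respectively, so they cannot literally be the two sides of one instance of $\sigma=\tau$ between parallel morphisms; the ``evaluation at the canonical pairing'' must be set up so that both composites land in the same Hom-set before commutativity can be invoked. You correctly identify this as the technical heart, but until it is carried out the necessity direction is not established.
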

\begin{cor*}
	Define $\mathcal{A}^{\eta}$ as the collection of objects $A'$ of $\mathcal{A}$, such that $\eta_{A'}$ is
	an isomporphism; in the commutative case, $\neg\mathcal{A}$ is a sub-category $\neg$ induces an equivalence of the full subcategory $\mathcal{A}^{\eta}$ of $\mathcal{A}$ with its opposite \cite[Proposition 1.31]{Bell}.
\end{cor*}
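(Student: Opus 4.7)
The plan is to exploit the preceding proposition (commutativity $\Leftrightarrow$ $\eta_{\neg A}$ is an iso for every $A$) in order to produce a well-defined restricted functor $\neg : \mathcal{A}^{\eta} \to (\mathcal{A}^{\eta})^{\mathrm{op}}$, and then to verify the two classical criteria for an equivalence: fully faithful and essentially surjective.

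First, I would check that $\neg$ indeed maps $\mathcal{A}^{\eta}$ into $\mathcal{A}^{\eta}$ when considered on objects. By the preceding proposition, in the commutative case $\eta_{\neg X}$ is an isomorphism for \emph{every} object $X \in \mathcal{A}$. In particular, for any $A \in \mathcal{A}^{\eta}$, the object $\neg A$ lies in $\mathcal{A}^{\eta}$, so restricting $\neg$ and passing to the opposite category yields a bona fide functor $\neg : \mathcal{A}^{\eta} \to (\mathcal{A}^{\eta})^{\mathrm{op}}$. (In fact this shows the stronger statement that $\neg \mathcal{A} \subseteq \mathcal{A}^{\eta}$.)

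Next, I would establish full faithfulness by a double adjunction computation. For $A, B \in \mathcal{A}^{\eta}$, the basic adjunction bijection $\mathcal{A}(\neg X, Y) \cong \mathcal{A}(\neg Y, X)$ (with $X = B$, $Y = \neg A$) gives
\begin{equation*}
\mathcal{A}(\neg B, \neg A) \; \cong \; \mathcal{A}(\neg\neg A, B).
\end{equation*}
Since $\eta_A : A \to \neg\neg A$ is an isomorphism (because $A \in \mathcal{A}^{\eta}$), precomposition with $\eta_A^{-1}$ yields a further bijection $\mathcal{A}(\neg\neg A, B) \cong \mathcal{A}(A, B)$. Composing, I get a bijection $\mathcal{A}(A, B) \cong \mathcal{A}(\neg B, \neg A)$, and the naturality square for $\eta$, namely $\eta_B \circ f = \neg\neg f \circ \eta_A$, together with the invertibility of $\eta_B$, shows that this bijection is precisely $f \mapsto \neg f$ with inverse $g \mapsto \eta_B^{-1} \circ \neg g \circ \eta_A$. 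Hence $\neg$ is fully faithful on $\mathcal{A}^{\eta}$.

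For essential surjectivity, given $B \in \mathcal{A}^{\eta}$, set $A := \neg B$. By the first step $A \in \mathcal{A}^{\eta}$, and then $\neg A = \neg\neg B$ is isomorphic to $B$ via $\eta_B^{-1}$; this is an isomorphism in $\mathcal{A}$ and hence in $(\mathcal{A}^{\eta})^{\mathrm{op}}$ as well. Combining the three steps gives the equivalence $\mathcal{A}^{\eta} \simeq (\mathcal{A}^{\eta})^{\mathrm{op}}$. The main obstacle I foresee is bookkeeping: keeping the direction of arrows straight when passing between $\mathcal{A}$ and $\mathcal{A}^{\mathrm{op}}$, and checking carefully that the adjunction bijection really agrees with the action of $\neg$ on morphisms (which is where naturality of $\eta$, not just its invertibility, enters). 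Nothing beyond the preceding proposition and the adjunction itself is required.
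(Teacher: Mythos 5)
The paper states this corollary without proof, deferring to Bell's Proposition 1.31, so there is no in-text argument to compare against. Your proof is correct and is the standard one for this kind of statement (a contravariant functor self-adjoint on one side restricts to an anti-equivalence on the fixed points of the double-negation monad): the preceding proposition gives $\neg\mathcal{A}\subseteq\mathcal{A}^{\eta}$, the adjunction bijection ${\sf Hom}(\neg B,\neg A)\cong{\sf Hom}(\neg\neg A,B)$ combined with invertibility and naturality of $\eta$ gives full faithfulness, and $A:=\neg B$ with $\eta_B^{-1}$ gives essential surjectivity. The only point deserving a second look is the one you already flag: verifying that the composite bijection ${\sf Hom}(A,B)\cong{\sf Hom}(\neg B,\neg A)$ really is $f\mapsto\neg f$, which follows from the naturality square $\neg\neg f\circ\eta_A=\eta_B\circ f$; your bookkeeping there is sound.
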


Thus we recover most of the usual properties of negation, without having a notion of false.\\

\noindent Now assume that $\mathcal{A}$ is symmetric monoidal and closed; we get natural isomorphisms
\begin{equation}
\neg (A\otimes B)\approx A\Rightarrow \neg B\approx B\Rightarrow \neg A.
\end{equation}
And using the neutral element ${\bf 1}=\star $ for $C$, and denoting $\neg {\bf 1}$ by $P$, we obtain that $\neg B=B\multimap P$.\\
\begin{prop}
	For any object $P\in \mathcal{A}$, the functor $A\mapsto (A\multimap P)=P|A$ is a tensor negation
	whose pole is $P$.
\end{prop}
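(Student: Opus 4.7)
The plan is to derive the three pieces of the definition of a tensorial negation (functoriality into $\mathcal{A}^{\rm op}$, the self-adjunction giving $\eta, \varepsilon$, and the tensorial bijection) directly from the universal property of the internal hom $\multimap$ in the symmetric monoidal closed category $\mathcal{A}$, plus a unit computation identifying the pole. The core isomorphism, used everywhere, is
\[
{\sf Hom}_{\mathcal{A}}(X \otimes Y, Z) \;\cong\; {\sf Hom}_{\mathcal{A}}(X, Y \multimap Z),
\]
natural in $X, Y, Z$, together with the symmetry isomorphism $X \otimes Y \cong Y \otimes X$.

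First I would check that $\neg_P : A \mapsto A \multimap P$ really defines a functor $\mathcal{A} \to \mathcal{A}^{\rm op}$: the internal hom $\multimap$ is contravariant in its first slot, so any arrow $f: A \to B$ induces $f \multimap P : B \multimap P \to A \multimap P$, i.e.\ an arrow $\neg_P B \to \neg_P A$ in $\mathcal{A}$, which is an arrow $\neg_P A \to \neg_P B$ in $\mathcal{A}^{\rm op}$, with composition and identities preserved. Next I would establish the self-adjunction $\neg_P^{\rm op} \dashv \neg_P$ by producing, for every pair $A,B$, the chain of natural bijections
\[
{\sf Hom}_{\mathcal{A}}(A, \neg_P B) \;=\; {\sf Hom}_{\mathcal{A}}(A, B \multimap P) \;\cong\; {\sf Hom}_{\mathcal{A}}(A \otimes B, P) \;\cong\; {\sf Hom}_{\mathcal{A}}(B \otimes A, P) \;\cong\; {\sf Hom}_{\mathcal{A}}(B, \neg_P A),
\]
which by the opposite-category rewriting gives exactly ${\sf Hom}_{\mathcal{A}^{\rm op}}(\neg_P^{\rm op} A, B) \cong {\sf Hom}_{\mathcal{A}}(A, \neg_P B)$. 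Naturality in both variables is inherited from the naturality of the closed structure and of the symmetry, and the unit $\eta_A : A \to \neg_P \neg_P A$ and counit are read off from the $A = B = \neg_P A$ specialisations.

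Then I would verify the second (tensorial) axiom by the computation
\[
{\sf Hom}_{\mathcal{A}}(A \otimes B, \neg_P C) \cong {\sf Hom}_{\mathcal{A}}((A \otimes B)\otimes C, P) \cong {\sf Hom}_{\mathcal{A}}(A \otimes (B \otimes C), P) \cong {\sf Hom}_{\mathcal{A}}(A, \neg_P(B \otimes C)),
\]
where the middle step uses the associator of $\otimes$ and the outer steps the $\otimes$–$\multimap$ adjunction. Naturality in $A, B, C$ is again automatic, and coherence with the associator and unitor follows from the coherence axioms of the closed symmetric monoidal structure (a routine Mac~Lane pentagon/triangle check). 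Finally, the pole is computed from the left unit isomorphism:
\[
\neg_P \mathbf{1} \;=\; \mathbf{1} \multimap P \;\cong\; P,
\]
since ${\sf Hom}(X, \mathbf{1} \multimap P) \cong {\sf Hom}(X \otimes \mathbf{1}, P) \cong {\sf Hom}(X, P)$ naturally, and Yoneda identifies $\mathbf{1} \multimap P$ with $P$.

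There is no serious obstacle here: everything is a bookkeeping exercise in the universal property of $\multimap$ and the symmetry. The only point worth care is the coherence claim for the tensorial isomorphism, because one must make sure that the bijection ${\sf Hom}(A \otimes B, \neg_P C) \cong {\sf Hom}(A, \neg_P(B \otimes C))$ built from the associator agrees, under iteration, with the bijection one obtains for a three-fold tensor $A \otimes B \otimes C$; this is ultimately the pentagon coherence axiom of the monoidal structure applied to the internal hom, and it is the only place where one really uses that $\mathcal{A}$ is monoidal and not merely equipped with a bifunctor $\otimes$.
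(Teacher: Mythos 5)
Your proof follows essentially the same route as the paper's: contravariant functoriality of $A\mapsto A\multimap P$, the adjunction via the chain ${\sf Hom}(A\otimes B,P)\cong{\sf Hom}(B\otimes A,P)$ using symmetry, and the tensorial bijection via associativity. The explicit identification of the pole $\mathbf{1}\multimap P\cong P$ and the coherence remarks are correct additions but routine; the argument is sound.
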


\begin{proof}
	First, this is a contravariant functor in $A$.\\
	Secondly, for any pair $A,B$ in $\mathcal{A}$, using the symmetry hypothesis, we get natural bijections
	\begin{equation}
		{\sf Hom}(B,A\multimap P)\cong {\sf Hom}(B\otimes A,P)\cong {\sf Hom}(A,B\multimap P).
	\end{equation}
	This gives the basic adjunction.\\
	Third, for any triple $A,B,C$ in $\mathcal{A}$, the associativity gives
	\begin{equation}
		{\sf Hom}(A\otimes B,C\multimap P)\cong {\sf Hom}(A\otimes B\otimes C, P)\cong {\sf Hom}(A,(B\otimes C)\multimap P).
	\end{equation}
	This gives the tensorial condition.
\end{proof}

\noindent The transformation $\eta$ is given by the Yoneda lemma, from the following
natural map
\begin{equation}
{\sf Hom}(X,A)\rightarrow {\sf Hom}^{\rm op}(\neg X,\neg A)\cong {\sf Hom}(X, \neg\neg A).
\end{equation}

\noindent There is no reason for asserting that this negation is commutative.\\
From proposition \ref{prop:lin-exp}, the necessary and sufficient condition is that, for any object $A$, the following map is an
isomorphism
\begin{equation}
\eta_{A\Rightarrow P}: (A\Rightarrow P)\rightarrow (((A\Rightarrow P)\Rightarrow P)\Rightarrow P).
\end{equation}

\noindent Even  for $A=1$ this is a non-trivial condition: $P\approx ((P\Rightarrow P)\Rightarrow P)$.\\
\noindent The fact that $1\Rightarrow P\equiv P$ being obvious.\\

\noindent Choose an arbitrary object $\Delta$ and define $\neg Q$ as $Q\multimap \Delta$.
This $\Delta$ will play the role of "false".\\
We say that a theory $\mathbb{T}$ excludes $P$ if it contains $P\multimap \Delta$. This is equivalent to say that there exists $R$
in $\mathbb{T}$ such that $R\rightarrow (P\multimap \Delta)$, i.e. $R\otimes P \rightarrow \Delta$,
that is by symmetry: there exists $P\rightarrow (R\multimap \Delta)$. In particular, if $P\rightarrow R$, we obtain such a map by
composition with $R\rightarrow (R\multimap \Delta)$.\\

\noindent To localize the action of the proposition at $P$, we have to prove the following lemma:\\
\begin{lem}
	Conditioning by $Q$ such that $P\rightarrow P\otimes Q$ is non-empty, sends a theory $\mathbb{T}$ that excludes
	$P$ into a theory $\mathbb{T}$
	that also excludes $P$.
\end{lem}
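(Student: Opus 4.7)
The plan is to unpack the monoidal analogue of the semantic conditioning from definition \ref{defn:sem-cond}: in the closed monoidal setting, $\mathbb{T}\mid Q$ should be the theory generated by the objects $Q\multimap R$ for $R$ an axiom of $\mathbb{T}$ (this is the right adjoint to tensoring by $Q$, and it agrees with the Heyting-algebra definition when $\mathcal{A}$ is a poset). The assumption that $\mathbb{T}$ excludes $P$ then means, as spelled out just before the lemma, that there exists $R\in\mathbb{T}$ with a morphism $R\otimes P\to\Delta$, equivalently a morphism $P\to(R\multimap\Delta)=\neg R$.

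First I would fix such an $R$ and aim to produce, from it, an object of $\mathbb{T}\mid Q$ which similarly excludes $P$. The natural candidate is $Q\multimap R$, and the task is to build a morphism $(Q\multimap R)\otimes P\to\Delta$. The key device is the evaluation counit $(Q\multimap R)\otimes Q\to R$ provided by the adjunction $-\otimes Q\dashv Q\multimap -$. The given hypothesis $P\to P\otimes Q$ will be inserted exactly to create the factor $Q$ that the evaluation wants to eat.

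Concretely, I would chain
\begin{equation*}
(Q\multimap R)\otimes P \;\longrightarrow\; (Q\multimap R)\otimes P\otimes Q \;\longrightarrow\; (Q\multimap R)\otimes Q\otimes P \;\longrightarrow\; R\otimes P \;\longrightarrow\; \Delta,
\end{equation*}
where the first arrow is the identity tensored with $P\to P\otimes Q$, the second uses symmetry of $\otimes$, the third is the evaluation tensored with $\mathrm{id}_P$, and the last is the witness that $\mathbb{T}$ excludes $P$. By associativity and coherence this gives the desired map $(Q\multimap R)\otimes P\to\Delta$, i.e.\ a morphism $P\to(Q\multimap R)\multimap\Delta=\neg(Q\multimap R)$, so $Q\multimap R\in\mathbb{T}\mid Q$ also excludes $P$. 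Finally I would observe that the family of such $Q\multimap R$, when $R$ runs over axioms of $\mathbb{T}$, generates the conditioned theory, so closure under right-consequence in $\mathbb{T}\mid Q$ preserves exclusion of $P$.

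The only real subtlety is justifying that $\mathbb{T}\mid Q$ really is the right object to consider (that $Q\multimap -$ is the correct transcription of the Heyting exponential used in definition \ref{defn:sem-cond}, and that right-closure of theories in the monoidal setting is compatible with it); once this is granted, the proof reduces to the short diagram chase above, and no use is made of commutativity or involutivity of any candidate negation $\neg_{\Delta}$. The non-emptiness of $\mathrm{Hom}(P,P\otimes Q)$ is used in exactly one place, to supply the opening arrow of the chain.
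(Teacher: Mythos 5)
Your proof is correct and is essentially the paper's argument in unpacked form: the paper simply applies the contravariant functor $\neg(-)=(-)\multimap\Delta$ to the given morphism $P\to P\otimes Q$ to obtain $\neg(P\otimes Q)\to\neg P$ and identifies $\neg(P\otimes Q)$ with $(\neg P)|Q=Q\multimap\neg P$, which is exactly what your evaluation--symmetry--witness chain establishes when transposed across the adjunction $-\otimes Q\dashv Q\multimap -$. Your closing remark about passing to the right-closure of $\{Q\multimap R: R\in\mathbb{T}\}$ is a point the paper leaves implicit, but it does not change the substance of the argument.
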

\begin{proof}
	From the hypothesis we have a morphism $\neg (P\times Q)\rightarrow \neg P$, but $\neg (P\times Q)$
	is isomorphic to $Q\Rightarrow (P\Rightarrow\Delta)=(\neg P)|Q$.
\end{proof}

\noindent This is analog to the statement of Proposition \ref{prop:conditioning} in section \ref{sec:fibrations-and-cofibrations}, because in this case $P\leq Q$
is equivalent to $P=P\wedge Q$ and to $P\leq P\wedge Q$. The proof does not use that $P$ is a linear exponential object.\\

Now assume that $P$ belongs to the category $\mathcal{A}^{!}$, i.e. $P=!\Gamma$ for a given object $\Gamma\in \mathcal{A}$; we saw that the
set $\mathcal{A}_P$ of $Q$ such that $P\rightarrow Q$ forms a closed monoidal category, and by the above lemma, it acts on the set of theories
excluding $P$. That is because $P\rightarrow Q$ implies $P\rightarrow P\otimes P\rightarrow P\otimes Q$\\
\indent Therefore, all the ingredients of the information topology of chapter \ref{chap:stacks} are present in this situation.\\

\bibliographystyle{alpha}

\bibliography{./ToposStacksDNNs_arxiv-Report_V4}

\end{document}